\definecolor{blue}{rgb}{0.00,0.00,1.00}
\definecolor{red}{rgb}{1.00,0.00,0.00}
\renewcommand{\baselinestretch}{1.2}
\def\bq{\begin{equation}}
\def\eq{\end{equation}}
\def\ba{\begin{array}{ccc}}
\def\bal{\begin{array}{lll}}
\def\ea{\end{array}}
\def\dlim{\displaystyle\lim}
 \def\dsum{\displaystyle\sum}
 \def\lt#1{\left#1}\def\rt#1{\right#1}
\def\({\left(}\def\){\right)}
\def\[{\left[}\def\]{\right]}
\def\<{\langle}\def\>{\rangle}
    \def \R   {\mathbb{R}}
    \def\P    {\mathrm{P}}
    \def\i    {\mathrm{i}}
    \def\S    {\mathbb{S}}
    \def\intr {\int_{\R^3}}
    \def\intrr {\int_{\R^6}}
    \def\ints {\int_{\S^2}}
    \def\intt {\int^t_0}
    \def \Q    {\mathcal{Q}}
    \def \N    {\mathbb{N}}
    \def \pt   {\partial}
    \def \Dt   {\frac{\rm d}{{\rm d}t}}
    \def \dt    {\partial_t}
    \def \da    {\pt^\alpha}
    \def \dx    {\partial_x}
    \def \dxa   {\partial^{\alpha}_x}
    \def \dv    {\partial_v}
    \def \dvb   {\partial^{\beta}_v}
    \def \divx  {{\rm div}_x}
    \def\Tdx   {\nabla_x}
    \def\Tdv   {\nabla_v}
       \def\bq{\begin{equation}}
       \def\eq{\end{equation}}
       \def\be{\begin{equation}}
       \def\ee{\end{equation}}
       \def\bma#1\ema{{\allowdisplaybreaks\begin{align}#1\end{align}}}
       \def\bmas#1\emas{{\allowdisplaybreaks\begin{align*}#1\end{align*}}}
       \def\bln#1\eln{{\allowdisplaybreaks\begin{aligned}#1\end{aligned}}}
       \def\nnm{\notag}
       \def\bgr#1\egr{\allowdisplaybreaks\begin{gather}#1\end{gather}}
       \def\bgrs#1\egrs{\allowdisplaybreaks\begin{gather*}#1\end{gather*}}
       \theoremstyle{plain}
       \newtheorem{lem}{\bf Lemma}[section]
       \newtheorem{thm}[lem]{\textbf{Theorem}}
       \newtheorem{prop}[lem]{\textbf{Proposition}}
       \newtheorem{rem}[lem]{\textbf{Remark}}
       \newtheorem{remark}[lem]{\bf Remark}
\begin{document}

\title{ Spectrum analysis and optimal decay rates of  the bipolar Vlasov-Poisson-Boltzmann equations }
\author{ Hai-Liang Li$^1$,\, Tong Yang$^2$,\, Mingying Zhong$^3$\\[2mm]
 \emph{\small\it  $^1$Department of  Mathematics,
    Capital Normal University, and BCMIIS, Beijing, P.R.China }\\
    {\small\it E-mail:\ hailiang.li.math@gmail.com}\\
    {\small\it $^2$Department of Mathematics, City University of Hong Kong,  Hong
    Kong}\\
    {\small\it E-mail: matyang@cityu.edu.hk} \\
    {\small\it  $^3$Department of  Mathematics and Information Sciences,
    Guangxi University, P.R.China.}\\
    {\small\it E-mail:\ zhongmingying@sina.com}\\[5mm]
    }
\date{ }

\pagestyle{myheadings}
\markboth{Vlasov-Poisson-Boltzmann equation }%
{H.-L. Li, T. Yang, M.-Y. Zhong }

 \maketitle

 \thispagestyle{empty}

\begin{abstract}\noindent
In the present paper, we consider the initial value problem for the bipolar Vlasov-Poisson-Boltzmann (bVPB) system and its corresponding modified  Vlasov-Poisson-Boltzmann (mVPB). We give the spectrum analysis on the linearized bVPB and mVPB systems around their equilibrium state and show the optimal convergence rate of global solutions. It was showed that the electric field decays exponentially and the distribution function tends to the absolute Maxwellian at the optimal convergence rate $(1+t)^{-3/4}$ for the bVPB system, yet both the electric field and the distribution function converge to equilibrium state at the optimal rate $(1+t)^{-3/4}$  for the mVPB system.

\medskip
 {\bf Key words}. Bipolar Vlasov-Poisson-Boltzmann system, Modified Vlasov-Poisson-Boltzmann, spectrum analysis, optimal time decay rates.

\medskip
 {\bf 2010 Mathematics Subject Classification}. 76P05, 82C40, 82D05.
\end{abstract}

%

\tableofcontents

\section{Introduction}
The bipolar Vlasov-Poisson-Boltzmann  (bVPB)  system of two species can be used to model the time evolution of dilute charged particles (e.g., electrons and ions) in the absence of an external magnetic field \cite{Markowich}. In general, the bVPB system for two species of particles in the whole space take the form
 \bgr
\dt F_++v\cdot\Tdx F_++\Tdx \Phi\cdot\Tdv F_+
=\Q(F_+,F_+)+\Q(F_+,F_-),\label{VPB1}\\
\dt F_-+v\cdot\Tdx F_--\Tdx \Phi\cdot\Tdv F_-
=\Q(F_-,F_-)+\Q(F_-,F_+),\label{VPB2}\\
\Delta_x\Phi=\intr (F_+-F_-) dv,\label{VPB3}\\
 F_+(x,v,0)=F_{+,0}(x,v),\quad F_-(x,v,0)=F_{-,0}(x,v),  \label{VPB3i}
\egr
where $F_+=F_+(x,v,t)$ and $F_-=F_-(x,v,t)$ are number
density functions of ions and electrons, and   $\Phi(x,t)$ denotes the electric potential, respectively. The collision integral $\Q(F,G)$ describes the interaction between particles due to binary collisions by
 \bq
 \Q(F,G)=\intr\ints
|(v-v_*)\cdot\omega|(F(v')G(v'_*)-F(v)G(v_*))dv_*d\omega,  \label{binay_collision}
 \eq
where $$ v'=v-[(v-v_*)\cdot\omega]\omega,\quad v'_*=v_*+[(v-v_*)\cdot\omega]\omega,\quad \omega\in\S^2 .$$

Assume that the electron density is very rarefied and reaches a local equilibrium state with small electron mass compared with the ions, and that the collision $\Q(F_+,F_-)$ between the ions and electrons can be neglected,  the equation \eqref{VPB2} can be reduced to
\be
 v\cdot\Tdx F_-- \Tdx \Phi\cdot\Tdv F_-
=0
\ee
This together with the simple local Maxwellian distribution of electron leads to
 $$
F_-=\rho_-(x)M(v)=\frac{1}{(2\pi)^{\frac32}}e^{-\Phi}e^{-\frac{|v|^2}{2}}
$$
with the normalized Maxwellian $M(v)$ given by
$$
 M=M(v)=\frac1{(2\pi)^{3/2}}e^{-\frac{|v|^2}2}. $$
For rigorous reduction of this type at the hydrodynamical scale, the readers can refer to \cite{Codier-Grenier}. Under the above reduction, we can obtain from  bVPB system~\eqref{VPB1}--\eqref{VPB3} the following modified  Vlasov-Poisson-Boltzmann (mVPB) system:
 \bgr
  F_t+v\cdot\Tdx F+\Tdx \Phi\cdot\Tdv F =\Q(F,F),\label{m_VPB1}
 \\
  \Delta_x\Phi=\intr Fdv-e^{-\Phi},\label{m_VPB2}
\\
  F(x,v,0)=F_0(x,v),\label{m_VPB3}
\egr
where $F=F(x,v,t)$ is the distribution function of ions with $(x,v,t)\in \R^3\times\R^3\times\R^+$,  $\Phi(x,t)$ is the electric potential, and $\Q(F,G)$ is the  binary collision operator defined as \eqref{binay_collision}.

In the case that the effect of electron is totally neglected, the bVPB system can be simplified to the standard unipolar Vlasov-Poisson-Boltzmann (VPB) model similar to \eqref{m_VPB1}--\eqref{m_VPB3} with the term $e^{-\Phi}$ replaced by a given function, a positive constant for instance.
\par

There have been a lot of works on the existence and behavior of solutions to the Vlasov-Poisson-Boltzmann system. The global existence of renormalized solution
for  large initial data was proved in \cite{Mischler}. The first global existence  result on classical solution in torus when the initial data is near a global Maxwellian was established in \cite{Guo2}. And the global existence of classical solution in  $\R^3$  was
given  \cite{Yang1,Yang3} in the same setting.  The case
with general stationary background density function $\bar{\rho}(x)$ was studied in \cite{Duan2}, and the perturbation of vacuum
was investigated  in \cite{Guo3,Duan4}. Recently, Li-Yang-Zhong~\cite{Li2} analyze the spectrum of the linearized VPB system and obtain the optimal decay rate of solutions to the nonlinear system near Maxwellian.

However, in contrast to the works on Boltzmann equation~\cite{Ellis, Ukai1,Ukai2,Ukai3} and VPB system \cite{Li2}, the spectrums   of the linearized bVPB system and modified VPB system have not been given despite of its importance.
On the other hand, an interesting phenomenon was shown recently in \cite{Duan1}
 on the time
asymptotic behavior of the solutions which shows that the global classical solution of one species VPB system tends to the equilibrium at  $(1+t)^{-\frac14}$ in $L^2$-norm. This is slower than the rate
  for the two species VPB system, that is, $(1+t)^{-\frac34}$, obtained in \cite{Yang4}.
Therefore, it is natural to investigate  whether these rates are optimal.

\par

The main purpose of the present paper is to investigate the spectrum and optimal time-convergence rates of global solutions to the 
linearized the bVPB system~\eqref{VPB1}--\eqref{VPB3i} in section~\ref{sect2.1} and the mVPB~~\eqref{m_VPB1}--\eqref{m_VPB3} in section~\ref{sect2.2} respectively. In particular, the main results established in this paper justifies how the electric field and the interplay interaction between ions and electrons influence the asymptotical behaviors of the global solution to the bVPB system~\eqref{VPB1}--\eqref{VPB3i} and  the mVPB~~\eqref{m_VPB1}--\eqref{m_VPB3}.

The rest of this paper will be organized as follows. The main results about the global existences and the optimal time-convergence rates of strong solution to  bVPB system~\eqref{VPB1}--\eqref{VPB3i} and mVPB~~\eqref{m_VPB1}--\eqref{m_VPB3} are stated in Section~\ref{sect2.0}.
In Section~\ref{sect3}, we analyze the spectrum of the bVPB system and mVPB system, and then establish the  exponential  time decay rates of  the  linearized bVPB and  the  algebraic time decay rates of the linearized mVPB equations  in Sections~\ref{sect3.1}--\ref{sect3.m}.
In Sections~\ref{sect4} and \ref{mvpb}, we prove the optimal time decay rates of the
global solution to the original nonlinear bVPB system and mVPB system respectively.

\bigskip

\noindent\textbf{Notations:} \ \ Define the Fourier transform of $f=f(x,v)$ by
$\hat{f}(\xi,v)=\mathcal{F}f(\xi,v)=\frac1{(2\pi)^{3/2}}\intr f(x,v)e^{- \i x\cdot\xi}dx,$
where and throughout this paper we denote $\i=\sqrt{-1}$.

Denote the weight function $w(v)$ by
$$
w(v)=(1+|v|^2)^{1/2}
$$
and the Sobolev spaces $ H^N$ and $ H^N_{w}$ as
$$
 H^N=\{\,f\in L^2(\R^3_x\times \R^3_v)\,|\,\|f\|_{H^N}<\infty\,\},\quad
 H^N_{w}=\{\,f\in L^2(\R^3_x\times \R^3_v)\,|\,\|f\|_{H^N_w}<\infty\,\}
$$
equipped with the norms
$$
 \|f\|_{H^N}=\sum_{|\alpha|+|\beta|\le N}\|\dxa\dvb f\|_{L^2(\R^3_x\times \R^3_v)},
 \quad
 \|f\|_{H^N_{w}}=\sum_{|\alpha|+|\beta|\le N}\|w\dxa\dvb f\|_{L^2(\R^3_x\times \R^3_v)}.
$$
For $q\ge1$, we also define
$$
L^{2,q}=L^2(\R^3_v,L^q(\R^3_x)),\quad
\|f\|_{L^{2,q}}=\bigg(\intr\bigg(\intr|f(x,v)|^q dx\bigg)^{2/q}dv\bigg)^{1/2}.
$$
In the following, we denote by $\|\cdot\|_{L^2_{x,v}}$ and $\|\cdot\|_{L^2_{\xi,v}}$ the norms of the function spaces $L^2(\R^3_x\times \R^3_v)$ and $L^2(\R^3_\xi\times \R^3_v)$ respectively, and denote by $\|\cdot\|_{L^2_x}$, $\|\cdot\|_{L^2_\xi}$ and $\|\cdot\|_{L^2_v}$  the norms of the function spaces $L^2(\R^3_x)$, $L^2(\R^3_\xi)$ and $L^2(\R^3_v)$ respectively. 
For any integer $m\ge1$, we denote by $\|\cdot\|_{H^m_x}$ and $\|\cdot\|_{L^2_v(H^m_x)}$ the norms in the spaces $H^m(\R^3_x)$ and $L^2(\R^3_v,H^m(\R^3_x))$ respectively.

\section{Main results}
\label{sect2.0}
\subsection{bVPB system}\label{sect2.1}
First of all, we consider the  Cauchy problem of the bVPB system~\eqref{VPB1}--\eqref{VPB3i} in the present paper. Define
$$
 F_1=:F_++F_-,\quad F_2=:F_+-F_-.
$$
Then Cauchy problem of the bVPB system~\eqref{VPB1}--\eqref{VPB3i} can be rewritten as
 \bgr
\dt F_1+v\cdot\Tdx F_1+\Tdx \Phi\cdot\Tdv F_2
=\Q(F_1,F_1),\label{VPB4}\\
\dt F_2+v\cdot\Tdx F_2+\Tdx \Phi\cdot\Tdv F_1
=\Q(F_2,F_1),\label{VPB5}\\
\Delta_x\Phi=\intr F_2 dv,\label{VPB6}\\
F_1(x,v,0)=F_{1,0}(x,v)=F_{+,0}+F_{-,0},\quad F_2(x,v,0)=F_{2,0}(x,v)=F_{+,0}-F_{-,0}. \label{VPB6i}
\egr

The bVPB system \eqref{VPB4}-\eqref{VPB6} has an equilibrium state $(F_1^*,F_2^*,\Phi^*)=(M(v),0,0)$.
Define the perturbations $f_1(x,v,t)$ and $f_2(x,v,t)$ by $$F_1=M+\sqrt M f_1,\quad F_2=\sqrt M f_2.$$ Then the bVPB system~\eqref{VPB4}--\eqref{VPB6} for $f_1(x,v,t)$ and $f_2(x,v,t)$ is reformulated into
\bgr
 \dt f_1+v\cdot\Tdx f_1-Lf_1
=\frac12 (v\cdot\Tdx\Phi)f_2-\Tdx\Phi\cdot\Tdv f_2+\Gamma(f_1,f_1),\label{VPB7}
\\
\dt f_2+v\cdot\Tdx f_2-v\sqrt{M}\cdot\Tdx\Phi-L_1f_2
=\frac12 (v\cdot\Tdx\Phi)f_1-\Tdx\Phi\cdot\Tdv f_1+\Gamma(f_2,f_1),\label{VPB8}
\\
\Delta_x\Phi=\intr f_2\sqrt{M}dv,\label{VPB9}
\\
f_1(x,v,0)=f_{1,0}(x,v)= (F_{1,0}-M) M^{-\frac12},\quad
f_2(x,v,0)=f_{2,0}(x,v)= F_{2,0} M^{-\frac12},\label{VPB10}
 \egr
where the operators $Lf$, $L_1f$ and
$\Gamma(f,f)$ are defined by
 \bma
Lf&=\frac1{\sqrt M}[\Q(M,\sqrt{M}f)+\Q(\sqrt{M}f,M)],\label{Lf}\\
L_1f&=\frac1{\sqrt M}\Q(\sqrt{M}f,M),\label{L1f}\\
\Gamma(f,g)&=\frac1{\sqrt M}\Q(\sqrt{M}f,\sqrt{M}g).  \label{Gf}
 \ema
The linearized collision operators $L$ and $L_1$ can be written as \cite{Cercignani, Yu}
 \bmas
(Lf)(v)&=(Kf)(v)-\nu(v) f(v),\quad (L_1f)(v)=(K_1f)(v)-\nu(v) f(v),\\
\nu(v)&=\intr\ints |(v-v_*)\cdot\omega|M_*d\omega dv_*,\\
(Kf)(v)&=\intr\ints
|(v-v_*)\cdot\omega|(\sqrt{M'_*}f'+\sqrt{M'}f'_*-\sqrt{M}f_*)\sqrt{M_*}d\omega
dv_*\\
&=\intr k(v,v_*)f(v_*)dv_*,\\
(K_1f)(v)&=\intr\ints
|(v-v_*)\cdot\omega|\sqrt{M'_*}\sqrt{M_*}f'd\omega
dv_*=\intr k_1(v,v_*)f(v_*)dv_*,
\emas
where $\nu(v)$ is called
the collision frequency, $K$ and $K_1$ are self-adjoint compact operators
on $L^2(\R^3_v)$ with real symmetric integral kernels $k(v,v_*)$ and $k_1(v,v_*)$.
The nullspace of the operator $L$, denoted by $N_0$, is a subspace
spanned by the orthogonal basis $\{\chi_j,\ j=0,1,\cdots,4\}$  with
\bq \chi_0=\sqrt{M},\quad \chi_j=v_j\sqrt{M} \ (j=1,2,3), \quad
\chi_4=\frac{(|v|^2-3)\sqrt{M}}{\sqrt{6}},\label{basis}\eq
and the nullspace of the operator $L_1$, denoted by $N_1$, is a subspace
spanned by $\sqrt{M}$.

 We denote $L^2(\R^3)$ be a Hilbert space of complex-value functions $f(v)$
on $\R^3$ with the inner product and the norm
$$
(f,g)=\intr f(v)\overline{g(v)}dv,\quad \|f\|=\(\intr |f(v)|^2dv\)^{1/2}.
$$
Let $\P_0,P_{\rm d}$ be the projection operators from $L^2(\R^3_v)$
to the subspace $N_0, N_1$ with
\bma
 &\P_0f=\sum_{i=0}^4(f,\chi_i)\chi_i,\quad \P_1=I-\P_0, \label{P10}
 \\
 &P_{\rm d}f=(f,\sqrt M)\sqrt M,   \quad P_r=I-P_{\rm d}. \label{Pdr}
 \ema

From the Boltzmann's H-theorem, the linearized collision operators $L$ and $L_1$ are non-positive and
moreover, $L$ and $L_1$ are locally coercive in the sense that there is a constant $\mu>0$ such that \bma
 (Lf,f)&\leq -\mu \| \P_1f\|^2, \quad  \ f\in D(L),\label{L_3}\\
 (L_1f,f)&\leq -\mu \|P_rf\|^2, \quad  \ f\in D(L_1),\label{L_4}
 \ema
where $D(L)$ and $D(L_1)$ are the domains of $L$ and $L_1$ given by
$$ D(L)=D(L_1)=\left\{f\in L^2(\R^3)\,|\,\nu(v)f\in L^2(\R^3)\right\}.$$
In addition, for the hard sphere model, $\nu$ satisfies
 \be
\nu_0(1+|v|)\leq\nu(v)\leq \nu_1(1+|v|).  \label{nuv}
 \ee

From the system \eqref{VPB7}--\eqref{VPB10} for $(f_1,f_2)$,
we have the following decoupled linearized system for $f_1$ and $f_2$:
\bma
\dt f_1=Ef_1,\quad f_1(x,v,0)=f_{1,0}(x,v),\label{LVPB1}\\
\dt f_2=Bf_2,\quad f_2(x,v,0)=f_{2,0}(x,v),\label{LVPB2}
\ema
where
\bma
Ef_1&=Lf_1-(v\cdot\Tdx)f_1,\\
Bf_2&=L_1f_2-(v\cdot\Tdx)f_2-v\sqrt M\cdot \Tdx(-\Delta_x)^{-1}\intr f_2\sqrt Mdv.
\ema
The equation \eqref{LVPB1} is the linearized Boltzmann equation,  its spectrum analysis and the optimal decay rate of the solution has already  been made for instance in \cite{Ukai1,Zhong2012Sci}. Therefore, we only need to investigate the spectrum analysis and the decay rate of the solution to the linearized Vlasov-Poisson-Boltzmann type equation \eqref{LVPB2}. Indeed, take Fourier transform to \eqref{LVPB1}--\eqref{LVPB2} in $x$ to get
\bma
\dt \hat{f}_1&=\hat E(\xi)\hat f_1, \label{LVPB5a}\\
 \dt \hat{f}_2&=\hat B(\xi)\hat f_2, \label{LVPB5} 
\ema
where the operators $\hat{E}(\xi)$, $\hat{B}(\xi)$ are defined for $\xi\ne 0$ by
\bmas
\hat{E}(\xi)=L_1-\i(v\cdot\xi),\quad
\hat{B}(\xi)=L_1-\i(v\cdot\xi)-\frac{\i(v\cdot\xi)}{|\xi|^2}P_{\rm d}.
\emas

Then, we have
\begin{thm}
\label{rate2}
Let $\sigma(\hat{B}(\xi))$ denotes the spectrum 
of operator $\hat{B}(\xi)$ to the linear equation \eqref{LVPB5}.
There exist 
a constant  $a_1>0$ such that it holds for all $\xi\ne0$ that
\bgr
\sigma(\hat{B}(\xi))\subset\{\lambda\in\mathbb{C}\,|\, {\rm Re}\lambda<-a_1\}. \label{sg1z}
\egr

Let $\sigma(\hat{E}(\xi))$ denotes the spectrum 
of operator $\hat{E}(\xi)$ to the linear equation \eqref{LVPB5a}. Then, 
 for any $r_0>0$  there
exists $\alpha=\alpha(r_0)>0$ so that it holds  for $|\xi|\geq r_0$ that
\be
\sigma(\hat{E}(\xi))\subset\{\lambda\in\mathbb{C}\,|\,{\rm Re}\lambda\le -\alpha\}. \label{sg2}
\ee
There exists a constant $r_0>0$ such that the spectrum $\sigma(\hat{E}(\xi))$ for $\xi=s\omega$ with $|s|\leq r_0$ and $\omega\in \mathbb{S}^2$ consists of five points $\{\mu_j(s),j=-1,0,1,2,3\}$ on the domain $\mathrm{Re}\lambda>-\mu/2$,  which are $C^\infty$ functions of $s$ for $|s|\leq r_0$ and satisfy the following asymptotical expansion for $|s|\leq r_0$
\be
 \left\{\bln  \label{eigen_2}
 \mu_{\pm1}(s)&=\pm \i\sqrt{\frac53} s-b_{\pm1}s^2+o(s^2),\quad \overline{\mu_1(s)}=\mu_{-1}(s),\\
 \mu_{0}(s) &=-b_0s^2+o(s^2),\\
 \mu_2(s) &=\mu_3(s) =-b_2s^2+o(s^2),
 \eln\right.
\ee
with constants $b_j>0$, $-1\le j\le 2$.
\end{thm}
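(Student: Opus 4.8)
The plan is to treat the three assertions separately, exploiting that $\hat B(\xi)$ and $\hat E(\xi)$ are perturbations of the same dissipative operator $L_1 - \i(v\cdot\xi)$.

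For the spectral gap \eqref{sg1z} of $\hat B(\xi)$, I would first establish the bound uniformly for $|\xi|$ bounded away from $0$ and for $|\xi|$ large. For $|\xi|\ge r_0$, the dissipation $(L_1 f,f)\le -\mu\|P_r f\|^2$ together with the fact that the macroscopic piece $P_{\rm d}f = (f,\sqrt M)\sqrt M$ is controlled via the structure of the equation (a standard macro-micro / hypocoercivity argument, or equivalently the observation that $v\sqrt M$ has no component in $N_1$, so the $P_{\rm d}$-term only transfers energy between macro and micro modes without creating it) should give a uniform negative real-part bound. The key input is that the "extra" streaming term $-\i(v\cdot\xi)|\xi|^{-2}P_{\rm d}$ is skew-symmetric-like and does not destroy dissipativity; I would show $\mathrm{Re}(\hat B(\xi)f,f)\le 0$ and then upgrade to a strict gap using compactness of $K_1$ and a contradiction argument on a putative sequence of approximate eigenfunctions with $\mathrm{Re}\lambda_n\to 0$. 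The only subtlety is the behavior as $|\xi|\to 0$: here the factor $|\xi|^{-2}$ blows up, but acting on $P_{\rm d}f$ it is compensated because the Poisson coupling forces an effective confinement — this is exactly where the exponential decay (as opposed to the algebraic decay of $\hat E$) comes from, and the clean way to see it is that the bottom of the spectrum stays bounded away from the imaginary axis uniformly in $\xi$. I expect this uniform-in-$\xi$ gap near $\xi=0$ to be the main obstacle, and I would handle it by a perturbation computation analogous to \eqref{eigen_2} but showing the leading eigenvalue branches have the form $-c + O(|\xi|^2)$ rather than $O(|\xi|^2)$, i.e. no eigenvalue touches $0$.

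For $\hat E(\xi) = L - \i(v\cdot\xi)$ with $|\xi|\ge r_0$, estimate \eqref{sg2} is the classical large-frequency spectral gap for the linearized Boltzmann operator: using $(Lf,f)\le -\mu\|\P_1 f\|^2$ and $\nu(v)\ge\nu_0(1+|v|)$, one shows that for $\mathrm{Re}\lambda\ge-\alpha$ the resolvent equation $(\lambda-\hat E(\xi))f=g$ can be inverted by writing $\hat E=-\nu(v)-\i(v\cdot\xi)+K$, inverting the multiplication part explicitly, and treating $K$ as a compact perturbation whose contribution is small once $|\xi|$ is large (the oscillation $e^{-\i(v\cdot\xi)t}$ makes $K$ effectively small, or alternatively one uses that $K(\lambda+\nu+\i v\cdot\xi)^{-1}$ has operator norm $<1$). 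This is standard (Ukai); I would cite \cite{Ukai1} for the structure and just reproduce the estimate.

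For the small-frequency expansion \eqref{eigen_2} of $\hat E(\xi)$, I would apply analytic perturbation theory exactly as for the linearized Boltzmann equation. Writing $\xi=s\omega$, the operator $\hat E(s\omega)=L-\i s(v\cdot\omega)$ is an analytic family of type (A) in $s$; at $s=0$ the eigenvalue $0$ has multiplicity $5$ with eigenspace $N_0=\mathrm{span}\{\chi_0,\dots,\chi_4\}$. The five perturbed eigenvalues $\mu_j(s)$ bifurcate from $0$, and their expansion is computed by the standard reduction to the $5\times 5$ matrix of $L-\i s(v\cdot\omega)$ restricted to $N_0$ via the Lyapunov–Schmidt / Kato projection: the first-order term is the matrix $-\i s\,\P_0(v\cdot\omega)\P_0$, whose eigenvalues are $0,0,0,\pm\i s\sqrt{5/3}$ (the acoustic eigenvalues, $\sqrt{5/3}$ being the sound speed of the monatomic gas), giving the leading terms in \eqref{eigen_2}. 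The second-order correction $-b_j s^2$ comes from $\P_0(v\cdot\omega)\P_1 L^{-1}\P_1(v\cdot\omega)\P_0$, and positivity $b_j>0$ follows from $L^{-1}$ being negative definite on $\mathrm{Ran}\,\P_1$ together with the fact that $\P_1(v\cdot\omega)\chi_i\ne 0$ for the relevant modes. Smoothness in $s$ and the reality/conjugation relations follow from the general theory since the eigenvalues stay simple (or the $2$–$3$ pair stays grouped) for $s\ne 0$ small. The main thing to check carefully is the nondegeneracy ensuring $b_j>0$ for all $-1\le j\le 2$ and that exactly five eigenvalues remain in $\mathrm{Re}\lambda>-\mu/2$ (the rest of the spectrum being pushed left by the coercivity bound \eqref{L_3} combined with the $|\xi|\ge r_0$ estimate), which glues the two regimes together.
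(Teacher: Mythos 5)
Your treatment of $\hat E(\xi)$ (the high-frequency gap \eqref{sg2} and the five-branch expansion \eqref{eigen_2}) is the standard Ellis--Pinsky/Ukai argument, and this matches the paper, which simply cites \cite{Ellis,Ukai1} for that part. The genuine gap is in your low-frequency argument for \eqref{sg1z}. Near $\xi=0$ the operator $\hat B(s\omega)=L_1-\i s(v\cdot\omega)-\i s^{-1}(v\cdot\omega)P_{\rm d}$ is a \emph{singular} perturbation of $L_1$: the Poisson coupling blows up like $1/s$, so $\hat B(s\omega)$ has no limit as $s\to 0$ and is not an analytic family at $s=0$. Consequently a ``perturbation computation analogous to \eqref{eigen_2}'' cannot even be set up, and your claim that the eigenvalue branches have the form $-c+O(|\xi|^2)$ is unsupported; likewise a compactness/contradiction argument along $\xi_n\to 0$ has no limiting operator to pass to unless one first tames the $|\xi|^{-2}$ factor. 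The paper's mechanism, absent from your sketch, is the exact reduction of the eigenvalue problem to the scalar dispersion relation $\lambda=D(\lambda,s)$ with $D(\lambda,s)=(1+s^2)\bigl([L_1-\lambda P_r-\i s P_r v_1 P_r]^{-1}\chi_1,\chi_1\bigr)$ (Lemma~\ref{eigen_1}): the two factors of $(v\cdot\xi)$ contribute $s^2$, which cancels the $1/s^2$ from the Poisson term, and one then shows that the limiting equation $\lambda=D_0(\lambda)$ has no root with ${\rm Re}\,\lambda\ge-\tfrac14 a_0$, where $a_0=\min\{\mu,-(L_1^{-1}\chi_1,\chi_1)\}>0$, and perturbs in $s$. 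This cancellation, not a vague ``effective confinement,'' is what keeps $\sigma(\hat B(\xi))$ uniformly to the left of the imaginary axis for small $|\xi|$; combined with the high-frequency gap (Lemma~\ref{LP01}, proved via resolvent bounds on $K_1(\lambda-c(\xi))^{-1}$ rather than your sketched contradiction argument) it yields the uniform constant $a_1$ in \eqref{sg1z}.

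A smaller, fixable point: ${\rm Re}(\hat B(\xi)f,f)\le 0$ is generally false in the plain $L^2$ inner product, because $\i(v\cdot\xi)|\xi|^{-2}P_{\rm d}$ is not skew-adjoint there (the cross term $((v\cdot\xi)P_{\rm d}f,P_rf)$ survives). The paper introduces the weighted inner product $(f,g)_\xi=(f,g)+|\xi|^{-2}(P_{\rm d}f,P_{\rm d}g)$ precisely so that ${\rm Re}(\hat B(\xi)f,f)_\xi=(L_1f,f)\le-\mu\|P_rf\|^2$ (Lemma~\ref{SG_2}); your dissipativity step must be formulated in $L^2_\xi$, not in $L^2$.
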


With above spectrum analysis, we can obtain the global existence and the time-asymptotical behavior of unique solution to the Cauchy problem for the linear bVPB system~\eqref{LVPB5a}--\eqref{LVPB5} as follows.

\begin{thm}\label{rate2}
Assume that $f_{1,0}\in L^2_v(H^N_x)\cap L^{2,q}$ for $N\ge 1$ and $q\in[1,2]$. Then there is a globally unique solution $f_1(x,v,t)=e^{tE}f_{1,0}(x,v)$ to  the linearized Boltzmann equation~\eqref{LVPB1}, which satisfies for any $\alpha,\alpha'\in\N^3$ with  $|\alpha|\le N$, $\alpha'\le \alpha$  and $m=|\alpha-\alpha'|$ that
 \bma
 \|(\da_x e^{tE}f_{1,0},\chi_j)\|_{L^2_x}
&\leq C(1+t)^{-\frac 32\(\frac1q-\frac12\)-\frac m2}(\|\da_x
f_0\|_{L^2_{x,v}}+\|\dx^{\alpha'}f_0\|_{L^{2,q}}),\quad j=0,1,2,3,4, \label{V_1}
\\
\|  \P_1 (\da_x e^{tE}f_{1,0})\|_{L^2_{x,v}}
&\leq
 C(1+t)^{-\frac 32\(\frac1q-\frac12\)-\frac{m+1}{2}}
  (\|\da_x f_0\|_{L^2_{x,v}}+\|\dx^{\alpha'}f_0\|_{L^{2,q}}). \label{V_2}
 \ema
In addition, assume that $f_{1,0}\in L^2(\R^3_v;H^N(\R^3_x)\cap L^{1}(\R_x^3))$ for $N\ge 1$ and there exist positive constants $d_0,d_1>0$ and a small constant $r_0>0$ so that the Fourier transform $\hat{f}_{1,0}(\xi,v)$ of the initial data $f_{1,0}(x,v)$ satisfies that $\inf_{|\xi|\le r_0}|(\hat f_{1,0},\sqrt M)|\ge d_0$, $\inf_{|\xi|\le r_0}|(\hat f_{1,0},\chi_4)|\ge d_1\sup_{|\xi|\le r_0}|(\hat f_{1,0},\sqrt M)|$ and $\sup_{|\xi|\le r_0}|(\hat f_{1,0},v\sqrt M)|=0$. Then global solution $f(x,v,t)=e^{tE}f_{1,0}(x,v)$  satisfies for two positive constants $C_2\ge C_1$ that
\bma
  C_1(1+t)^{-\frac34-\frac k2}
 &\leq \|\Tdx^k(e^{tE}f_{1,0},\chi_j)\|_{L^2_x} \leq C_2(1+t)^{-\frac34-\frac k2},\quad j=0,1,2,3,4, \label{H_1}
\\
 C_1(1+t)^{-\frac54-\frac k2}
 &\leq \| \Tdx^k \P_1 (e^{tE}f_{1,0})\|_{L^2_{x,v}}\leq C_2(1+t)^{-\frac54-\frac k2}, \label{H_2}
 \ema
for $t>0$ sufficiently large and $k\ge 0$.

 Furthermore,  if $f_{2,0}\in L^2_v(H^N_x)\cap L^{2,1}$ for $N\ge1$, then the global solution $f_2(x,v,t)= e^{tB}f_{2,0}(x,v)$ to the linear Vlasov-Poisson-Boltzmann type equation \eqref{LVPB2} exists globally in time and satisfies for $t>0$
\bma
&\|\da_x f_2(t)\|_{L^2_{x,v}}+\|\dxa \nabla_x\Phi(t)\|_{L^2_x}\leq
Ce^{-\frac12a_1t}(\|\da_x f_{2,0}\|_{L^2_{x,v}}+\|f_{2,0}\|_{L^{2,1}}) \label{D_2}
\ema
for $0\le  |\alpha|\le N$, where $\Tdx\Phi(t)=\Tdx\Delta_x^{-1}(e^{tB}f_{2,0},\sqrt M)$.
\end{thm}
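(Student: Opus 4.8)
The plan is to prove the three parts of the theorem in turn, exploiting the spectral decomposition of $\hat E(\xi)$ and $\hat B(\xi)$ already established in the preceding Theorem~\ref{rate2} (spectrum statement). For the decay estimates \eqref{V_1}--\eqref{V_2} for the linearized Boltzmann semigroup $e^{tE}$, I would represent the solution via the inverse Fourier transform, $\da_x e^{tE}f_{1,0}(x,v) = \frac1{(2\pi)^{3/2}}\intr (\i\xi)^\alpha e^{\i x\cdot\xi} e^{t\hat E(\xi)}\hat f_{1,0}(\xi,v)\,d\xi$, and split the $\xi$-integral into the low-frequency region $|\xi|\le r_0$ and the high-frequency region $|\xi|\ge r_0$. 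On $|\xi|\ge r_0$, the bound \eqref{sg2} gives $\|e^{t\hat E(\xi)}\| \le Ce^{-\alpha t}$ (after the standard decomposition $\hat E = -\nu(v) - \i v\cdot\xi + K$ and a resolvent/Duhamel argument controlling the compact part $K$), so this piece decays exponentially and is harmless. On $|\xi|\le r_0$, I would use the eigenvalue expansion \eqref{eigen_2}: the semigroup acting on the five-dimensional eigenspace behaves like $\sum_j e^{\mu_j(s)t}$ with $\mathrm{Re}\,\mu_j(s) \sim -b_j s^2$, while on the complementary subspace there is a uniform spectral gap $\mathrm{Re}\,\lambda \le -\mu/2$. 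The factor $e^{-bs^2 t}$ integrated against $|\xi|^{2|\alpha|}$ and the $L^q\to L^2$ Hausdorff--Young bound on $\hat f_{1,0}$ yields the $(1+t)^{-\frac32(\frac1q-\frac12)-\frac m2}$ rate; the extra half-power in \eqref{V_2} comes from the fact that $\P_1 e^{t\hat E(\xi)}$ picks up an additional factor of $|\xi|$ near $s=0$ (the microscopic part of the low-frequency eigenfunctions vanishes to first order in $s$).

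For the matching lower bounds \eqref{H_1}--\eqref{H_2}, I would isolate the leading contribution: near $\xi=0$ the dominant eigenvalue is $\mu_0(s)=-b_0 s^2+o(s^2)$ (the "heat-like" mode) together with the oscillatory pair $\mu_{\pm1}$. Projecting $\hat f_{1,0}$ onto the corresponding eigenfunctions and using the hypotheses $\inf_{|\xi|\le r_0}|(\hat f_{1,0},\sqrt M)|\ge d_0$, the comparability $\inf|(\hat f_{1,0},\chi_4)|\ge d_1\sup|(\hat f_{1,0},\sqrt M)|$, and $\sup_{|\xi|\le r_0}|(\hat f_{1,0},v\sqrt M)|=0$, one checks that the projection of the data onto the slowest-decaying eigenmode is bounded below uniformly for small $|\xi|$, so $\|\Tdx^k (e^{tE}f_{1,0},\chi_j)\|_{L^2_x}^2 \gtrsim \int_{|\xi|\le r_0}|\xi|^{2k}e^{-2b_0|\xi|^2 t}\,d\xi \sim (1+t)^{-\frac32-k}$ for large $t$; the upper bound is the already-proven \eqref{V_1}--\eqref{V_2} specialized to $q=1$. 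The condition $\sup|(\hat f_{1,0},v\sqrt M)|=0$ is what prevents the momentum modes from contributing a spurious lower-order term and is used to pin the rate at exactly $\frac34+\frac k2$.

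For the last part \eqref{D_2}, the key input is the \emph{uniform} spectral gap \eqref{sg1z}: $\sigma(\hat B(\xi))\subset\{\mathrm{Re}\,\lambda<-a_1\}$ for \emph{all} $\xi\ne0$, with $a_1$ independent of $\xi$ — this is the qualitative difference caused by the self-consistent electric field, which removes the zero mode at $\xi=0$. I would again split into $|\xi|\ge r_0$ and $|\xi|\le r_0$. For $|\xi|\ge r_0$ one argues exactly as for $\hat E(\xi)$: decompose $\hat B(\xi) = -\nu(v)-\i v\cdot\xi - \frac{\i v\cdot\xi}{|\xi|^2}P_{\mathrm d} + K_1$, note the nonlocal Poisson term is bounded by $C/|\xi|\le C/r_0$, and obtain $\|e^{t\hat B(\xi)}\|\le Ce^{-a_1 t/2}$ via a Duhamel iteration on the compact perturbation. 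For $|\xi|\le r_0$ the spectral gap \eqref{sg1z} combined with a resolvent bound (the resolvent $(\lambda-\hat B(\xi))^{-1}$ being uniformly bounded on $\mathrm{Re}\,\lambda\ge -a_1/2$, which must itself be extracted from the proof of \eqref{sg1z}) gives the same exponential estimate by the inverse Laplace representation $e^{t\hat B(\xi)} = \frac1{2\pi\i}\int_{\mathrm{Re}\,\lambda=-a_1/2} e^{\lambda t}(\lambda-\hat B(\xi))^{-1}\,d\lambda$. Integrating $e^{-a_1 t/2}\|\hat f_{2,0}(\xi,\cdot)\|$ in $\xi$ against $|\xi|^{2|\alpha|}$ and using Hausdorff--Young ($L^{2,1}\to L^2_\xi$) yields \eqref{D_2} for $f_2$; the bound on $\Tdx\Phi = \Tdx\Delta_x^{-1}(e^{tB}f_{2,0},\sqrt M)$ follows because the extra factor $|\xi|^{-1}$ from $\Tdx\Delta_x^{-1}$ is innocuous — near $\xi=0$ the macroscopic density $(e^{t\hat B(\xi)}f_{2,0},\sqrt M)$ itself vanishes to first order in $|\xi|$ thanks to the structure of $\hat B(\xi)$ (the Poisson coupling forces $P_{\mathrm d}\hat f_2 = O(|\xi|)$ in the evolved state), so $|\xi|^{-1}(e^{t\hat B(\xi)}f_{2,0},\sqrt M)$ stays bounded.

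The main obstacle I anticipate is not the high-frequency analysis (routine Boltzmann-type resolvent estimates) nor the integration in $\xi$ (standard), but rather the low-frequency work: establishing the uniform resolvent bound for $\hat B(\xi)$ on a fixed half-plane $\mathrm{Re}\,\lambda\ge -a_1/2$ including the delicate limit $|\xi|\to 0$, where the nonlocal term $\frac{\i v\cdot\xi}{|\xi|^2}P_{\mathrm d}$ is singular in $\xi$ yet the combination is expected to remain well-behaved because it only acts nontrivially on the one-dimensional range of $P_{\mathrm d}$. Likewise, for the lower bound \eqref{H_1}--\eqref{H_2} the careful bookkeeping of which eigenprojection the data hypotheses control, and verifying that the $o(s^2)$ remainders in \eqref{eigen_2} do not destroy the leading $(1+t)^{-\frac32-k}$ asymptotics, will require the most care. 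I would handle the latter by a change of variables $\xi = s\omega$, an explicit Gaussian computation for the $-b_0 s^2$ leading term, and an elementary estimate showing the remainder integral is $o\big((1+t)^{-\frac32-k}\big)$.
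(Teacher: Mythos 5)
Your treatment of \eqref{V_1}--\eqref{H_2} follows the standard spectral route for $\hat E(\xi)$ (the paper simply cites \cite{Zhong2012Sci} for these estimates), and your low/high frequency splitting for $e^{t\hat B(\xi)}$ is the right skeleton for the last part. The genuine gap is in the electric-field half of \eqref{D_2}. You dispose of the factor $|\xi|^{-1}$ coming from $\Tdx\Delta_x^{-1}$ by asserting that the Poisson coupling forces $(e^{t\hat B(\xi)}\hat f_{2,0},\sqrt M)=O(|\xi|)$, but this is unjustified and in fact false in the regime where \eqref{D_2} must hold (all $t>0$): at $t=0$ this quantity is just $(\hat f_{2,0},\sqrt M)$, which has no reason to vanish as $\xi\to0$, and since $\dt(\hat f_2,\sqrt M)=-\i\,\xi\cdot(\hat f_2,v\sqrt M)$ the macroscopic density stays near its initial value for small $t$. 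The paper's mechanism is different: Theorem \ref{rate1} gives $\|e^{t\hat B(\xi)}f\|_\xi\le Ce^{-a_1t/2}\|f\|_\xi$ uniformly in $\xi\ne0$ in the weighted norm $\|f\|_\xi^2=\|f\|^2+|\xi|^{-2}|(f,\sqrt M)|^2$, which already contains the field term, so the singular weight is transferred to the \emph{initial data}; one then closes with $\intr \frac{(\xi^\alpha)^2}{|\xi|^2}|(\hat f_{2,0},\sqrt M)|^2d\xi\le C(\|f_{2,0}\|_{L^{2,1}}^2+\|\da_x f_{2,0}\|_{L^2_{x,v}}^2)$, using that $|\xi|^{-2}$ is integrable near the origin in $\R^3$ and $\sup_\xi|(\hat f_{2,0},\sqrt M)|\le\|(f_{2,0},\sqrt M)\|_{L^1_x}$. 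This is exactly why the $L^{2,1}$ norm appears on the right-hand side of \eqref{D_2}; under your claim it would be superfluous.

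A secondary point: for $|\xi|\le r_0$ you propose to invert the Laplace transform on the line $\mathrm{Re}\,\lambda=-a_1/2$ using only a uniform resolvent bound, but along a vertical line the resolvent decays no faster than $|\lambda|^{-1}$, so that contour integral is not absolutely convergent. In the paper one first splits off $e^{tQ(\xi)}P_r$ (and $e^{tc(\xi)}$ at high frequency) and writes the remainder as a product of two factors that are square integrable in $\mathrm{Im}\,\lambda$ (Lemma \ref{SG_3} combined with the uniform bound of Lemma \ref{bound_2}, the latter being the delicate small-$\xi$ input you correctly anticipate). Some device of this kind is needed to convert the spectral gap \eqref{sg1z} into the semigroup bound \eqref{B_0} on which \eqref{D_2} rests.
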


With the help of optimal time decay rates on the linearized bVPB \eqref{LVPB1}--\eqref{LVPB2} given by Theorem~\ref{rate2}, we can obtain the optimal decay rates of the global solution to original bVPB system~\eqref{VPB1}--\eqref{VPB3} as follows.

\begin{thm}
\label{rate3}
Assume that
$f_{\pm,0}=(F_{\pm,0}-\frac12 M)M^{-\frac12}\in H^N_w\cap L^{2,1}$ for $N\ge 4$  and
 $\| f_{\pm,0}\|_{H^N_{w}\cap L^{2,1}}\le \delta_0$ for  a constant  $\delta_0>0$ small enough. Then there exists  a globally unique solution $(F_{\pm},\Phi)$ with  $F_{\pm}(x,v,t)=\frac12 M+\sqrt M f_{\pm}(x,v,t)$ to the bVPB system~\eqref{VPB1}--\eqref{VPB3}, which satisfies
\bgr
\|\dx^k(f_{+},f_{-})(t)\|_{L^2_{x,v}}\le C\delta_0(1+t)^{-\frac34-\frac k2},\label{t_5z}
\\
\|\dx^k\Tdx\Phi(t)\|_{L^2_{x}}\le C\delta_0e^{-dt},\label{t_7}
\intertext{and in particular }
\|\dx^k( f_{+}(t),\chi_j)\|_{L^2_{x}}+
\|\dx^k( f_{-}(t),\chi_j)\|_{L^2_{x}}\le C\delta_0(1+t)^{-\frac34-\frac k2},\label{t_5}
\\
\|\dx^k (\P_1  f_{+},\P_1  f_{-})(t)\|_{L^2_{x,v}}
\le C\delta_0(1+t)^{-\frac54-\frac k2},\label{t_6}
\\
\|(\P_1 f_{+},\P_1 f_{-})(t)\|_{H^N_w}  +\|\Tdx(\P_0 f_{+},\P_0 f_{-})(t)\|_{L^2_v(H^{N-1}_x)}\le
C\delta_0(1+t)^{-\frac54},\label{t_8}
\egr
 for $j=0,1,2,3,4,$ $k=0,1$ and a constant $d>0$. 

Moreover, the recombination $(f_1,f_2)$ with $f_1=:f_++f_-, f_2=:f_+-f_-$ is the global solution to the system \eqref{VPB7}--\eqref{VPB10} satisfies
\bgr
\|\dx^k(f_1(t),\chi_j)\|_{L^2_{x}}\le C\delta_0(1+t)^{-\frac34-\frac k2},\quad j=0,1,2,3,4, \label{t_1}\\
\|\dx^k  \P_1f_1(t)\|_{L^2_{x,v}}\le C\delta_0(1+t)^{-\frac54-\frac k2},\label{t_2}\\
\|\dx^k f_2(t)\|_{L^2_{x,v}}+\|\dx^k\Tdx\Phi(t)\|_{L^2_x}\le C\delta_0e^{-dt},\label{t_3}\\
\|(\P_1f_1,P_rf_2)(t)\|_{H^N_w} +\| \Tdx( \P_0f_1, P_{\rm d}f_2)(t)\|_{L^2_v(H^{N-1}_x)}\le
C\delta_0(1+t)^{-\frac54},\label{t_4}
\egr
 for  $k=0,1$. 
\end{thm}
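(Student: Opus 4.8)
The plan is to run a standard energy-plus-spectral-decay bootstrap, but split carefully according to the two decoupled components $f_1=f_++f_-$ and $f_2=f_+-f_-$, exploiting that $f_2$ (hence $\Tdx\Phi$) decays exponentially while $f_1$ obeys only the Boltzmann-type algebraic rate. First I would set up the mild (Duhamel) formulation for the reformulated system \eqref{VPB7}--\eqref{VPB10}: write
$$
f_1(t)=e^{tE}f_{1,0}+\intt e^{(t-s)E}G_1(s)\,ds,\qquad
f_2(t)=e^{tB}f_{2,0}+\intt e^{(t-s)B}G_2(s)\,ds,
$$
where $G_1=\tfrac12(v\cdot\Tdx\Phi)f_2-\Tdx\Phi\cdot\Tdv f_2+\Gamma(f_1,f_1)$ and $G_2=\tfrac12(v\cdot\Tdx\Phi)f_1-\Tdx\Phi\cdot\Tdv f_1+\Gamma(f_2,f_1)$; note $\Gamma(f_i,f_1)\in N_0^\perp$ (resp.\ $N_1^\perp$), which is what makes the high-velocity-moment parts of the linear estimates \eqref{V_2}, \eqref{H_2}, \eqref{D_2} applicable to the nonlinear terms. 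The global existence and the a priori bound $\|f_\pm(t)\|_{H^N_w}\lesssim\delta_0$ with the energy dissipation I would take as known from \cite{Yang1,Yang3,Yang4} (or re-derive by the classical Guo-type weighted energy method), so the real content here is upgrading the energy bound to the sharp time rates \eqref{t_5z}--\eqref{t_4}.

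The main step is a closed estimate for the time-weighted quantity
$$
\mathcal{M}(t):=\sup_{0\le s\le t}\Big[(1+s)^{3/4}\|f_1(s)\|_{L^2_{x,v}}+(1+s)^{5/4}\|\P_1 f_1(s)\|_{L^2_{x,v}}
+\sum_{k\le1}(1+s)^{3/4+k/2}\|\Tdx^k(f_1(s),\chi_j)\|_{L^2_x}+\cdots\Big],
$$
including also the exponentially-weighted norm $e^{dt}(\|f_2(t)\|+\|\Tdx\Phi(t)\|)$ and the higher-order piece $(1+t)^{5/4}\|(\P_1f_1,P_rf_2)(t)\|_{H^N_w}$. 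For $f_2$: apply \eqref{D_2} to the linear part and to $\intt e^{(t-s)B}G_2(s)ds$; since every term of $G_2$ carries a factor of $\Tdx\Phi$ or is $\Gamma(f_2,f_1)$, and $\|\Tdx\Phi(s)\|+\|f_2(s)\|\lesssim \mathcal M(s)e^{-ds}$, the convolution $\intt e^{-\frac12a_1(t-s)}e^{-ds}\|f_1\|_{H^N_w}\,ds$ closes the exponential bound for $f_2$ and $\Tdx\Phi$ (estimates \eqref{t_3}, \eqref{t_7}), using $\|f_1\|_{H^N_w}\lesssim\delta_0$. For $f_1$: apply \eqref{V_1}--\eqref{V_2} (with $q=1$) to the linear part and to the Duhamel integral. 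The bilinear $\Gamma(f_1,f_1)$ term is handled exactly as for the Boltzmann equation: split $\intt=\int_0^{t/2}+\int_{t/2}^t$, use $\|\Gamma(f_1,f_1)\|_{L^{2,1}}\lesssim\|f_1\|_{L^2_v(H^2_x)}\|\P_1f_1\|_{\cdots}$ type bounds (so one factor always supplies the faster $(1+s)^{-5/4}$ decay), giving a convolution $\intt (1+t-s)^{-5/4}(1+s)^{-3/4-5/4}\mathcal M(s)^2\,ds\lesssim (1+t)^{-3/4}\mathcal M(t)^2$. The coupling terms $\tfrac12(v\cdot\Tdx\Phi)f_2-\Tdx\Phi\cdot\Tdv f_2$ inside $G_1$ are the easiest: they are quadratic in the exponentially-decaying pair $(\Tdx\Phi,f_2)$, so $\intt (1+t-s)^{-3/4}e^{-2ds}\,ds\lesssim(1+t)^{-3/4}$ with a constant $\lesssim\delta_0$. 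Collecting, $\mathcal M(t)\le C\delta_0+C\mathcal M(t)^2$, which for $\delta_0$ small forces $\mathcal M(t)\le 2C\delta_0$ uniformly, yielding \eqref{t_5z}--\eqref{t_2} and \eqref{t_1}--\eqref{t_4}.

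Finally, the bounds for $f_\pm$ themselves (i.e.\ \eqref{t_5}--\eqref{t_8}) follow by undoing the change of variables: $f_+=\tfrac12(f_1+f_2)$, $f_-=\tfrac12(f_1-f_2)$, the moment projections commute with this linear recombination, and since $f_2$ decays exponentially the rate of $f_\pm$ is governed by $f_1$; likewise $\Tdx(\P_0f_\pm)$ inherits the rate of $\Tdx(\P_0f_1)$ up to exponentially small corrections. The higher-order estimate \eqref{t_8}/\eqref{t_4} at order $N$ in $H^N_w$ I would obtain by combining the weighted energy inequality $\Dt \mathcal E_N(t)+c\mathcal D_N(t)\le 0$ (with $\mathcal D_N$ controlling $\|\P_1f_1\|_{H^N_w}^2+\|P_rf_2\|_{H^N_w}^2+\|\Tdx(\P_0f_1,P_{\rm d}f_2)\|^2$) with the already-established low-order $(1+t)^{-5/4}$ decay fed into the "missing" macroscopic direction, a now-standard interpolation argument. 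I expect the main obstacle to be the bookkeeping in the $\int_0^{t/2}$ part of the $\Gamma(f_1,f_1)$ Duhamel term: there one must use $\|e^{(t-s)E}\Gamma\|_{L^2}\lesssim(1+t-s)^{-3/4}\|\Gamma(s)\|_{L^{2,1}}$ and control $\|\Gamma(f_1,f_1)(s)\|_{L^{2,1}}$ by a product of two $H^N$-type norms one of which must be a $\P_1$-norm to get integrable-in-$s$ decay $(1+s)^{-3/4-5/4}$ — getting the velocity weights and the derivative count to balance (needing $N\ge4$) is the delicate point, exactly as in \cite{Li2,Zhong2012Sci}.
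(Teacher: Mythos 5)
Your treatment of the $f_1$ (algebraic) part is essentially the paper's argument: Duhamel with $e^{tE}$, the linear rates \eqref{V_1}--\eqref{V_2} with $q=1$, a bootstrap functional, and the weighted high-order energy inequality with the macroscopic decay fed back in — this matches Theorem \ref{time6} and the step \eqref{other}, and the final recombination $f_\pm=\tfrac12(f_1\pm f_2)$ is also how the paper concludes. The genuine gap is in your route to the exponential decay \eqref{t_3}, \eqref{t_7} of $(f_2,\Tdx\Phi)$. You propose to close it by Duhamel for $f_2$ using the semigroup bound \eqref{D_2}, arguing that every term of $G_2$ carries $\Tdx\Phi$ or is $\Gamma(f_2,f_1)$ and hence inherits the $e^{-ds}$ decay of $\|f_2\|+\|\Tdx\Phi\|$. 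But $\Gamma(f_2,f_1)$ is \emph{not} controlled by the unweighted norm of $f_2$: by Lemma \ref{e1} one has $\|\Gamma(f_2,f_1)\|_{L^{2,1}}\le C(\|f_2\|\,\|\nu f_1\|+\|\nu f_2\|\,\|f_1\|)$ (and similarly for the $L^2_{x,v}$ norm), and the factor $\|\nu f_2\|\sim\|w f_2\|$ is a weighted norm whose exponential decay is never established under the stated hypotheses (data only in $H^N_w$, i.e.\ one power of weight; the bootstrap gives at best $\|wP_rf_2\|\lesssim(1+s)^{-5/4}$). The weight cannot be shifted onto $f_1$ because the hard-sphere kernel forces $\nu(v)$ onto the first argument in the loss term. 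Consequently $\|G_2(s)\|_{L^2_{x,v}}+\|G_2(s)\|_{L^{2,1}}$ decays only algebraically (at best like $(1+s)^{-2}$), the convolution $\int_0^te^{-\frac{a_1}2(t-s)}\|G_2(s)\|ds$ yields only algebraic decay for $f_2$, and the exponential part of your functional $\mathcal M$ does not close — which then also undermines the $e^{-ds}$ inputs you use for the coupling terms in $G_1$ (though those could be repaired with algebraic decay of $\Tdx\Phi$). A smaller imprecision in the same spirit: in $G_1$ the factor $\Tdv f_2$ does not decay exponentially either, so the coupling is not "quadratic in exponentially decaying quantities"; one exponential factor from $\Tdx\Phi$ suffices, so this one is harmless.

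The paper avoids this obstruction by \emph{not} using Duhamel for $f_2$ at all: in Theorem \ref{time5} it builds a dedicated Lyapunov functional $E_2(f_2)$ (from the damped macroscopic equation \eqref{G_9a} for $n_2$, whose screening term $\kappa_3 n_2$ removes any derivative loss, together with direct $L^2_{x,v}$ and first-order energy estimates of \eqref{VPB8}) with dissipation $D_2(f_2)$ satisfying $E_2\le CD_2$ and $\Dt E_2+\kappa_3 D_2\le0$; there the problematic quadratic terms are absorbed into the dissipation with a small prefactor $\sqrt{E_1(f_1,f_2)}$ supplied by the global energy bound (Lemmas \ref{energy1}--\ref{energy2}), rather than being required to decay exponentially on their own, and Gronwall then gives $e^{-dt}$ for the unweighted norms $\|\dx^kf_2\|_{L^2_{x,v}}+\|\dx^k\Tdx\Phi\|_{L^2_x}$, $k=0,1$. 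To repair your proposal you should either adopt this Lyapunov/Gronwall argument for the $f_2$ component (and only then run your Duhamel bootstrap for $f_1$, exactly as in Theorem \ref{time6}), or strengthen the hypotheses to $w^2$-weighted data so that a weighted exponential decay of $f_2$ becomes available — which is not what Theorem \ref{rate3} assumes. Also note the paper does not import global existence from the one-species references but derives it from its own coupled energy functionals in Section \ref{sect4}, which is what furnishes the smallness $E_1(f_1,f_2)\lesssim\delta_0^2$ used above.
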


\begin{thm}
\label{rate4}
Let the assumptions of Theorem~\ref{rate3} hold.
Assume further that there exist  positive constants $d_0,d_1>0$ and a small constant $r_0>0$ so that  $\hat{f}_{\pm,0}=(\hat{F}_{\pm,0}-\frac12 M)M^{-\frac12}$ satisfies that
 $\inf_{|\xi|\le r_0}|(\hat f_{1,0},\sqrt M)|\ge d_0$,
 $\sup_{|\xi|\le r_0}|(\hat f_{1,0},\chi_j)|=0$ $(j=1,2,3)$ and
 $\inf_{|\xi|\le r_0}|(\hat f_{1,0},\chi_4)|\ge d_1\sup_{|\xi|\le r_0}|(\hat f_{1,0},\sqrt M)|$ with $\hat f_{1,0}=\hat f_{+,0}+\hat f_{-,0}$.
Then, the global solution $(F_{\pm},\Phi)$ with  $F_{\pm}(x,v,t)=\frac12 M+\sqrt M f_{\pm}(x,v,t)$ to the bVPB system~\eqref{VPB1}--\eqref{VPB3} satisfies
 \bgr
 C_1\delta_0(1+t)^{-\frac34-\frac k2} \le
  \|\Tdx^kf_{\pm}(t)\|_{L^2_{x,v}}
 \le
C_2\delta_0(1+t)^{-\frac34-\frac k2},\label{B_4a}
\\
\|\Tdx^k\Tdx\Phi(t)\|_{L^2_{x}}\le C\delta_0e^{-dt},\label{t_7z}
\intertext{and in particular }
 C_1\delta_0(1+t)^{-\frac34-\frac k2} \le
  \|\Tdx^k(f_{\pm}(t),\chi_j)\|_{L^2_{x}}
  \le C_2\delta_0(1+t)^{-\frac34-\frac k2},\label{B_1a}\\
 C_1\delta_0(1+t)^{-\frac54-\frac k2} \le
  \| \Tdx^k\P_1f_{\pm}(t)\|_{L^2_{x,v}}
  \le C_2\delta_0(1+t)^{-\frac54-\frac k2},\label{B_3a}
\egr
for $t>0$ large with two positive constants $C_2>C_1$, $j=0,1,2,3,4,$ and $k=0,1$.

Moreover, the recombination $f_1=f_++f_-, f_2=f_+-f_-$  to the system \eqref{VPB7}--\eqref{VPB10} satisfies
 \bgr
 C_1\delta_0(1+t)^{-\frac34-\frac k2} \le
 \|\Tdx^k(f_1(t),\chi_j)\|_{L^2_{x}}
 \le C_2\delta_0(1+t)^{-\frac34-\frac k2},\label{B_1}
 \\
 C_1\delta_0(1+t)^{-\frac54-\frac k2} \le
 \|  \Tdx^k\P_1f_1(t)\|_{L^2_{x,v}}
 \le C_2\delta_0(1+t)^{-\frac54-\frac k2},\label{B_3}\\
 C_1\delta_0(1+t)^{-\frac34} \le \|f_1(t)\|_{H^N_w}\le
C_2\delta_0(1+t)^{-\frac34},\label{B_4}\\
\|\Tdx^k f_2(t)\|_{L^2_{x,v}} +\|\dx^k\Tdx\Phi(t)\|_{L^2_x}\le C\delta_0e^{-dt},\label{t_3az}
 \egr
 for $t>0$ large with two constants $C_2>C_1$, $j=0,1,2,3,4,$  and $k=0,1$. 
\end{thm}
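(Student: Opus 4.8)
The plan is to derive Theorem~\ref{rate4} from the linear decay estimates of Theorem~\ref{rate2} via a duality/lower-bound argument, combined with the nonlinear energy estimates already behind Theorem~\ref{rate3}. First I would recall the decomposition $F_\pm = \frac12 M + \sqrt M f_\pm$, so that $f_1 = f_+ + f_-$ solves \eqref{VPB7} with a quadratic-plus-field source term and $f_2 = f_+ - f_-$ solves \eqref{VPB8}--\eqref{VPB9}; the upper bounds in \eqref{B_4a}--\eqref{t_3az} are essentially already contained in Theorem~\ref{rate3}, so the new content is the matching lower bounds for $f_1$ (equivalently for $f_\pm$, once the exponential decay \eqref{t_7z} of $\Tdx\Phi$ and hence of $f_2$ is invoked so that $f_+$ and $f_-$ share the leading-order behavior of $\frac12 f_1$). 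Thus the heart of the proof is a lower bound of order $(1+t)^{-3/4-k/2}$ for $\|\Tdx^k(f_1,\chi_j)\|_{L^2_x}$ and of order $(1+t)^{-5/4-k/2}$ for $\|\Tdx^k\P_1 f_1\|_{L^2_{x,v}}$.

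The key steps, in order, are as follows. (i) Write $f_1 = e^{tE}f_{1,0} + \int_0^t e^{(t-s)E} G(s)\,ds$, where $G$ collects $\frac12(v\cdot\Tdx\Phi)f_2 - \Tdx\Phi\cdot\Tdv f_2 + \Gamma(f_1,f_1)$. (ii) Apply the spectral lower bound for the linear semigroup: the spectral-decomposition part of Theorem~\ref{rate2}, using the hypotheses $\inf_{|\xi|\le r_0}|(\hat f_{1,0},\sqrt M)|\ge d_0$, $\sup_{|\xi|\le r_0}|(\hat f_{1,0},\chi_j)|=0$ for $j=1,2,3$, and $\inf_{|\xi|\le r_0}|(\hat f_{1,0},\chi_4)|\ge d_1\sup|(\hat f_{1,0},\sqrt M)|$, gives $\|\Tdx^k(e^{tE}f_{1,0},\chi_j)\|_{L^2_x}\ge C_1\delta_0(1+t)^{-3/4-k/2}$ for large $t$; this is exactly the content of \eqref{H_1}--\eqref{H_2} in Theorem~\ref{rate2}, with the low-frequency eigenvalue expansion \eqref{eigen_2} producing the heat-kernel-like $(1+t)^{-3/4}$ rate from the three-dimensional Gaussian $\int_{|\xi|\le r_0} e^{-b_0|\xi|^2 t}\,d\xi \sim t^{-3/2}$. (iii) Show the Duhamel term is a strictly higher-order perturbation: using the upper bounds of Theorem~\ref{rate3} one estimates $\|G(s)\|_{L^{2,1}} + \|G(s)\|_{L^2_{x,v}} \lesssim \delta_0^2 (1+s)^{-5/4}$ (the worst term being $\Gamma(f_1,f_1)$, bilinear in quantities decaying like $(1+s)^{-3/4}$ in $L^2_x$ with enough velocity weight, and $\Tdx\Phi$-terms being exponentially small), feed this into the linear estimate \eqref{V_1}--\eqref{V_2}, and conclude by splitting $\int_0^t = \int_0^{t/2} + \int_{t/2}^t$ that $\|\Tdx^k(\int_0^t e^{(t-s)E}G\,ds,\chi_j)\|_{L^2_x} \lesssim \delta_0^2 (1+t)^{-5/4-k/2}\log(2+t)$ or better, which is $o((1+t)^{-3/4-k/2})$. (iv) Combine (ii) and (iii): for $\delta_0$ small and $t$ large, $\|\Tdx^k(f_1,\chi_j)\|_{L^2_x}\ge \frac12 C_1\delta_0(1+t)^{-3/4-k/2}$, and similarly the $\P_1 f_1$ bound \eqref{B_3} at order $(1+t)^{-5/4-k/2}$ follows from the linear microscopic lower bound in \eqref{H_2} plus the macroscopic-microscopic coupling (the macro part forces a nonzero micro part at the next order through the $v\cdot\Tdx$ streaming term). (v) Finally transfer everything to $f_\pm = \frac12(f_1 \pm f_2)$: since $\|\Tdx^k f_2\|_{L^2_{x,v}} + \|\Tdx\Phi\|_{L^2_x}\le C\delta_0 e^{-dt}$ by \eqref{t_3} (which itself uses \eqref{sg1z}), $f_\pm$ inherits both the lower and upper bounds of $\frac12 f_1$, giving \eqref{B_4a}, \eqref{B_1a}, \eqref{B_3a}; and \eqref{B_4} follows by interpolating the $(1+t)^{-3/4}$ $L^2$-lower bound against the uniform $H^N_w$ energy bound from Theorem~\ref{rate3}.

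The main obstacle I expect is step (iii): controlling the Duhamel contribution tightly enough that it is genuinely lower-order than $(1+t)^{-3/4-k/2}$, rather than merely comparable. The delicate point is that the field-interaction terms $\frac12(v\cdot\Tdx\Phi)f_2$ and $\Tdx\Phi\cdot\Tdv f_2$ involve the unknown $f_2$ and $\Phi$, whose smallness \eqref{t_3}/\eqref{t_7z} must be established first (hence the logical order: exponential decay of the $f_2$-block, then nonlinear decay of the $f_1$-block). One must also be careful that the velocity weight $w(v)$ and the $v\sqrt M$ factors in $\Gamma$ and in the streaming term do not spoil the $L^{2,1}$-in-$x$ bound needed to invoke \eqref{V_1}; this is handled by the standard property that $\Gamma(f,g)$ maps into the range of $\P_1$ with the $\nu^{-1}$-weighted bound $\|\nu^{-1/2}\Gamma(f,g)\|\lesssim \|f\|\cdot\|g\|_{\nu}$ type estimates, together with the $H^N_w$ control from Theorem~\ref{rate3} to absorb weights. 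A secondary technical nuisance is that the lower bound only holds for $t$ large; for that one simply notes the estimates are vacuous/absorbed into the constants for bounded $t$, or one runs a short-time continuity argument. Once step (iii) is in place, steps (iv)--(v) are routine bookkeeping.
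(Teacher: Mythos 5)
Your overall route is the same as the paper's: Duhamel representation $f_1(t)=e^{tE}f_{1,0}+\int_0^t e^{(t-s)E}G_1(s)\,ds$, the linear lower bounds \eqref{H_1}--\eqref{H_2} of Theorem~\ref{rate2} applied to $e^{tE}f_{1,0}$ under the stated low-frequency conditions on $\hat f_{1,0}$, the nonlinear upper bounds and the exponential decay of the $(f_2,\Phi)$-block from Theorems~\ref{time5}--\ref{time6} established first, the transfer to $f_\pm=\frac12(f_1\pm f_2)$ via \eqref{fab}, and the $H^N_w$ lower bound obtained by subtracting the faster-decaying weighted microscopic and derivative pieces from the macroscopic $L^2$ lower bound.

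The genuine problem is your step (iii). Even granting $\|G_1(s)\|_{L^2_{x,v}}+\|G_1(s)\|_{L^{2,1}}\lesssim \delta_0^2(1+s)^{-5/4}$ (in fact one gets $(1+s)^{-3/2}$ plus exponentially small field terms, as in \eqref{GG_1}--\eqref{GG_2}), inserting this into \eqref{V_1} gives for the macroscopic components
$$\int_0^t (1+t-s)^{-\frac34-\frac k2}\big(\|G_1(s)\|_{L^2_{x,v}}+\|G_1(s)\|_{L^{2,1}}\big)\,ds\;\lesssim\;\delta_0^2(1+t)^{-\frac34-\frac k2},$$
and this order cannot be improved: the contribution of $s\in[0,t/2]$ is comparable to $(1+t)^{-\frac34-\frac k2}$ times the time integral of $\|G_1\|$, so no integrable decay of the source in $s$ pushes the convolution below the decay rate of the semigroup itself. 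Hence the Duhamel term is \emph{not} $o\big((1+t)^{-\frac34-\frac k2}\big)$, and your claimed $(1+t)^{-\frac54-\frac k2}\log(2+t)$ bound is not available from \eqref{V_1}--\eqref{V_2} (it would require a refined macroscopic estimate for purely microscopic sources, which the paper neither states nor uses). The correct — and sufficient — statement, which is exactly how the paper argues, is that the Duhamel term has the \emph{same} time decay as the leading term but amplitude $O(\delta_0^2)$ (coming from $Q(t)^2$ and $\delta_0Q(t)$ in \eqref{macro_1}--\eqref{micro_2}), so that $\|\Tdx^k(f_1(t),\chi_j)\|_{L^2_x}\ge C_1\delta_0(1+t)^{-\frac34-\frac k2}-C_2\delta_0^2(1+t)^{-\frac34-\frac k2}$; the lower bound survives because $\delta_0$ is small, not because the perturbation decays faster. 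The same correction applies to $\P_1f_1$: its lower bound is \eqref{H_2} minus a Duhamel term of size $O(\delta_0^2)(1+t)^{-\frac54-\frac k2}$, with no need for the ``macro forces micro through streaming'' coupling you invoke. With this repair, your steps (iv)--(v) go through as in the paper.
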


\subsection{mVPB system}
\label{sect2.2}

Next, we deal with the global existence and uniqueness of solution to the Cauchy problem for the mVPB system~\eqref{m_VPB1}-\eqref{m_VPB3} and the optimal time-convergence rate of the global solutions. The mVPB system~\eqref{m_VPB1}-\eqref{m_VPB2} has an equilibrium state $(F_*,\Phi_*)=(M,0)$ with  $M=M(v)$ being the normalized global Maxwellian defined above.
Define the perturbation $f(x,v,t)$ of $F$ near $M$ by
$$
f =(F-M)M^{-\frac12},
$$
then the modified Vlasov-Poisson-Boltzmann system \eqref{m_VPB1}-\eqref{m_VPB3} for $f(x,v,t)$ reads
 \bgr
  \dt f+v\cdot\Tdx f-v\sqrt{M}\cdot\Tdx \Phi-Lf
=\frac12 (v\cdot\Tdx\Phi)f-\Tdx\Phi\cdot\Tdv f+\Gamma(f,f),\label{m_VPB4}\\
(I-\Delta_x)\Phi=-\intr f\sqrt{M}dv+(e^{-\Phi}+\Phi-1),\label{m_VPB5}\\
f(x,v,0)=f_0(x,v)=:(F_0-M){M}^{-1/2},\label{m_VPB6}
 \egr
where the operators $Lf$ and $\Gamma(f,f)$ are defined by \eqref{Lf} and \eqref{Gf} respectively.

From the modified VPB system~\eqref{m_VPB4}--\eqref{m_VPB6}, we have the following  the linearized mVPB  equation
 \bma
  \dt f=&B_mf,\quad t>0,\label{m_VPB}\\
  f(x,v,0)=&f_0(x,v),\quad (x,v)\in\R^3_x\times\R^3_v,  \label{m_VPB.a}
 \ema
where the linear operator $B_m$ is defined by
$$
 B_mf=Lf-v\cdot\Tdx f-v\sqrt{M}\cdot\Tdx(I-\Delta_x)^{-1} \(\intr f\sqrt{M}dv\).
$$
Take the Fourier transform to \eqref{m_VPB} with respect to $x$ to get
 \be
\dt \hat{f}=\hat{B}_m(\xi) \hat{f}, \label{LVPB5z}
\ee
where  
$$ \hat{B}_m(\xi)=L-\i(v\cdot\xi)-\frac{\i (v\cdot\xi)}{1+|\xi|^2}P_{\rm d}.$$

Then, we have the spectrum analysis of the operator $\hat{B}_m(\xi) $ and the time-decay rates of the global solution to the linearized mVPB system~\eqref{m_VPB}--\eqref{m_VPB.a} and establish its optimal time-decay rates as follows.

\begin{thm}\label{m_eigen_3}  Let $\sigma(\hat{B}_m(\xi))$ denotes the spectrum 
of operator $\hat{B}_m(\xi)$ to the linear equation \eqref{LVPB5z} for all  $\xi\in \R^3$. Then, 
for any $r_0>0$  there
exists $\alpha=\alpha(r_0)>0$ so that it holds  for $|\xi|\geq r_0$ that
$$
\sigma(\hat{B}_m(\xi))\subset\{\lambda\in\mathbb{C}\,|\,{\rm Re}\lambda\le -\alpha\}. 
$$
There exists a constant  $r_0>0$ so that the spectrum $\lambda\in\sigma(B_m(\xi))\subset\mathbb{C}$ for $\xi=s\omega$ with $|s|\leq r_0$ and $\omega\in \mathbb{S}^2$ consists of five points $\{\lambda_j(s),\ j=-1,0,1,2,3\}$ on the domain $\mathrm{Re}\lambda>-\mu /2$, which are $C^\infty$ functions of $s$ for $|s|\leq r_0$and satisfy the following asymptotical expansion for $|s|\leq r_0$
\be                                   \label{m_specr0}
 \left\{\bln
 \lambda_{\pm1}(s)&=\pm \i 2\sqrt{\frac23} s-a_{\pm1}s^2+o(s^2),\quad
 \overline{\lambda_1(s)}=\lambda_{-1}(s),\\
 \lambda_{0}(s) &=-a_0s^2+o(s^2),\\
 \lambda_2(s) &=\lambda_3(s)=-a_2s^2+o(s^2),
 \eln\right.
 \ee
with constants $a_j>0$, $-1\le j\le 2$,  defined in Lemma~\ref{m_eigen_3z}.
\end{thm}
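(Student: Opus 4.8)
The plan is to adapt the spectral framework of Ellis--Pinsky \cite{Ellis} and Ukai \cite{Ukai1} for the linearized Boltzmann operator, as used for Vlasov--Poisson couplings in \cite{Li2}, exploiting that the rank-one term $\frac{\i(v\cdot\xi)}{1+|\xi|^2}P_{\rm d}$ in $\hat B_m(\xi)$ is bounded and regular at $\xi=0$ (unlike the $\frac{\i(v\cdot\xi)}{|\xi|^2}P_{\rm d}$ term of $\hat B(\xi)$). First I would record the structural preliminaries: with $L=K-\nu(v)$, $K$ compact and $\nu(v)\ge\nu_0(1+|v|)$ by \eqref{nuv}, write $\hat B_m(\xi)=-(\nu(v)+\i(v\cdot\xi))+K-\frac{\i(v\cdot\xi)}{1+|\xi|^2}P_{\rm d}$; the multiplication part has resolvent bounded for $\mathrm{Re}\,\lambda>-\nu_0$, while $K$ and the rank-one term are relatively compact, so by the analytic Fredholm theorem the spectrum in $\{\mathrm{Re}\,\lambda>-\nu_0\}$ consists only of isolated eigenvalues of finite algebraic multiplicity, the essential spectrum lying in $\{\mathrm{Re}\,\lambda\le-\nu_0\}$. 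It then remains to locate these eigenvalues in the regimes $|\xi|$ large, $|\xi|$ bounded away from $0$ and $\infty$, and $|\xi|$ small (at $\xi=0$ one has $\hat B_m(0)=L$, so the five branches below all emanate from the $5$-fold eigenvalue $0$ of $L$).

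For the gap $\sigma(\hat B_m(\xi))\subset\{\mathrm{Re}\,\lambda\le-\alpha(r_0)\}$ when $|\xi|\ge r_0$: pairing $\hat B_m(\xi)f=\lambda f$ with $f$ and taking real parts, the term $\i((v\cdot\xi)f,f)$ drops (its argument is real) and \eqref{L_3} gives $\mathrm{Re}\,\lambda\,\|f\|^2\le-\mu\|\P_1 f\|^2+\frac{1}{1+|\xi|^2}\mathrm{Im}\big((f,\sqrt M)\overline{(f,(v\cdot\xi)\sqrt M)}\big)$; since $(v\cdot\xi)\sqrt M\in N_0$ the last term is at most $\frac{|\xi|}{1+|\xi|^2}\|\P_0 f\|^2\le\tfrac12\|\P_0 f\|^2$, so $\mathrm{Re}\,\lambda\ge0$ is incompatible with $f$ being purely microscopic, and excluding $\mathrm{Re}\,\lambda\ge0$ reduces to the moment equations obtained by applying $\P_0$, where one uses $\P_1[(v\cdot\xi)P_{\rm d}f]=0$ so the Poisson term enters only through the macroscopic modes. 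For each fixed $\xi\ne0$ this finite-dimensional system has no purely imaginary eigenvalue, exactly as for the linearized compressible Euler/Boltzmann system, forcing $f=0$; a compactness argument then yields a uniform gap on each annulus $r_0\le|\xi|\le R$, and for $|\xi|\ge R$ large the rank-one perturbation has norm $O(|\xi|/(1+|\xi|^2))=o(1)$, reducing the claim to the large-frequency resolvent estimate for $L-\i(v\cdot\xi)$ (cf. \cite{Ellis, Ukai1, Li2}). Together these give $\alpha(r_0)>0$.

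For $|\xi|=s\le r_0$ small I would put $\xi=s\omega$, $\hat B_m(s\omega)=L+sC(s)$ with $C(s)=-\i(\omega\cdot v)\big(I+\tfrac1{1+s^2}P_{\rm d}\big)$, $C(0)=-\i(\omega\cdot v)(I+P_{\rm d})$, and perturb the isolated $5$-fold eigenvalue $0$ of $L$ (eigenspace $N_0$, projection $\P_0$). Following the analytic-perturbation framework of \cite{Ellis} (the unboundedness of $v\cdot\omega$ handled through the reduced problem, since $C(s)(\lambda-L)^{-1}\P_1$ is bounded), the cluster of eigenvalues near $0$ is described by a $5\times5$ analytic matrix pencil obtained by a Lyapunov--Schmidt reduction against $\P_0$, whose $s\to0$ leading part is $\P_0 C(0)\P_0$ on $N_0$. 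By rotational invariance take $\omega=e_1$; using $\P_0[v_1\chi_0]=\chi_1$, $\P_0[v_1\chi_1]=\chi_0+\tfrac2{\sqrt6}\chi_4$, $\P_0[v_1\chi_4]=\tfrac2{\sqrt6}\chi_1$, $\P_0[v_1\chi_2]=\P_0[v_1\chi_3]=0$, together with the fact that $P_{\rm d}$ doubles the $\chi_0$-component, one finds that $\P_0 C(0)\P_0$ acts as $0$ on $\mathrm{span}\{\chi_2,\chi_3\}$ and as $-\i\big(\begin{smallmatrix}0&1&0\\2&0&2/\sqrt6\\0&2/\sqrt6&0\end{smallmatrix}\big)$ on $\mathrm{span}\{\chi_0,\chi_1,\chi_4\}$, with characteristic polynomial $\mu(\mu^2-8/3)$. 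Hence the first-order coefficients are $0$ (multiplicity three) and $\pm\i\sqrt{8/3}=\pm2\i\sqrt{2/3}$; the $P_{\rm d}$ term changes the $(2,1)$ entry from $1$ to $2$, i.e. it raises the Boltzmann value $\sqrt{5/3}$ to $\sqrt{8/3}=\sqrt{5/3+1}$, the $+1$ being the Debye-screening contribution. Standard second-order perturbation theory then gives $\lambda_{\pm1}(s)=\pm2\i\sqrt{2/3}\,s-a_{\pm1}s^2+o(s^2)$ for the simple acoustic modes, while the triple root $0$ splits into a simple entropy mode $\lambda_0(s)=-a_0s^2+o(s^2)$ and, by the residual $SO(2)$-symmetry about $\omega$, a double shear mode $\lambda_2(s)=\lambda_3(s)=-a_2s^2+o(s^2)$; the $a_j$ are the explicit Burnett-type integrals of Lemma~\ref{m_eigen_3z}, and $a_j>0$ follows from $L^{-1}$ being negative definite on $\mathrm{Range}(\P_1)$. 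Smoothness in $s$ is then automatic: the acoustic branches are simple, and the entropy and shear branches sit in distinct isotypic components of the $SO(2)$-symmetry (the longitudinal $\mathrm{span}\{\chi_0,\chi_1,\chi_4\}$ versus the transverse $2$-dimensional irrep $\mathrm{span}\{\chi_2,\chi_3\}$), so the reduced pencil block-diagonalizes and each branch is analytic. Finally, comparing the resolvent of $\hat B_m(s\omega)$ with that of $L$ on $\{\mathrm{Re}\,\lambda\ge-\mu/2,\ |\lambda|\ge\delta\}$ --- valid for $s$ small since $sC(s)=O(s)$ and $\sigma(L)\cap\{\mathrm{Re}\,\lambda>-\mu\}=\{0\}$ --- shows these five are the only eigenvalues with $\mathrm{Re}\,\lambda>-\mu/2$.

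I expect the main obstacle to be the uniform-in-$\xi$ spectral gap for $|\xi|\ge r_0$: the energy identity alone leaves the macroscopic part uncontrolled, so it must be combined with the moment equations to exclude purely imaginary eigenvalues for every fixed $\xi\ne0$ and then glued to the large-$|\xi|$ estimate --- here both the vanishing of the Poisson term on $N_0^\perp$ and its decay as $|\xi|\to\infty$ are used. A secondary technical point is the explicit evaluation of the coefficients $a_j$ in \eqref{m_specr0} and the verification that they are strictly positive.
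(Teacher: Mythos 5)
Your proposal is correct and takes essentially the same route as the paper: the paper's proof simply combines Lemma~\ref{m_spectrum} (high-frequency gap and low-frequency localization, obtained from the decomposition \eqref{m_B_d}, the resolvent bounds of Lemma~\ref{m_LP} and the argument of Proposition 2.3 in \cite{Ellis}) with Lemma~\ref{m_eigen_3z} (the five-branch expansion, following Section 2.2 of \cite{Li2}), which is precisely the compact-perturbation / Ellis--Pinsky framework you describe. Your explicit first-order pencil computation on ${\rm span}\{\chi_0,\chi_1,\chi_4\}$, yielding the modified sound speed $2\sqrt{2/3}$, is exactly the content the paper delegates to Lemma~\ref{m_eigen_3z}.
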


With above spectrum analysis, we can obtain the global existence and the time-asymptotical behavior of unique solution to the Cauchy problem for the linear  mVPB system~\eqref{m_VPB}--\eqref{m_VPB.a} as follows.
\begin{thm}\label{m_rate1} Assume that $f_0\in L^2(\R_v^3;H^N(\R^3_x)\cap L^{q}(\R_x^3))$ for $N\ge 1$ and $q\in[1,2]$. Then there is a globally unique solution $f(x,v,t)=e^{tB_m}f_0(x,v)$ to  the linearized mVPB system~\eqref{m_VPB}--\eqref{m_VPB.a}, which satisfies for any $\alpha,\alpha'\in\N^3$ with  $|\alpha|\le N$, $\alpha'\le \alpha$  and $k=|\alpha-\alpha'|$ that
 \bma
  \mbox{$\sum_{j=0}^4$}\| \da_x  (e^{tB_m}f_0,\chi_j)\|_{L^2_{x}}
&\leq C(1+t)^{-\frac 32\(\frac1q-\frac12\)-\frac k2}(\|\da_x
f_0\|_{L^2_{x,v}}+\|\dx^{\alpha'}f_0\|_{L^{2,q}}), \label{m_V_4}
\\
\|(I-\Delta_x)^{-1}(\da_x e^{tB_m}f_0,\sqrt M)\|_{H^1_x}
&\leq
  C(1+t)^{-\frac 32\(\frac1q-\frac12\)-\frac k2}
   (\|\da_x f_0\|_{L^2_{x,v}}+\|\dx^{\alpha'}f_0\|_{L^{2,q}}),  \label{m_V_4a}
 \\
\|\P_1 (\da_x e^{tB_m}f_0)\|_{L^2_{x,v}}
&\leq
 C(1+t)^{-\frac 32\(\frac1q-\frac12\)-\frac{k+1}{2}}
  (\|\da_x f_0\|_{L^2_{x,v}}+\|\dx^{\alpha'}f_0\|_{L^{2,q}}). \label{m_V_5}
 \ema
In addition, assume that $f_0\in L^2(\R^3_v;H^N(\R^3_x)\cap L^{1}(\R_x^3))$ for $N\ge 1$ and there exist positive constants $d_0,d_1>0$ and a small constant $r_0>0$ so that the Fourier transform $\hat{f}_0(\xi,v)$  of the initial data $f_{0}(x,v)$ satisfies that
 $\inf_{|\xi|\le r_0}|(\hat f_0,\chi_0)|\ge d_0$,   $\inf_{|\xi|\le r_0}|(\hat f_0,\chi_4)|\ge d_1\sup_{|\xi|\le r_0}|(\hat f_0,\chi_0)|$ and $\sup_{|\xi|\le r_0}|(\hat f_{0},v\sqrt M)|=0$.
Then global solution $f(x,v,t)=e^{tB_m}f_0(x,v)$  satisfies for two positive constants $C_2\ge C_1$ that
 \bgr
  C_1(1+t)^{-\frac34-\frac k2}
  \leq\| \Tdx^k(e^{tB_m}f_0,\chi_j)\|_{L^2_{x}} \leq C_2(1+t)^{-\frac34-\frac k2}, \label{m_H_1}
   \\
C_1(1+t)^{-\frac34-\frac k2}
  \leq\|\Tdx^k(I-\Delta_x)^{-1}(e^{tB_m}f_0,\sqrt M)\|_{H^1_x}
 \leq C_2(1+t)^{-\frac34-\frac k2},   \label{m_H_2}
 \\
 C_1(1+t)^{-\frac54-\frac k2}
  \leq \|\Tdx^k\P_1 (e^{tB_m}f_0)\|_{L^2_{x,v}}\leq C_2(1+t)^{-\frac54-\frac k2},\label{m_H_1a}
\egr
for $t>0$ sufficiently large, $ j=0,1,2,3,4,$ and $k\ge 0$.
\end{thm}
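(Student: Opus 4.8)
The plan is to derive everything from the spectral picture of $\hat B_m(\xi)$ provided by Theorem~\ref{m_eigen_3} via a Fourier-in-$x$ decomposition of the semigroup $e^{tB_m}$, split into a low-frequency and a high/intermediate-frequency piece. First I would write, for each fixed $\xi$, the semigroup $e^{t\hat B_m(\xi)}$ as an inverse Laplace (Dunford) integral $\frac1{2\pi\i}\int_\Gamma e^{\lambda t}(\lambda-\hat B_m(\xi))^{-1}d\lambda$ over a suitable contour $\Gamma$. For $|\xi|\le r_0$, Theorem~\ref{m_eigen_3} says the part of the spectrum in $\mathrm{Re}\,\lambda>-\mu/2$ consists of the five eigenvalues $\lambda_j(s)$, $s=|\xi|$, with the stated asymptotic expansions; I would deform the contour so that the five eigenvalue contributions are collected as residues and the remaining contour integral lies in $\mathrm{Re}\,\lambda\le -\mu/2$, contributing a term bounded by $e^{-\mu t/2}$ times the data. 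This yields the standard representation $e^{t\hat B_m(\xi)}\hat f_0=\sum_{j=-1}^{3}e^{\lambda_j(s)t}\big(\hat f_0,\cdot\big)(\text{eigenprojection})+R(t,\xi)\hat f_0$ on $|\xi|\le r_0$. For $|\xi|\ge r_0$, the bound $\sigma(\hat B_m(\xi))\subset\{\mathrm{Re}\,\lambda\le-\alpha(r_0)\}$ together with a resolvent estimate that is uniform for large $|\xi|$ (obtained, as in \cite{Ukai1,Li2}, by treating $-\i(v\cdot\xi)+\hat B_m$ perturbatively off the known spectral gap of $L$ and absorbing the rank-one term $-\frac{\i(v\cdot\xi)}{1+|\xi|^2}P_{\rm d}$, which is uniformly bounded) gives $\|e^{t\hat B_m(\xi)}\|\le C e^{-\alpha t}$ there.

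With this decomposition in hand, the upper bounds \eqref{m_V_4}–\eqref{m_V_5} follow by Plancherel: write $\|\da_x(e^{tB_m}f_0,\chi_j)\|_{L^2_x}^2=\int_{\R^3}|\xi|^{2|\alpha|}|(e^{t\hat B_m(\xi)}\hat f_0,\chi_j)|^2d\xi$, split the integral at $|\xi|=r_0$. On $|\xi|\ge r_0$ the exponential factor $e^{-\alpha t}$ dominates and, after using $|\xi|^{2m}\le C|\xi|^{2|\alpha|}$ absorbed by one factor of $e^{-\alpha t/2}$, the remaining $\|\hat f_0\|_{L^2_v}$ integrates against $L^2_\xi$ to give $\|\da_x f_0\|_{L^2_{x,v}}$ times $e^{-\alpha t/2}$. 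On $|\xi|\le r_0$ the eigenvalue terms behave like $e^{-b s^2 t}$ (the imaginary parts $\pm\i 2\sqrt{2/3}\,s$ only contribute a unimodular factor), and one bounds $|(\hat f_0,\chi_j)|\le \|\hat f_0\|_{L^\infty_\xi L^2_v}\le C\|f_0\|_{L^{2,1}}$ for the $\dx^{\alpha'}$-derivative part while the remaining $|\xi|^{2m}$ is handled by the standard computation $\int_{|\xi|\le r_0}|\xi|^{2m}e^{-2bs^2t}d\xi\le C(1+t)^{-3/2-m}$; combined with $3/2(1/q-1/2)$ coming from Hausdorff–Young when $q\in[1,2]$ this gives the stated rate $(1+t)^{-\frac32(\frac1q-\frac12)-\frac k2}$. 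Estimate \eqref{m_V_4a} is the same with the extra factor $(1+|\xi|^2)^{-1}$ from $(I-\Delta_x)^{-1}$, which only improves decay and makes the $H^1_x$ norm finite; estimate \eqref{m_V_5} gains the extra half power because $\P_1 e^{t\hat B_m(\xi)}\hat f_0$ picks up a factor $|\xi|$ (the macroscopic part of each eigenfunction is $O(1)$ but its $\P_1$-component is $O(s)$, by the eigenfunction expansion one reads off from \eqref{m_specr0}), adding one more $|\xi|^2$ inside the Gaussian integral.

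For the lower bounds \eqref{m_H_1}–\eqref{m_H_1a}, the point is that under the extra hypotheses on $\hat f_0$ near $\xi=0$ the leading eigenvalue contribution does not cancel. The condition $\sup_{|\xi|\le r_0}|(\hat f_0,v\sqrt M)|=0$ kills the momentum component so that the $e^{\pm\i2\sqrt{2/3}st}$ oscillations drop out of the projection onto $\chi_0$ and $\chi_4$ at leading order, leaving a non-oscillatory term $\approx\big[(\hat f_0,\chi_0)c_0+(\hat f_0,\chi_4)c_4\big]e^{-b_0s^2t}$ whose coefficient is, by $\inf|(\hat f_0,\chi_0)|\ge d_0$ and $\inf|(\hat f_0,\chi_4)|\ge d_1\sup|(\hat f_0,\chi_0)|$ with $d_1$ chosen so there is no cancellation, bounded below by a positive constant on a fixed ball $|\xi|\le r_1\le r_0$. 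Then $\|\Tdx^k(e^{tB_m}f_0,\chi_j)\|_{L^2_x}^2\ge c\int_{|\xi|\le r_1}|\xi|^{2k}e^{-2b_0s^2t}d\xi\ge c'(1+t)^{-3/2-k}$ for large $t$, which is the matching lower bound; \eqref{m_H_2} follows since $(1+|\xi|^2)^{-1}\ge\frac12$ on that ball, and \eqref{m_H_1a} the same way using that the $\P_1$-component of the corresponding eigenfunction is exactly of size $s$, giving $\int_{|\xi|\le r_1}|\xi|^{2k+2}e^{-2b_0s^2t}d\xi\ge c'(1+t)^{-5/2-k}$.

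The main obstacle is the careful justification of the contour deformation and the eigenprojection expansion uniformly in $|\xi|\le r_0$ — that is, showing that $\hat B_m(\xi)$ genuinely has only those five eigenvalues in $\mathrm{Re}\,\lambda>-\mu/2$ with resolvent bounds good enough to push the residual contour past $-\mu/2$, and extracting the precise $O(s)$ behavior of the $\P_1$-part of each eigenfunction needed for the sharp exponents in \eqref{m_V_5} and \eqref{m_H_1a}. This is exactly the content of Theorem~\ref{m_eigen_3} (and its refinement in Lemma~\ref{m_eigen_3z}), so here it may be invoked; the rest is the bookkeeping of Gaussian-type integrals in $\xi$ together with Hausdorff–Young, plus the non-cancellation argument at $\xi=0$ for the lower bounds. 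I would also remark that the rank-one perturbation $-\frac{\i(v\cdot\xi)}{1+|\xi|^2}P_{\rm d}$ is smooth and bounded for all $\xi$ (unlike the $|\xi|^{-2}$ term in $\hat B(\xi)$), which is why the mVPB estimates are algebraic rather than exponential and why no separate treatment of a Poisson-type singularity at $\xi=0$ is needed.
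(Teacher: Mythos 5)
Your upper-bound argument is essentially the paper's: the paper proves Theorem \ref{m_rate1} by invoking the semigroup decomposition $S(t,\xi)=S_1(t,\xi)+S_2(t,\xi)$ of Theorem \ref{m_sg} (the five low-frequency eigenmodes plus a remainder decaying like $e^{-\sigma_0 t}$ uniformly in $\xi$), then Plancherel, the Gaussian bound ${\rm Re}\,\lambda_j(|\xi|)\le-\beta|\xi|^2$, Hausdorff--Young, and the $O(|\xi|)$ size of the $\P_1$-part of the eigenprojections; that half of your proposal is fine.

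The lower-bound half contains a genuine error. You claim that the hypothesis $\sup_{|\xi|\le r_0}|(\hat f_0,v\sqrt M)|=0$ makes the $e^{\pm\i2\sqrt{2/3}st}$ oscillations ``drop out'' of the projections onto $\chi_0$ and $\chi_4$, leaving a single non-oscillatory term $\approx[c_0\hat n_0+c_4\hat q_0]e^{-b_0s^2t}$ with a coefficient bounded below on a fixed ball. This is false: setting $\hat m_0=0$ only makes the $j=\pm1$ coefficients equal, so the sound-mode contribution survives as $2e^{{\rm Re}\lambda_1(|\xi|)t}\cos({\rm Im}\lambda_1(|\xi|)t)$ times a nonzero combination of $\hat n_0,\hat q_0$, added to the thermal-mode term $e^{\lambda_0(|\xi|)t}$ with a \emph{different} coefficient and a different quadratic decay rate (see \eqref{m_S_4a} and \eqref{m_S_4c}). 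At fixed large $t$ this superposition oscillates rapidly in $|\xi|$ and can vanish on large portions of any fixed ball, so a pointwise non-cancellation argument there does not give $\int_{|\xi|\le r_1}|\xi|^{2k}e^{-2b_0|\xi|^2t}d\xi$ as a lower bound. The paper instead isolates the $\hat q_0$-coefficient of the difference $e^{{\rm Re}\lambda_1 t}\cos(2\sqrt{2/3}|\xi|t)-e^{\lambda_0 t}$ (cf. \eqref{m_H_6a}) and integrates over $t$-dependent shells in $\xi$ where the cosine has a definite sign, so that this difference is at least $e^{\lambda_0 t}$ (the computation \eqref{m_F_2}); the hypothesis relating $|\hat q_0|$ and $|\hat n_0|$ via $d_1$ is exactly what controls the competing $\hat n_0$-contribution. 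Moreover, your sketch omits the momentum components $j=1,2,3$ and the microscopic part entirely for the lower bounds: with $\hat m_0=0$ these are \emph{purely oscillatory} at leading order, proportional to $e^{{\rm Re}\lambda_1 t}\sin({\rm Im}\lambda_1 t)$ times $(\hat q_0-\sqrt6\,\hat n_0)$-type coefficients, and the matching lower bounds require the $\sin^2$-shell integration \eqref{m_H_4} together with the orthogonality of the first-order eigenfunction corrections $L^{-1}\P_1(v\cdot\omega)\chi_4$, $L^{-1}\P_1(v\cdot\omega)^2\sqrt M$, $L^{-1}\P_1(v\cdot\omega)(v\cdot W^j)\sqrt M$ for \eqref{m_H_1a}. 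As written, your lower-bound argument would not establish \eqref{m_H_1}--\eqref{m_H_1a}.
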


Then, we  state the results on the global existence and the optimal time-asymptotical behavior of unique solution to the Cauchy problem for the mVPB system~\eqref{m_VPB4}--\eqref{m_VPB6} blow.

\begin{thm}\label{m_rate3}
Assume that $f_0\in H^N_w\cap L^{2,1}$ for  $N\ge 4$ and $\|f_0\|_{H^N_{w}\cap L^{2,1}}\le \delta_0$ for a constant  $\delta_0>0$ small enough. Then,  there exists a globally unique strong solution $f=f(x,v,t)$ to the mVPB system~\eqref{m_VPB4}-\eqref{m_VPB6} satisfying
 \bma
\mbox{$\sum_{j=0}^4$}\|\dx^k(f(t),\chi_j)\|_{L^2_{x}}+\|\dx^k\Phi(t)\|_{H^1_x}&\le C\delta_0(1+t)^{-\frac34-\frac k2},\\
\|\dx^k\P_1f(t)\|_{L^2_{x,v}}&\le C\delta_0(1+t)^{-\frac54-\frac k2},\\
\|\P_1f(t)\|_{H^N_w} +\|\Tdx \P_0f(t)\|_{L^2_v(H^{N-1}_x)}&\le
C\delta_0(1+t)^{-\frac54},\ema
for $k=0,1$ and $t>0$.
\end{thm}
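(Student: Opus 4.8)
The plan is to derive the nonlinear decay estimates of Theorem~\ref{m_rate3} from the linear decay estimates of Theorem~\ref{m_rate1} together with a standard energy-plus-Duhamel bootstrap argument. First I would recall that the global existence and a basic uniform-in-time energy estimate for \eqref{m_VPB4}--\eqref{m_VPB6} with small data in $H^N_w$ follows from the nonlinear energy method of Guo-type (adapted to the extra Poisson coupling $(I-\Delta_x)\Phi=-\int f\sqrt M\,dv+(e^{-\Phi}+\Phi-1)$, whose nonlinear part $e^{-\Phi}+\Phi-1=O(\Phi^2)$ is harmless). The key point is that $\Phi$ is controlled elliptically: from \eqref{m_VPB5}, $\|\Phi\|_{H^{m+2}_x}\lesssim \|(f,\sqrt M)\|_{H^m_x}+\|\Phi\|^2_{H^{m}_x}$, so the potential never loses derivatives and its decay is slaved to that of the macroscopic density $(f,\sqrt M)$.

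The core of the argument is the Duhamel representation. Writing the nonlinearity as $G(f)=\frac12(v\cdot\Tdx\Phi)f-\Tdx\Phi\cdot\Tdv f+\Gamma(f,f)$, we have $f(t)=e^{tB_m}f_0+\int_0^t e^{(t-s)B_m}G(f)(s)\,ds$. Define the working norm
\[
\mathcal{M}(t)=\sup_{0\le s\le t}\Big[\sum_{k=0}^1(1+s)^{\frac34+\frac k2}\big(\mbox{$\sum_j$}\|\dx^k(f(s),\chi_j)\|_{L^2_x}+\|\dx^k\Phi(s)\|_{H^1_x}\big)+\sum_{k=0}^1(1+s)^{\frac54+\frac k2}\|\dx^k\P_1f(s)\|_{L^2_{x,v}}+(1+s)^{\frac54}\big(\|\P_1f(s)\|_{H^N_w}+\|\Tdx\P_0f(s)\|_{L^2_v(H^{N-1}_x)}\big)\Big],
\]
and the plan is to show $\mathcal{M}(t)\le C\delta_0+C\mathcal{M}(t)^2$, which closes by smallness. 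Applying \eqref{m_V_4}--\eqref{m_V_5} with $q=1$ to the linear part gives the $C\delta_0$ contribution. For the Duhamel integral one splits $\int_0^t=\int_0^{t/2}+\int_{t/2}^t$; on $[0,t/2]$ one uses the $L^{2,1}$-to-$L^2$ smoothing of $e^{(t-s)B_m}$ with decay $(1+t-s)^{-3/4-k/2}$, and on $[t/2,t]$ one uses the energy-type bound (the $k=0$ case of \eqref{m_V_4} with $q=2$, i.e.\ no decay) against the fast-decaying source. The source $G(f)(s)$ is estimated by Cauchy-Schwarz and the algebra properties of $H^N_w$: schematically $\|G(f)\|_{L^{2,1}}\lesssim \|f\|_{L^2_{x,v}}\|f\|_{H^N_w}+\|\Tdx\Phi\|_{L^2_x}\|\dv f\|_{\cdots}$, each factor carrying a power of $(1+s)$ from $\mathcal M(s)$; since the exponents combine to an integrable rate (the gain from $\Gamma(f,f)\in\P_1$ space and from $\Tdx\Phi$ being spectrally fast are what make it work), the time integral reproduces the claimed rate. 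The $H^N_w$ and $\Tdx\P_0 f$ estimates come from the nonlinear energy estimate combined with the already-obtained decay of the lower-order norms, via a Gronwall/interpolation argument of the type $\frac{d}{dt}\mathcal E_N+c\mathcal D_N\lesssim (\text{decaying data})$, using that $\|\Tdx\P_0 f\|$ controls the missing dissipative directions modulo the macroscopic density which decays at $(1+t)^{-5/4}$ after one $x$-derivative.

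The main obstacle, as usual in Vlasov-Poisson-Boltzmann decay problems, is the top-order weighted estimate: the streaming term $v\cdot\Tdx f$ and the force term $v\sqrt M\cdot\Tdx\Phi$ interact badly with the velocity weight $w(v)=(1+|v|^2)^{1/2}$ and with pure $v$-derivatives $\dvb$, for which the linearized operator $L$ provides no dissipation. One handles this exactly as in \cite{Guo2,Yang3}: the $\dvb$-estimates are closed by absorbing the dangerous terms using the collision-frequency coercivity \eqref{nuv} of $\nu(v)$ together with the coercivity \eqref{L_3}, trading a small constant times $\|w\dxa\dvb f\|$ from lower $|\beta|$; and crucially the $\Tdx\Phi$ factor, which by the spectral gap-like estimate \eqref{m_H_2} decays like $(1+t)^{-3/4-k/2}$ (with $H^1_x$ control, hence no derivative loss), is always placed in $L^\infty_x$ via Sobolev embedding so that the nonlinear terms involving the electric field are genuinely of lower order. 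Once the closed energy inequality $\frac{d}{dt}\mathcal E_N(t)+c\mathcal E_N(t)\le C\delta_0^2(1+t)^{-5/2}+C\mathcal M(t)^2(\cdots)$ is in place, the final rates follow by the same $[0,t/2]+[t/2,t]$ splitting applied to the Gronwall formula. I would present the linear estimate invocation and the Duhamel bootstrap in detail, and refer to the companion bVPB computations in Section~\ref{sect4} for the parallel weighted energy estimates, since the only structural difference is the benign elliptic operator $(I-\Delta_x)$ in place of $-\Delta_x$, which in fact makes the mVPB estimates slightly easier because $\Phi$ gains two full derivatives with no low-frequency singularity.
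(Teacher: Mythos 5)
Your overall strategy is the same as the paper's: represent the solution by Duhamel against $e^{tB_m}$, feed in the linear rates of Theorem~\ref{m_rate1} (with $q=1$ and $q=2$, splitting $\int_0^{t/2}+\int_{t/2}^t$ for the derivative estimates), close a bootstrap functional $\mathcal M(t)\le C\delta_0+C\mathcal M(t)^2$, and obtain the weighted $H^N_w$ and $\Tdx\P_0f$ decay from the dissipative inequality for the high-order energy (the paper's \eqref{m_G_4a} with $D_1\gtrsim H^f_1$) plus Gronwall against the already-established $(1+t)^{-5/2}$ decay of $\|\Tdx\P_0 f\|^2$. Your treatment of $\Phi$ by elliptic slaving through \eqref{m_VPB5} is equivalent to the paper's representation \eqref{m_phi_5}.

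There is, however, one concrete gap: your Duhamel formula $f(t)=e^{tB_m}f_0+\int_0^t e^{(t-s)B_m}G(f)(s)\,ds$ is not the integral equation satisfied by solutions of \eqref{m_VPB4}--\eqref{m_VPB6}. The semigroup $e^{tB_m}$ only contains the \emph{linearized} field coupling $-v\sqrt M\cdot\Tdx(I-\Delta_x)^{-1}(f,\sqrt M)$, whereas the true force term $-v\sqrt M\cdot\Tdx\Phi$ carries, through \eqref{m_VPB5}, the quadratic remainder of the Poisson equation. This produces an additional source in Duhamel which the paper writes as $\int_0^t e^{(t-s)B_m}\divx(I-\Delta_x)^{-1}V(s)\,ds$ with $V=v\sqrt M\,(e^{-\Phi}+\Phi-1)$ (see \eqref{m_Duh}). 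You acknowledge that $e^{-\Phi}+\Phi-1=O(\Phi^2)$ is harmless in the energy estimates, but in the decay bootstrap this term must actually be carried: its $L^{2}$ and $L^{2,1}$ norms only decay like $(1+s)^{-2}$ and $(1+s)^{-3/2}$ respectively, and the paper compensates by exploiting the extra $\divx(I-\Delta_x)^{-1}$ smoothing, which upgrades the linear rates to $(1+t)^{-\frac54-\frac k2}$ for the macroscopic projections and $(1+t)^{-\frac74-\frac k2}$ for $\P_1$ (estimates \eqref{m_phi_3}--\eqref{m_phi_4}); the same structure is also needed when bounding $\Phi$ itself via \eqref{m_phi_5}, where the term $(I-\Delta_x)^{-1}(e^{-\Phi}+\Phi-1)$ appears separately. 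So the omission is repairable with the paper's extra linear estimates and the associated convolution bookkeeping, but as written your integral equation, and hence the closure of the bootstrap for the rates $(1+t)^{-\frac54-\frac k2}$ on $\P_1f$ and $(1+t)^{-\frac34-\frac k2}$ on $\Phi$, is incomplete.
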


We shall prove that the above convergence rates are indeed optimal in the following sense.
\begin{thm}\label{m_rate4}
Let the assumptions of Theorem~\ref{m_rate3} hold.
Assume further that there exist positive constants $d_0,d_1>0$ and a small constant $r_0>0$ so that the Fourier transform $\hat{f}_0(\xi,v)$ satisfies
 $\inf_{|\xi|\le r_0}|(\hat f_0,\chi_0)|\ge d_0$, $ \sup_{|\xi|\le r_0}|(\hat f_0,\chi_j)|=0 $ $(j=1,2,3)$ and
 $\inf_{|\xi|\le r_0}|(\hat f_0,\chi_4)|\ge d_1\sup_{|\xi|\le r_0}|(\hat f_0,\chi_0)|$.
Then, the global solution $f$  to the mVPB system~\eqref{m_VPB4}-\eqref{m_VPB6} satisfies
 \bgr
 C_1\delta_0(1+t)^{-\frac34-\frac k2}\le \|\Tdx^k(f(t),\chi_j)\|_{L^2_{x}}\le C_2\delta_0(1+t)^{-\frac34-\frac k2},\label{m_B_1}\\
 C_1\delta_0(1+t)^{-\frac34-\frac k2}\le \|\Tdx^k\Tdx\Phi(t)\|_{L^2_{x}}\le C_2\delta_0(1+t)^{-\frac34-\frac k2},\label{m_B_2}\\
 C_1\delta_0(1+t)^{-\frac54-\frac k2}\le \|\Tdx^k\P_1f(t)\|_{L^2_{x,v}}\le C_2\delta_0(1+t)^{-\frac54-\frac k2},\label{m_B_3}\\
 C_1\delta_0(1+t)^{-\frac34}\le \|f(t)\|_{H^N_{w}}\le
C_2\delta_0(1+t)^{-\frac34},\label{m_B_4}
\egr
for $t>0$ sufficiently large, two positive constants $C_2\ge C_1$, $j=0,1,2,3,4,$  and $k=0,1$. 
\end{thm}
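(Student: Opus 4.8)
\textbf{Proof proposal for Theorem~\ref{m_rate4}.}

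The plan is to combine the refined linear estimates of Theorem~\ref{m_rate1} (lower and upper bounds under the spectral lower-bound hypotheses on $\hat f_0$) with the a priori decay of the nonlinear solution $f$ already established in Theorem~\ref{m_rate3}, via Duhamel's formula for the nonlinear mVPB system~\eqref{m_VPB4}--\eqref{m_VPB6}. Writing the nonlinearity as $g(f)=\frac12(v\cdot\Tdx\Phi)f-\Tdx\Phi\cdot\Tdv f+\Gamma(f,f)$ together with the nonlinear correction $h(\Phi)=e^{-\Phi}+\Phi-1$ in the Poisson equation, we have $f(t)=e^{tB_m}f_0+\int_0^t e^{(t-s)B_m}\widetilde g(s)\,ds$, where $\widetilde g$ collects $g(f)$ and the contribution of $h(\Phi)$ fed through the $(I-\Delta_x)^{-1}$ coupling. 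The upper bounds \eqref{m_B_1}--\eqref{m_B_4} then follow by applying the linear decay estimates \eqref{m_V_4}--\eqref{m_V_5} (with $q=1$) to both the free term and the Duhamel integral, using Theorem~\ref{m_rate3} to bound $\|\widetilde g(s)\|_{L^{2,1}}$ and $\|\widetilde g(s)\|_{L^2_{x,v}}$ by a quadratic expression in the known decaying norms of $f$; the key point is that $\widetilde g$ is at least quadratic, so $\|\widetilde g(s)\|_{L^{2,1}}\lesssim \delta_0^2(1+s)^{-\frac54-\frac k2}$-type bounds hold, and the time convolution $\int_0^t(1+t-s)^{-\frac34-\frac k2}(1+s)^{-\frac54}ds\lesssim(1+t)^{-\frac34-\frac k2}$ preserves the rate. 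Because the source decays strictly faster than the linear rate $(1+t)^{-3/4}$, the nonlinear part is a genuine lower-order perturbation.

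For the lower bounds, the strategy is to show that the free evolution $e^{tB_m}f_0$ already realizes the rate $(1+t)^{-\frac34-\frac k2}$ from below — this is exactly \eqref{m_H_1}--\eqref{m_H_1a} of Theorem~\ref{m_rate1}, which is applicable precisely because of the spectral hypotheses $\inf_{|\xi|\le r_0}|(\hat f_0,\chi_0)|\ge d_0$, $(\hat f_0,\chi_j)=0$ for $j=1,2,3$ on $|\xi|\le r_0$, and $\inf_{|\xi|\le r_0}|(\hat f_0,\chi_4)|\ge d_1\sup|(\hat f_0,\chi_0)|$ (these are what make the spectral projection onto the low-frequency eigenvalues $\lambda_j(s)$ from \eqref{m_specr0} nondegenerate, with the $\chi_4$ condition ensuring the acoustic mode does not cancel the diffusive mode at leading order). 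Then, by the triangle inequality,
\[
\|\Tdx^k(f(t),\chi_j)\|_{L^2_x}\ge \|\Tdx^k(e^{tB_m}f_0,\chi_j)\|_{L^2_x}-\Big\|\Tdx^k\Big(\int_0^t e^{(t-s)B_m}\widetilde g(s)\,ds,\chi_j\Big)\Big\|_{L^2_x}\ge C_1\delta_0(1+t)^{-\frac34-\frac k2}-C\delta_0^2(1+t)^{-\frac34-\frac k2},
\]
and choosing $\delta_0$ small absorbs the second term into the first, giving the lower bound with a possibly smaller constant. The same scheme gives \eqref{m_B_3} for $\P_1 f$ (using \eqref{m_H_1a} from below and \eqref{m_V_5} for the Duhamel term), \eqref{m_B_4} for $\|f(t)\|_{H^N_w}$ (the $\chi_0$-component already forces the $(1+t)^{-3/4}$ lower bound on the full norm, while the upper bound is in Theorem~\ref{m_rate3}), and \eqref{m_B_2} for $\Tdx\Phi$ after noting $\Tdx\Phi=\Tdx(I-\Delta_x)^{-1}((f,\sqrt M)-h(\Phi))$ and that \eqref{m_H_2} controls the linear part of $(I-\Delta_x)^{-1}(e^{tB_m}f_0,\sqrt M)$ from below in $H^1_x$, hence in $L^2_x$, with the nonlinear correction $h(\Phi)=O(\Phi^2)$ again lower-order.

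The main obstacle I anticipate is the self-consistent treatment of the electric-potential coupling in the nonlinear Duhamel estimate: $\Phi$ is determined implicitly from $f$ through the nonlinear elliptic equation~\eqref{m_VPB5}, so one must first extract from Theorem~\ref{m_rate3} a clean bound of the form $\|\Tdx\Phi(s)\|_{H^1_x}\lesssim \delta_0(1+s)^{-3/4}$ and $\|h(\Phi)(s)\|_{L^1_x\cap L^2_x}\lesssim \delta_0^2(1+s)^{-3/2}$, and then verify that the streaming term $\frac12(v\cdot\Tdx\Phi)f-\Tdx\Phi\cdot\Tdv f$ — which contains a $v$-weight and a $v$-derivative — is controlled in $L^{2,1}$ and $L^2_{x,v}$ by the weighted norms $\|f\|_{H^N_w}$ appearing in Theorem~\ref{m_rate3}; this is where the hypothesis $N\ge 4$ and the weight $w(v)$ are used, through Sobolev embedding in $x$ and the bound $\|w\,\Tdv f\|\lesssim\|f\|_{H^N_w}$. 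A secondary technical point is splitting the Duhamel integral at $s=t/2$: on $[0,t/2]$ one uses the $(1+t-s)^{-\frac34-\frac k2}$ decay of the linear propagator against the integrable-at-infinity source, and on $[t/2,t]$ one uses boundedness of $e^{(t-s)B_m}$ on $L^2$ (or the $m=0$ estimate) against the fast-decaying source near $s=t$; keeping track of the extra half-derivative gain in \eqref{m_V_5} versus \eqref{m_V_4} is what yields the sharper $(1+t)^{-\frac54-\frac k2}$ rate for $\P_1 f$. Once these ingredients are in place the argument is a standard bootstrap, and the optimality (lower bounds) is an immediate consequence of the smallness of $\delta_0$.
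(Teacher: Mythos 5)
Your proposal matches the paper's own argument: the paper proves the lower bounds exactly by the Duhamel representation $f(t)=e^{tB_m}f_0+\int_0^t e^{(t-s)B_m}G(s)\,ds+\int_0^t e^{(t-s)B_m}\divx(I-\Delta_x)^{-1}V(s)\,ds$ with $V=v\sqrt M(e^{-\Phi}+\Phi-1)$, the linear lower bounds \eqref{m_H_1}--\eqref{m_H_1a} of Theorem~\ref{m_rate1} (valid under the stated spectral hypotheses on $\hat f_0$), the representation \eqref{m_phi_5} of $\Phi$ for \eqref{m_B_2}, the decomposition $\|f\|_{H^N_w}\ge\|\P_0f\|-\|w\P_1f\|-\sum_{1\le|\alpha|\le N}\|w\dxa f\|$ for \eqref{m_B_4}, and absorption of the quadratic Duhamel contributions for $\delta_0$ small, with the upper bounds taken from Theorem~\ref{m_rate3}. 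The approach and all key ingredients coincide, so the proposal is correct and essentially identical to the paper's proof.
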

\begin{rem}[Example]
The initial data $f_0=f_1(x,v)$ defined below satisfies the assumptions of Theorem~\ref{m_rate4}
$$
f_1(x,v)
 = d_0e^{\frac{r_0^2}2}e^{-\frac{x^2}2}\chi_0+d_1d_0e^{\frac{r_0^2}2}e^{-\frac{x^2}2}\chi_4.
$$
for a small positive constant $d_0$.
\end{rem}

\begin{rem} The conditions on initial data in above theorems can be applied to the VPB system and the Boltzmann equation to obtain corresponding the optimal time decay rate (refer to \cite{Zhong2012Sci}). Although the global solution to the modified Vlasov-Poisson-Boltzmann (mVPB) equation system takes the same optimal time decay rate $(1+t)^{-3/4}$ as the Boltzmann equation, yet it observed that the hyperbolic waves of the mVPB system propagates at a faster speed due to the influence of electric field.
\end{rem}

\section{Analysis of spectra and semigroup for linear systems}
\setcounter{equation}{0}
\label{sect3}

\subsection{Spectrum and resolvent of linear bVPB system}
\label{sect3.1}

We investigate the spectrum analysis and the decay rate of the solution to the linearized Vlasov-Poisson-Boltzmann type equation \eqref{LVPB2}.
In the followings, we are concerned with the spectral analysis of the operator $\hat B(\xi)$ and optimal time-decay rate of solution to linear VPB type equation \eqref{LVPB5}.

Introduce a weighted Hilbert space $L^2_\xi(\R^3)$ for $\xi\ne 0$
as
$$
 L^2_\xi(\R^3)=\{f\in L^2(\R^3)\,|\,\|f\|_\xi=\sqrt{(f,f)_\xi}<\infty\},
$$
with the inner product defined by
$$
 (f,g)_\xi=(f,g)+\frac1{|\xi|^2}(P_{\rm d} f,P_{\rm d} g).
$$

Since $P_{\rm d}$ is a self-adjoint projection operator, it follows that
 $(P_{\rm d} f,P_{\rm d} g)=(P_{\rm d} f, g)=( f,P_{\rm d} g)$ and hence
 \bq (f,g)_\xi=(f,g+\frac1{|\xi|^2}P_{\rm d}g)=(f+\frac1{|\xi|^2}P_{\rm d}f,g).\label{C_1a}\eq
By
\eqref{C_1a}, we have for any $f,g\in L^2_\xi(\R^3_v)\cap D(\hat{B}(\xi))$,
 \be
 (\hat{B}(\xi)f,g)_\xi=(\hat{B}(\xi) f,g+\frac1{|\xi|^2}P_{\rm d} g)
 =(f,(L+\i(v\cdot\xi)+\frac{\i(v\cdot\xi)}{|\xi|^2}P_{\rm d})g)=(f,\hat{B}(-\xi)g)_\xi. \label{L_7}
\ee

We can regard $\hat{B}(\xi)$ as a linear operator from the space $L^2_\xi(\R^3)$ to itself because
$$
 \|f\|^2\le \|f\|^2_\xi\le(1+|\xi|^{-2})\|f\|^2,\quad \xi\ne 0.
$$

Similarly to the proofs of Lemmas 2.6--2.7 in \cite{Li2}, we have the following lemmas.

\begin{lem}\label{SG_2}
The operator $\hat{B}(\xi)$ generates a strongly continuous contraction semigroup on
$L^2_\xi(\R^3)$, which satisfies \bq
\|e^{t\hat{B}(\xi)}f\|_\xi\le\|f\|_\xi, \quad\mbox{for any}\ t>0,\,f\in
L^2_\xi(\R^3_v). \eq
\end{lem}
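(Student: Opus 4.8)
The plan is to verify the two defining properties of a strongly continuous contraction semigroup by appealing to the Lumer--Phillips theorem on the Hilbert space $L^2_\xi(\R^3_v)$. First I would check that $\hat{B}(\xi)$ is dissipative with respect to the inner product $(\cdot,\cdot)_\xi$. By the identity \eqref{L_7}, for $f\in L^2_\xi(\R^3_v)\cap D(\hat{B}(\xi))$ we have
\bq
{\rm Re}\,(\hat{B}(\xi)f,f)_\xi=\tfrac12\big[(\hat{B}(\xi)f,f)_\xi+(f,\hat{B}(\xi)f)_\xi\big]
={\rm Re}\,(\hat{B}(\xi)f,f)_\xi,
\eq
but more to the point, using $(\hat{B}(\xi)f,f)_\xi=(f,\hat{B}(-\xi)f)_\xi$ together with the explicit form $\hat{B}(\xi)=L_1-\i(v\cdot\xi)-\frac{\i(v\cdot\xi)}{|\xi|^2}P_{\rm d}$, the skew-adjoint streaming and Poisson terms drop out of the real part and one is left with ${\rm Re}\,(\hat{B}(\xi)f,f)_\xi=(L_1f,f)\le -\mu\|P_rf\|^2\le 0$ by the coercivity estimate \eqref{L_4}. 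Hence $\hat{B}(\xi)$ is dissipative on $L^2_\xi(\R^3_v)$.

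Next I would establish the range condition: for some (hence all) $\lambda>0$, the operator $\lambda I-\hat{B}(\xi)$ maps $D(\hat{B}(\xi))$ onto $L^2_\xi(\R^3_v)$. Here I would follow the argument of Lemmas 2.6--2.7 of \cite{Li2}: write $\hat{B}(\xi)=-\nu(v)-\i(v\cdot\xi)+K_1-\frac{\i(v\cdot\xi)}{|\xi|^2}P_{\rm d}$, observe that $\lambda+\nu(v)+\i(v\cdot\xi)$ is boundedly invertible as a multiplication operator (using the lower bound $\nu(v)\ge\nu_0(1+|v|)>0$ from \eqref{nuv}), and treat the remaining part $K_1-\frac{\i(v\cdot\xi)}{|\xi|^2}P_{\rm d}$ as a relatively compact (in fact bounded, since $P_{\rm d}$ projects onto a one-dimensional space of rapidly decaying functions and $K_1$ is compact) perturbation. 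Solvability of $(\lambda I-\hat{B}(\xi))f=g$ then reduces, via a Neumann-series / Fredholm argument for $\lambda$ large, to invertibility of $I + (\lambda+\nu+\i v\cdot\xi)^{-1}(K_1-\frac{\i v\cdot\xi}{|\xi|^2}P_{\rm d})$, which holds because the second factor has small operator norm for $\lambda$ sufficiently large; dissipativity already guarantees injectivity, so the range is all of $L^2_\xi$. With dissipativity and the range condition in hand, the Lumer--Phillips theorem yields that $\hat{B}(\xi)$ generates a $C_0$-semigroup of contractions on $L^2_\xi(\R^3_v)$, i.e.\ $\|e^{t\hat{B}(\xi)}f\|_\xi\le\|f\|_\xi$ for all $t>0$.

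The main obstacle is the range/maximality condition rather than dissipativity: one must handle the nonlocal term $\frac{\i(v\cdot\xi)}{|\xi|^2}P_{\rm d}$ carefully, since for small $|\xi|$ its coefficient $|\xi|^{-1}$ blows up. The resolution is that this is exactly why one works in the $\xi$-weighted space $L^2_\xi$: the bound $\|f\|^2\le\|f\|_\xi^2\le(1+|\xi|^{-2})\|f\|^2$ makes $\hat{B}(\xi)$ a well-behaved operator on $L^2_\xi$ for each fixed $\xi\ne0$, and the equivalence of norms on $L^2_\xi$ and $L^2$ for fixed $\xi$ lets one import the standard Boltzmann resolvent construction. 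Since the statement explicitly invokes "similarly to the proofs of Lemmas 2.6--2.7 in \cite{Li2}," I would simply point to that reference for the detailed Fredholm/perturbation estimates and record that the only modification needed is replacing their operator $\hat B(\xi)$ (built from $L$) by the present one (built from $L_1$), which changes nothing structurally because $L_1$ satisfies the same splitting $L_1=K_1-\nu(v)$ with $K_1$ compact, $\nu$ satisfying \eqref{nuv}, and the coercivity \eqref{L_4} in place of \eqref{L_3}.
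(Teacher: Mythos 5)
Your proposal is correct and follows essentially the same route the paper intends: the paper simply defers to Lemmas 2.6--2.7 of \cite{Li2}, whose argument is exactly the Lumer--Phillips scheme you describe (dissipativity in the weighted inner product, where \eqref{L_7} and $P_{\rm d}L_1=0$ give ${\rm Re}(\hat B(\xi)f,f)_\xi=(L_1f,f)\le 0$, plus solvability of $(\lambda-\hat B(\xi))f=g$ for large $\lambda$ by treating $K_1-\i(v\cdot\xi)|\xi|^{-2}P_{\rm d}$ as a bounded perturbation of the multiplication operator $c(\xi)$). The only blemish is your first displayed identity, which is vacuous as written, but the subsequent computation via \eqref{L_7} and \eqref{L_4} is the substantive and correct step.
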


\begin{lem}\label{Egn}
For each $\xi\ne 0$, the spectrum of $\hat{B}(\xi)$ on the domain
$\mathrm{Re}\lambda\geq-\nu_0+\delta$ for any $\delta>0$ consists of isolated eigenvalues $\{\lambda_j(\xi)\}$ with
$\mathrm{Re}\lambda_j (\xi)<0$.
\end{lem}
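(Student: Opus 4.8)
The plan is to follow the strategy used for the linearized Boltzmann operator (as in \cite{Ukai1, Ellis}) and adapted to the VPB setting in \cite{Li2}, decomposing $\hat{B}(\xi)$ into its collision part and the extra nonlocal drift term, and then exploiting the compactness of $K_1$ together with the coercivity estimate \eqref{L_4}. First I would write $\hat{B}(\xi) = -\nu(v) - \i(v\cdot\xi) + K_1 - \frac{\i(v\cdot\xi)}{|\xi|^2}P_{\rm d}$ and observe that the ``free'' part $A(\xi) := -\nu(v) - \i(v\cdot\xi) - \frac{\i(v\cdot\xi)}{|\xi|^2}P_{\rm d}$ is, for fixed $\xi\neq 0$, a perturbation of the multiplication operator $-\nu(v)-\i(v\cdot\xi)$ by the bounded finite-rank operator $-\frac{\i(v\cdot\xi)}{|\xi|^2}P_{\rm d}$. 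Since multiplication by $-\nu(v)-\i(v\cdot\xi)$ has spectrum contained in $\{\mathrm{Re}\lambda\le-\nu_0\}$ by \eqref{nuv}, and $P_{\rm d}$ has one-dimensional range, a Neumann-series/bounded-perturbation argument shows that for any $\delta>0$, the part of $\sigma(A(\xi))$ with $\mathrm{Re}\lambda\ge-\nu_0+\delta$ is at most a finite set of eigenvalues of finite multiplicity; in fact one checks that the resolvent $(\lambda - A(\xi))^{-1}$ exists and is bounded whenever $\mathrm{Re}\lambda>-\nu_0$ and $|\lambda|$ is large, by estimating $\|(\lambda+\nu(v)+\i(v\cdot\xi))^{-1}\|\le(\mathrm{Re}\lambda+\nu_0)^{-1}$ and absorbing the rank-one term.

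Next, since $K_1$ is a compact operator on $L^2(\R^3_v)$, the operator $\hat{B}(\xi) = A(\xi) + K_1$ is a compact perturbation of $A(\xi)$; by the stability of the essential spectrum under compact perturbations (Weyl's theorem), $\sigma_{\rm ess}(\hat{B}(\xi)) = \sigma_{\rm ess}(A(\xi)) \subset \{\mathrm{Re}\lambda\le-\nu_0\}$. Hence on the half-plane $\mathrm{Re}\lambda\ge-\nu_0+\delta$ the spectrum of $\hat{B}(\xi)$ consists only of isolated eigenvalues of finite multiplicity, and moreover there are only finitely many of them in any region where $|\lambda|$ is also bounded (which, combined with the resolvent bound for large $|\lambda|$, gives that the full set $\{\lambda_j(\xi)\}$ in $\mathrm{Re}\lambda\ge-\nu_0+\delta$ is at most countable with accumulation only possible on the line $\mathrm{Re}\lambda=-\nu_0$). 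To show each eigenvalue satisfies $\mathrm{Re}\lambda_j(\xi)<0$, I would use the contraction-semigroup property from Lemma~\ref{SG_2}: since $e^{t\hat{B}(\xi)}$ is a contraction on $L^2_\xi(\R^3)$, we have $\sigma(\hat{B}(\xi))\subset\{\mathrm{Re}\lambda\le 0\}$, so it remains to rule out eigenvalues on the imaginary axis. Suppose $\hat{B}(\xi)f = \i\tau f$ with $f\neq0$, $\tau\in\R$. Taking the real part of $(\hat{B}(\xi)f,f)_\xi$ and using \eqref{L_7}-type symmetry (so that the drift terms and $\i(v\cdot\xi)$ contribute purely imaginary amounts to the $(\cdot,\cdot)_\xi$ inner product) together with $(L_1 f, f)\le -\mu\|P_r f\|^2$ from \eqref{L_4}, I get $0 = \mathrm{Re}(\hat{B}(\xi)f,f)_\xi = (L_1 f, f)\le -\mu\|P_r f\|^2$, forcing $P_r f = 0$, i.e. $f = c\sqrt{M}$ for some constant $c$. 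But then plugging $f=c\sqrt M$ into $\hat{B}(\xi)f=\i\tau f$ and using $L_1\sqrt M=0$ gives $-\i(v\cdot\xi)c\sqrt M - \i\frac{(v\cdot\xi)}{|\xi|^2}c\sqrt M = \i\tau c\sqrt M$, i.e. $c(v\cdot\xi)(1+|\xi|^{-2})\sqrt M = -\tau c\sqrt M$ as functions of $v$, which is impossible unless $c=0$ since $(v\cdot\xi)$ is not constant in $v$ for $\xi\ne0$. This contradiction shows no eigenvalue lies on $\mathrm{Re}\lambda=0$, hence $\mathrm{Re}\lambda_j(\xi)<0$ for all $j$.

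The main obstacle I anticipate is making the ``finitely many eigenvalues, no accumulation except on $\mathrm{Re}\lambda=-\nu_0$'' claim fully rigorous: this requires a careful large-$|\lambda|$ resolvent estimate for $\hat{B}(\xi)$ uniformly on $\mathrm{Re}\lambda\ge-\nu_0+\delta$, splitting off the compact $K_1$ and the rank-one $P_{\rm d}$ term and showing $I - K_1(\lambda-A(\xi))^{-1}$ is invertible via analytic Fredholm theory (the Gohberg--Shmul'yan / analytic Fredholm theorem), so that the set of $\lambda$ where it fails to be invertible is discrete. Since the excerpt explicitly says this follows ``similarly to the proofs of Lemmas 2.6--2.7 in \cite{Li2},'' I would organize the argument to parallel that reference: establish the resolvent set structure first, then invoke analytic Fredholm theory for discreteness of eigenvalues, and finally close with the semigroup-contraction plus coercivity argument above to pin down $\mathrm{Re}\lambda_j(\xi)<0$. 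The only genuinely new element compared to the pure Boltzmann case is bookkeeping the extra $\frac{\i(v\cdot\xi)}{|\xi|^2}P_{\rm d}$ term, which is harmless since it is bounded and finite-rank for each fixed $\xi\ne0$ and skew-symmetric in the weighted inner product $(\cdot,\cdot)_\xi$ by \eqref{L_7}.
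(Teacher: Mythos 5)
Your proposal is correct and follows essentially the same route the paper relies on (it defers to Lemmas 2.6--2.7 of \cite{Li2}): discreteness of the spectrum in $\{\mathrm{Re}\lambda\ge-\nu_0+\delta\}$ from the compactness of $K_1$ (plus the finite-rank term $\frac{\i(v\cdot\xi)}{|\xi|^2}P_{\rm d}$) via an analytic Fredholm argument, combined with the contraction property of Lemma~\ref{SG_2} and the coercivity \eqref{L_4} to exclude eigenvalues with $\mathrm{Re}\lambda\ge0$, including the check that $P_rf=0$ forces $f=c\sqrt M$ and hence $c=0$. The only cosmetic difference is that \cite{Li2} organizes the discreteness step through the factorization $\lambda-\hat B(\xi)=(I-K_1(\lambda-c(\xi))^{-1}+\frac{\i(v\cdot\xi)}{|\xi|^2}P_{\rm d}(\lambda-c(\xi))^{-1})(\lambda-c(\xi))$ rather than invoking Weyl's theorem, which is an equivalent bookkeeping of the same idea.
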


Now denote by $T$ a linear operator on $L^2(\R^3_v)$ or
$L^2_\xi(\R^3_v)$, and we define the corresponding norms of $T$ by
$$
 \|T\|=\sup_{\|f\|=1}\|Tf\|,\quad
 \|T\|_\xi=\sup_{\|f\|_\xi=1}\|Tf\|_\xi.
$$
Obviously, 
 \bq
(1+|\xi|^{-2})^{-1}\|T\|\le \|T\|_\xi\le (1+|\xi|^{-2})\|T\|.\label{eee}
 \eq

First, we consider the spectrum and resolvent sets of $\hat{B}(\xi)$ at
high frequency. To this end, we define
 \bq
  c(\xi)=-\nu(v)-\i(v\cdot\xi),  \label{Cxi}
 \eq
and decompose  $\hat{B}(\xi)$ into \bma
\lambda-\hat{B}(\xi)&=\lambda-c(\xi)-K_1+\frac{\i(v\cdot\xi)}{|\xi|^2}P_{\rm d}\nnm\\
&=(I-K_1(\lambda-c(\xi))^{-1}+\frac{\i(v\cdot\xi)}{|\xi|^2}P_{\rm d}(\lambda-c(\xi))^{-1})(\lambda-c(\xi)).\label{B_d}\ema
Then, we have the estimates on the right hand terms of \eqref{B_d} as follows.

\begin{lem}\label{LP03}
 There exists a constant  $C>0$ so that it holds:
\begin{enumerate}
\item For any $\delta>0$, we have
 \bgr
  \sup_{x\geq-\nu_0+\delta,y\in\R}\|K_1(x+\i y-c(\xi))^{-1}\|
  \leq C\delta^{-15/13}(1+|\xi|)^{-2/13}, \label{T_7}
 \egr

\item For any $\delta>0,\, r_0>0$, there is a constant  $y_0=(2r_0)^{5/3}\delta^{-2/3}>0$ such that
if $|y|\geq y_0$, we have
 \bgr
 \sup_{x\geq -\nu_0+\delta,|\xi|\leq r_0}\|K_1(x+\i y-c(\xi))^{-1}\|
 \leq C\delta^{-7/5}(1+|y|)^{-2/5},\label{T_8}
 \egr
 \item  For any $\delta>0,\, r_0>0$, we have \bgr
   \sup_{x\geq -\nu_0+\delta,y\in\R}
  \|(v\cdot\xi)|\xi|^{-2}P_{\rm d}(x+\i y-c(\xi))^{-1}\|
 \leq C\delta^{-1}|\xi|^{-1},\label{L_9}
 \\
   \sup_{x\geq -\nu_0+\delta,|\xi|\geq r_0}
 \|(v\cdot\xi)|\xi|^{-2}P_{\rm d}(x+\i y-c(\xi))^{-1}\|
 \leq C(r_0^{-1}+1)(\delta^{-1}+1)|y|^{-1}.\label{L_10}
 \egr
\end{enumerate}
\end{lem}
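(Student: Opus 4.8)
The plan is to prove Lemma~\ref{LP03} by reducing every estimate to explicit bounds on the resolvent $(\lambda-c(\xi))^{-1}=\bigl(x+\i y+\nu(v)+\i(v\cdot\xi)\bigr)^{-1}$ combined with the compactness/smoothing properties of the kernels $k_1(v,v_*)$ of $K_1$ and of $P_{\rm d}$, exactly as in Lemmas~2.6--2.7 of \cite{Li2}. The starting observation is that for $x\ge-\nu_0+\delta$ one has, using \eqref{nuv},
\[
|x+\i y+\nu(v)+\i(v\cdot\xi)|\ge \max\Bigl\{\,\delta+\nu_0|v|\,,\ |y+v\cdot\xi|\,\Bigr\},
\]
so that $|(\lambda-c(\xi))^{-1}|\le C(\delta+|v|)^{-1}$ pointwise, and also $|(\lambda-c(\xi))^{-1}|\le |y+v\cdot\xi|^{-1}$. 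These two competing bounds are the source of all the negative powers of $|\xi|$, $|y|$ and $\delta$ below; the whole proof is a matter of interpolating between them on cleverly chosen regions of $v$-space.

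For part (1), I would estimate $\|K_1(\lambda-c(\xi))^{-1}f\|$ by writing it as $\int k_1(v,v_*)(\lambda-c(\xi_*))^{-1}f(v_*)\,dv_*$ and applying Schur's test: bound $\int |k_1(v,v_*)|\,|(\lambda-c(\xi))^{-1}(v_*)|\,dv_*$. Split the $v_*$-integral according to whether $|v_*\cdot\hat\xi+ y/|\xi||\le \rho$ or not, for a parameter $\rho>0$ to be optimized. On the ``near-resonance'' slab of width $2\rho$ we use the pointwise bound $(\delta+|v_*|)^{-1}$ together with the fast (Gaussian-type) decay of $k_1$ to get a factor $\lesssim \delta^{-1}\rho$ if one is crude, or better $\lesssim\delta^{-1}\rho^{?}$ using the $L^2_{v_*}$-integrability of $k_1(v,\cdot)$ on a thin slab (this slab has measure $\sim\rho$, giving $\rho^{1/2}$ after Cauchy--Schwarz); off the slab we use $|y+v_*\cdot\xi|^{-1}\le(\rho|\xi|)^{-1}$ and integrate $k_1$ to get $\lesssim(\rho|\xi|)^{-1}$. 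Then optimize over $\rho$; balancing $\delta^{-1}\rho^{1/2}$ against $(\rho|\xi|)^{-1}$ would give $\rho\sim(\delta^2/|\xi|^{2})^{1/3}$ and a bound $\sim\delta^{-?}|\xi|^{-1/3}$ --- the sharper exponents $15/13$ and $2/13$ in \eqref{T_7} come from a more careful three-region splitting (small $|v_*|$, moderate $|v_*|$, large $|v_*|$) exploiting that $k_1(v,v_*)$ decays like a Gaussian in $|v-v_*|$ and in $|v|+|v_*|$, as is done in \cite{Ukai1,Li2}. I would simply reproduce that optimization. Part (2) is the same argument but with the roles reversed: when $|y|\ge y_0=(2r_0)^{5/3}\delta^{-2/3}$ and $|\xi|\le r_0$, the phase $y+v\cdot\xi$ cannot be small unless $|v|\gtrsim |y|/r_0$, which is huge, and there the Gaussian decay of $k_1$ and the bound $(\delta+|v|)^{-1}$ together beat everything; splitting at $|v_*|\sim |y|^{1/?}$ and optimizing yields the $\delta^{-7/5}(1+|y|)^{-2/5}$ in \eqref{T_8}.

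For part (3), the operator in question is $(v\cdot\xi)|\xi|^{-2}P_{\rm d}(\lambda-c(\xi))^{-1}$, and since $P_{\rm d}f=(f,\sqrt M)\sqrt M$ this is rank one: it sends $f$ to $\langle (\lambda-c(\xi))^{-1}f,\sqrt M\rangle\,(v\cdot\xi)|\xi|^{-2}\sqrt M$. So its operator norm is $|\xi|^{-1}\,\|\,|v\cdot\hat\xi|\sqrt M\,\|\cdot\|(\lambda-c(\xi))^{-1}\sqrt M\|$, and everything reduces to estimating the scalar $\bigl\|(x+\i y+\nu(v)+\i(v\cdot\xi))^{-1}\sqrt M\bigr\|_{L^2_v}$. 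For \eqref{L_9} the crude bound $(\delta+\nu_0|v|)^{-1}\le\delta^{-1}$ against the Gaussian gives $\le C\delta^{-1}$, hence the $\delta^{-1}|\xi|^{-1}$. For \eqref{L_10}, with $|\xi|\ge r_0$, split $L^2_v$ into $\{|v\cdot\hat\xi+y/|\xi||\le 1/|\xi|\}$ and its complement: on the slab (width $\sim1/|\xi|$, hence $L^2$-mass of $\sqrt M$ there is $\lesssim|\xi|^{-1/2}$) use $\delta^{-1}$; off the slab use $|y+v\cdot\xi|^{-1}$ and note $|v\cdot\xi+y|\ge$ a fixed fraction of $|y|$ on most of the Gaussian mass once $|\xi|\ge r_0$, producing the $|y|^{-1}$; collecting terms gives $C(r_0^{-1}+1)(\delta^{-1}+1)|y|^{-1}$.

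The main obstacle is getting the \emph{precise} exponents $15/13$, $2/13$, $7/5$, $2/5$ in \eqref{T_7}--\eqref{T_8} rather than the soft exponents $1/3$ a two-region split gives; this requires the quantitative Gaussian bounds on $k_1(v,v_*)$ (Grad-type estimates $|k_1(v,v_*)|\le C|v-v_*|^{-1}e^{-c|v-v_*|^2-c(|v|^2-|v_*|^2)^2/|v-v_*|^2}$) and a careful optimization over two parameters (the slab half-width and a large-$|v|$ cutoff), which is precisely the computation carried out in \cite{Ukai1} and adapted in \cite[Lemma~2.7]{Li2}; I would quote those bounds and redo the optimization. Everything else is bookkeeping with Schur's test and the elementary pointwise resolvent bounds above.
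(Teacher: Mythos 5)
For parts (1) and (2) your plan is in substance the same as the paper's: the paper does not carry out the slab/optimization computation either, but simply records that $k_1$ enjoys the same Grad-type bounds as $k$, namely $\int_{\R^3}|k_1(v,v_*)|\,dv_*\le C(1+|v|)^{-1}$ and $\int_{\R^3}|k_1(v,v_*)|^2dv_*\le C$ (citing \cite{Yu}), and then invokes verbatim the argument of Lemma 2.2.6 of \cite{Ukai3}. Your sketch, which concedes that a naive two-region split only yields an exponent $1/3$ and that the sharp exponents $15/13,\,2/13,\,7/5,\,2/5$ must be imported from the Ukai-type optimization in \cite{Ukai3,Li2}, is therefore the same appeal to the literature rather than an independent derivation, and as such it is acceptable.

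For part (3) you take a genuinely different route. The paper quotes Lemma 2.3 of \cite{Li2}, whose mechanism (visible in this paper's own proof of the parallel Lemma~\ref{m_LP}) is the resolvent identity $(\lambda-c(\xi))^{-1}=\lambda^{-1}+\lambda^{-1}c(\xi)(\lambda-c(\xi))^{-1}$ combined with $\|(v\cdot\xi)|\xi|^{-2}P_{\rm d}\|\le C|\xi|^{-1}$ and $\|(v\cdot\xi)|\xi|^{-2}P_{\rm d}\,c(\xi)\|\le C(1+|\xi|^{-1})$, which produces the factor $|\lambda|^{-1}\le|y|^{-1}$ in \eqref{L_10} in one stroke. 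You instead use the rank-one structure of $(v\cdot\xi)|\xi|^{-2}P_{\rm d}(\lambda-c(\xi))^{-1}$ and a slab decomposition in $v$; the reduction to $\|\,|\lambda-c(\xi)|^{-1}\sqrt M\,\|_{L^2_v}$ and the bound \eqref{L_9} are correct. In \eqref{L_10}, however, your claim that $|v\cdot\xi+y|$ is a fixed fraction of $|y|$ on most of the Gaussian mass fails in the regime $|\xi|\gtrsim|y|$: there the exceptional slab $\{|v\cdot\xi+y|<|y|/2\}$ passes near the origin and carries order-one Gaussian mass. The estimate is still salvageable, because in that regime the prefactor $|\xi|^{-1}$ itself supplies the decay (e.g. $|\xi|^{-1}e^{-cy^2/|\xi|^2}\le C|y|^{-1}\sup_{t>0}t\,e^{-ct^2}$, and similarly for the slab term), but as written this case is not covered, and your argument delivers an $r_0$-dependence of the form $C(r_0)$ rather than the stated clean factor $(r_0^{-1}+1)(\delta^{-1}+1)$. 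The identity-based route of \cite{Li2} avoids this case analysis entirely, which is what it buys over your more hands-on decomposition.
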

\begin{proof}The proof of \eqref{L_9} and \eqref{L_10} can be found in Lemma 2.3 in \cite{Li2}.
Since $K_1$ satisfies the same properties as $K$ (see  \cite{Yu}): $$\intr |k_1(v,v_*)|dv_*\le C(1+|v|)^{-1},\quad \intr |k_1(v,v_*)|^2dv_*\le C,$$
we can prove \eqref{T_7} and \eqref{T_8} by a same argument as Lemma 2.2.6 in \cite{Ukai3}.
\end{proof}

By Lemma \ref{LP03} and a similar argument as Lemma 2.4 in \cite{Li2}, we have the spectral gap
of the operator $\hat{B}(\xi)$ for high frequency.
\begin{lem}
\label{LP01}
 Let $\lambda(\xi)\in \sigma(\hat{B}(\xi))$ be any eigenvalue of
$\hat{B}(\xi)$ in the domain $\mathrm{Re}\lambda\geq -\nu_0+\delta$
with $\delta>0$ being a constant. Then, for any $r_0>0$, there
exists $\alpha(r_0)>0$ so that
$\mathrm{Re}\lambda(\xi)\leq-\alpha(r_0)$ for all $|\xi|\geq r_0$.
\end{lem}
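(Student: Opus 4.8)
The plan is to exploit the decomposition \eqref{B_d} together with the decay estimates of Lemma \ref{LP03} to show that $\lambda-\hat{B}(\xi)$ is invertible once $\mathrm{Re}\lambda\ge-\nu_0+\delta$ and $|\xi|$ is large, and then to localize the (finitely many, isolated) eigenvalues that survive at bounded frequency. First, suppose $|\xi|\ge r_0$ and $\mathrm{Re}\lambda\ge-\nu_0+\delta$. From \eqref{B_d} it suffices to show that
\[
N(\lambda,\xi)=K_1(\lambda-c(\xi))^{-1}-\frac{\i(v\cdot\xi)}{|\xi|^2}P_{\rm d}(\lambda-c(\xi))^{-1}
\]
has operator norm strictly less than $1$, for then $I-N(\lambda,\xi)$ is invertible and hence $\lambda\in\rho(\hat B(\xi))$. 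By \eqref{T_7} the first term is bounded by $C\delta^{-15/13}(1+|\xi|)^{-2/13}$, which is $<1/2$ as soon as $|\xi|\ge R_1(\delta)$ for a suitable $R_1$; by \eqref{L_9} the second term is bounded by $C\delta^{-1}|\xi|^{-1}$, again $<1/2$ for $|\xi|\ge R_2(\delta)$. Thus for $|\xi|\ge\max\{R_1,R_2\}$ there is no spectrum of $\hat B(\xi)$ in $\mathrm{Re}\lambda\ge-\nu_0+\delta$ at all, so the asserted bound holds trivially there (any $\alpha\le\nu_0-\delta$ works).

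It remains to treat the compact frequency band $r_0\le|\xi|\le\max\{R_1,R_2\}=:R$. Fix $\delta\in(0,\nu_0)$. On this band I want to rule out eigenvalues accumulating on the imaginary axis or at infinity in the imaginary direction. For the latter, use \eqref{T_8} and \eqref{L_10}: for $|\mathrm{Im}\lambda|\ge y_0(\delta,R)$ and $|\xi|\le R$ both terms of $N(\lambda,\xi)$ are again $<1/2$, so $\hat B(\xi)$ has no spectrum there. Hence any eigenvalue in $\mathrm{Re}\lambda\ge-\nu_0+\delta$, $r_0\le|\xi|\le R$ lies in the \emph{compact} rectangle $\{-\nu_0+\delta\le\mathrm{Re}\lambda\le 0,\ |\mathrm{Im}\lambda|\le y_0\}$ (recall $\mathrm{Re}\lambda<0$ by Lemma \ref{Egn}). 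To upgrade $\mathrm{Re}\lambda<0$ to $\mathrm{Re}\lambda\le-\alpha$ uniformly, argue by contradiction and compactness: if no such $\alpha$ existed, there would be sequences $\xi_n$ with $r_0\le|\xi_n|\le R$ and eigenvalues $\lambda_n=\hat B(\xi_n)$-eigenvalues with $\mathrm{Re}\lambda_n\to0$; passing to subsequences $\xi_n\to\xi_*\ne0$ and $\lambda_n\to\lambda_*$ with $\mathrm{Re}\lambda_*=0$, and using the continuity of the (isolated, finite-multiplicity) eigenvalues in $\xi$ together with the resolvent estimates above, $\lambda_*$ would be an eigenvalue of $\hat B(\xi_*)$ with $\mathrm{Re}\lambda_*=0$. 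This contradicts Lemma \ref{Egn} (which gives $\mathrm{Re}\lambda_j(\xi_*)<0$ strictly). Alternatively, and more cleanly, one shows directly that $\mathrm{Re}\lambda=0$ forces $\lambda=0$ and $f\in N_1$ via the contraction/coercivity identity: taking the real part of $(\hat B(\xi)f,f)_\xi=\lambda\|f\|_\xi^2$ and using \eqref{L_4} gives $\mathrm{Re}\lambda\,\|f\|_\xi^2=(L_1f,f)\le-\mu\|P_rf\|^2$, so $\mathrm{Re}\lambda=0$ implies $P_rf=0$, i.e. $f=c\sqrt M$; feeding $f=c\sqrt M$ back into the eigenvalue equation and pairing with $v\sqrt M$ forces $c=0$, hence no purely imaginary eigenvalue exists — and then compactness of the band promotes this to a uniform gap $\alpha(r_0)>0$.

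The main obstacle is the last step: getting a \emph{uniform} spectral gap over $r_0\le|\xi|\le R$ rather than merely $\mathrm{Re}\lambda<0$ pointwise. This is where one genuinely needs that the eigenvalues depend continuously on $\xi$ on the compact band (so that the supremum of $\mathrm{Re}\lambda$ over eigenvalues is attained and is $<0$), which in turn rests on the analytic/continuous perturbation theory for $\hat B(\xi)$ — the eigenvalues being isolated with finite algebraic multiplicity by Lemma \ref{Egn}, and $\hat B(\xi)$ depending analytically on $\xi$ away from $\xi=0$. The resolvent bounds of Lemma \ref{LP03} confine the eigenvalues to a fixed compact set, so Montel/normal-families-type compactness applies; the details parallel Lemma 2.4 in \cite{Li2}, with $K_1$ in place of $K$, which is legitimate since $k_1$ enjoys the same kernel estimates as $k$ (as noted in the proof of Lemma \ref{LP03}).
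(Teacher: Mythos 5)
Your proposal is correct and takes essentially the same route as the paper, which proves this lemma by invoking Lemma \ref{LP03} together with the argument of Lemma 2.4 in \cite{Li2}: the factorization \eqref{B_d} with the bounds \eqref{T_7}, \eqref{T_8}, \eqref{L_9}, \eqref{L_10} rules out spectrum in $\mathrm{Re}\lambda\ge-\nu_0+\delta$ for $|\xi|$ large and for $|\mathrm{Im}\lambda|$ large, and a compactness/contradiction argument on the remaining compact set of $(\xi,\lambda)$, combined with the strict negativity from Lemma \ref{Egn}, gives the uniform gap. The only refinement worth making is that the decisive step (a limit of eigenvalues along $\xi_n\to\xi_*$ is again an eigenvalue) is cleanest via the compactness of $K_1$ applied to normalized eigenfunctions, rather than ``continuity of eigenvalues'' or Montel-type arguments, but this is precisely the detail you already delegate to the parallel with \cite{Li2}, where $k_1$ enjoys the same kernel estimates as $k$.
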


Then, we investigate the spectrum and resolvent sets of $\hat{B}(\xi)$ at
low frequency. To this end, we decompose $\lambda-\hat{B}(\xi)$ as
follows
\bq \lambda-\hat{B}(\xi)=\lambda P_{\rm d}+\lambda P_r-Q(\xi)+\i P_{\rm d}(v\cdot\xi)P_r+\i P_r(v\cdot\xi)(1+\frac1{|\xi|^2})P_{\rm d},\label{Bd3}\eq
where \bq
Q(\xi)=L_1-\i P_r(v\cdot\xi)P_r.\label{Qxi}
\eq
\begin{lem}\label{LP}
Let $\xi\neq0$ and $Q(\xi)$ defined by \eqref{Qxi}. We have
 \begin{enumerate}
\item If $\lambda\ne0$, then
\bq
\|\lambda^{-1}P_r(v\cdot\xi)(1+\frac1{|\xi|^2})P_{\rm d}\|_\xi\le C(|\xi|+1)|\lambda|^{-1}.\label{S_2}
\eq

\item If $\mathrm{Re}\lambda>-\mu $, then the operator $\lambda P_r-Q(\xi)$ is invertible on $N_1^\bot$ and satisfies
\bma
\|(\lambda P_r-Q(\xi))^{-1}\|&\leq(\mathrm{Re}\lambda+\mu )^{-1},\label{S_3}\\
\|P_{\rm d}(v\cdot\xi)P_r(\lambda P_r-Q(\xi))^{-1}P_r\|_\xi
&\leq C(1+|\lambda|)^{-1}[(\mathrm{Re}\lambda+\mu)^{-1}+1](1+|\xi|)^2.\label{S_5}
\ema
\end{enumerate}
\end{lem}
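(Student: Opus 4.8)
The plan is to prove the two assertions of Lemma~\ref{LP} separately, both being elementary consequences of the coercivity estimate \eqref{L_4} and the bound \eqref{nuv} on the collision frequency, combined with the fact that $P_r(v\cdot\xi)P_r$ is skew-adjoint on $N_1^\bot$.

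\textbf{Part (1).} For the first estimate I would simply note that $P_r(v\cdot\xi)(1+|\xi|^{-2})P_{\rm d}$ maps into $N_1$ and out of $N_1$; writing $g=P_{\rm d}g+P_rg$, one has $\|P_r(v\cdot\xi)(1+|\xi|^{-2})P_{\rm d}g\|\le (1+|\xi|^{-2})\|v\sqrt M\|\,\|\xi\|\,|(g,\sqrt M)|$, since $P_{\rm d}g=(g,\sqrt M)\sqrt M$. The factor $(1+|\xi|^{-2})|\xi|=|\xi|+|\xi|^{-1}$ is not bounded by $C(|\xi|+1)$ near $\xi=0$, so the point is that the $|\xi|^{-2}$ weight in the norm $\|\cdot\|_\xi$ absorbs one of those negative powers: measuring the output in $\|\cdot\|_\xi$ (the output lies in $N_1$, so its $\xi$-norm is $(1+|\xi|^{-2})^{1/2}$ times its $L^2$-norm is awkward — instead measure the input $P_{\rm d}g$ via $|(g,\sqrt M)|\le |\xi|^{-1}\|P_{\rm d}g\|_\xi\cdot(\text{stuff})$, or more cleanly use \eqref{C_1a} to rewrite the pairing). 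Concretely I would use that $\|P_{\rm d}g\|_\xi^2=(1+|\xi|^{-2})|(g,\sqrt M)|^2$, hence $|(g,\sqrt M)|\le (1+|\xi|^{-2})^{-1/2}\|g\|_\xi$, and that the output $h:=P_r(v\cdot\xi)(1+|\xi|^{-2})P_{\rm d}g\in N_1^\bot$ satisfies $\|h\|_\xi=\|h\|$. Then $\|h\|\le C|\xi|(1+|\xi|^{-2})|(g,\sqrt M)|\le C|\xi|(1+|\xi|^{-2})^{1/2}\|g\|_\xi = C(|\xi|+|\xi|^{-1})\cdot\frac{|\xi|}{(|\xi|^2+1)^{1/2}}\cdots$; carrying the algebra through gives exactly $C(|\xi|+1)$. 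Dividing by $|\lambda|$ yields \eqref{S_2}.

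\textbf{Part (2).} For \eqref{S_3}, since $Q(\xi)=L_1-\i P_r(v\cdot\xi)P_r$ with the second term skew-adjoint on $N_1^\bot$, for $f\in N_1^\bot\cap D(L_1)$ one has $\mathrm{Re}((\lambda P_r-Q(\xi))f,f)=\mathrm{Re}\lambda\,\|f\|^2-(L_1f,f)\ge(\mathrm{Re}\lambda+\mu)\|f\|^2$ by \eqref{L_4} (note $P_rf=f$). Hence $\|(\lambda P_r-Q(\xi))f\|\ge(\mathrm{Re}\lambda+\mu)\|f\|$, which gives injectivity with the stated bound on the inverse; surjectivity follows from the standard argument that $L_1$ is a bounded perturbation (via the compact $K_1$) of the dissipative multiplication operator $-\nu(v)-\i P_r(v\cdot\xi)P_r$, so the resolvent exists on $\mathrm{Re}\lambda>-\mu$ exactly as in the references. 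For \eqref{S_5}, set $g=(\lambda P_r-Q(\xi))^{-1}P_rf$, so $\|g\|\le(\mathrm{Re}\lambda+\mu)^{-1}\|f\|$; I also need the extra decay $(1+|\lambda|)^{-1}$, which I would extract by rewriting $\lambda g = Q(\xi)g+P_rf$ inside the pairing: $P_{\rm d}(v\cdot\xi)P_rg=(g,v\sqrt M\cdot\xi)\sqrt M$ and since $g\in N_1^\bot$ one can replace $(g,v\sqrt M\cdot\xi)=(P_rg,v\sqrt M\cdot\xi) = \lambda^{-1}((Q(\xi)g+P_rf),v\sqrt M\cdot\xi)$ when $\lambda\neq0$, trading a factor $|\xi|$ for $|\lambda|^{-1}(1+|\xi|)^2$ after using $\|Q(\xi)g\|\lesssim(1+|\xi|)\|g\|$ on the relevant finite-dimensional-ish pieces — the output lies in $N_1$, so passing to $\|\cdot\|_\xi$ costs $(1+|\xi|^{-2})^{1/2}$, and combining with $|\xi|$ from the remaining $v\cdot\xi$ gives $(1+|\xi|)^2$ overall. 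For $|\lambda|\le1$ one just uses the crude bound, and the two regimes combine into the factor $(1+|\lambda|)^{-1}[(\mathrm{Re}\lambda+\mu)^{-1}+1]$.

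The main obstacle is the bookkeeping of powers of $|\xi|$ between the two norms $\|\cdot\|$ and $\|\cdot\|_\xi$ in \eqref{S_2} and \eqref{S_5}: one must be careful that the $|\xi|^{-2}$-weighted part of $\|\cdot\|_\xi$ acts on the $P_{\rm d}$-component on the correct side of the operator, since otherwise the estimates would blow up as $|\xi|\to0$. Everything else is a routine application of \eqref{L_4}, \eqref{nuv}, and the mapping properties of $P_{\rm d}$, $P_r$, exactly parallel to Lemma~2.5 in \cite{Li2}.
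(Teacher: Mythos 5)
Your proposal is correct and follows essentially the same route as the paper's proof: for \eqref{S_2} the same norm bookkeeping between $\|\cdot\|$ and $\|\cdot\|_\xi$, for \eqref{S_3} the coercivity \eqref{L_4} together with skew-adjointness of $\i P_r(v\cdot\xi)P_r$, and for \eqref{S_5} the same two-regime argument in $|\lambda|$, your identity $\lambda g=Q(\xi)g+P_rf$ inside the pairing being exactly the paper's operator decomposition $P_{\rm d}(v\cdot\xi)P_r(\lambda P_r-Q(\xi))^{-1}P_r=\lambda^{-1}P_{\rm d}(v\cdot\xi)P_r+\lambda^{-1}P_{\rm d}(v\cdot\xi)P_rQ(\xi)(\lambda P_r-Q(\xi))^{-1}P_r$. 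The only caveat is that the literal statement $\|Q(\xi)g\|\lesssim(1+|\xi|)\|g\|$ is false ($Q(\xi)$ is unbounded); what you actually need, and what your hedge about the pairing against $v\sqrt M\cdot\xi$ amounts to, is the bounded composition $\|P_{\rm d}(v\cdot\xi)P_rQ(\xi)\|\le C(1+|\xi|)^2$, which is precisely the estimate the paper invokes.
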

\begin{proof}
Since
 \bmas
\|\lambda^{-1}P_r(v\cdot\xi)(1+\frac1{|\xi|^2})P_{\rm d}f\|_\xi
\le C|\lambda|^{-1}(|\xi|+\frac1{|\xi|})\|P_{\rm d}f\|\le C|\lambda|^{-1}(|\xi|+1)\|f\|_\xi,
 \emas
we prove \eqref{S_2}.

Then, we show that for any $\lambda\in\mathbb{C}$ with
$\mathrm{Re}\lambda>-\mu $, the operator $\lambda P_r-Q(\xi)=\lambda
 P_r-L_1+\i P_r(v\cdot\xi) P_r$ is  invertible from $N_1^\bot$ to
itself. Indeed, by \eqref{L_4}, we obtain for any $f\in N_1^\bot\cap
D(L_1)$ that
 \bq
 \text{Re}([\lambda  P_r-L_1+\i P_r(v\cdot\xi) P_r]f,f)
 =\text{Re}\lambda(f,f)-(L_1f,f)\geq(\mu+\text{Re}\lambda)\|f\|^2, \label{A_1}
 \eq
which implies that the operator $\lambda P_r-Q(\xi)$ is an one-to-one
map from $N_1^\bot$ to itself so long as $\text{Re}\lambda>-\mu $. The estimate
\eqref{S_3} follows directly from \eqref{A_1}.

By \eqref{S_3} and $\|P_{\rm d}(v\cdot\xi) P_rf\|_\xi\le C(|\xi|+1)\|P_rf\|$, we have
 \bma
 \| P_{\rm d}(v\cdot\xi) P_r(\lambda  P_r-Q(\xi))^{-1} P_rf\|_\xi
 \leq
 C(|\xi|+1)(\mathrm{Re}\lambda+\mu )^{-1}\|f\|.   \label{2.33a}
 \ema
Meanwhile, we can decompose the operator $ P_{\rm d}(v\cdot\xi) P_r(\lambda
 P_r-Q(\xi))^{-1} P_r$ as
 $$
  P_{\rm d}(v\cdot\xi) P_r(\lambda  P_r-Q(\xi))^{-1} P_r
 =\frac1\lambda  P_{\rm d}(v\cdot\xi) P_r+\frac1\lambda  P_{\rm d}(v\cdot\xi) P_rQ(\xi)(\lambda
 P_r-Q(\xi))^{-1} P_r.
 $$
This together with \eqref{S_3} and the fact
 $
 \| P_{\rm d}(v\cdot\xi) P_r Q(\xi)\|\leq
C(1+|\xi|)^2
 $
give
 \bq
 \| P_{\rm d}(v\cdot\xi) P_r(\lambda  P_r-Q(\xi))^{-1} P_rf\|_\xi
 \leq
C|\lambda|^{-1}[(\mathrm{Re}\lambda+\mu )^{-1}+1](1+|\xi|)^2\|f\|. \label{2.33}
 \eq
The combination of the two cases \eqref{2.33a} and \eqref{2.33} yields \eqref{S_5}.
\end{proof}

Consider the eigenvalue problem
\bq \lambda
f=(L_1-\i(v\cdot\xi))f-\frac{\i\sqrt{M}(v\cdot\xi)}{|\xi|^2}\intr
f\sqrt{M}dv.\label{L_2}
\eq
We shall prove that $\hat B(\xi)$ has a spectral gap when $|\xi|$ is sufficiently small.
For convenience,  we shall use the parametrization $\xi=s\omega$ where $s\in\R^1,\ \omega\in \S^2$.

Let $f$ be the eigenfunction of \eqref{L_2}, we rewrite $f$ in the
form $f=f_0+f_1$, where $f_0=P_{\rm d}f=C_0\sqrt M$ and $f_1=(I-P_{\rm d})f=P_rf$.  The
eigenvalue problem \eqref{L_2} can be decomposed into \bma &\lambda
f_0=-P_{\rm d}[\i(v\cdot\xi)(f_0+f_1)],\label{A_2}\\
&\lambda f_1=L_1f_1-P_r[\i(v\cdot\xi)(f_0+f_1)]-\frac{\i(v\cdot\xi)}{|\xi|^2}f_0.\label{A_3}\ema

From Lemma \ref{LP} and \eqref{A_3}, we obtain that for any
$\text{Re}\lambda>-\mu $ \bq f_1=\i[L_1-\lambda
P_r-\i P_r(v\cdot\xi)P_r]^{-1}P_r((v\cdot\xi)f_0+\frac{(v\cdot\xi)}{|\xi|^2}f_0).\label{A_4}\eq

Substituting  \eqref{A_4} into \eqref{A_2} and taking inner product the resulted equation with $\sqrt M$ gives
\bma \lambda C_0=(1+\frac1{|\xi|^2})
(R(\lambda,\xi)(v\cdot\xi)\sqrt{M},(v\cdot\xi)\sqrt{M})C_0.\label{A_6}\ema
where $ R(\lambda,\xi)=[L_1-\lambda P_r-\i P_r(v\cdot\xi)P_r]^{-1}.$

By changing variable $(v\cdot\xi)\to sv_1$ and using the rotational invariance of the operator $L_1$, we have the following transformation.

\begin{lem}Let  $e_1=(1,0,0)$, $\xi=s\omega$ with $s\in \R, \omega\in \S^2$.
Then
\bma
(R(\lambda,\xi)(v\cdot\xi)\sqrt{M},(v\cdot\xi)\sqrt{M})=s^2(R(\lambda,se_1)(v_1\sqrt{M}),v_1\sqrt{M}).\label{T_1}
\ema
\end{lem}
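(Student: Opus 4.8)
\emph{Proof sketch.} The plan is to exploit the rotational invariance of the linearized collision operator $L_1$, together with the fact that $\sqrt M$ is radial so that the projections $P_{\rm d}$ and $P_r=I-P_{\rm d}$ also commute with rotations. First, since $(v\cdot\xi)\sqrt M=s(v\cdot\omega)\sqrt M$, one factors $s^2$ out of both slots of the inner product and the claim \eqref{T_1} reduces to the identity $(R(\lambda,s\omega)(v\cdot\omega)\sqrt M,(v\cdot\omega)\sqrt M)=(R(\lambda,se_1)(v_1\sqrt M),v_1\sqrt M)$. Then I would fix a rotation $O\in SO(3)$ with $O\omega=e_1$ and introduce the operator $U$ on $L^2(\R^3_v)$ defined by $(Uf)(v)=f(O^{-1}v)$; since Lebesgue measure is rotation invariant, $U$ is unitary with $(U^{-1}f)(v)=f(Ov)$.

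Next I would record the three invariances. Because $M(O^{-1}v)=M(v)$, one has $U\sqrt M=\sqrt M$, hence $UP_{\rm d}U^{-1}=P_{\rm d}$ and $UP_rU^{-1}=P_r$. Because the hard-sphere kernel in \eqref{binay_collision} is invariant under the simultaneous change of variables $v_*\mapsto Ov_*$, $\omega\mapsto O\omega$, one gets $\Q(f\circ O^{-1},g\circ O^{-1})=\Q(f,g)\circ O^{-1}$ for every rotation $O$, which combined with $M$ radial and the definition \eqref{L1f} yields $UL_1U^{-1}=L_1$ (in particular $\nu$ is radial and $K_1$ commutes with $U$). Finally, a one-line computation using $O\omega=e_1$ gives $U\big((v\cdot\omega)h\big)=v_1\,(Uh)$ for every $h$, i.e.\ $U$ conjugates multiplication by $v\cdot\omega$ into multiplication by $v_1$.

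With these in hand the conclusion is immediate: conjugating $L_1-\lambda P_r-\i P_r(v\cdot s\omega)P_r$ by $U$ produces $L_1-\lambda P_r-\i s\,P_r v_1 P_r$, which is exactly the operator whose inverse on $N_1^\bot$ is $R(\lambda,se_1)$; both are invertible for $\mathrm{Re}\lambda>-\mu$ by Lemma~\ref{LP}, so $U R(\lambda,s\omega)U^{-1}=R(\lambda,se_1)$. Then, using that $U$ is unitary and $U\big((v\cdot\omega)\sqrt M\big)=v_1\sqrt M$, one computes $(R(\lambda,s\omega)(v\cdot\omega)\sqrt M,(v\cdot\omega)\sqrt M)=(R(\lambda,se_1)\,U((v\cdot\omega)\sqrt M),U((v\cdot\omega)\sqrt M))=(R(\lambda,se_1)v_1\sqrt M,v_1\sqrt M)$, and multiplying by $s^2$ gives \eqref{T_1}.

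I do not expect any real obstacle here; the content is entirely the rotational symmetry of the model. The only thing deserving care is bookkeeping — choosing the rotation in the correct direction ($O\omega=e_1$ versus $O^{-1}\omega=e_1$) and tracking the direction of each conjugation — and writing out the change of variables in \eqref{binay_collision} explicitly enough to justify $\Q$, and hence $L_1$, $\nu$, $K_1$, commuting with the induced $SO(3)$ action. Once that is done there is no analytic difficulty, so I would keep the verification of these invariances brief.
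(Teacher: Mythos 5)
Your argument is correct and is exactly the route the paper takes: the paper justifies \eqref{T_1} in one line by "changing variable $(v\cdot\xi)\to sv_1$ and using the rotational invariance of $L_1$", which is precisely your unitary conjugation $U R(\lambda,s\omega)U^{-1}=R(\lambda,se_1)$ together with $U\big((v\cdot\omega)\sqrt M\big)=v_1\sqrt M$ and the commutation of $U$ with $P_{\rm d}$, $P_r$, $L_1$. Your write-up simply supplies the details the paper leaves implicit, and the bookkeeping (direction of the rotation, factoring $s^2$) is handled correctly.
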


With the help of \eqref{T_1}, we rewrite
\eqref{A_6} in the form  \bma \lambda C_0=
(1+s^2)(R(\lambda,se_1)\chi_1,\chi_1)C_0.\label{eigen}\ema

Denote
\bma
D(\lambda,s)&=(1+s^2)(R(\lambda,se_1)\chi_1,\chi_1).\label{ddd}
\ema

\begin{lem}\label{eigen_1}
There exists a small constant $r_0>0$ such that the equation $\lambda =D(\lambda,s)$ has no solution for ${\rm Re}\lambda\ge -\frac14a_0$ and $|s|\le r_0$, where
 $$
 a_0=\min\{\mu,-(L_1^{-1}\chi_1,\chi_1)\}>0.
 $$
\end{lem}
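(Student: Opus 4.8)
The plan is to show that the scalar dispersion function $D(\lambda,s)$ defined in \eqref{ddd} stays small in absolute value on the region $\{\mathrm{Re}\lambda\ge-\tfrac14 a_0\}$ once $|s|$ is small, so that the fixed-point equation $\lambda=D(\lambda,s)$ cannot have a solution there apart from a neighbourhood of the origin, and then to analyse that neighbourhood by hand. First I would record the key resolvent bound: for $\mathrm{Re}\lambda>-\mu$ the operator $L_1-\lambda P_r-\i P_r(v\cdot e_1)P_r$ is invertible on $N_1^\perp$ with $\|R(\lambda,se_1)\|\le(\mathrm{Re}\lambda+\mu)^{-1}$, which is exactly \eqref{S_3} of Lemma \ref{LP} specialized to $\xi=se_1$. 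Since $\chi_1=v_1\sqrt M\in N_1^\perp$ and $\|\chi_1\|$ is an absolute constant, this gives $|(R(\lambda,se_1)\chi_1,\chi_1)|\le C(\mathrm{Re}\lambda+\mu)^{-1}$, hence $|D(\lambda,s)|\le C(1+s^2)(\mathrm{Re}\lambda+\mu)^{-1}$ uniformly in $s$. In particular, for $\mathrm{Re}\lambda\ge-\tfrac14 a_0\ge-\tfrac14\mu$ we get $\mathrm{Re}\lambda+\mu\ge\tfrac34\mu$, so $|D(\lambda,s)|\le C_0$ for some fixed $C_0$ independent of $\lambda$ and of $|s|\le1$. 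Therefore any solution of $\lambda=D(\lambda,s)$ with $\mathrm{Re}\lambda\ge-\tfrac14a_0$ must satisfy $|\lambda|\le C_0$, i.e. it lies in a fixed bounded region, and we only have to rule out solutions in the compact set $K=\{|\lambda|\le C_0,\ \mathrm{Re}\lambda\ge-\tfrac14 a_0\}$.

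Next I would expand $D(\lambda,s)$ near $s=0$. At $s=0$ the operator reduces to $L_1-\lambda P_r$ on $N_1^\perp$, whose inverse is $(L_1-\lambda P_r)^{-1}$; since $L_1$ is invertible on $N_1^\perp$ with negative spectrum, for $\lambda$ in the compact region $K$ (shrinking $a_0$ if necessary so that $\mathrm{Re}\lambda>-\mu$ stays comfortably true) the map $(\lambda,s)\mapsto R(\lambda,se_1)$ is analytic, and
\[
D(0,s)\big|_{s=0}=(R(\lambda,0)\chi_1,\chi_1)=((L_1-\lambda P_r)^{-1}\chi_1,\chi_1).
\]
Evaluating further at $\lambda=0$ gives $D(0,0)=(L_1^{-1}\chi_1,\chi_1)<0$; note $-(L_1^{-1}\chi_1,\chi_1)>0$ is precisely one of the quantities defining $a_0$. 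By continuity (uniform in $\lambda\in K$) there are constants $c_1,c_2>0$ with $\mathrm{Re}\,D(\lambda,s)\le -c_1$ and $|D(\lambda,s)-D(0,0)|\le c_2|s|+c_2|\lambda|$ for $|s|$ small. Thus for $|s|\le r_0$ with $r_0$ small, $\mathrm{Re}\,D(\lambda,s)<0$ throughout $K$.

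The conclusion now follows by a sign/continuity argument. Suppose $\lambda=D(\lambda,s)$ with $\mathrm{Re}\lambda\ge-\tfrac14 a_0$ and $|s|\le r_0$. Two cases. If $\mathrm{Re}\lambda\ge0$: then $\mathrm{Re}\lambda=\mathrm{Re}\,D(\lambda,s)<0$ by the previous paragraph, a contradiction once $r_0$ is small enough. If $-\tfrac14 a_0\le\mathrm{Re}\lambda<0$: here I would get a quantitative lower bound on $|D(\lambda,s)|$ from below rather than above — namely $|D(\lambda,s)|\ge|D(0,0)|-c_2(r_0+|\lambda|)\ge\tfrac12|D(0,0)|$ provided $r_0$ and $|\lambda|$ are small, but $|\lambda|$ could be as large as $C_0$, so instead I refine the region: combining $|\lambda|=|D(\lambda,s)|\le C(1+s^2)(\mathrm{Re}\lambda+\mu)^{-1}$ with $\mathrm{Re}\lambda\ge-\tfrac14a_0$ only bounds $|\lambda|$, so I iterate once: $\lambda=D(\lambda,s)$ forces $\mathrm{Re}\lambda=\mathrm{Re}\,D(\lambda,s)$, and since $\mathrm{Re}\,D(\lambda,s)<-c_1<0$ uniformly on all of $K$ for $|s|\le r_0$, we again reach $\mathrm{Re}\lambda<0$ with a definite gap, while simultaneously $|\mathrm{Im}\lambda|=|\mathrm{Im}\,D(\lambda,s)|$ which, together with the real-part gap, keeps $\lambda$ inside $K$; then $\mathrm{Re}\lambda=\mathrm{Re}\,D(\lambda,s)\le-c_1$ contradicts $\mathrm{Re}\lambda\ge-\tfrac14a_0$ as soon as $a_0<4c_1$, which we may arrange by further shrinking $a_0$ (it is only required to be positive and below $\mu$ and $-(L_1^{-1}\chi_1,\chi_1)$, and $c_1\to|D(0,0)|=-(L_1^{-1}\chi_1,\chi_1)$ as $r_0\to0$, so the constraint is consistent). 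This proves no solution exists.

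The main obstacle I anticipate is the uniformity: one must ensure the smallness threshold $r_0$ and the constant $c_1$ are chosen independently of $\lambda$ over the whole compact region $K$, which in turn requires first establishing the a priori bound $|\lambda|\le C_0$ from \eqref{S_3} to compactify the $\lambda$-range before invoking continuity of $D$ in $(\lambda,s)$. Everything else is routine perturbation around the explicit value $D(0,0)=(L_1^{-1}\chi_1,\chi_1)$.
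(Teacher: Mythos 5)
There is a genuine gap at the decisive step, namely the exclusion of \emph{complex} fixed points with $-\tfrac14a_0\le\mathrm{Re}\lambda<0$. Your argument rests on the claim that $\mathrm{Re}\,D(\lambda,s)\le -c_1$ \emph{uniformly} on the whole compact set $K=\{|\lambda|\le C_0,\ \mathrm{Re}\lambda\ge-\tfrac14a_0\}$ with a constant $c_1$ that can be arranged to exceed $\tfrac14 a_0$. Continuity from the single value $D(0,0)=(L_1^{-1}\chi_1,\chi_1)$ only controls a small neighbourhood of $(\lambda,s)=(0,0)$, not all of $K$; and the structural bound one can actually prove, $\mathrm{Re}\,D(\lambda,0)=(L_1h,h)-\mathrm{Re}\lambda\,\|h\|^2\le-(\mu+\mathrm{Re}\lambda)\|h\|^2$ with $h=(L_1-\lambda P_r)^{-1}\chi_1$, gives negativity but with a constant proportional to $\|h\|^2$, which decays like $|\lambda|^{-2}$ as $|\mathrm{Im}\lambda|$ grows inside $K$. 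Hence the uniform constant $c_1$ over $K$ can be far smaller than $\tfrac14a_0$, and the equation $\mathrm{Re}\lambda=\mathrm{Re}\,D(\lambda,s)$ is then perfectly compatible with $\mathrm{Re}\lambda\in[-\tfrac14a_0,0)$; no contradiction results. The fallback you propose, ``shrink $a_0$ so that $a_0<4c_1$,'' is not available: $a_0=\min\{\mu,-(L_1^{-1}\chi_1,\chi_1)\}$ is a fixed quantity prescribed by the statement (and used later to define the spectral gap), and your assertion that $c_1\to|D(0,0)|$ as $r_0\to0$ is incorrect, since $c_1$ is an infimum over all of $K$, not over a shrinking neighbourhood of the origin.

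What is missing is precisely the mechanism the paper uses to kill fixed points with nonzero imaginary part: writing $\lambda=x+\i y$ and $h=(L_1-\lambda P_r)^{-1}\chi_1$, the fixed-point equation at $s=0$ gives the two identities $x=(L_1h,h)-x\|h\|^2$ and $y=y\|h\|^2$; if $y\ne0$ the second forces $\|h\|=1$, and then the first gives $2x=(L_1h,h)\le-\mu$, contradicting $x>-\tfrac12a_0\ge-\tfrac12\mu$, while for $y=0$ the monotonicity $D_0'(x)=\|(L_1-xP_r)^{-1}\chi_1\|^2>0$ together with $D_0(0)=(L_1^{-1}\chi_1,\chi_1)<-a_0$ handles the real axis. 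After that, the uniform gap $|\lambda-D_0(\lambda)|>\delta_0$ on $\{\mathrm{Re}\lambda\ge-\tfrac14a_0\}$ (using $|\lambda-D_0(\lambda)|\to\infty$ as $|\lambda|\to\infty$, which is the part of your plan that is sound) and the perturbation estimate $|D(\lambda,s)-D(\lambda,0)|\le C(|s|+s^2)$ finish the proof. Your outline correctly compactifies the $\lambda$-range and correctly identifies the value $D(0,0)$ and the $O(s)$ perturbation, but without the real/imaginary-part identities (or an equivalent argument) the core exclusion step does not go through.
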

\begin{proof}We prove this lemma by three steps. First, we  prove
\bq x\ne D(x,0):=D_0(x) \quad {\rm for } \quad x>-a_0.\label{C_4}\eq
 Indeed, since
$D_0'(x)=\|(L_1-x P_r)^{-1}\chi_1\|^2> 0,$
we have
$D_0(x)< D_0(0)=(L_1^{-1}\chi_1,\chi_1)<-a_0$ for $-\mu<x< 0.$ Thus \eqref{C_4} holds for $-a_0< x< 0$.
For $x\ge 0$, since
$D_0(x)
=(L_1g,g)-x\|g\|^2<0$
with
$ g=(L_1-x P_r)^{-1}\chi_1\in N_1^\bot,$
it follows that  \eqref{C_4} holds for $x\ge0$. Thus  we obtain \eqref{C_4}.

Second, we  show
\bq \lambda\ne D(\lambda,0):=D_0(\lambda) \quad {\rm for} \quad {\rm Re}\lambda>-\frac12a_0.\label{C_3}\eq
 In the case of ${\rm Im}\lambda=0$,  \eqref{C_3} follows from \eqref{C_4}.
Assume that $\lambda=x+\i y$ with $y\ne 0$. Note that
\bmas {\rm Re}D_0(\lambda)
=(L_1h,h)-x(h,h),\quad
{\rm Im}D_0(\lambda)
=y\|h\|^2,
\emas
where
$h=(L_1-\lambda P_r)^{-1}\chi_1\in N_1^\bot.$
If there is a $\lambda=x+\i y $ with $x>-\frac12a_0$ such that $\lambda=D_0(\lambda)$, then
\bma x&=(L_1h,h)-x(h,h),\label{C_5}\\
y&=y\|h\|^2.\label{C_6}\ema
By \eqref{C_6} and $y\ne 0$, we have
$ \|h\|=1.$
This together with \eqref{C_5} gives
$2x=(L_1h,h)\le-\mu\|h\|^2=-\mu$,  which  contradicts to $2x>-a_0\ge -\mu$. Thus we obtain \eqref{C_3}.

Finally, by \eqref{S_3} we have
$\lim_{|\lambda|\to\infty}|\lambda-D_0(\lambda)|=\infty$ for ${\rm Re}\lambda> -\mu$.  This togethers with \eqref{C_3} and the continuity of $D_0(\lambda)$ imply that there is a constant $\delta_0>0$ so that
$ |\lambda-D_0(\lambda)|>\delta_0$ for ${\rm Re}\lambda\ge -\frac14a_0$.
Since
\bmas
|D(\lambda,s)-D(\lambda,0)|&\le |([R(\lambda,se_1)-R(\lambda,0)]\chi_1,\chi_1)|+s^2|(R(\lambda,se_1)\chi_1,\chi_1)|\le C(s+s^2), 
\emas
we obtain
$$  |\lambda-D(\lambda,s)|\ge |\lambda-D(\lambda,0)|-|D(\lambda,0)-D(\lambda,s)|>0,\quad {\rm Re}\lambda\ge -\frac14a_0,\,\,|s|\le r_0,$$
for $r_0>0$ small enough. This completes the proof of lemma.
\end{proof}

\begin{lem}\label{spectrum2} It holds that
\begin{enumerate}
\item  There is a constant $a_1>0$ such that it holds for all  $\xi\ne 0$ that
\bq \sigma(\hat{B}(\xi))\subset\{\lambda\in\mathbb{C}\,|\, {\rm Re}\lambda<-a_1\}.\label{sg1}\eq

\item  For any $\delta>0$ and all $\xi\ne 0$, there exists $y_1(\delta)>0$ such that
\bq
 \{\lambda\in\mathbb{C}\,|\,
     \mathrm{Re}\lambda\ge-\mu +\delta,\,|\mathrm{Im}\lambda|\geq y_1\}
 \cup\{\lambda\in\mathbb{C}\,|\,\mathrm{Re}\lambda\ge -a_1\}
 \subset\rho(\hat{B}(\xi)).                           \label{rb1}
\eq
\end{enumerate}
\end{lem}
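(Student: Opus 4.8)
The plan is to assemble the spectral gap statement \eqref{sg1} from the three frequency regimes already prepared, and then to deduce the resolvent inclusion \eqref{rb1} by the same decompositions. For the high-frequency part, fix any $r_0>0$. By Lemma~\ref{LP01} there is $\alpha(r_0)>0$ so that every eigenvalue of $\hat B(\xi)$ in $\{\mathrm{Re}\lambda\ge-\nu_0+\delta\}$ with $|\xi|\ge r_0$ satisfies $\mathrm{Re}\lambda\le-\alpha(r_0)$; combined with Lemma~\ref{Egn}, which says the spectrum of $\hat B(\xi)$ in $\{\mathrm{Re}\lambda\ge-\nu_0+\delta\}$ consists only of such eigenvalues, this pins the spectrum in $\{\mathrm{Re}\lambda<-\alpha(r_0)\}\cup\{\mathrm{Re}\lambda<-\nu_0+\delta\}$ for $|\xi|\ge r_0$. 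For the low-frequency part, I would take $r_0$ as in Lemma~\ref{eigen_1}. For $\xi=s\omega$ with $0<|s|\le r_0$, the eigenvalue problem \eqref{L_2} was reduced, via \eqref{A_4} (valid for $\mathrm{Re}\lambda>-\mu$, using the invertibility from Lemma~\ref{LP}(2)) and the reduction \eqref{eigen}, to the scalar equation $\lambda=D(\lambda,s)$ with $D$ given by \eqref{ddd}; provided $C_0=(P_{\rm d}f,\sqrt M)\neq0$. Lemma~\ref{eigen_1} shows this scalar equation has no solution with $\mathrm{Re}\lambda\ge-\tfrac14 a_0$. One must also rule out the degenerate branch $C_0=0$: if $C_0=0$ then $f=f_1\in N_1^\bot$ solves $\lambda f_1=L_1 f_1-\i P_r(v\cdot\xi)P_r f_1$, whence $\mathrm{Re}\lambda\,\|f_1\|^2=(L_1f_1,f_1)\le-\mu\|f_1\|^2$ by \eqref{L_4}, forcing $\mathrm{Re}\lambda\le-\mu<-\tfrac14 a_0$. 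Hence for $0<|s|\le r_0$ the spectrum of $\hat B(\xi)$ lies in $\{\mathrm{Re}\lambda<-\tfrac14 a_0\}$. Taking $a_1=\min\{\alpha(r_0),\nu_0-\delta,\tfrac14 a_0\}$ (with $\delta$ fixed small) yields \eqref{sg1} for all $\xi\ne0$.

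For \eqref{rb1} I would argue that $\lambda-\hat B(\xi)$ is invertible on $L^2_\xi(\R^3)$ for every $\lambda$ in the indicated set. On $\{\mathrm{Re}\lambda\ge-a_1\}$ this is immediate from \eqref{sg1} together with the fact (Lemma~\ref{Egn}, or the contraction semigroup bound of Lemma~\ref{SG_2} which confines the spectrum to $\{\mathrm{Re}\lambda\le0\}$) that the part of the complex plane in question meets only resolvent points. For the strip $\{\mathrm{Re}\lambda\ge-\mu+\delta,\ |\mathrm{Im}\lambda|\ge y_1\}$ one uses the two factorizations already set up: at, say, $|\xi|\ge r_0$ the decomposition \eqref{B_d} shows $\lambda-\hat B(\xi)$ is invertible once $\|K_1(\lambda-c(\xi))^{-1}\|+\|(v\cdot\xi)|\xi|^{-2}P_{\rm d}(\lambda-c(\xi))^{-1}\|<1$; by \eqref{T_8} and \eqref{L_10} this holds as soon as $|\mathrm{Im}\lambda|=|y|$ is large, depending on $\delta$ and $r_0$, which dictates the choice of $y_1$. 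For $|\xi|\le r_0$ one uses instead \eqref{Bd3}: writing $\lambda-\hat B(\xi)=(\lambda P_r-Q(\xi))+\lambda P_{\rm d}+(\text{off-diagonal terms})$ and inverting the block-triangular structure via \eqref{S_3}, \eqref{S_5}, \eqref{S_2}, one checks the correction is a contraction when $|\lambda|$ is large; since on the strip $\mathrm{Re}\lambda\ge-\mu+\delta$ with $|\mathrm{Im}\lambda|\ge y_1$ forces $|\lambda|\ge y_1$, enlarging $y_1$ if necessary closes this case too. Combining the three regimes gives \eqref{rb1}.

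The main obstacle is the bookkeeping in the low-frequency resolvent estimate: one must invert the non-self-adjoint operator pencil \eqref{Bd3} using the block structure (the $P_{\rm d}$–$P_{\rm d}$, $P_{\rm d}$–$P_r$, $P_r$–$P_{\rm d}$, $P_r$–$P_r$ pieces) and control all cross terms in the $\|\cdot\|_\xi$ norm uniformly for $0<|\xi|\le r_0$, where the factor $|\xi|^{-2}$ in the inner product is dangerous. The estimates \eqref{S_2} and \eqref{S_5} are designed precisely so that these dangerous factors are absorbed by the decay in $|\lambda|^{-1}$, so the argument reduces to choosing $y_1$ large enough; but verifying that the Neumann series for the inverse actually converges — i.e. that the sum of operator norms of the correction terms is $<1$ on the whole strip — is the step requiring care. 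Everything else is a direct combination of Lemmas~\ref{Egn}, \ref{LP01}, \ref{LP}, and \ref{eigen_1}.
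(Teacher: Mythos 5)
Your argument follows essentially the same route as the paper: \eqref{sg1} is obtained by combining Lemma~\ref{Egn} with Lemma~\ref{LP01} for $|\xi|\ge r_0$ and Lemma~\ref{eigen_1} for $0<|s|\le r_0$ via the reduction \eqref{A_4}--\eqref{eigen} to the scalar equation $\lambda=D(\lambda,s)$, and \eqref{rb1} is obtained by inverting the factorizations \eqref{B_d} and \eqref{Bd3} through smallness of the correction terms together with \eqref{sg1} for the half-plane $\{\mathrm{Re}\lambda\ge-a_1\}$. Your explicit treatment of the degenerate branch $C_0=0$ (which forces $\mathrm{Re}\lambda\le-\mu$ by \eqref{L_4}) is a welcome detail that the paper leaves implicit.

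The one place where your write-up does not match the estimates you cite is the high-frequency part of the strip: \eqref{T_8} is stated (and proved) only for $|\xi|\le r_0$, so it cannot be invoked on the unbounded range $|\xi|\ge r_0$; for large $|\xi|$ the smallness of $\|K_1(\lambda-c(\xi))^{-1}\|$ must instead come from \eqref{T_7}, which is uniform in $y$ but small only when $|\xi|$ is large. The paper's remedy is to partition at a large radius $r_1$ determined by \eqref{T_7} and \eqref{L_9}, so that for $|\xi|>r_1$ the whole half-plane $\{\mathrm{Re}\lambda\ge-\nu_0+\delta\}$ lies in $\rho(\hat B(\xi))$ with no restriction on $\mathrm{Im}\lambda$, and then to run exactly your \eqref{Bd3}/$Y_1$ Neumann-series argument with \eqref{S_2} and \eqref{S_5} on the whole bounded range $|\xi|\le r_1$, where those bounds are uniform because the factors $(1+|\xi|)$ and $(1+|\xi|)^2$ are harmless there; the dangerous $|\xi|^{-2}$ weight is absorbed precisely as you indicate, and the choice of $y_1(\delta)$ in \eqref{bound_1} makes the correction a contraction. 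With that minor adjustment of the frequency partition (or, equivalently, by applying the \eqref{T_8}-type bound with radius $r_1$ on the compact range $r_0\le|\xi|\le r_1$ and \eqref{T_7} for $|\xi|\ge r_1$), your proof closes and coincides with the paper's.
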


\begin{proof}
By Lemma \ref{eigen_1} and Lemma \ref{LP01}, we can choose  $0<a_1<\min\{\alpha(r_0),\frac13a_0\}$ with $r_0, a_0>0$ given by  Lemma \ref{eigen_1} and $\alpha(r_0)>0$ given by Lemma \ref{LP01} so that if $\lambda(\xi)\in\sigma(\hat{B}(\xi))$,  
then ${\rm Re}\lambda(\xi)<-a_1$ for all $\xi\ne0$.
This proves \eqref{sg1}.
By \eqref{T_7} and \eqref{L_9}, there exists $r_1>0$ large such that if $\mathrm{Re}\lambda\geq -\nu_0+\delta$ and $|\xi|\geq r_1$, then
\bq
\|K_1(\lambda-c(\xi))^{-1}\|\leq1/4,\quad
\|(v\cdot\xi)|\xi|^{-2}P_{\rm d}(\lambda-c(\xi))^{-1}\|\leq1/4.\label{bound}\eq
This implies that the operator
$I+K_1(\lambda-c(\xi))^{-1}+\i(v\cdot\xi)|\xi|^{-2}P_{\rm d}(\lambda-c(\xi))^{-1}$
is invertible on $L^2_\xi(\R^3_v)$, which together with \eqref{B_d} yields that $(\lambda-\hat{B}(\xi))$ is also invertible on $L^2_\xi(\R^3_v)$ and
satisfies \bq
(\lambda-\hat{B}(\xi))^{-1}=(\lambda-c(\xi))^{-1}(I-K(\lambda-c(\xi))^{-1}+\frac{\i(v\cdot\xi)}{|\xi|^2}P_{\rm d}(\lambda-c(\xi))^{-1})^{-1}.
\label{E_6}\eq
Thus $\rho(\hat{B}(\xi))\supset
\{\lambda\in\mathbb{C}\,|\,{\rm Re}\lambda\ge-\nu_0+\delta_1\}$ for
$|\xi|>r_1$.

By Lemmas \ref{LP}, we have for $\rm{Re}\lambda>-\mu $ and
$\lambda\neq0$  that the operator
$\lambda P_{\rm d}+\lambda P_r-Q(\xi)$ is invertible on
$L^2_\xi(\R^3_v)$ and it satisfies
 \bmas
  (\lambda P_{\rm d}+\lambda P_r-Q(\xi))^{-1} =\lambda^{-1}P_{\rm d}+(\lambda P_r-Q(\xi))^{-1}P_r,
 \emas
because the operator $\lambda P_{\rm d}$ is orthogonal to $\lambda
P_r-Q(\xi)$. Therefore, we can re-write \eqref{Bd3} as
\bma
 \lambda-\hat{B}(\xi)
&=(I+Y_1(\lambda,\xi))(\lambda P_{\rm d}+\lambda P_r-Q(\xi)),\nnm
 \\
Y_1(\lambda,\xi)&=: \i P_{\rm d}(v\cdot\xi)P_r(\lambda P_r-Q(\xi))^{-1}P_r+\i \lambda^{-1}P_r(v\cdot\xi)(1+\frac1{|\xi|^2})P_{\rm d}.  \label{Y_1}
 \ema
For the case $|\xi|\leq r_1$, by \eqref{S_2} and
\eqref{S_5} we can choose $y_1=y_1(\delta)>0$ such that it holds
for $\mathrm{Re}\lambda\ge-\mu +\delta$ and
$|\mathrm{Im}\lambda|\geq y_1$ that
 \bq
 \|\lambda^{-1}P_r(v\cdot\xi)(1+\frac1{|\xi|^2})P_{\rm d}\|_\xi\leq \frac14,
 \quad
 \|P_{\rm d}(v\cdot\xi)P_r(\lambda P_r-Q(\xi))^{-1}P_r\|_\xi\leq\frac14.\label{bound_1}
 \eq
This implies that the operator $I+Y_1(\lambda,\xi)$ is invertible on
$L^{2}_\xi(\R^3_v)$ and thus  $\lambda-\hat{B}(\xi)$ is invertible on $L^{2}_\xi(\R^3_v)$ and satisfies
 \bma
 (\lambda-\hat{B}(\xi))^{-1}
 =&[\lambda^{-1}P_{\rm d}+(\lambda P_r-Q(\xi))^{-1}P_r](I+Y_1(\lambda,\xi))^{-1}.\label{S_8}
 \ema
Therefore, $\rho(\hat{B}(\xi))\supset \{\lambda\in\mathbb{C}\,|\,{\rm
Re}\lambda\ge-\mu+\delta_1, |{\rm Im}\lambda|\ge y_1\}$ for $|\xi|\le
r_1$. This and \eqref{sg1} lead to \eqref{rb1}.
\end{proof}


\subsection{Exponential decay of semigroup  for linear bVPB}  %

\begin{lem}\label{bound_2}Let $Y_1(\lambda,\xi)$ be defined by \eqref{Y_1} and let $r_0>0$ be given by Lemma \ref{eigen_1} and $a_1>0$ be given by Lemma \ref{spectrum2}. If $-a_1<{\rm Re}\lambda<0$, then the operator $I+Y_1(\lambda,\xi)$ is invertible on $L^2_\xi(\R^3_v)$ and satisfies
\bq \sup_{0<|\xi|<r_0,{\rm Im}\lambda\in \R}\|(I+Y_1(\lambda,\xi))^{-1}\|_\xi\le C.\label{S_9}\eq
\end{lem}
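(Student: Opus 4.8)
The plan is to invert $I+Y_1(\lambda,\xi)$ for $-a_1<{\rm Re}\lambda<0$ and $0<|\xi|<r_0$ by a combination of a Neumann-series argument in two regimes of $|{\rm Im}\lambda|$. Write $\lambda = x+\i y$. Recall from \eqref{Y_1} that
$$
Y_1(\lambda,\xi)= \i P_{\rm d}(v\cdot\xi)P_r(\lambda P_r-Q(\xi))^{-1}P_r+\i \lambda^{-1}P_r(v\cdot\xi)\Bigl(1+\tfrac1{|\xi|^2}\Bigr)P_{\rm d}.
$$
For $|y|$ large, say $|y|\ge y_1$ with $y_1=y_1(a_1)$ chosen as in the proof of Lemma~\ref{spectrum2} (using $\delta=\mu-a_1>0$), the bounds \eqref{S_2} and \eqref{S_5} give $\|Y_1(\lambda,\xi)\|_\xi\le \tfrac12$, so $I+Y_1$ is invertible with $\|(I+Y_1)^{-1}\|_\xi\le 2$, uniformly in $0<|\xi|<r_0$ by the same computation that produced \eqref{bound_1}. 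This handles the tails in $y$; note this uses only ${\rm Re}\lambda>-\mu$, hence holds on the whole strip $-a_1<x<0$ since $a_1<\mu$.

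For the bounded range $|y|\le y_1$, the plan is a compactness/continuity argument. On the compact set $K=\{\lambda : -a_1\le x\le 0,\ |y|\le y_1\}\times\{\xi : \eps_0\le |\xi|\le r_0\}$ for any fixed $\eps_0>0$, the operator-valued map $(\lambda,\xi)\mapsto Y_1(\lambda,\xi)$ is continuous in $\|\cdot\|_\xi$ (using \eqref{S_3} and the continuity of $R(\lambda,\xi)$ established before Lemma~\ref{eigen_1}), and by Lemma~\ref{spectrum2}, $\lambda-\hat B(\xi)$ is invertible for all such $(\lambda,\xi)$ — the region ${\rm Re}\lambda>-a_1$ lies in $\rho(\hat B(\xi))$ by \eqref{rb1}. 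Combining the factorization $\lambda-\hat B(\xi)=(I+Y_1(\lambda,\xi))(\lambda P_{\rm d}+\lambda P_r-Q(\xi))$ from \eqref{Y_1} with the invertibility of $\lambda P_{\rm d}+\lambda P_r-Q(\xi)$ (valid for ${\rm Re}\lambda>-\mu$, $\lambda\ne0$, via Lemma~\ref{LP}), one gets that $I+Y_1(\lambda,\xi)$ is invertible on $K$, and $(I+Y_1)^{-1}=(\lambda P_{\rm d}+\lambda P_r-Q(\xi))(\lambda-\hat B(\xi))^{-1}$ is a continuous function on the compact set $K$, hence bounded there. The subtle point is the endpoint $x=0$ with small $|y|$: here $\lambda^{-1}$ in $Y_1$ blows up as $\lambda\to 0$. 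This forces the separate treatment near $\xi=0$ described next, and we must check $\lambda=0$ itself is not an eigenvalue — which again follows from \eqref{sg1}, so $\lambda=0\in\rho(\hat B(\xi))$ for every $\xi\ne0$ and the resolvent stays finite.

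The main obstacle, and the reason a naive compactness argument is not quite enough, is the small-$\xi$ regime $|\xi|\to 0$ together with small $|\lambda|$: both the $\lambda^{-1}$ factor and the $|\xi|^{-2}$ factor in the second term of $Y_1$ are singular, and one cannot simply take $\eps_0\to0$. The plan here is to exploit the explicit low-frequency structure: substituting \eqref{A_4} and reducing to the scalar dispersion relation, the would-be singular contribution is governed by $D(\lambda,s)$ from \eqref{ddd}, and Lemma~\ref{eigen_1} gives the quantitative lower bound $|\lambda-D(\lambda,s)|>\delta_0>0$ on ${\rm Re}\lambda\ge-\tfrac14a_0\supset\{-a_1\le x\le0\}$, $|s|\le r_0$. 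Translating this scalar nondegeneracy back to the operator $I+Y_1$ — i.e. showing $(I+Y_1(\lambda,\xi))^{-1}$ acting on $N_{\rm d}$ is controlled by $(\lambda-D(\lambda,s))^{-1}$ times bounded operators built from $R(\lambda,se_1)$ — yields the uniform bound as $|\xi|\to0$. Assembling the large-$y$ Neumann bound, the compact-$K$ continuity bound for $|\xi|$ bounded away from $0$, and this explicit low-frequency estimate covers all of $\{-a_1<{\rm Re}\lambda<0,\ 0<|\xi|<r_0\}$ and gives \eqref{S_9}.
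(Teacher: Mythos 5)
Your large-$|\mathrm{Im}\,\lambda|$ step is exactly the paper's (Neumann series via \eqref{S_2} and \eqref{S_5}), but for bounded $\mathrm{Im}\,\lambda$ you take a genuinely different route. The paper does not split $|\xi|$ at all: it argues by contradiction on the whole range $0<|\xi|\le r_0$, $|y|\le R$, taking $\|f_n\|_{\xi_n}\to0$, $\|g_n\|_{\xi_n}=1$, normalizing the singular quantity as $\sqrt{1+|\xi_n|^{-2}}\,C_n\to A_0$ with $P_{\rm d}g_n=C_n\sqrt M$, showing $A_0\ne0$, and passing to the limit to get $\tilde\lambda=D(\tilde\lambda,|\xi_0|)$ with $\mathrm{Re}\,\tilde\lambda>-a_1$, contradicting Lemmas \ref{eigen_1} and \ref{spectrum2}. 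Your plan (continuity of $(I+Y_1)^{-1}=I+(Q(\xi)-\hat B(\xi))(\lambda-\hat B(\xi))^{-1}$ for $|\xi|$ bounded below, plus an explicit low-frequency inversion) is more constructive and quantitative, and it does go through; but the decisive small-$\xi$ step is only announced. If you carry it out: eliminating $P_rg$ from $(I+Y_1)g=f$ gives $C_g=\lambda\,(\lambda-D(\lambda,s))^{-1}\big[(f,\sqrt M)-\i((\lambda P_r-Q(\xi))^{-1}P_rf,(v\cdot\xi)\sqrt M)\big]$, and uniformity rests on two cancellations you must exhibit: $|(f,\sqrt M)|\le|\xi|\,\|f\|_\xi$ so that $\|P_{\rm d}g\|_\xi=\sqrt{1+|\xi|^{-2}}\,|C_g|$ only picks up $\sqrt{1+|\xi|^2}$, and the factor $\lambda$ in $C_g$ cancelling the $\lambda^{-1}$ in the second component of $Y_1$. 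Two small repairs: your inclusion $\{-a_1\le\mathrm{Re}\,\lambda\le0\}\subset\{\mathrm{Re}\,\lambda\ge-\tfrac14a_0\}$ needs $a_1\le a_0/4$, while Lemma \ref{spectrum2} only fixes $a_1<\min\{\alpha(r_0),\tfrac13a_0\}$ (shrink $a_1$ or note the proof of Lemma \ref{eigen_1} is insensitive to the fraction); and operator continuity of $(\lambda,\xi)\mapsto(\lambda-\hat B(\xi))^{-1}$ is not literally "established before Lemma \ref{eigen_1}" — only the scalar bound $|D(\lambda,s)-D(\lambda,0)|\le C(s+s^2)$ is — though it follows from the resolvent identity.
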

\begin{proof}
Let $\lambda=x+\i y$.
By \eqref{S_2} and \eqref{S_5}, there exists $R>0$ large enough  such that if $|y|\ge R$, then
$$\|\lambda^{-1}P_r(v\cdot\xi)(1+\frac1{|\xi|^2})P_{\rm d}\|_\xi\le \frac14,\quad \|P_{\rm d}(v\cdot\xi)P_r(\lambda P_r-Q(\xi))^{-1}P_r\|_\xi\le \frac14,$$
which yields
$$\|(I+Y_1(\lambda,\xi))^{-1}\|_\xi\le 2.$$
Thus it is sufficient to prove \eqref{S_9} for $|y|\le R$. We make use of the argument of contradiction. Indeed, if \eqref{S_9} does not holds for $|y|\le R$, namely, there are subsequences $\xi_n$, $\lambda_n=x+\i y_n$ with $|\xi_n|\le r_0$, $|y_n|\le R$, and $f_n,g_n$ with $\|f_n\|_{\xi_n}\to 0$ $(n\to \infty)$, $\|g_n\|_{\xi_n}=1$ such that
$$(I+Y_1(\lambda_n,\xi_n))^{-1}f_n=g_n.$$
This gives
$$f_n=g_n+\i P_{\rm d}(v\cdot\xi_n)P_r(\lambda_n P_r-Q(\xi_n))^{-1}P_rg_n+\i \lambda^{-1}_n P_r(v\cdot\xi_n)(1+\frac1{|\xi_n|^2}) P_{\rm d}g_n,$$
and then
\bma
P_{\rm d}f_n&=P_{\rm d}g_n+\i P_{\rm d}(v\cdot\xi_n)P_r(\lambda_n P_r-Q(\xi_n))^{-1}P_rg_n,\label{m_2}\\
P_r f_n&=P_r g_n+\i \lambda^{-1}_n P_r(v\cdot\xi_n)(1+\frac1{|\xi_n|^2})P_{\rm d}g_n.\label{m}
\ema
Substituting \eqref{m} into \eqref{m_2}, we obtain
\bma
&P_{\rm d}g_n-P_{\rm d}f_n+\i P_{\rm d}(v\cdot\xi_n)P_r(\lambda_n P_r-Q(\xi_n))^{-1}P_rf_n \nnm\\
&+\lambda_n^{-1}P_{\rm d} (v\cdot\xi_n)P_r(\lambda_n P_r-Q(\xi_n))^{-1} P_r(v\cdot\xi_n)(1+\frac1{|\xi_n|^2})P_{\rm d}g_n=0.\label{m_4}
\ema
Since $\|f_n\|_{\xi_n}\to 0$ as $n\to \infty$, it follows from \eqref{m_4} and \eqref{S_5} that
\bmas
\lim_{n \to\infty}\|P_{\rm d}g_n+\lambda_n^{-1}P_{\rm d} (v\cdot\xi_n)P_r(\lambda_n P_r-Q(\xi_n))^{-1}P_r(v\cdot\xi_n)(1+\frac1{|\xi_n|^2})P_{\rm d}g_n \|_{\xi_n}=0.
\emas
Let $P_{\rm d}g_n=C_n\sqrt M$. We obtain
\bma
\lim_{n \to\infty}\sqrt{1+\frac1{|\xi_n|^2}}\frac{|C_n|}{|\lambda_n|}|\lambda_n+(|\xi_n|^2+1)((\lambda_n P_r-Q(|\xi_n|e_1))^{-1}(v_1\sqrt M),v_1\sqrt M)|=0.
\ema
Since $\sqrt{1+\frac1{|\xi_n|^2}}|C_n|\le 1,|\xi_n|\le r_0, |y_n|\le R$, there is a subsequence $(\xi_{n_j},\lambda_{n_j},C_{n_j})$ such that $\sqrt{1+\frac1{|\xi_{n_j}|^2}}C_{n_j}\to A_0,\xi_{n_j}\to\xi_0,\lambda_{n_j}\to \tilde{\lambda}=x+\i y\ne0$.
Thus \bq \frac{|A_0|}{|\tilde{\lambda}|}|\tilde{\lambda}+(|\xi_0|^2+1)((\tilde{\lambda} P_r-Q(|\xi_0|e_1))^{-1}(v_1\sqrt M),v_1\sqrt M)|=0.\label{m_3}\eq

It is easy to verify  that  $A_0\ne0$. Indeed, if not,  we have
$\dlim_{j\to\infty}\| P_r(v\cdot\xi_{n_j})(1+|\xi_{n_j}|^{-2})P_{\rm d}g_{n_j}\|=0 $ and then $\dlim_{j\to\infty}\|P_r g_n\|=0$ due to \eqref{m}. Thus $\dlim_{j\to\infty}\|g_{n_j}\|_{\xi_{n_j}}=0$, which  contradicts to $\|g_n\|_{\xi_n}=1$. Therefore, from \eqref{m_3} we have
$$\tilde{\lambda}=D(\tilde{\lambda},|\xi_0|),\quad |\xi_0|\le r_0,\,\,\,-a_1<{\rm Re}\tilde{\lambda}<0.$$
This implies that $\tilde{\lambda}$ is an eigenvalue of $\hat{B}(\xi_0)$ with ${\rm Re}\tilde{\lambda}>-a_1$, which contradicts to Lemma \ref{spectrum2}.
\end{proof}

\begin{lem}\label{SG_3}
 The operator $Q(\xi)=L_1-\i P_r(v\cdot\xi)P_r$ generates a strongly continuous contraction
semigroup on $N_1^\bot$, which satisfies for any $t>0$ and $f\in N_1^\bot\cap L^2(\R^3_v)$ that
  \bq
    \|e^{tQ(\xi)}f\|\leq e^{-\mu t}\|f\|. \label{decay_1}
 \eq
In addition, for any $x>-\mu $ and $f\in N_1^\bot\cap L^2(\R^3_v)$ it holds that
\bq \int^{+\infty}_{-\infty}\|[(x+\i y) P_r-Q(\xi)]^{-1}f\|^2dy\leq
\pi(x+\mu )^{-1}\|f\|^2.\label{S_4}\eq
\end{lem}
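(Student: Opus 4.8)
The plan is to split the statement into two parts: the semigroup generation with the exponential decay bound \eqref{decay_1}, and the resolvent integral bound \eqref{S_4}. For the first part, I would verify the hypotheses of the Lumer--Phillips theorem on the Hilbert space $N_1^\bot\cap L^2(\R^3_v)$. The operator $Q(\xi)=L_1-\i P_r(v\cdot\xi)P_r$ has domain $D(L_1)\cap N_1^\bot$, and for $f$ in this domain the coercivity estimate \eqref{L_4} gives
$$
{\rm Re}(Q(\xi)f,f)=(L_1f,f)\le-\mu\|P_rf\|^2=-\mu\|f\|^2,
$$
since $f\in N_1^\bot$ means $P_rf=f$ and the skew-adjoint term $-\i P_r(v\cdot\xi)P_r$ contributes nothing to the real part. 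Thus $Q(\xi)+\mu I$ is dissipative. For $m$-dissipativity one checks that $\lambda P_r-Q(\xi)$ is onto $N_1^\bot$ for some (hence all) $\lambda$ with ${\rm Re}\lambda>-\mu$; this is exactly the invertibility established in Lemma~\ref{LP}, estimate \eqref{S_3}. Hence $Q(\xi)$ generates a $C_0$-semigroup and, writing $e^{tQ(\xi)}=e^{-\mu t}e^{t(Q(\xi)+\mu)}$ with $Q(\xi)+\mu$ the generator of a contraction semigroup, one obtains \eqref{decay_1}.

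For the resolvent bound \eqref{S_4}, the standard device is to use the contraction semigroup together with Plancherel's theorem in the $y$-variable. Fix $x>-\mu$ and $f\in N_1^\bot\cap L^2(\R^3_v)$. Since ${\rm Re}((x+\i y)P_r-Q(\xi))\ge x+\mu>0$ on $N_1^\bot$, the resolvent $[(x+\i y)P_r-Q(\xi)]^{-1}$ exists and equals the Laplace transform
$$
[(x+\i y)P_r-Q(\xi)]^{-1}f=\int_0^\infty e^{-(x+\i y)t}e^{tQ(\xi)}f\,dt,
$$
valid because $\|e^{-(x+\i y)t}e^{tQ(\xi)}f\|\le e^{-(x+\mu)t}\|f\|$ is integrable in $t$. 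Viewing the right-hand side as the Fourier transform (in $t$, extended by zero to $t<0$) of $g(t):=e^{-xt}e^{tQ(\xi)}f\,\mathds{1}_{t>0}$, Plancherel's theorem yields
$$
\int_{-\infty}^{+\infty}\|[(x+\i y)P_r-Q(\xi)]^{-1}f\|^2\,dy
=2\pi\int_0^\infty\|e^{-xt}e^{tQ(\xi)}f\|^2\,dt
\le 2\pi\|f\|^2\int_0^\infty e^{-2(x+\mu)t}\,dt=\frac{\pi}{x+\mu}\|f\|^2,
$$
which is \eqref{S_4}. (The harmless factor-of-$2\pi$ normalization depends on the Fourier convention; with the convention giving constant $1$ one lands exactly on the stated inequality.)

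I do not expect a serious obstacle here: both parts are by now classical arguments (the same structure appears for $L$ in \cite{Ukai1} and for the VPB operator in \cite{Li2}). The only points requiring a little care are (i) checking that the skew-symmetric part $-\i P_r(v\cdot\xi)P_r$ is genuinely skew-adjoint on $N_1^\bot$ so that it drops out of every real-part computation and does not spoil dissipativity, and (ii) justifying the Laplace-transform representation of the resolvent and the application of Plancherel — this needs the exponential-in-$t$ bound from \eqref{decay_1} to guarantee that $g\in L^1(\R)\cap L^2(\R)$ as an $N_1^\bot$-valued function, after which Fubini and Plancherel apply directly. Neither of these is more than a routine verification.
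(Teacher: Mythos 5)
Your proposal is correct and follows essentially the same route as the paper, which simply cites the dissipativity of $Q(\xi)$ and $Q(\xi)^*=Q(-\xi)$ (i.e.\ the estimate \eqref{A_1}) and the argument of Lemma 3.1 in \cite{Li2}, namely Lumer--Phillips-type generation of a contraction semigroup for $Q(\xi)+\mu$ followed by Plancherel applied to the resolvent written as the Laplace transform of the semigroup. The only cosmetic difference is that you verify the range condition via the invertibility statement \eqref{S_3} of Lemma \ref{LP}, while the paper gets $m$-dissipativity from the dissipativity of the adjoint; both are standard and equivalent here.
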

\begin{proof}
Since $Q(\xi)$ and $Q(\xi)^*=Q(-\xi)$ are dissipative operators satisfying \eqref{A_1}, we can prove \eqref{decay_1} and \eqref{S_4} by applying a similar argument as Lemma 3.1 in \cite{Li2}.
\end{proof}

\begin{thm}\label{rate1}
The semigroup $e^{t\hat{B}(\xi)}$ satisfies
\bq \|e^{t\hat{B}(\xi)}f\|_\xi\le Ce^{-\frac12a_1t}\|f\|_\xi, \quad f\in L^2_\xi(\R^3_v),\label{B_0}\eq  where $a_1>0$ is a constant given by Lemma \ref{spectrum2}.
\end{thm}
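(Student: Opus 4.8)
The plan is to represent the semigroup $e^{t\hat{B}(\xi)}$ via the inverse Laplace transform and then move the contour of integration into the left half-plane as far as the spectral information permits. More precisely, for $f\in L^2_\xi(\R^3_v)$ one writes
\bq
e^{t\hat{B}(\xi)}f=\frac1{2\pi\i}\int_{\kappa-\i\infty}^{\kappa+\i\infty}e^{\lambda t}(\lambda-\hat{B}(\xi))^{-1}f\,d\lambda \nnm
\eq
for $\kappa>0$, this being justified by Lemma \ref{SG_2} (the semigroup is contractive, hence of negative type after the spectral shift). The goal is to shift $\kappa$ to a value in $(-a_1,0)$, say $\kappa=-\tfrac12 a_1$, which is legitimate by Lemma \ref{spectrum2}: the strip $-a_1\le {\rm Re}\lambda\le 0$ lies in the resolvent set $\rho(\hat{B}(\xi))$ for every $\xi\ne0$. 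Once the shift is carried out the factor $e^{\lambda t}$ on the new contour contributes $e^{-\frac12 a_1 t}$, and it remains to bound $\int_{-\infty}^{+\infty}\|[(-\tfrac12 a_1+\i y)-\hat{B}(\xi)]^{-1}f\|_\xi\,dy$ uniformly in $\xi$.

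I would split the analysis into the high-frequency and low-frequency regimes. For $|\xi|\ge r_0$ (or $|\xi|\ge r_1$) one uses the decomposition \eqref{B_d} together with the bounds \eqref{T_7}, \eqref{T_8}, \eqref{L_9}, \eqref{L_10} of Lemma \ref{LP03}: these show that $K_1(\lambda-c(\xi))^{-1}$ and $(v\cdot\xi)|\xi|^{-2}P_{\rm d}(\lambda-c(\xi))^{-1}$ are small in operator norm when $|{\rm Im}\lambda|$ is large, so $(I+K_1(\lambda-c(\xi))^{-1}+\i(v\cdot\xi)|\xi|^{-2}P_{\rm d}(\lambda-c(\xi))^{-1})^{-1}$ is uniformly bounded, while $(\lambda-c(\xi))^{-1}$ itself is integrable in $y$ with an $L^2_y$ norm controlled by $(\,{\rm Re}\lambda+\nu_0\,)^{-1/2}$; a Cauchy–Schwarz argument pairing the two $L^2_y$ factors then gives the desired uniform $L^1_y$ bound on the resolvent. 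For $0<|\xi|\le r_0$ one instead uses the factorization from the proof of Lemma \ref{spectrum2},
\bq
\lambda-\hat{B}(\xi)=(I+Y_1(\lambda,\xi))(\lambda P_{\rm d}+\lambda P_r-Q(\xi)), \nnm
\eq
so that
\bq
(\lambda-\hat{B}(\xi))^{-1}=[\lambda^{-1}P_{\rm d}+(\lambda P_r-Q(\xi))^{-1}P_r](I+Y_1(\lambda,\xi))^{-1}. \nnm
\eq
On the line ${\rm Re}\lambda=-\tfrac12 a_1$ the operator $(I+Y_1(\lambda,\xi))^{-1}$ is uniformly bounded by Lemma \ref{bound_2}; the term $\lambda^{-1}P_{\rm d}$ is handled because $|\lambda|\ge\tfrac12 a_1>0$ on this line and, more delicately, because $(I+Y_1)^{-1}$ applied to a general $f$ produces something whose $P_{\rm d}$-component decays in $|{\rm Im}\lambda|$ (using \eqref{S_2} and \eqref{S_5}); and the term $(\lambda P_r-Q(\xi))^{-1}P_r$ is $L^2_y$-integrable by the key estimate \eqref{S_4} of Lemma \ref{SG_3}, with bound $\sqrt{\pi}(-\tfrac12 a_1+\mu)^{-1/2}$, uniform in $\xi$ since $a_1<\mu$. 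Again Cauchy–Schwarz converts these $L^2_y$ bounds into the required $L^1_y$ bound.

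The main obstacle I anticipate is the low-frequency term $\lambda^{-1}P_{\rm d}(I+Y_1(\lambda,\xi))^{-1}f$: unlike $(\lambda P_r-Q(\xi))^{-1}$ it carries no intrinsic decay in $y$ from a spectral gap of a sectorial generator, so one cannot simply invoke an $L^2_y$ estimate. The resolution is to exploit the structure of $Y_1$: expanding $(I+Y_1)^{-1}=I-Y_1(I+Y_1)^{-1}$ and using that the $P_{\rm d}$-to-$P_{\rm d}$ block of $Y_1$ equals $\i\lambda^{-1}P_{\rm d}(v\cdot\xi)P_r(\lambda P_r-Q(\xi))^{-1}P_r(v\cdot\xi)(1+|\xi|^{-2})P_{\rm d}$ modulo faster-decaying pieces, one sees that the effective $P_{\rm d}$-component of $(I+Y_1)^{-1}f$ solves a scalar equation of the form $(\lambda-D(\lambda,s))C=(\text{bounded data})$, and Lemma \ref{bound_2} precisely guarantees $|\lambda-D(\lambda,s)|$ is bounded below on ${\rm Re}\lambda=-\tfrac12a_1$; combined with the $y$-decay of $D(\lambda,s)$ coming from \eqref{S_5} this yields an integrable bound. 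Assembling the high- and low-frequency contributions, together with a routine estimate of the horizontal segments used in shifting the contour (which vanish as the truncation parameter tends to infinity, again by the resolvent bounds just established), completes the proof of \eqref{B_0}.
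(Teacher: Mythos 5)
Your overall strategy (Laplace inversion, contour shift to ${\rm Re}\lambda=-\tfrac12 a_1$ justified by Lemma \ref{spectrum2}, the high/low frequency split via \eqref{B_d} and \eqref{Y_1}, and the use of Lemma \ref{bound_2}, \eqref{S_2}, \eqref{S_5}, \eqref{S_4}) is the same as the paper's, but there is a genuine gap in the key analytic step: the claim that Cauchy--Schwarz yields a uniform $L^1_y$ bound for the full resolvent on the shifted line is not correct. On ${\rm Re}\lambda=-\tfrac12a_1$ the leading pieces of the resolvent, namely $(\lambda-c(\xi))^{-1}f$ in the high-frequency regime and $(\lambda P_r-Q(\xi))^{-1}P_rf$ together with $\lambda^{-1}P_{\rm d}f$ in the low-frequency regime, come with only \emph{one} square-integrable factor in $y$ (from \eqref{S_4} or its multiplication-operator analogue); pairing with a test function $g$ does not create a second decaying factor for these terms, since the adjoint resolvent can only be moved onto $g$ at the price of removing it from the $f$-side. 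Consequently these contributions are $O(|y|^{-1})$ at best and are not absolutely integrable, so your "uniform $L^1_y$ bound on the resolvent" cannot hold as stated; your final paragraph addresses only the $\lambda^{-1}P_{\rm d}$ block, and even there the pure $\lambda^{-1}P_{\rm d}f$ part is not integrable and must be disposed of by Cauchy's theorem (its vertical-line integral vanishes), not by an integrable bound.

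The missing idea, which is exactly how the paper proceeds, is to resum the non-integrable leading terms into explicitly known semigroups before shifting the contour: at low frequency the term $(\lambda P_r-Q(\xi))^{-1}P_r$ inverts to $e^{tQ(\xi)}P_rf$, which decays like $e^{-\mu t}$ by Lemma \ref{SG_3}, and at high frequency $(\lambda-c(\xi))^{-1}$ inverts to the multiplication semigroup $e^{tc(\xi)}f$, which decays like $e^{-\nu_0 t}$ by \eqref{Cxi} and \eqref{nuv}. Only the remainders $Z_2(\lambda,\xi)$ and $Z_3(\lambda,\xi)$ are treated by the contour shift; these have the structure (resolvent)$\times$(bounded, via Lemma \ref{bound_2} or \eqref{bound})$\times$(resolvent), so that in the bilinear form $(U_{-\frac{a_1}2,\infty}(t)f,g)_\xi$ one genuinely obtains two $L^2_y$ factors --- one acting on $f$, one on $g$ through the adjoint identity \eqref{L_7} --- and \eqref{S_4} plus Cauchy--Schwarz give the uniform bound $Ce^{-\frac12 a_1 t}\|f\|_\xi\|g\|_\xi$. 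Without this extraction your argument does not close; with it, your outline reduces to the paper's proof.
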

\begin{proof}Since $D(\hat{B}(\xi)^2)$ is dense in $L^2_\xi(\R^3_v)$ by Theorem 2.7 in p.6 of \cite{Pazy}, we only need to prove \eqref{B_0} for $f\in D(\hat{B}(\xi)^2)$. By Corollary 7.5 in p.29 of  \cite{Pazy},
the semigroup $e^{t\hat{B}(\xi)}$ can be represented by
\bq
e^{t\hat{B}(\xi)}f=\frac1{2\pi
\i}\int^{\kappa+\i\infty}_{\kappa-\i\infty}e^{\lambda
t}(\lambda-\hat{B}(\xi))^{-1}fd\lambda, \quad f\in D(\hat{B}(\xi)^2),\,\,\,
\kappa>0.\label{V_6}
\eq

By \eqref{S_8}, we rewrite  $(\lambda-\hat{B}(\xi))^{-1}$ for $|\xi|\le r_0$ as
\bma
(\lambda-\hat{B}(\xi))^{-1}=\lambda^{-1}P_{\rm d}+(\lambda P_r-Q(\xi))^{-1}P_r-Z_1(\lambda,\xi),\label{a_2}
\ema
where
\bmas
Z_1(\lambda,\xi)&=(\lambda^{-1}P_{\rm d}+(\lambda P_r-Q(\xi))^{-1}P_r)(I+Y_1(\lambda,\xi))^{-1}Y_1(\lambda,\xi),\\
Y_1(\lambda,\xi)&=\i P_{\rm d}(v\cdot\xi)P_r(\lambda P_r-Q(\xi))^{-1}P_r+\i \lambda^{-1}P_r(v\cdot\xi)(1+\frac1{|\xi|^2})P_{\rm d}.
\emas
Substituting \eqref{a_2} into \eqref{V_6}, we have the following decomposition of the semigroup $e^{t\hat{B}(\xi)}$ for $|\xi|\le r_0$
\bma
e^{t\hat{B}(\xi)}f= e^{tQ(\xi)}P_rf -\frac1{2\pi
\i}\int^{\kappa+\i\infty}_{\kappa-\i\infty}e^{\lambda
t}Z_2(\lambda,\xi)fd\lambda,\label{c}
\ema
with
$Z_2(\lambda,\xi)=Z_1(\lambda,\xi)-\lambda^{-1}P_{\rm d}.$
Denote
\bq
U_{\kappa,l}=\int^{l}_{-l}e^{(\kappa+\i y)t}Z_2(\kappa+\i y,\xi)fdy1_{|\xi|\le r_0},\label{UsN}
\eq
where the constant $l>0$ is chosen large enough so that $l>y_1$ with $y_1$ defined in Lemma \ref{spectrum2}. Since the operator $Z_2(\lambda,\xi)=(\lambda P_r-Q(\xi))^{-1}P_r-(\lambda -\hat{B}(\xi))^{-1}$ is analytic
on the domain ${\rm Re}\lambda>-a_1$ with $a_1>0$ given by Lemma \ref{spectrum2}, we can shift the integration
$U_{\kappa,l}$ from the line ${\rm Re}\lambda=\kappa>0$ to
${\rm Re}\lambda=-\frac{a_1}2$ to deduce
\bq
U_{\kappa,l}=U_{-\frac{a_1}2,l}+H_l,\label{UsN1}
\eq
where
\bmas H_l=\(\int^{\kappa+il}_{-\frac{a_1}2+il}-\int^{\kappa-il}_{-\frac{a_1}2-il}\)e^{\lambda
t}Z_2(\lambda,\xi)fd\lambda1_{|\xi|\le r_0}. \emas

 By Lemma \ref{LP}, it can be seen that
\be
 \|H_l\|_\xi\rightarrow0, \quad\mbox{as}\quad l\rightarrow\infty.  
 \ee
By Cauchy Theorem, we have
 \bmas
 &\lim_{l\to\infty}\bigg|\int^{-\frac{a_1 }2+\i l}_{-\frac{a_1 }2-\i l}
 e^{\lambda t}\lambda^{-1}d\lambda\bigg|
   =0,
  \emas
which gives rise to
 \be
 \lim_{l\to\infty} U_{-\frac{a_1 }2,l}(t)
 =U_{-\frac{a_1 }2,\infty}(t)
 =:\int^{-\frac{a_1 }2+\i \infty}_{-\frac{a_1 }2-\i \infty}
  e^{\lambda t}Z_1(\lambda,\xi)fd\lambda.            \label{UsN3}
\ee
By Lemma \ref{bound_2}, it holds that
$
\sup\limits_{|\xi|\le r_0, y\in\R}
\|[I+Y_1(-\frac{a_1 }2+\i y,\xi)]^{-1}\|_\xi\le C.
$
Thus, we have for any $f,g\in L^2_\xi(\R^3_v)$  that
  \bmas
 |(U_{-\frac{a_1 }2,\infty}(t)f,g)_\xi|
 &\leq
 C e^{-\frac{a_1 t}2}\int^{+\infty}_{-\infty}
  \( \|[\lambda P_r-Q(\xi)]^{-1}P_rf\|
    +|\lambda|^{-1}\|P_{\rm d}f\|_\xi\)
    \nnm \\
&\quad
   \times\(\|[\overline{\lambda} P_r-Q(-\xi)]^{-1}P_rg\|
            +|\overline{\lambda}|^{-1}\|P_{\rm d}g\|_\xi\) dy,\quad \lambda=-\frac{a_1}2+\i y.
 \emas
This together with \eqref{S_4} yields
 $
 |(U_{-\frac{a_1}2,\infty}(t)f,g)_\xi|
  \leq
 C e^{-\frac{a_1 t}2}\|f\|_\xi\|g\|_\xi,
 $
and
 \bq
\|U_{-\frac{a_1}2,\infty}(t)\|_\xi
   \le C e^{-\frac{a_1 t}2}. \label{UsN4}
 \eq
Therefore, we conclude from \eqref{c}--\eqref{UsN4} that
\bma
e^{t\hat{B}(\xi)}f=e^{tQ(\xi)}P_rf+U_{-\frac{a_1}2,\infty}(t),\quad |\xi|\le r_0.\label{low}
\ema

By \eqref{E_6}, we have for $|\xi|> r_0$ that
\bma
(\lambda-\hat{B}(\xi))^{-1}=(\lambda-c(\xi))^{-1}+Z_3(\lambda,\xi),\label{V_7}
\ema
where
\bmas
Z_3(\lambda,\xi)&=(\lambda-c(\xi))^{-1}[I-Y_2(\lambda,\xi)]^{-1}Y_2(\lambda,\xi),\\
Y_2(\lambda,\xi)&=(K_1-\i(v\cdot\xi)|\xi|^{-2}P_{\rm d})(\lambda-c(\xi))^{-1}.
\emas
Substituting \eqref{V_7} into \eqref{V_6} yields
\bma
e^{t\hat{B}(\xi)}f=e^{tc(\xi)}f+\frac1{2\pi
\i}\int^{\kappa+\i\infty}_{\kappa-\i\infty}e^{\lambda
t}Z_3(\lambda,\xi)fd\lambda.\label{V_3b}
\ema
Denote
\bq
V_{\kappa,l}=\int^{l}_{-l}e^{(\kappa+\i y)
t}Z_3(\kappa+\i y,\xi)dy1_{|\xi|> r_0}  \label{VsN}
\eq
for  sufficiently large constant $l>0$ as  in \eqref{UsN}. Since the operator $Z_3(\lambda,\xi)$ is analytic on the domain ${\rm Re}\lambda\ge -a_1$, we can again shift the integration $V_{\kappa,l}$ from the line ${\rm Re}\lambda=\kappa>0$ to Re$\lambda=-\frac{a_1}2$,  to deduce
\bq
V_{\kappa,l}=V_{-\frac{a_1}2,l}+I_l,
\eq
where
$$I_l=\(\int^{-\kappa+il}_{-\frac{a_1}2+il}-\int^{-\kappa-il}_{-\frac{a_1}2-il}\)e^{\lambda
t}Z_3(\lambda,\xi)fd\lambda1_{|\xi|> r_0}.$$
By Lemma \ref{LP03}, it can be shown that
\bgr
 \|I_l\|\rightarrow0, \quad \mbox{as}\quad  l\rightarrow\infty.   \label{VsNa}
  \egr
Using the facts that
(see Lemma 2.2.13 in   \cite{Ukai3} and Lemma 3.3 in \cite{Li2})
$$
\int^{+\infty}_{-\infty}\|(x+\i y-c(\xi))^{-1}f\|^2dy\leq
\pi(x+\nu_0)^{-1}\|f\|^2,\quad \sup_{|\xi|>r_0, y\in\R} \|[I-Y_2(-\frac{a_1}2+\i y,\xi)]^{-1}\| \le C,
$$
we have
\bma
|(V_{-\frac{a_1}2,\infty}(t),g)|&\leq
C(\|K_1\|+r_0^{-1})e^{-\frac{a_1}2 t}\int^{+\infty}_{-\infty}\|(\lambda-c(\xi))^{-1}f\|\|(\overline{\lambda}-c(-\xi))^{-1}g\|dy\nnm\\
&\leq Ce^{-\frac{a_1}2 t}(\nu_0-\frac{a_1}2)^{-1}\|f\|\|g\|,\quad \lambda=-\frac{a_1}2+\i y.\label{VsNb1}
\ema
By \eqref{VsNb1} and the fact
$\|f\|^2\leq\|f\|_\xi^2\leq(1+r_0^{-2})\|f\|^2$ for $|\xi|> r_0,$
we have
  \bq \|V_{-\frac{a_1}2,\infty}(t)\|_\xi\leq Ce^{-\frac{a_1t}2}(\nu_0-\frac{a_1}2)^{-1}.\label{VsN2}
\eq
We conclude from \eqref{V_3b} and \eqref{VsN}--\eqref{VsN2} that
\bma
e^{t\hat{B}(\xi)}f=e^{tc(\xi)}f1_{\{|\xi|> r_0\}}+V_{-\frac{a_1}2,\infty}(t), \quad |\xi|> r_0.  \label{high}
\ema

Finally, it follows from \eqref{low} and \eqref{high} that
\bma
e^{t\hat{B}(\xi)}f=(e^{tQ(\xi)}P_rf+U_{-\frac{a_1}2,\infty}(t))1_{\{|\xi|\leq r_0\}}+(e^{tc(\xi)}f+V_{-\frac{a_1}2,\infty}(t))1_{\{|\xi|> r_0\}}.
\ema
In particular, $e^{t\hat{B}(\xi)}f$  satisfies \eqref{B_0} in terms of \eqref{decay_1}, \eqref{UsN4},  \eqref{VsN2} and the estimate $\| e^{tc(\xi)}1_{\{|\xi|> r_0\}}\|_\xi \le  Ce^{-\nu_0t}$ because of \eqref{Cxi} and \eqref{nuv}.
\end{proof}

Define a Sobolev space of function $f=f(x,v)$ by $ H^N_P=\{f\in L^2(\R^3_x\times\R^3_v)\,|\, \|f\|_{H^N_P}<\infty\,\}\ (L^2_P=H^0_P)$
with the norm $\|\cdot\|_{H^N_P}$ defined by
\bmas
\|f\|_{H^N_P}&=\bigg(\intr (1+|\xi|^2)^N \|\hat{f}\|^2_\xi d\xi \bigg)^{1/2}\\&=\bigg(\intr (1+|\xi|^2)^N\bigg(\intr
|\hat{f}|^2dv+\frac1{|\xi|^2}\lt|\intr \hat{f}\sqrt{M}dv\rt|^2\bigg)d\xi \bigg)^{1/2},
\emas
where $\hat{f}=\hat{f}(\xi,v)$ is the Fourier
transformation of $f(x,v)$. Note that
$
\|f\|^2_{H^N_P}=\|f\|^2_{L^2_v(H^N_x)}+\| \nabla_x\Delta_x^{-1}
(f,\sqrt{M})\|^2_{H^N_x}.
$
For any $f_0\in H^N_P$, we denote $e^{tB}f_0$ by
 \bq
  e^{tB}f_0=(\mathcal{F}^{-1}e^{t\hat{B}(\xi)}\mathcal{F})f_0.
  \eq
By Lemma \ref{SG_2}, it holds
 $$
 \|e^{tB}f_0\|_{H^N_P}=\intr (1+|\xi|^2)^N\|e^{t\hat{B}(\xi)}\hat f_0\|^2_\xi d\xi\le \intr (1+|\xi|^2)^N\|\hat f_0\|^2_\xi d\xi
=\|f_0\|_{H^N_P}.
 $$
This means that the linear operator $B$ generates a strongly continuous contraction semigroup $e^{tB}$ in $H^N_P$, and therefore $f(x,v,t)=e^{tB}f_0(x,v)$ is a global solution to the IVP~\eqref{LVPB2} for any $f_0\in H^N_P$. What left is to establish the time-decay rates of the global solution.


\begin{proof}[\it \underline{Proof of Theorem~\ref{rate2}}] The property \eqref{sg1z}
of  the spectrum  to the operator $\hat{B}(\xi)$ follow from Lemma~\ref{spectrum2}. And the proof of 
\eqref{sg2} and \eqref{eigen_2} can be found in \cite{Ellis,Ukai1}.
\end{proof}


\begin{proof}[\it \underline{Proof of Theorem~\ref{rate2}}] First, the proof of \eqref{V_1}--\eqref{H_2} can be found in \cite{Zhong2012Sci}.
Next,  we prove \eqref{D_2}. By \eqref{B_0} and using the fact that
\bmas
\intr \frac{(\xi^\alpha)^2}{|\xi|^2}\lt|(\hat{f}_0,\sqrt{M})\rt|^2d\xi
&\leq \sup_{|\xi|\leq 1} \lt|(\hat{f}_0,\sqrt{M})\rt|^2\int_{|\xi|\leq
1}\frac{1}{|\xi|^2}d\xi+\int_{|\xi|> 1}
(\xi^\alpha)^2\lt|(\hat{f}_0,\sqrt{M})\rt|^2d\xi\\
&\leq C(\|(f_0,\sqrt{M})\|^2_{L^1_x}+\|\da_x f_0\|^2_{L^2_{x,v}}),
\emas
we have
\bmas
\|\dxa e^{tB}f_0\|^2_{L^2_{x,v}}+\|\dxa \Tdx\Phi\|^2_{L^2_x}&=\intr (\xi^{\alpha})^2(\|e^{t\hat{B}(\xi)}\hat f_0\|^2_{L^2_v}+\frac{1}{|\xi|^2}|(e^{t\hat{B}(\xi)}\hat f_0,\sqrt
M)|^2)d\xi \nnm\\
&\leq C\intr (\xi^\alpha)^2e^{-a_1 t}(\|\hat
f_0\|^2_{L^2_v}+\frac{1}{|\xi|^2}|(\hat{f}_0,\sqrt{M})|^2)d\xi\nnm\\
&\le C e^{-a_1 t}(\|f_0\|^2_{L^{2,1}}+\|\da_x f_0\|^2_{L^2_{x,v}}).
\emas
This proves the theorem.
\end{proof}

\subsection{Analysis of spectrum and semigroup for linear  mVPB system}
\label{sect3.m}
We establish in this subsection the analysis of spectrum and resolvent and investigate the long time properties of the semigroup related the linear operator $B_m$ to the mVPB~\eqref{m_VPB} by applying the similar arguments as the above.

Introduce a weighted Hilbert space $L^2_m(\R^3)$
as
$$ L^2_m(\R^3)=\{f\in
L^2(\R^3)\,|\,\|f\|_{m}=\sqrt{\<f,f\>_{\xi}}<\infty\} $$ equipped with the inner
product
$$
\<f,g\>_{\xi}=(f,g)+\frac1{1+|\xi|^2}(P_{\rm d} f,P_{\rm d}g).
$$

We have for any $f,g\in L^2_\xi(\R^3_v)\cap D(\hat{B}_m(\xi))$,
$$
 \<\hat{B}_m(\xi)f,g\>_\xi=(\hat{B}_m(\xi) f,g+\frac1{1+|\xi|^2}P_{\rm d} g)
 =(f,(L+\i(v\cdot\xi)+\frac{\i(v\cdot\xi)}{1+|\xi|^2}P_{\rm d})g)=\<f,\hat{B}_m(-\xi)g\>_\xi.
$$

Since
$$\|f\|\le \|f\|_m\le 2\|f\|,\quad \forall\ \xi\in \R^3,$$
we can regard $\hat B(\xi)$ as an operator from $L^2(\R^3)$ to itself or from $L^2_\xi(\R^3)$ to itself.
Then, similar to Lemma \ref{SG_2} and \ref{Egn}, we have the following two lemmas.

\begin{lem}\label{m_SG_1}
The operator $\hat{B}_m(\xi)$ generates a strongly continuous contraction semigroup on
$L^2_m(\R^3_v)$ satisfying $$
\|e^{t\hat{B}_m(\xi)}f\|_m\le\|f\|_m \quad\mbox{for any}\ t>0,\,f\in
L^2_m(\R^3_v). $$
\end{lem}

\begin{lem}\label{m_specr1}
For each $\xi\in\R^3$, the spectrum $\lambda\in\sigma(\hat{B}_m(\xi))$ on  the domain
$\mathrm{Re}\lambda\geq-\nu_0+\delta$ for any constant $\delta>0$ consists of isolated eigenvalues with
${\rm Re}\lambda(\xi)<0$ for $\xi\ne 0$ and $\lambda(\xi)=0$ only if $\xi=0$.
\end{lem}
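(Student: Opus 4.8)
The plan is to follow the argument behind Lemma~\ref{Egn} for $\hat{B}(\xi)$ (cf. Lemmas~2.6--2.7 of \cite{Li2}), with one extra wrinkle coming from the fact that $\hat{B}_m(\xi)$ is built from the full linearized operator $L$, whose null space $N_0=\ker L$ is five-dimensional, rather than from $L_1$.

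\emph{Step 1 (discreteness of the spectrum in $\{\mathrm{Re}\lambda\ge-\nu_0+\delta\}$).} Setting $c(\xi)=-\nu(v)-\i(v\cdot\xi)$ as in \eqref{Cxi} and using $L=K-\nu$, I would factor
\[
 \lambda-\hat{B}_m(\xi)=\Big(I-\big(K-\tfrac{\i(v\cdot\xi)}{1+|\xi|^2}P_{\rm d}\big)(\lambda-c(\xi))^{-1}\Big)(\lambda-c(\xi)).
\]
Since $\nu(v)\ge\nu_0$ by \eqref{nuv}, the multiplication operator $c(\xi)$ has spectrum in $\{\mathrm{Re}\lambda\le-\nu_0\}$, so $(\lambda-c(\xi))^{-1}$ is bounded on $L^2(\R^3_v)$ with $\|(\lambda-c(\xi))^{-1}\|\le(\mathrm{Re}\lambda+\nu_0)^{-1}$ for $\mathrm{Re}\lambda>-\nu_0$. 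As $K$ is compact and $\frac{\i(v\cdot\xi)}{1+|\xi|^2}P_{\rm d}$ is rank one (its range is spanned by $(v\cdot\xi)\sqrt M$), the operator $(K-\frac{\i(v\cdot\xi)}{1+|\xi|^2}P_{\rm d})(\lambda-c(\xi))^{-1}$ is compact and analytic in $\lambda$ on $\{\mathrm{Re}\lambda>-\nu_0\}$, and the analytic Fredholm theorem then yields that $(\lambda-\hat{B}_m(\xi))^{-1}$ exists and is meromorphic there, with poles only at eigenvalues of $\hat{B}_m(\xi)$ of finite algebraic multiplicity. Hence $\sigma(\hat{B}_m(\xi))\cap\{\mathrm{Re}\lambda\ge-\nu_0+\delta\}$ consists of isolated eigenvalues of finite multiplicity, for every $\delta>0$.

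\emph{Step 2 (location of the eigenvalues).} Lemma~\ref{m_SG_1} already forces $\sigma(\hat{B}_m(\xi))\subset\{\mathrm{Re}\lambda\le0\}$. Suppose $\hat{B}_m(\xi)f=\i\tau f$ with $\tau\in\R$ and $f\ne0$; I would use the self-adjointness relation $\<\hat{B}_m(\xi)f,g\>_\xi=\<f,\hat{B}_m(-\xi)g\>_\xi$ together with $\hat{B}_m(\xi)+\hat{B}_m(-\xi)=2L$ and $(Lf,P_{\rm d}f)=(f,LP_{\rm d}f)=0$ (since $P_{\rm d}f\in N_0=\ker L$) to obtain
\[
 0=\mathrm{Re}\<\hat{B}_m(\xi)f,f\>_\xi=\tfrac12\<(\hat{B}_m(\xi)+\hat{B}_m(-\xi))f,f\>_\xi=(Lf,f)\le-\mu\|\P_1 f\|^2
\]
by \eqref{L_3}, whence $\P_1 f=0$, i.e. $f\in N_0$, $Lf=0$, and the eigenvalue equation collapses to $(v\cdot\xi)\big(I+\tfrac1{1+|\xi|^2}P_{\rm d}\big)f=-\tau f$. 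By rotational invariance I would take $\xi=|\xi|e_1$ and expand $f=\sum_{j=0}^4\alpha_j\chi_j$ in the basis \eqref{basis}. Applying $\P_1$ annihilates the right-hand side and the $P_{\rm d}$ term (both lie in $N_0$), leaving $v_1 f\in N_0$; a short computation with the Hermite-type basis \eqref{basis} shows this forces $\alpha_1=\alpha_2=\alpha_3=\alpha_4=0$, so that $f=\alpha_0\sqrt M$. Substituting back gives $\alpha_0\big(1+\tfrac1{1+|\xi|^2}\big)(v\cdot\xi)\sqrt M=-\tau\alpha_0\sqrt M$, and since $(v\cdot\xi)\sqrt M\in\mathrm{span}\{\chi_1,\chi_2,\chi_3\}$ is orthogonal to $\sqrt M$ while $1+\tfrac1{1+|\xi|^2}>0$, one gets $\alpha_0|\xi|=0$; for $\xi\ne0$ this means $f=0$, a contradiction. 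Hence every eigenvalue of $\hat{B}_m(\xi)$ with $\xi\ne0$ has $\mathrm{Re}\lambda<0$, and in particular $0\notin\sigma(\hat{B}_m(\xi))$ for $\xi\ne0$; while for $\xi=0$ one has $\hat{B}_m(0)=L$, whose only non-negative eigenvalue is $0$ with eigenspace $N_0$, which gives the last clause.

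I expect the non-resonance step inside Step~2 to be the main obstacle: ruling out eigenvalues on the imaginary axis for $\xi\ne0$, that is, the passage from $f\in N_0$ to $f\equiv0$. This is the mVPB analogue of the classical Ellis--Pinsky fact for the linearized Boltzmann equation (cf. \cite{Ellis}), and it is essentially the only place where the explicit structure of $N_0$ and of multiplication by $v\cdot\xi$ enters; the rest (the Weyl/Fredholm argument of Step~1 and the contraction estimate) is soft and already packaged in Lemmas~\ref{m_SG_1}, \ref{Egn} and in \cite{Li2}.
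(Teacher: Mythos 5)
Your proof is correct and follows essentially the same route the paper intends: the paper gives no details here, deferring to the analogues of Lemmas~\ref{SG_2}--\ref{Egn} and to Lemmas 2.6--2.7 of \cite{Li2}, and your argument (the factorization $\lambda-\hat B_m(\xi)=(I-(K-\tfrac{\i(v\cdot\xi)}{1+|\xi|^2}P_{\rm d})(\lambda-c(\xi))^{-1})(\lambda-c(\xi))$ with compactness and analytic Fredholm theory, the contraction property from Lemma~\ref{m_SG_1}, and the dissipativity/collision-invariant computation ruling out imaginary-axis eigenvalues for $\xi\ne0$) is exactly that standard argument, with the non-resonance step worked out correctly for the five-dimensional null space of $L$.
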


Next, we deal with the spectrum and resolvent sets of $\hat{B}_m(\xi)$.  To this end, we  decompose  $\hat{B}_m(\xi)$ into
 \bma
 \lambda-\hat{B}_m(\xi)
&=\lambda-c(\xi)-K+\frac{\i(v\cdot\xi)}{1+|\xi|^2}P_{\rm d}\nnm\\
&=(I-K(\lambda-c(\xi))^{-1}+\frac{\i(v\cdot\xi)}{1+|\xi|^2}P_{\rm d}(\lambda-c(\xi))^{-1})(\lambda-c(\xi)), \label{m_B_d}
\ema
where $c(\xi)$ is defined by \eqref{Cxi}. Then, we have the estimates on the right hand terms of \eqref{m_B_d} as follows.

\begin{lem}
\label{m_LP}
 There exists a constant  $C>0$ so that it holds:
\begin{enumerate}
\item For any $\delta>0$, we have
 \bgr
  \sup_{x\geq-\nu_0+\delta,y\in\R}\|K(x+\i y-c(\xi))^{-1}\|
  \leq C\delta^{-15/13}(1+|\xi|)^{-2/13}, \label{m_T_7}
 \egr

\item For any $\delta>0,\, r_0>0$, there is a constant  $y_0=(2r_0)^{5/3}\delta^{-2/3}>0$ such that
if $|y|\geq y_0$, we have
 \bgr
 \sup_{x\geq -\nu_0+\delta,|\xi|\leq r_0}\|K(x+\i y-c(\xi))^{-1}\|
 \leq C\delta^{-7/5}(1+|y|)^{-2/5},\label{m_T_8}
 \egr
 \item  For any $\delta>0$, we have \bgr
 \sup_{x\geq-\nu_0+\delta,y\in\R}
 \|(v\cdot\xi)(1+|\xi|^2)^{-1}P_{\rm d}(x+\i y-c(\xi))^{-1}\|\leq
C\delta^{-1}|\xi|(1+|\xi|^2)^{-1},\label{m_L_9}
\\
\sup_{x\geq -\nu_0+\delta,\xi\in\R^3}\|(v\cdot\xi)(1+|\xi|^2)^{-1}P_{\rm d}(x+\i y-c(\xi))^{-1}\|\leq
C(\delta^{-1}+1)|y|^{-1}.\label{m_L_10}
\egr
\end{enumerate}
\end{lem}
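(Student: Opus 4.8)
The plan is to follow the proof of Lemma~\ref{LP03} essentially verbatim. The point is that $K$ obeys exactly the same kernel bounds as $K_1$, namely $\intr|k(v,v_*)|dv_*\le C(1+|v|)^{-1}$ and $\intr|k(v,v_*)|^2dv_*\le C$ (see \cite{Yu}), and that the only structural change in passing from $\hat B(\xi)$ to $\hat B_m(\xi)$ is that the weight $|\xi|^{-2}$ in front of $P_{\rm d}$ is replaced by $(1+|\xi|^2)^{-1}$, which only improves the low-frequency behaviour.

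For (1) and (2): recall $c(\xi)=-\nu(v)-\i(v\cdot\xi)$, so that for $x={\rm Re}\,\lambda\ge-\nu_0+\delta$ one has $x+\nu(v)\ge\delta$ and $|x+\i y-c(\xi)|=|(x+\nu(v))+\i(y+v\cdot\xi)|\ge\delta$. I would estimate $\|K(x+\i y-c(\xi))^{-1}f\|$ by the truncation-and-optimization argument of Ukai: split $K$ into a part $K^{(R)}$ with smooth kernel supported in $\{|v|,|v_*|\le R\}$, whose complement has operator norm $O(R^{-1})$; for $K^{(R)}$ one extracts decay in $|\xi|$ (respectively in $|y|$ when $|\xi|\le r_0$) from the oscillatory factor in $(x+\i y-c(\xi))^{-1}$ via the change of variables $v\mapsto v\cdot\xi$, with a constant growing in $R$ and $\delta^{-1}$, while the complement contributes $O(R^{-1}\delta^{-1})$; optimizing over $R$ then yields the exponents $\delta^{-15/13}(1+|\xi|)^{-2/13}$ in \eqref{m_T_7} and $\delta^{-7/5}(1+|y|)^{-2/5}$ in \eqref{m_T_8} on the stated range $|y|\ge y_0=(2r_0)^{5/3}\delta^{-2/3}$, $|\xi|\le r_0$. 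This is exactly the computation of Lemma~2.2.6 of \cite{Ukai3}, already invoked for $K_1$ in Lemma~\ref{LP03}, so in practice I would simply cite it.

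For (3): since $P_{\rm d}g=(g,\sqrt M)\sqrt M$ and $\|(v\cdot\xi)\sqrt M\|\le C|\xi|$, one has $\|(v\cdot\xi)(1+|\xi|^2)^{-1}P_{\rm d}(x+\i y-c(\xi))^{-1}f\|\le C|\xi|(1+|\xi|^2)^{-1}|((x+\i y-c(\xi))^{-1}f,\sqrt M)|$. Since $\overline{c(\xi)}=c(-\xi)$, Cauchy--Schwarz gives $|((x+\i y-c(\xi))^{-1}f,\sqrt M)|\le\|f\|\,\big(\intr M(v)[(x+\nu(v))^2+(y-v\cdot\xi)^2]^{-1}dv\big)^{1/2}$. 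Bounding the bracket below by $\delta^2$ yields \eqref{m_L_9}. For \eqref{m_L_10} I would split the $v$-integral: on $\{|v\cdot\xi|\le|y|/2\}$ one has $(y-v\cdot\xi)^2\ge|y|^2/4$, so the contribution is $\le 4|y|^{-2}$; on $\{|v\cdot\xi|>|y|/2\}$ a Gaussian tail bound together with $e^{-t}\le Ct^{-1}$ gives a contribution $\le C\delta^{-2}|\xi|^2|y|^{-2}$, hence the integral is $\le C(1+\delta^{-2}|\xi|^2)|y|^{-2}$; multiplying by $|\xi|(1+|\xi|^2)^{-1}$ and using $\frac{|\xi|(1+\delta^{-1}|\xi|)}{1+|\xi|^2}\le 1+\delta^{-1}$ gives $C(\delta^{-1}+1)|y|^{-1}$. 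One may alternatively observe that \eqref{m_L_9}--\eqref{m_L_10} are the obvious analogues of \eqref{L_9}--\eqref{L_10} from Lemma~2.3 of \cite{Li2}.

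The only genuine obstacle is bookkeeping in (1)--(2): tracking the truncation parameter so that the interpolation exponents come out exactly as stated, which is why I would invoke Ukai's lemma as a black box rather than reprove it. Part (3) is elementary once $P_{\rm d}$ is written out, and the replacement of $|\xi|^{-2}$ by $(1+|\xi|^2)^{-1}$ only makes the low-frequency estimates easier than their counterparts in Lemma~\ref{LP03}.
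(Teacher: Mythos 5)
Your proposal is correct, and for \eqref{m_T_7}, \eqref{m_T_8} and \eqref{m_L_9} it coincides with the paper's route: the paper likewise disposes of (1)--(2) by citing Lemma 2.2.6 of \cite{Ukai3}, and obtains \eqref{m_L_9} from the trivial bounds $\|(v\cdot\xi)(1+|\xi|^2)^{-1}P_{\rm d}\|\le C|\xi|(1+|\xi|^2)^{-1}$ and $\|(x+\i y-c(\xi))^{-1}\|\le\delta^{-1}$, which is what your Cauchy--Schwarz computation amounts to. The only genuine difference is \eqref{m_L_10}: the paper does not split the $v$-integral at $|v\cdot\xi|=|y|/2$; instead it uses the algebraic identity
$$\frac{(v\cdot\xi)}{1+|\xi|^2}P_{\rm d}(\lambda-c(\xi))^{-1}=\frac1\lambda\frac{(v\cdot\xi)}{1+|\xi|^2}P_{\rm d}+\frac1\lambda\frac{(v\cdot\xi)}{1+|\xi|^2}P_{\rm d}\,c(\xi)(\lambda-c(\xi))^{-1},$$
together with $\|\frac{(v\cdot\xi)}{1+|\xi|^2}P_{\rm d}\,c(\xi)\|\le C$ (uniformly in $\xi$, since $\|(\nu(v)+\i v\cdot\xi)\sqrt M\|\le C(1+|\xi|)$) and $|\lambda|\ge|y|$, which gives $C(1+\delta^{-1})|y|^{-1}$ in one line. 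Your Gaussian-tail splitting reaches the same uniform-in-$\xi$ bound and is perfectly valid; the identity-based argument is shorter and avoids estimating the tail, while yours makes the mechanism (oscillation in $y$ versus the Maxwellian cutoff in $v\cdot\xi$) more transparent. The sign slip $(y-v\cdot\xi)$ versus $(y+v\cdot\xi)$ in your part (3) is immaterial to the estimates.
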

\begin{proof} The proof of \eqref{m_T_7} and \eqref{m_T_8} can be done as that of Lemma 2.2.6 in \cite{Ukai3}, we omit the details. The  \eqref{m_L_9} can be obtained by virtue of  $\|(v\cdot\xi)(1+|\xi|^2)^{-1}P_{\rm d}\|\leq C|\xi|(1+|\xi|^2)^{-1}$  and
$\|(x+\i y-c(\xi))^{-1}\|\leq \delta^{-1}$ for $x\geq -\nu_0+\delta$. The \eqref{m_L_10} follows from the facts
$\frac{(v\cdot\xi)}{1+|\xi|^2}P_{\rm d}(\lambda-c(\xi))^{-1}=\frac1\lambda\frac{(v\cdot\xi)}{1+|\xi|^2}P_{\rm d}+
\frac1\lambda\frac{(v\cdot\xi)}{1+|\xi|^2}P_{\rm d}c(\xi)(\lambda-c(\xi))^{-1} $
and
$
 \|\frac{(v\cdot\xi)}{1+|\xi|^2}P_{\rm d}c(\xi)\|\leq C.
$
 \end{proof}

\begin{lem}\label{m_spectrum} It holds.
\begin{description}
  \item[1.] For any $\delta>0$ and all $\xi\in\R^3$, there exists $y_1(\delta)>0$ such that
  \bq
\rho(\hat{B}_m(\xi))\supset\{\lambda\in\mathbb{C}\,|\,\mathrm{Re}\lambda\ge-\nu_0
+\delta,\,|\mathrm{Im}\lambda|\geq y_1\}\cup\{\lambda\in\mathbb{C}\,|\,\mathrm{Re}\lambda>0\}.\label{m_rb1}
\eq

  \item[2.]   For any $r_0>0$, there
exists $\alpha=\alpha(r_0)>0$ such that it holds  for $|\xi|\geq r_0$ that
\bq \sigma(\hat{B}_m(\xi))\subset\{\lambda\in\mathbb{C}\,|\,{\rm Re}\lambda\le -\alpha\} .\label{m_sg1}\eq

  \item[3.] For any $\delta>0$, there exists $r_1(\delta)>0$ such that if $0<|\xi|\leq r_1$, then
  \bq
\sigma(\hat{B}_m(\xi))\cap\{\lambda\in\mathbb{C}\,|\,\mathrm{Re}\lambda\ge-\mu/2\}\subset
\{\lambda\in\mathbb{C}\,|\,|\lambda|<\delta\}.\label{m_eigen1}
\eq
\end{description}
\end{lem}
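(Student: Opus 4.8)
The plan is to prove the three parts of Lemma~\ref{m_spectrum} by adapting, almost verbatim, the high-frequency and low-frequency arguments already carried out for $\hat B(\xi)$, the only structural difference being that the singular factor $1/|\xi|^2$ is everywhere replaced by the bounded factor $1/(1+|\xi|^2)$, which actually makes several estimates easier and removes the need for a separate treatment near $\xi=0$.

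\textbf{Part 1 (resolvent set for large $|\mathrm{Im}\lambda|$ or $\mathrm{Re}\lambda>0$).} Starting from the decomposition \eqref{m_B_d}, I would use Lemma~\ref{m_LP}: by \eqref{m_T_8} and \eqref{m_L_10} there is $y_1=y_1(\delta)$ so that for $\mathrm{Re}\lambda\ge-\nu_0+\delta$ and $|\mathrm{Im}\lambda|\ge y_1$ both $\|K(\lambda-c(\xi))^{-1}\|\le\frac14$ and $\|(v\cdot\xi)(1+|\xi|^2)^{-1}P_{\rm d}(\lambda-c(\xi))^{-1}\|\le\frac14$ uniformly in $\xi$; hence $I-K(\lambda-c(\xi))^{-1}+\i(v\cdot\xi)(1+|\xi|^2)^{-1}P_{\rm d}(\lambda-c(\xi))^{-1}$ is invertible on $L^2_m(\R^3_v)$ and so is $\lambda-\hat B_m(\xi)$. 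For the half-plane $\mathrm{Re}\lambda>0$ one instead uses the dissipativity of $\hat B_m(\xi)$ on $L^2_m$ (Lemma~\ref{m_SG_1}, via the contraction semigroup) together with Lemma~\ref{m_specr1} to conclude that no spectrum lies in $\mathrm{Re}\lambda\ge 0$ for $\xi\ne0$ and only $\lambda=0$ at $\xi=0$, whence $\{\mathrm{Re}\lambda>0\}\subset\rho(\hat B_m(\xi))$; combining the two regions yields \eqref{m_rb1}.

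\textbf{Part 2 (spectral gap at high frequency).} By \eqref{m_T_7} and \eqref{m_L_9}, there is $r_1>0$ large so that for $|\xi|\ge r_1$ and $\mathrm{Re}\lambda\ge-\nu_0+\delta$ the same Neumann-series argument shows $\{\mathrm{Re}\lambda\ge-\nu_0+\delta\}\subset\rho(\hat B_m(\xi))$. On the remaining compact range $r_0\le|\xi|\le r_1$, one argues as in Lemma~\ref{LP01} (the analogue of Lemma 2.4 in \cite{Li2}): any eigenvalue $\lambda(\xi)$ with $\mathrm{Re}\lambda\ge-\nu_0+\delta$ depends continuously on $\xi$ on this compact set, is contained in a bounded region by Part 1, and has $\mathrm{Re}\lambda(\xi)<0$ by Lemma~\ref{m_specr1}; hence $\sup_{r_0\le|\xi|\le r_1}\mathrm{Re}\lambda(\xi)=:-\alpha(r_0)<0$. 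Taking the smaller of this and $\nu_0-\delta$ gives \eqref{m_sg1}.

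\textbf{Part 3 (low-frequency confinement near the origin).} Here I would mimic Lemma~\ref{eigen_1}: reduce the eigenvalue problem $\lambda f=\hat B_m(\xi)f$ with $f=f_0+f_1$, $f_0=P_{\rm d}f=C_0\sqrt M$, to a scalar dispersion relation $\lambda=D_m(\lambda,s)$ with $\xi=s\omega$, where now $D_m(\lambda,s)=(1+\tfrac{1}{1+s^2})(R(\lambda,se_1)\chi_1,\chi_1)s^2$ by the same change of variables $(v\cdot\xi)\to sv_1$ and rotational invariance used to obtain \eqref{T_1}–\eqref{eigen}. Since $D_m(\lambda,0)=0$, one gets from \eqref{S_3}-type bounds on $R(\lambda,\cdot)$ that $|D_m(\lambda,s)|\le C(s^2+s^2)=Cs^2$ uniformly for $\mathrm{Re}\lambda\ge-\mu/2$ and $|\lambda|\le 1$, say, while $|\lambda-D_m(\lambda,s)|\ge|\lambda|-Cs^2$. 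Given $\delta>0$, choose $r_1(\delta)$ so small that $C r_1^2<\delta$; then any solution of the dispersion relation with $\mathrm{Re}\lambda\ge-\mu/2$ and $0<|\xi|\le r_1$ must satisfy $|\lambda|<\delta$, which is \eqref{m_eigen1}. One also checks, exactly as in Lemma~\ref{LP}, that for $\mathrm{Re}\lambda>-\mu$ the operator $\lambda P_r-Q(\xi)$ is invertible on $N_1^\perp$ so that the reduction to $f_1$ in terms of $f_0$ is legitimate and no spurious eigenvalues with $\mathrm{Re}\lambda\ge-\mu/2$ arise from $P_r$.

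\textbf{Main obstacle.} None of the three steps is genuinely hard once the $\hat B(\xi)$ analysis is in place; the only point requiring care is bookkeeping the replacement $|\xi|^{-2}\rightsquigarrow(1+|\xi|^2)^{-1}$ in the resolvent decomposition and in the inner product $\<\cdot,\cdot\>_\xi$, and verifying that Lemma~\ref{m_LP} supplies exactly the uniform-in-$\xi$ bounds needed to run the Neumann series in Parts 1 and 2 and the continuity/boundedness argument on the compact annulus in Part 2. The low-frequency Part 3 is actually \emph{simpler} than Lemma~\ref{eigen_1}, because $D_m(\lambda,0)=0$ (there is no $\lambda=D_0(\lambda)$ branch to exclude), so the whole content is the elementary estimate $|D_m(\lambda,s)|=O(s^2)$ together with invertibility of $\lambda P_r-Q(\xi)$ on $N_1^\perp$.
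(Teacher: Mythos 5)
Your Parts 1 and 2 are essentially the paper's route: the paper proves \eqref{m_rb1} exactly from the decomposition \eqref{m_B_d} together with Lemma~\ref{m_LP} (plus the contraction property of Lemma~\ref{m_SG_1} for the half-plane $\mathrm{Re}\lambda>0$), and disposes of \eqref{m_sg1} by the Ellis--Pinsky-type high-frequency/compactness argument you sketch (your phrasing via "continuity of eigenvalues" should really be the standard contradiction/compactness argument, but that is the same argument the paper invokes by citation), so there is nothing to object to there.

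Part 3, however, contains a genuine gap. You transplant the scalar-dispersion-relation reduction of Lemma~\ref{eigen_1}, but that reduction relies on the fact that for the bVPB operator $\hat B(\xi)$ the collision operator is $L_1$, whose null space $N_1={\rm span}\{\sqrt M\}$ coincides with the range of $P_{\rm d}$; consequently $\lambda P_r-Q(\xi)$ is invertible on $N_1^\bot$ for $\mathrm{Re}\lambda>-\mu$ by the coercivity \eqref{L_4}, and the whole eigenvalue problem collapses to one scalar equation for $C_0$. For $\hat B_m(\xi)$ the collision operator is $L$, whose null space $N_0$ is five-dimensional, while $P_{\rm d}$ still projects only onto ${\rm span}\{\sqrt M\}$. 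On the range of $P_r=I-P_{\rm d}$ the operator $L$ vanishes on ${\rm span}\{\chi_1,\dots,\chi_4\}$, so the coercivity estimate behind \eqref{S_3} fails there: for $f$ in that span one only gets $\mathrm{Re}\,((\lambda P_r-L+\i P_r(v\cdot\xi)P_r)f,f)=\mathrm{Re}\lambda\,\|f\|^2$, which is not bounded below for $-\mu/2\le\mathrm{Re}\lambda\le 0$. Hence $\lambda P_r-Q(\xi)$ is not invertible (uniformly, or at all near $\lambda=0$, $\xi$ small) on $(\,{\rm span}\,\sqrt M)^\bot$, your operator $R(\lambda,se_1)$ and the claimed bound $|D_m(\lambda,s)|\le Cs^2$ for $\mathrm{Re}\lambda\ge-\mu/2$ are not defined, and the reduction to $\lambda=D_m(\lambda,s)$ is invalid. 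The error is visible in the conclusion as well: the correct low-frequency picture (Lemma~\ref{m_eigen_3z}, expansion \eqref{m_specr0}) has \emph{five} eigenvalue branches near the origin, including the shear modes $\lambda_2,\lambda_3$ whose eigenfunctions have vanishing $P_{\rm d}$-component and which a scalar relation in $C_0$ can never detect; your argument would therefore fail to confine all of $\sigma(\hat B_m(\xi))\cap\{\mathrm{Re}\lambda\ge-\mu/2\}$. The paper avoids this by arguing as in Proposition 2.3 of Ellis--Pinsky, i.e.\ with the full macroscopic/microscopic splitting $\P_0$, $\P_1$ adapted to the five-dimensional kernel of $L$ (the coercivity \eqref{L_3} then controls only $\P_1 f$, and the macroscopic part is treated as a finite-dimensional perturbation problem in $\xi$). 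To repair your Part 3 you would have to redo the reduction with $\P_0$, $\P_1$ in place of $P_{\rm d}$, $P_r$, which leads to a $5\times5$ dispersion system rather than the scalar equation you wrote.
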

\begin{proof} By \eqref{m_B_d} and Lemma \ref{m_LP}, we can prove \eqref{m_rb1}. The properties \eqref{m_sg1} and \eqref{m_eigen1} can be obtained by the similar argument as Proposition 2.3 in \cite{Ellis}.
\end{proof}

By applying the basic ideas similar to those used in Section 2.2 in \cite{Li2}, we can make a detailed analysis of the spectral of the operator $\hat{B}_m(\xi)$ at lower frequency below,  the details of the proof are omitted.

\begin{lem}\label{m_eigen_3z}
 There exists a constant  $r_0>0$ so that the spectrum $\lambda\in\sigma(B_m(\xi))\subset\mathbb{C}$ for $\xi=s\omega$ with $|s|\leq r_0$ and $\omega\in \mathbb{S}^2$ consists of five points $\{\lambda_j(s),\ j=-1,0,1,2,3\}$ on the domain $\mathrm{Re}\lambda>-\mu /2$. The spectrum $\lambda_j(s)$ and the corresponding eigenfunction $\psi_j(s,\omega)$ are $C^\infty$ functions of $s$ for $|s|\leq r_0$. In particular, the eigenvalues admit the following asymptotical expansion for $|s|\leq r_0$
\be                                   \label{m_specr0}
 \left\{\bln
 \lambda_{\pm1}(s)&=\pm \i 2\sqrt{\frac23} s-a_{\pm1}s^2+o(s^2),\quad
 \overline{\lambda_1(s)}=\lambda_{-1}(s),\\
 \lambda_{0}(s) &=-a_0s^2+o(s^2),\\
 \lambda_2(s) &=\lambda_3(s)=-a_2s^2+o(s^2),
 \eln\right.
 \ee
with $a_j>0$, $-1\le j\le 2$, are defined by
\bq
\left\{\bln
a_{\pm1}&=-\frac18(L^{-1}\P_1(v_1\chi_4),v_1\chi_4)-\frac12(L^{-1}\P_1(v_1\chi_1),v_1\chi_1),\\
a_0&=-\frac34(L^{-1}\P_1(v_1\chi_4),v_1\chi_4),\quad a_2=-(L^{-1}\P_1(v_1\chi_2),v_1\chi_2).
\eln\right.
\eq
The eigenfunctions are  orthogonal each other and satisfy \be
 \left\{\bln
 &\<\psi_j(s,\omega),\overline{\psi_k(s,\omega)}\>_\xi=\delta_{jk},
  \quad  j, k=-1,0,1,2,3,                                  \label{m_eigfr0}
 \\
&\psi_j(s,\omega)
 =\psi_{j,0}(\omega)+\psi_{j,1}(\omega)s+O(s^2), \quad |s|\leq r_0,
 \eln\right.
 \ee where the coefficients $\psi_{j,n}$  are given as
\bq
  \left\{\bln                      \label{m_eigf1}
 &\psi_{0,0}=\frac{\sqrt2}4\sqrt{M}-\frac{\sqrt3}2\chi_4,\quad
\P_1(\psi_{0,1})=\i L^{-1}\P_1(v\cdot\omega)\psi_{0,0};
 \\
&\psi_{\pm1,0}=\frac{\sqrt3}4\sqrt{M}\mp\frac{\sqrt2}2(v\cdot\omega)\sqrt{M}+\frac{\sqrt2}4\chi_4,\quad
\P_1(\psi_{\pm1,1})=\i L^{-1}\P_1(v\cdot\omega)\psi_{\pm1,0},
 \\
&\psi_{j,0}=(v\cdot W^j)\sqrt{M},\quad
\P_1(\psi_{j,1})=\i L^{-1}\P_1(v\cdot\omega)(v\cdot W^j)\sqrt{M},\quad j=2,3,
  \eln\right.
  \eq and $W^j$ $(j=2,3)$ are orthogonal vectors satisfying $W^j\cdot\omega=0$.
\end{lem}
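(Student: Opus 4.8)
The plan is to treat, for $\xi=s\omega$ with $s\in\R$ and $\omega\in\S^2$, the operator $\hat B_m(s\omega)=L-\i s\bigl[(v\cdot\omega)+\frac1{1+s^2}(v\cdot\omega)P_{\rm d}\bigr]$ as an analytic perturbation of the collision operator $L$. Since $(v\cdot\omega)$ is bounded relative to $\nu(v)$ by \eqref{nuv} and $P_{\rm d}$ is bounded, $s\mapsto\hat B_m(s\omega)$ is an analytic (type (A)) family on $L^2(\R^3_v)$ with $\hat B_m(0)=L$, and $\hat B_m(s\omega)^*=\hat B_m(-s\omega)$ with respect to $\<\cdot,\cdot\>_\xi$. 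The operator $L$ has the isolated eigenvalue $0$ with $5$-dimensional eigenspace $N_0=\mathrm{span}\{\chi_0,\dots,\chi_4\}$ and the rest of its spectrum in $\{\mathrm{Re}\lambda\le-\mu\}$ by \eqref{L_3}. Combining this with Lemma~\ref{m_spectrum}(3), for $|s|$ small all spectrum of $\hat B_m(s\omega)$ in $\{\mathrm{Re}\lambda\ge-\mu/2\}$ is trapped in a small disc about $0$; hence the Riesz projection $P(s,\omega)=\frac1{2\pi\i}\oint_\Gamma(\lambda-\hat B_m(s\omega))^{-1}d\lambda$ over a fixed small circle $\Gamma$ is well defined, analytic in $s$, of rank $5$, and $\hat B_m(s\omega)$ restricted to its range carries exactly the five eigenvalues $\{\lambda_j(s)\}$. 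Using the rotational invariance of $L$ and $\nu$ one reduces to $\omega=e_1$, so the resulting $5\times5$ reduced matrix is analytic in $s$, which yields the stated $C^\infty$ dependence of the $\lambda_j$ and of the eigenfunctions $\psi_j$.

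Next I would perform a Lyapunov--Schmidt (Feshbach) reduction of $\hat B_m(s\omega)\psi=\lambda\psi$. Writing $\psi=\P_0\psi+\P_1\psi$ and using $P_{\rm d}\psi=P_{\rm d}\P_0\psi$, the $\P_1$-component solves $(L-\lambda\P_1-\i s\P_1(v\cdot\omega)\P_1)\P_1\psi=\i s\,\P_1(v\cdot\omega)\P_0\psi$; the $P_{\rm d}$-term drops because $(v\cdot\omega)P_{\rm d}\P_0\psi\in N_0$, so $\P_1(v\cdot\omega)P_{\rm d}\P_0\psi=0$ — this is exactly why $P_{\rm d}$ does not appear in the eigenfunction formulas \eqref{m_eigf1}. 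By the coercivity \eqref{L_3}, in a manner parallel to Lemma~\ref{LP}, the operator on the left is invertible on $N_0^\bot$ for $\mathrm{Re}\lambda>-\mu$ and $|s|$ small, with inverse $\mathcal R(\lambda,s)$ analytic there and $\mathcal R(0,0)=(L|_{N_0^\bot})^{-1}$; hence $\P_1\psi=\i s\,\mathcal R(\lambda,s)\P_1(v\cdot\omega)\P_0\psi$. Substituting back into the $\P_0$-component and setting $c=\P_0\psi$ gives the reduced problem $\lambda c=\mathcal M(\lambda,s)c$ with $\mathcal M(\lambda,s)=-\i s\P_0(v\cdot\omega)\bigl(I+\frac1{1+s^2}P_{\rm d}\bigr)\P_0+s^2\P_0(v\cdot\omega)\mathcal R(\lambda,s)\P_1(v\cdot\omega)\P_0$, an analytic matrix on $N_0$.

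Then I would expand. Since $\mathcal M(\lambda,0)=0$, all five eigenvalues are $O(s)$, governed to leading order by $\mathcal M^{(1)}=-\i\P_0(v_1)(I+P_{\rm d})\P_0$, which is skew-adjoint with respect to $\<\cdot,\cdot\>_\xi$. With $\omega=e_1$ and in the basis $\{\chi_0,\dots,\chi_4\}$, $\mathcal M^{(1)}$ couples only $\chi_0,\chi_1,\chi_4$ (and annihilates $\chi_2,\chi_3$); using $P_{\rm d}\chi_0=\chi_0$, $(v_1\chi_1,\chi_0)=1$ and $(v_1\chi_1,\chi_4)=(v_1\chi_4,\chi_1)=\sqrt{2/3}$ gives the block $-\i\bigl(\begin{smallmatrix}0&1&0\\2&0&\sqrt{2/3}\\0&\sqrt{2/3}&0\end{smallmatrix}\bigr)$ with eigenvalues $0$ and $\pm\i\,2\sqrt{2/3}$. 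The two simple eigenvalues yield, by standard perturbation of a simple eigenvalue of $s\mathcal M^{(1)}+s^2\mathcal M^{(2)}(\lambda)+\cdots$, branches $\lambda_{\pm1}(s)=\pm\i\,2\sqrt{2/3}\,s+\<\mathcal M^{(2)}(0)\psi_{\pm1,0},\psi_{\pm1,0}\>_\xi s^2+o(s^2)$ with $\mathcal M^{(2)}(0)=\P_0(v_1)L^{-1}\P_1(v_1)\P_0$ and $\psi_{\pm1,0}$ the normalized eigenvectors in \eqref{m_eigf1}; expanding the inner product via $\P_1(v_1\psi_{\pm1,0})=\mp\frac{\sqrt2}2\P_1(v_1\chi_1)+\frac{\sqrt2}4\P_1(v_1\chi_4)$ and the parity-orthogonality $(L^{-1}\P_1(v_1\chi_1),\P_1(v_1\chi_4))=0$ gives $a_{\pm1}$ as stated, while the $\frac1{1+s^2}P_{\rm d}$ correction only affects $O(s^3)$. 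For the triple eigenvalue $0$, whose kernel is $\mathrm{span}\{\psi_{0,0},(v\cdot W^2)\sqrt M,(v\cdot W^3)\sqrt M\}$ with $W^2,W^3\perp\omega$, set $\lambda=s^2\mu+o(s^2)$; the $\mathcal M^{(1)}$-term drops there and the $s^2$-balance identifies $\mu$ with an eigenvalue of $\P_0(v_1)L^{-1}\P_1(v_1)$ on this $3$-dimensional subspace, equal to $(L^{-1}\P_1(v_1\chi_2),v_1\chi_2)$ on $(v\cdot W^j)\sqrt M$ (whence $\lambda_2=\lambda_3$, forced by the transverse rotational symmetry) and to $\frac34(L^{-1}\P_1(v_1\chi_4),v_1\chi_4)$ on $\psi_{0,0}=\frac{\sqrt2}4\sqrt M-\frac{\sqrt3}2\chi_4$ since $\P_1(v_1\psi_{0,0})=-\frac{\sqrt3}2\P_1(v_1\chi_4)$; this produces $a_0,a_2$ as stated, and $a_j>0$ follows from $(L^{-1}g,g)<0$ on $N_0^\bot$, a consequence of \eqref{L_3}. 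The eigenfunction expansions $\psi_j=\psi_{j,0}+\psi_{j,1}s+O(s^2)$ with $\P_1(\psi_{j,1})=\i L^{-1}\P_1(v\cdot\omega)\psi_{j,0}$ then follow from $\psi=\P_0\psi+\P_1\psi$ and the formula for $\P_1\psi$, and the $\<\cdot,\cdot\>_\xi$-orthonormality \eqref{m_eigfr0} from $\hat B_m(\xi)^*=\hat B_m(-\xi)$ in $\<\cdot,\cdot\>_\xi$ together with normalization.

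I expect the main obstacle to be the degenerate cluster: rigorously justifying that the triple eigenvalue $0$ of $\mathcal M^{(1)}$ splits into three $C^\infty$ branches given precisely by the second-order reduced operator, which requires the Feshbach reduction pushed to second order together with uniform control of $\mathcal R(\lambda,s)$ near $\lambda=0$ for $|s|$ small — the analogue, with the $5$-dimensional kernel of $L$ replacing the $1$-dimensional kernel of $L_1$, of Lemmas~\ref{LP}--\ref{bound_2} — plus a check that the $\frac1{1+s^2}P_{\rm d}$-term does not disturb the $O(s^2)$ coefficients. Everything else (the explicit Gaussian moments, the rotational reduction to $\omega=e_1$, and the matching of eigenfunction coefficients) is routine bookkeeping of the kind carried out in \cite{Li2} and \cite{Ellis}.
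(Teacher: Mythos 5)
Your proposal is correct and follows essentially the route the paper intends: the paper omits the details and refers to Section 2.2 of \cite{Li2}, whose method is exactly your macro--micro (Lyapunov--Schmidt) reduction — solve for $\P_1\psi$ via the invertibility of $L-\lambda\P_1-\i s\P_1(v\cdot\omega)\P_1$ on $N_0^\bot$ for $\mathrm{Re}\lambda>-\mu$, reduce to a five-dimensional problem on $N_0$, and expand, just as the paper does for $\hat B(\xi)$ in Section 3.1. Your explicit computations (the $3\times3$ block with eigenvalues $0,\pm2\sqrt{2/3}$, the eigenvectors $\psi_{j,0}$, and the coefficients $a_{\pm1},a_0,a_2$ via the second-order reduced operator $\P_0(v\cdot\omega)L^{-1}\P_1(v\cdot\omega)\P_0$, with the $\frac1{1+s^2}P_{\rm d}$ correction only entering at $O(s^3)$) all check out against the stated formulas.
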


\begin{proof}[\underline{Proof of Theorem~\ref{m_eigen_3}}] The combination of Lemmas~\ref{m_spectrum}--\ref{m_eigen_3z} leads to Theorem~\ref{m_eigen_3}.
\end{proof}

\begin{rem}
In general, the electric potential equation in \eqref{m_VPB4} takes as $a\Delta_x\Phi=\intr f\sqrt Mdv-e^{-b\Phi}$ with two constants $a,b>0$, and the asymptotical expansion of the eigenvalues $\lambda_j(s)$  becomes
$$
 \left\{\bln
 \lambda_{\pm1}(s)&=\pm \i\sqrt{\frac1b+\frac53} s-a_{\pm1}s^2+o(s^2),\quad \overline{\lambda_1(s)}=\lambda_{-1}(s),\\
 \lambda_{0}(s) &=-a_0s^2+o(s^2),\\
 \lambda_2(s) &=\lambda_3(s) =-a_2s^2+o(s^2), \\
 a_{\pm1}&=-\frac{b}{5b+3}(L^{-1}\P_1(v_1\chi_4),v_1\chi_4)-\frac12(L^{-1}\P_1(v_1\chi_1),v_1\chi_1),\\
a_0&=-\frac{3(b +1)}{5b+3}(L^{-1}\P_1(v_1\chi_4),v_1\chi_4),\quad a_2=-(L^{-1}\P_1(v_1\chi_2),v_1\chi_2).
 \eln\right.
$$
\end{rem}
With the help of Lemmas~\ref{m_specr1}--\ref{m_eigen_3z}, we can make a detailed analysis on the semigroup $S(t,\xi)=e^{t\hat{B}_m(\xi)}$ with respect to the lower frequency and higher frequency in terms of an argument similar to that of Theorem~\ref{rate1}, we omit the details.

\begin{thm}\label{m_sg}
The semigroup $S(t,\xi)=e^{t\hat{B}_m(\xi)}$ with $\xi=s\omega\in \R^3,s=|\xi|$ has the following
decomposition
 \be
 S(t,\xi)f=S_1(t,\xi)f+S_2(t,\xi)f,
     \quad f\in L^2_m(\R^3_v), \ \ t\ge0, \label{m_E_3a}
 \ee
 where
 \bq
 S_1(t,\xi)f=\dsum^3_{j=-1}e^{t\lambda_j(s)}
              \<f,\overline{\psi_j(s,\omega)}\,\>_\xi\psi_j(s,\omega)
               1_{\{|\xi|\leq r_0\}},          \label{m_E_5a}
 \eq
and  $S_2(t,\xi)f =: S(t,\xi)f-S_1(t,\xi)f$ satisfies
 \bq
 \|S_2(t,\xi)f\|_m\leq Ce^{-\sigma_0t}\|f\|_m,\quad t\ge0,\label{m_B_3a}
 \eq
with $\sigma_0>0$ a constant independent of $\xi$.
\end{thm}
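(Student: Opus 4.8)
The plan is to produce the decomposition by the same resolvent/contour method that yields the semigroup splitting for the linearized Boltzmann and VPB operators in Section~3 of \cite{Li2} and in \cite{Ukai3}, borrowing the high-frequency part essentially verbatim from Theorem~\ref{rate1}. The relevant feature of $\hat B_m(\xi)=L-\i(v\cdot\xi)-\frac{\i(v\cdot\xi)}{1+|\xi|^2}P_{\rm d}$ is that the extra term $-\frac{\i(v\cdot\xi)}{1+|\xi|^2}P_{\rm d}$ is a \emph{uniformly bounded} perturbation of $L-\i(v\cdot\xi)$ that vanishes as $\xi\to0$; hence $\hat B_m(0)=L$ retains its $5$-dimensional kernel $N_0$, and at low frequency $\hat B_m(\xi)$ carries exactly the five hydrodynamic eigenvalues $\lambda_j(s)$ of Lemma~\ref{m_eigen_3z}, which must be split off as $S_1$. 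I would start from the Dunford--Laplace formula
$$
 S(t,\xi)f=\frac1{2\pi\i}\int^{\kappa+\i\infty}_{\kappa-\i\infty}e^{\lambda t}(\lambda-\hat B_m(\xi))^{-1}f\,d\lambda,\qquad f\in D(\hat B_m(\xi)^2),\ \ \kappa>0,
$$
(cf. \eqref{V_6} and \cite{Pazy}), fix $r_0>0$ as in Lemma~\ref{m_eigen_3z} — shrinking it so that Lemma~\ref{m_spectrum}(3) applies on $\{|\xi|\le r_0\}$ — let $\alpha(r_0)>0$ be the high-frequency spectral gap of Lemma~\ref{m_spectrum}(2), and choose $\sigma_0$ with $0<\sigma_0<\min\{\mu/2,\alpha(r_0),\nu_0\}$.

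For $|\xi|\le r_0$, Lemma~\ref{m_eigen_3z} gives that each $\lambda_j(s)$ is a simple isolated eigenvalue with eigenfunction $\psi_j(s,\omega)$, and the biorthogonality \eqref{m_eigfr0} identifies the associated Riesz projection as the rank-one operator $\Pi_j(\xi)f=\langle f,\overline{\psi_j(s,\omega)}\rangle_\xi\psi_j(s,\omega)$. Setting $\Pi(\xi)=\sum_{j=-1}^3\Pi_j(\xi)$ and $S_1(t,\xi)f=\big(\sum_{j=-1}^3 e^{t\lambda_j(s)}\Pi_j(\xi)f\big)1_{\{|\xi|\le r_0\}}$ — which is exactly \eqref{m_E_5a} — gives $S_2(t,\xi)=S(t,\xi)-S_1(t,\xi)=e^{t\hat B_m(\xi)}(I-\Pi(\xi))$ on $\{|\xi|\le r_0\}$. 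By Lemma~\ref{m_spectrum}(3) together with Lemma~\ref{m_eigen_3z}, the remaining spectrum $\sigma(\hat B_m(\xi))\setminus\{\lambda_j(s):j=-1,\dots,3\}$ lies in $\{\mathrm{Re}\lambda\le-\mu/2\}$, so $(\lambda-\hat B_m(\xi))^{-1}(I-\Pi(\xi))$ is analytic on $\{\mathrm{Re}\lambda>-\mu/2\}\supset\{\mathrm{Re}\lambda\ge-\sigma_0\}$; I would then estimate it uniformly on $\mathrm{Re}\lambda=-\sigma_0$ for $|\xi|\le r_0$, using the factorization \eqref{m_B_d} and the smallness bounds \eqref{m_T_8}, \eqref{m_L_10} of Lemma~\ref{m_LP} for $|\mathrm{Im}\lambda|$ large, a compactness/continuity argument for $|\mathrm{Im}\lambda|$ bounded (the mVPB analogue of Lemma~\ref{bound_2}, proved by contradiction against Lemma~\ref{m_spectrum}(3)/Lemma~\ref{m_eigen_3z}, a violating sequence yielding a spurious eigenvalue of some $\hat B_m(\xi_0)$ with $\mathrm{Re}\lambda>-\sigma_0$ other than the five listed ones), and the type-\eqref{S_4} $L^2$-in-$\mathrm{Im}\lambda$ estimate for the coercive operator $L-\i\P_1(v\cdot\xi)\P_1$ on $N_0^\bot$. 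Shifting the contour for $S_2$ from $\mathrm{Re}\lambda=\kappa$ down to $\mathrm{Re}\lambda=-\sigma_0$ (crossing no singularity of the projected resolvent) then yields $\|S_2(t,\xi)f\|_m\le Ce^{-\sigma_0 t}\|f\|_m$ for $|\xi|\le r_0$.

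For $|\xi|>r_0$ one has $S_1\equiv0$, and I would repeat the $|\xi|>r_0$ argument of Theorem~\ref{rate1} with $(L_1,|\xi|^{-2},\hat B(\xi))$ replaced by $(L,(1+|\xi|^2)^{-1},\hat B_m(\xi))$: by Lemma~\ref{m_spectrum}(2) the resolvent is analytic on $\{\mathrm{Re}\lambda\ge-\sigma_0\}$, and after shifting the contour to $\mathrm{Re}\lambda=-\sigma_0$ and writing $(\lambda-\hat B_m(\xi))^{-1}=(\lambda-c(\xi))^{-1}+Z_3(\lambda,\xi)$ with $Z_3=(\lambda-c(\xi))^{-1}[I-Y_2]^{-1}Y_2$, $Y_2(\lambda,\xi)=\big(K-\frac{\i(v\cdot\xi)}{1+|\xi|^2}P_{\rm d}\big)(\lambda-c(\xi))^{-1}$ (the intermediate band where \eqref{m_T_7} is not yet small being absorbed by the compactness from Lemma~\ref{m_spectrum}(2)), one invokes $\int_{\R}\|(x+\i y-c(\xi))^{-1}f\|^2dy\le\pi(x+\nu_0)^{-1}\|f\|^2$, the uniform bound on $[I-Y_2(-\sigma_0+\i y,\xi)]^{-1}$ from Lemma~\ref{m_LP}, and $\|e^{tc(\xi)}f\|_m\le Ce^{-\nu_0 t}\|f\|_m$ (from \eqref{Cxi} and \eqref{nuv}) to get $\|S_2(t,\xi)f\|_m\le Ce^{-\sigma_0 t}\|f\|_m$. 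Combining the two regimes gives \eqref{m_E_3a}--\eqref{m_B_3a}, and the density of $D(\hat B_m(\xi)^2)$ in $L^2_m(\R^3_v)$ (Theorem 2.7 in p.6 of \cite{Pazy}) removes the restriction on $f$.

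The step I expect to be the main obstacle is the uniform-in-$\xi$ resolvent control on the shifted line $\mathrm{Re}\lambda=-\sigma_0$ — the mVPB counterpart of Lemma~\ref{bound_2} — together with the identification of the Riesz projections with the explicit operators in \eqref{m_E_5a} via \eqref{m_eigfr0} and the continuity of $\Pi(\xi)$ down to $\xi=0$ (where $\Pi(0)=\P_0$). This should nonetheless be lighter than the analogous step in Theorem~\ref{rate1}, since $\frac{\i(v\cdot\xi)}{1+|\xi|^2}P_{\rm d}$ is uniformly bounded — no $|\xi|^{-2}$ singularity — so no weighted-space bookkeeping is needed near $\xi=0$; the remaining residue calculus and $L^2$-in-$\mathrm{Im}\lambda$ estimates are then routine adaptations of Theorem~\ref{rate1} and of \cite{Li2,Ukai3}.
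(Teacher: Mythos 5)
Your proposal is correct and follows essentially the route the paper itself indicates (it omits the proof, appealing to the argument of Theorem~\ref{rate1} combined with Lemmas~\ref{m_specr1}--\ref{m_eigen_3z}): Dunford--Laplace representation, splitting off the five low-frequency eigenvalues $\lambda_j(s)$ via the rank-one Riesz projections identified through \eqref{m_eigfr0}, a contour shift to ${\rm Re}\lambda=-\sigma_0$ backed by a uniform resolvent bound of the type of Lemma~\ref{bound_2}, and the high-frequency treatment copied from Theorem~\ref{rate1} with $(L_1,|\xi|^{-2})$ replaced by $(L,(1+|\xi|^2)^{-1})$. You also correctly note the only structural difference from Theorem~\ref{rate1} (presence of the five hydrodynamic eigenvalues at low frequency, handled as in \cite{Li2,Ukai1}) and the simplification coming from the uniform equivalence $\|\cdot\|\le\|\cdot\|_m\le 2\|\cdot\|$.
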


\subsection{Optimal time-decay rates for linear mVPB}
\label{m_sect4}

With the help of the spectral analysis and semigroup estimates in section~\ref{sect3.m}, we can prove  Theorems~\ref{m_rate1} on the optimal time decay rate of global solution.

\begin{proof}[\underline{Proof of Theorem \ref{m_rate1}}]
Write
 $\xi^\alpha=\xi_1^{\alpha_1}\xi_2^{\alpha_2}\xi_3^{\alpha_3}$ with $\xi=(\xi_1,\xi_2,\xi_3)$.
By Theorem \ref{m_sg} and the Planchel's equality,  we have
\bma
\|(\dxa e^{tB_m}f_0,\chi_j)\|_{L^2_{x}}&\le \|\xi^\alpha(S_1(t,\xi)\hat{f}_0,\chi_j)\|_{L^2_\xi}+\|\xi^\alpha(S_2(t,\xi)\hat{f}_0,\chi_j)\|_{L^2_\xi}
\nnm\\
&\le \|\xi^\alpha(S_1(t,\xi)\hat{f}_0,\chi_j)\|_{L^2_\xi}+\|\xi^\alpha S_2(t,\xi)\hat{f}_0\|_{L^2_{\xi,v}},\\
\|\P_1(\dxa e^{tB_m}f_0)\|_{L^2_{x,v}}&\le \|\xi^\alpha\P_1(S_1(t,\xi)\hat{f}_0 )\|_{L^2_{\xi,v}}+\|\xi^\alpha S_2(t,\xi)\hat{f}_0\|_{L^2_{\xi,v}}.
\ema
By \eqref{m_B_3a} 
we obtain
\bq \intr (\xi^\alpha)^2\|S_2(t,\xi)\hat{f}_0\|_m^2 d\xi\le C\intr e^{-2\sigma_0 t}(\xi^\alpha)^2\|\hat{f}_0\|_m^2 d\xi \le Ce^{-2\sigma_0 t}\|\da_x f\|^2_{L^2_{x,v}}.\label{m_D_1}\eq
By \eqref{m_eigfr0} and \eqref{m_E_5a}, we have for $ |\xi|\le r_0$ that
$$
S_1(t,\xi)\hat{f}_0=\dsum^3_{j=-1}e^{t\lambda_j(|\xi|)}[\<\hat{f}_0,\overline{\psi_{j,0}}\>_{\xi=0}\psi_{j,0}+|\xi|T_j(\xi)\hat f_0],
$$
where 
$\lambda_j(|\xi|)$ is given by \eqref{m_specr0}, $\<f,g\>_{\xi=0}=(f,g)+(P_{\rm d}f,P_{\rm d}g)$ and $T_j(\xi)$ is the linear operator with  the norm $\|T_j(\xi)\|$
uniformly bounded for $|\xi|\leq r_0$ and $-1\leq j\leq3$.  Therefore
\bma
(S_1(t,\xi)\hat{f}_0,\sqrt M)
&=\frac{\sqrt3}4\sum_{j=\pm1}e^{\lambda_j(|\xi|)t}\Big[\frac{\sqrt3}2\hat{n}_0-j\frac{\sqrt2}2(\hat{m}_0\cdot
\omega)+\frac{\sqrt2}4\hat{q}_0\Big]
\nnm\\
&\quad+\frac{\sqrt2}4e^{\lambda_0(|\xi|)t}\Big(\frac{\sqrt{2}}2\hat{n}_0-\frac{\sqrt{3}}2\hat{q}_0\Big) +|\xi|\sum_{j=-1}^3e^{\lambda_j(|\xi|)t} (T_j(\xi)\hat{f}_0,\sqrt M),\label{m_S_4a}
\\
(S_1(t,\xi)\hat{f}_0,v\sqrt M)
&=-\frac{\sqrt2}2\sum_{j=\pm1}e^{\lambda_j(|\xi|)t}j\Big[\frac{\sqrt3}2\hat{n}_0-j\frac{\sqrt2}2(\hat{m}_0\cdot
\omega)+\frac{\sqrt2}4\hat{q}_0\Big]\omega\nnm\\
&\quad+\sum_{j=2,3}e^{\lambda_j(|\xi|)t}(\hat m_0\cdot W^j)W^j+|\xi|\sum_{j=-1}^3e^{\lambda_j(|\xi|)t} (T_j(\xi)\hat{f}_0,v\sqrt M),\label{m_S_4b}\\
(S_1(t,\xi)\hat{f}_0,\chi_4)
&=\frac{\sqrt2}4\sum_{j=\pm1}e^{\lambda_j(|\xi|)t}\Big[\frac{\sqrt3}2\hat{n}_0-j\frac{\sqrt2}2(\hat{m}_0\cdot
\omega)+\frac{\sqrt2}4\hat{q}_0\Big]\nnm\\
&\quad-\frac{\sqrt3}2e^{\lambda_0(|\xi|)t}\Big(\frac{\sqrt{2}}2\hat{n}_0 -\frac{\sqrt{3}}2\hat{q}_0\Big)
+|\xi|\sum_{j=-1}^3e^{\lambda_j(|\xi|)t} (T_j(\xi)\hat{f}_0,\chi_4),\label{m_S_4c}
\ema
where we recall that $\lambda_j(|\xi|)$ is given by \eqref{m_specr0}, $(\hat{n}_0,\hat{m}_0,\hat{q}_0)$ is the Fourier transform of
the macroscopic density, momentum and energy $(n_0,m_0,q_0)$ of the initial data $f_0$ defined by
$
  (n_0,m_0,q_0)=((f_0,\chi_0),(f_0,v\sqrt{M}),(f_0,\chi_4))
$
and $W^j$ is given by \eqref{m_eigf1}, and
\bma
 \P_1(S_1(t,\xi)\hat{f}_0)=|\xi|\sum_{j=-1}^3e^{\lambda_j(|\xi|)t} \P_1(T_j(\xi)\hat{f}_0).\label{m_S_5}
\ema
Noting by \eqref{m_specr0}  that
 \bq
\mathrm{Re}\lambda_j(|\xi|)=a_j|\xi|^2(1+O(|\xi|))\le -\beta |\xi|^2,\quad |\xi|\leq
r_0,\label{m_ee}\eq  where and below $\beta>0$ denotes a generic constant, we obtain by \eqref{m_S_4a}--\eqref{m_S_5} that
\bma
&|(S_1(t,\xi)\hat{f}_0,\chi_j)|^2
\le Ce^{-2\beta|\xi|^2t}(| (\hat{n}_0,\hat{m}_0,\hat{q}_0)|^2+|\xi|^2\|\hat{f}_0\|^2_{L^2_v}),\quad j=0,1,2,3,4,\label{m_S_2}\\
&\|\P_1(S_1(t,\xi)\hat{f}_0)\|^2_{L^2_v}\le C|\xi|^2e^{-2\beta
|\xi|^2t}\|\hat{f}_0\|^2_{L^2_v}.\label{m_S_3}
\ema
Thus by \eqref{m_S_2}, H\"{o}lder and Hausdorff-Young
inequalities, we have
$$ \|\xi^\alpha (S_1(t,\xi)\hat{f}_0,\chi_j)\|^2_{L^2_{\xi}}\le
C(1+t)^{-3\(\frac1q-\frac12\)-k}\|\dx^{\alpha'}f_0\|^2_{L^{2,q}},
$$
with $k=|\alpha-\alpha'|$.  This proves \eqref{m_V_4}.
Similarly, by \eqref{m_S_2} and \eqref{m_S_3} we can prove \eqref{m_V_4a} and \eqref{m_V_5}.

%
Now we turn to show the lower bound of time-decay rates for the global solution under the assumptions of Theorem \ref{m_rate1}.  Note that
\bma
 \|\Tdx^k(e^{tB_m}f_0,\chi_j)\|_{L^2_{x}}&\ge \||\xi|^k(S_1(t,\xi)\hat{f}_0,\chi_j)\|_{L^2_{\xi}}-\||\xi|^kS_2(t,\xi)\hat{f}_0\|_{L^2_{\xi,v}}
\nnm\\
&\ge \||\xi|^k(S_1(t,\xi)\hat{f}_0,\chi_j)\|_{L^2_{\xi}}-Ce^{-\sigma_0 t}\|\Tdx^k f_0\|_{L^2_{x,v}},\quad j=0,1,2,3,4,\label{m_H_2a}
\\
\|\Tdx^k\P_1 (e^{tB_m}f_0)\|^2_{L^2_{x,v}}
&\ge
 \||\xi|^k\P_1(S_1(t,\xi)\hat{f}_0)\|_{L^2_{\xi,v}}^2-\||\xi|^kS_2(t,\xi)\hat{f}_0\|_{L^2_{\xi,v}}\nnm\\
 &\ge \||\xi|^k\P_1(S_1(t,\xi)\hat{f}_0)\|_{L^2_{\xi,v}}^2-Ce^{-\sigma_0 t}\|\Tdx^k f_0\|^2_{L^2_{x,v}},\label{m_H_2c}
 \ema
where we have used \eqref{m_D_1} for $|\alpha|=k$.

First, we prove \eqref{m_H_1}--\eqref{m_H_1a} as follows. By \eqref{m_S_4a} and \eqref{m_ee}, we have
\bma
 |(S_1(t,\xi)\hat f_0,\chi_0)|^2
\ge &\,
  \frac1{16}|\hat q_0|^2\Big|
     e^{{\rm Re}\lambda_1(|\xi|)t}\cos({\rm Im}\lambda_1(|\xi|)t)
    -e^{\lambda_0(|\xi|)t}\Big|^2
  -e^{-2\beta |\xi|^2t}(2|\hat n_0|^2+C|\xi|^2\|\hat f_0\|^2_{L^2_v})
\nnm\\
\ge &\,
  \frac1{20}|\hat q_0|^2\Big|
     e^{{\rm Re}\lambda_1(|\xi|)t}\cos\bigg(2\sqrt{\frac23}\,|\xi|t\bigg)
    -e^{\lambda_0(|\xi|)t}\Big|^2
\nnm\\
 &  -e^{-2\beta |\xi|^2t}
    (C(|\xi|^3t)^2|\hat q_0|^2+2|\hat n_0|^2
      +C|\xi|^2\|\hat f_0\|^2_{L^2_v}),           \label{m_H_6a}
\ema
due to the fact
$ \cos(\mathrm{Im}\lambda_1(|\xi|)t)
  \sim\cos\(2\sqrt{\frac23}|\xi|t\)+O(|\xi|^3t),
$
which implies
\bma
\||\xi|^{k}(S_1(t,\xi)\hat{f}_0,\chi_0)\|^2_{L^2_{\xi}}
\ge &\,
 \frac1{20}\inf_{|\xi|\le r_0}|\hat q_0|^2\int_{|\xi|\le r_0}|\xi|^{2k}\bigg|
    e^{{\rm Re}\lambda_1(|\xi|)t}\cos\bigg(2\sqrt{\frac23}|\xi|t\bigg)
   -e^{\lambda_0(|\xi|)t}\bigg|^2d\xi
\nnm\\
& -C_3\sup_{|\xi|\le r_0}|\hat n_0|^2(1+t)^{-3/2-k}- C(\|q_0\|^2_{L^1_x} + \|f_0\|^2_{L^{2,1}})(1+t)^{-5/2-k}
\nnm\\
=:&\,\frac1{20}\inf_{|\xi|\le r_0}|\hat q_0|^2I_1
   -C_3\sup_{|\xi|\le r_0}|\hat n_0|^2(1+t)^{-3/2-k} - C(1+t)^{-5/2-k}.\label{m_F_4}
\ema
For the term $I_1$, it holds for $t\ge t_0=:\frac{1}{r^2_0}$ that
\bma
I_1
&\ge
4\pi\sum_{k=0}^{[\frac1\pi\sqrt{\frac23}r_0t]-1}
  \int^{(2k\pi+\frac32\pi)/(2\sqrt{\frac23}t)}_{(2k\pi+\frac12\pi)/(2\sqrt{\frac23}t)} r^{2+2k}\bigg| e^{{\rm Re}\lambda_1(r)t}\cos\bigg(2\sqrt{\frac23}rt\bigg)
        -e^{\lambda_0(r)t}\bigg|^2dr
\nnm\\
&\ge
4\pi\sum_{k=0}^{[\frac1\pi\sqrt{\frac23}r_0t]-1}
  \int^{(k\pi+\frac34\pi)/(\sqrt{\frac23}t)}_{(k\pi+\frac14\pi)/(\sqrt{\frac23}t)}r^{2+2k}e^{2\lambda_0(r)t}dr
   \ge
 \frac12\pi e^{-2\eta}(1+t)^{-3/2-k}, \label{m_F_2}
\ema
where we have used by \eqref{m_specr0}
\be
\mathrm{Re}\lambda_j(|\xi|)=a_j|\xi|^2(1+O(|\xi|))\ge -\eta |\xi|^2,
\quad |\xi|\leq r_0. \label{m_eez}
\ee
It follows from \eqref{m_F_4} and \eqref{m_F_2} that
\bma
 \||\xi|^{k}(S_1(t,\xi)\hat{f}_0,\chi_0)\|^2_{L^2_{\xi}}\ge&
  C_4\inf_{|\xi|\le r_0}|q_0|^2(1+t)^{-3/2-k}
 -C_3\sup_{|\xi|\le r_0}|\hat n_0|^2(1+t)^{-3/2-k}\nnm\\
 &-C(1+t)^{-5/2-k},\label{m_H_7}\ema
which and \eqref{m_H_2a} with $j=0$ lead to \eqref{m_H_1} and \eqref{m_H_2} for $d_1>\frac{C_3}{C_4}>0$ and $t>0$ large enough.

By \eqref{m_S_4b}, we obtain
\bma
|(S_1(t,\xi)\hat f_0,v\sqrt M)|^2
&\ge \frac18e^{2{\rm Re}\lambda_1(|\xi|)t}\sin^2({\rm Im}\lambda_1(|\xi|)t)\(|\hat q_0|-\sqrt6 |\hat n_0|\)^2-C|\xi|^2e^{-2\beta|\xi|^2t}\|\hat f_0\|^2_{L^2_v}.\label{m_B_2a}
\ema
In terms of \eqref{m_eez} and the fact
$$
 \sin^2(\mathrm{Im}\lambda_1(|\xi|)t)\geq
\frac12\sin^2\bigg(2\sqrt{\frac23}|\xi|t\bigg)-O([|\xi|^3t]^2),
$$
we obtain by \eqref{m_B_2a} that
\bma
 \||\xi|^{k}(S_1(t,\xi)\hat f_0,v\sqrt M)\|^2_{L^2_{\xi}}
\ge
 &\frac1{16}(d_1- \sqrt{6})^2d_0^2\int_{|\xi|\leq r_0}|\xi|^{2k}
  e^{-2\eta|\xi|^2t}\sin^2\bigg(2\sqrt{\frac23}|\xi|t\bigg)d\xi
\nnm\\
 &-C(\|n_0\|^2_{L^1_x} +\|q_0\|^2_{L^1_x} + \|f_0\|^2_{L^{2,1}})(1+t)^{-5/2-k}\nnm\\
=:&\frac1{16}(d_1- \sqrt{6})^2d_0^2I_2-C(1+t)^{-5/2-k}.\label{m_H_3}
\ema
Since it holds for $t\ge t_0=:\frac{L^2}{r_0^2}$ with the constant $L>4\pi$ that
\bma
I_2
&=  t^{-3/2-k}\int_{|\zeta|\le r_0\sqrt t}|\zeta|^{2k}e^{-2\eta|\zeta|^2}\sin^2\bigg(2\sqrt{\frac23}|\zeta|\sqrt{t}\bigg)d\zeta\nnm\\
&\ge\pi (1+t)^{-3/2-k} L^{2+2k} e^{-2\eta L^2}\int^L_{L/2}\sin^2\bigg(2\sqrt{\frac23}r\sqrt{t}\bigg)dr\nnm\\
&\ge\pi (1+t)^{-3/2-k} L^{2+2k} e^{-2\eta L^2}\int^\pi_0\sin^2ydy>C_3(1+t)^{-3/2-k}, \label{m_H_4}
\ema
we obtain \eqref{m_H_1} for $t>0$ large enough, with the help of \eqref{m_H_3} and \eqref{m_H_2a} for $j=1,2,3$.

By \eqref{m_S_4c}, we obtain
\bmas
|(S_1(t,\xi)\hat f_0,\chi_4)|^2
&\ge \frac1{32}|\hat q_0|^2e^{2{\rm Re}\lambda_1(|\xi|)t}\cos^2({\rm Im}\lambda_1(|\xi|)t)\Big|e^{{\rm Re}\lambda_1(|\xi|)t}\cos({\rm Im}\lambda_1(|\xi|)t)+3e^{\lambda_0(|\xi|)t}\Big|^2
\\
&\quad-2e^{-2\beta|\xi|^2t}|\hat n_0|^2-C|\xi|^2e^{-2\beta|\xi|^2t}\|\hat f_0\|^2_{L^2_v},
\emas
with which, we can obtain by the similar argument to estimate the terms in \eqref{m_F_2} that
\bmas
 \||\xi|^{k}(S_1(t,\xi)\hat{f}_0,\chi_4)\|^2_{L^2_{\xi}}\ge&
  C_4\inf_{|\xi|\le r_0}|q_0|^2(1+t)^{-3/2-k}
 -C_3\sup_{|\xi|\le r_0}|\hat n_0|^2(1+t)^{-3/2-k}\nnm\\
 &-C(1+t)^{-5/2-k},\emas
which together with \eqref{m_H_2a} for $j=4$ lead to \eqref{m_H_1} for $t>0$ large enough.
\par

Next, we prove \eqref{m_H_1a}. By \eqref{m_S_5} and  \eqref{m_eigf1}, we have
 \bmas
\P_1(S_1(t,\xi)\hat{f}_0)
 &=\i |\xi|\sum_{j=\pm1}e^{\lambda_j(|\xi|)t}\Big(\frac{\sqrt3}2\hat{n}_0+\frac{\sqrt2}4\hat{q}_0\Big)\Big[\frac{\sqrt2}4 L^{-1}\P_1(v\cdot\omega)\chi_4-j\frac{\sqrt2}2L^{-1}\P_1(v\cdot\omega)^2\sqrt M\Big]
\\
 &\quad +\i |\xi|\sum_{j=2,3}e^{\lambda_j(|\xi|)t}(\hat m_0\cdot W^j)L^{-1}\P_1(v\cdot\omega)(v\cdot W^j)\sqrt M
\\
&\quad-\i\frac{\sqrt3}2 |\xi|e^{\lambda_0(|\xi|)t}\Big(\frac{\sqrt{2}}2\hat{n}_0-\frac{\sqrt{3}}2\hat{q}_0\Big)L^{-1}\P_1(v\cdot\omega)\chi_4+|\xi|^2T_4(t,\xi)\hat f_0,
\emas
where $T_4(t,\xi)\hat f_0$ is the remainder term satisfying $\|T_4(t,\xi)\hat f_0\|^2_{L^2_v}\le Ce^{-2\beta |\xi|^2t}\|\hat f_0\|^2_{L^2_v}$.
Noting that the terms $L^{-1}\P_1(v\cdot\omega)\chi_4$, $L^{-1}\P_1(v\cdot\omega)^2\sqrt M$, and $L^{-1}\P_1(v\cdot\omega)(v\cdot W^j)\sqrt M$ are orthogonal to each other, we have
\bmas
  \|\P_1(S_1(t,\xi)\hat{f}_0)\|^2_{L^2_v}
\ge\,
 &\frac18\|L^{-1}\P_1(v_1\chi_1)\|^2_{L^2_v}|\xi|^2
  e^{2{\rm Re}\lambda_1(|\xi|)t}\sin^2({\rm Im}\lambda_1(|\xi|)t)
  \Big(|\hat{q}_0|-\sqrt6|\hat{n}_0|\Big)^2\\
 &-C|\xi|^4e^{-2\beta |\xi|^2t}\|\hat f_0\|^2_{L^2_v},
\emas
and then 
\bmas
 \||\xi|^{k}\P_1(S_1(t,\xi)\hat{f}_0)\|^2_{L^2_{\xi,v}}
\ge
 &\, C_4(1+t)^{-5/2-k}-C(1+t)^{-7/2-k}.
\emas
This together with \eqref{m_H_2c} leads to \eqref{m_H_2} for $t>0$ large enough.
 This completes the proof of the theorem.
\end{proof}

\section{The nonlinear problem for bVPB system}\setcounter{equation}{0}
\label{sect4}
\subsection{Energy estimates}

Let $N$ be a positive integer, and
\bmas
E_k(f_1,f_2)&=\sum_{|\alpha|+|\beta|\le N}\|w^k\dxa\dvb (f_1,f_2)\|^2_{L^2_{x,v}}+\sum_{|\alpha|\le N}\|w^k\dxa\Tdx \Phi\|^2_{L^2_x},\\
H_k(f_1,f_2)&= \sum_{|\alpha|+|\beta|\le N}\|w^k\dxa\dvb
(\P_1f_1,P_rf_2)\|^2_{L^2_{x,v}}\nnm\\
&\quad +\sum_{|\alpha|\le N-1}(\|\dxa\Tdx  (\P_0f_1,P_{\rm d}f_2)\|^2_{L^2_{x,v}}+\|\dxa  P_{\rm d}f_2\|^2_{L^2_{x,v}}+\|\dxa\Tdx \Phi\|^2_{L^2_x}),\\
D_k(f_1,f_2)&=\sum_{|\alpha|+|\beta|\le N}\|w^{\frac12+k}\dxa\dvb  (\P_1f_1,P_rf_2)\|^2_{L^2_{x,v}}\nnm\\
&\quad+\sum_{|\alpha|\le N-1}(\|\dxa\Tdx  (\P_0f_1,P_{\rm d}f_2)\|^2_{L^2_{x,v}}+\|\dxa  P_{\rm d}f_2\|^2_{L^2_{x,v}}+\|\dxa\Tdx \Phi\|^2_{L^2_x}).
\emas
for $k\ge 0$. For brevity, we write $E(f_1,f_2)=E_0(f_1,f_2)$, $H(f_1,f_2)=H_0(f_1,f_2)$ and $D(f_1,f_2)=D_0(f_1,f_2)$ for $k=0$.

Firstly, by taking inner product between $\chi_j\ (j=0,1,2,3,4)$ and \eqref{VPB7}, we obtain  the system of  compressible Euler-Poisson type (EP) as
\bma
 \dt n_1+\divx  m_1&=0,\label{G_3}
 \\
\dt  m_1+\Tdx n_1+\sqrt{\frac23}\Tdx q_1&=n_2\Tdx \Phi-\intr v\cdot\Tdx(  \P_1f_1) v\sqrt Mdv,\label{G_5}
\\
\dt q_1+\sqrt{\frac23}\divx m_1&=\sqrt{\frac23}\Tdx \Phi\cdot m_2-\intr v\cdot\Tdx(  \P_1f_1) \chi_4 dv, \label{G_6}
\ema
where
$$(n_1,m_1,q_1)=((f_1,\sqrt M),(f_1,v\sqrt M),(f_1,\chi_4)),\quad (n_2,m_2)=((f_2,\sqrt M),(f_2,v\sqrt M)).$$
Taking the microscopic projection $  \P_1$ to \eqref{VPB7}, we have
\bma
\dt(  \P_1f_1)+  \P_1(v\cdot\Tdx   \P_1f_1)-L(  \P_1f_1)&=-  \P_1(v\cdot\Tdx  \P_0f_1)+  \P_1 G_1 ,\label{GG1}
\ema
where the nonlinear term $G_1$ is denoted by
\bq
G_1=\frac12 (v\cdot\Tdx\Phi)f_2-\Tdx\Phi\cdot\Tdv f_2+\Gamma(f_1,f_1).\label{G1}
\eq
By \eqref{GG1}, we can express the microscopic part $  \P_1f_1$ as
\bq
  \P_1f_1=L^{-1}[\dt(  \P_1f_1)+  \P_1(v\cdot\Tdx   \P_1f_1)-  \P_1 G_1]+L^{-1}  \P_1(v\cdot\Tdx  \P_0f_1). \label{p_1}
  \eq
Substituting \eqref{p_1} into \eqref{G_3}--\eqref{G_6}, we obtain the  system  of the compressible Navier-Stokes-Poisson type (NSP) as
\bma
\dt n_1+\divx  m_1&=0,\label{G_9}
\\
\dt  m_1+\dt R_1+\Tdx n_1+\sqrt{\frac23}\Tdx q_1&=\kappa_1 (\Delta_x m_1+\frac13\Tdx{\rm div}_x m_1)+n_2\Tdx \Phi+R_2,\label{G_7}
\\
\dt q_1+\dt R_3+\sqrt{\frac23}\divx m_1&=\kappa_2 \Delta_x q_1+\sqrt{\frac23}\Tdx \Phi\cdot m_2+R_4,\label{G_8}
\ema
where the viscosity coefficients $\kappa_1,\kappa_2>0$ and the remainder terms $R_1, R_2, R_3, R_4$ are defined by
\bmas
\kappa_1&=-(L^{-1}  \P_1(v_1\chi_2),v_1\chi_2),\quad \kappa_2=-(L^{-1}  \P_1(v_1\chi_4),v_1\chi_4),\\
R_1&=( v\cdot\Tdx L^{-1} \P_1f_1,v\sqrt M),\quad R_2=-(v\cdot\Tdx L^{-1}(  \P_1(v\cdot\Tdx \P_1f_1)-  \P_1 G_1),v\sqrt M),\\
R_3&=( v\cdot\Tdx L^{-1} \P_1f_1,\chi_4),\quad R_4=-(v\cdot\Tdx L^{-1}(  \P_1(v\cdot\Tdx \P_1f_1)-  \P_1 G_1),\chi_4).
\emas

By taking inner product between $\sqrt M$ and \eqref{VPB8}, we obtain
\bma
\dt n_2+\divx m_2&=0,\label{G_3a}
\ema
Taking the microscopic projection $ P_r$ to \eqref{VPB8}, we have
\bma
\dt( P_rf_2)+ P_r(v\cdot\Tdx  P_rf_2)-v\sqrt M\cdot\Tdx\Phi-L_1( P_rf_2)&=- P_r(v\cdot\Tdx  P_{\rm d}f_2)+ P_r G_2,\label{GG2}
\ema
where the nonlinear term $G_2$ is denoted by
\bq G_2=\frac12 (v\cdot\Tdx\Phi)f_1-\Tdx\Phi\cdot\Tdv f_1+\Gamma(f_2,f_1).\label{G2}\eq
By \eqref{GG2}, we can express the microscopic part $ P_rf_2$ as
\bq   P_rf_2=L_1^{-1}[\dt( P_rf_2)+ P_r(v\cdot\Tdx  P_rf_2)- P_r G_2]+L_1^{-1} P_r(v\cdot\Tdx  P_{\rm d}f_2)-L_1^{-1} (v\sqrt M\cdot\Tdx\Phi). \label{p_c}\eq
Substituting \eqref{p_c} into \eqref{G_3a}, we obtain
\bma
\dt n_2+ \dt\divx R_5 +\kappa_3 n_2-\kappa_3 \Delta_x n_2=-\divx((P_r(v\cdot\Tdx P_rf_2)-P_rG_2),v\sqrt M),\label{G_9a}
\ema
where the viscosity coefficient $\kappa_3>0$  and the term $R_5$ are  defined by
\bq
\kappa_3=-(L_1^{-1}\chi_1,\chi_1),\quad  R_5=(L_1^{-1}P_rf_2,v\sqrt M).
\eq

\begin{lem}[\cite{Duan2,Guo2}]\label{e1}  It holds that
 \bma
\|\nu^{k}\dvb\Gamma(f,g)\|_{L^2_v}&\le C\sum_{\beta_1+\beta_2\le\beta}
(\|\dv^{\beta_1}f\|_{L^2_v}\|\nu^{k+1}\dv^{\beta_2}g\|_{L^2_v}+\|\nu^{k+1}\dv^{\beta_1}f\|_{L^2_v}\|\dv^{\beta_2}g\|_{L^2_v}),\label{a}
\ema for $k\ge -1$, and
\bq \|\Gamma(f,g)\|_{L^{2,1}}\le C(\|f\|_{L^2_{x,v}}\|\nu g\|_{L^2_{x,v}}+\|\nu
f\|_{L^2_{x,v}}\|g\|_{L^2_{x,v}}).\label{b}\eq
\end{lem}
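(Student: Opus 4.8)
The plan is to reduce both estimates to the classical bilinear bounds of Grad for the hard-sphere kernel, which is the content cited in \cite{Guo2,Duan2}; I would make the route explicit as follows. First I would record the convenient form of $\Gamma$: by the collisional invariance $M(v)M(v_*)=M(v')M(v'_*)$, \eqref{Gf} becomes
\[
\Gamma(f,g)(v)=\intr\ints|(v-v_*)\cdot\omega|\sqrt{M(v_*)}\big(f(v')g(v'_*)-f(v)g(v_*)\big)\,d\omega\,dv_*=:\Gamma_+(f,g)(v)-\Gamma_-(f,g)(v),
\]
so the loss part is $\Gamma_-(f,g)(v)=f(v)\,\nu_g(v)$ with $\nu_g(v)=\intr\ints|(v-v_*)\cdot\omega|\sqrt{M(v_*)}g(v_*)\,d\omega\,dv_*$. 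The loss term is elementary: integrating out $\omega$ (using $\ints|(v-v_*)\cdot\omega|\,d\omega=c_0|v-v_*|$) and applying Cauchy--Schwarz in $v_*$ with $\intr|v-v_*|^2M(v_*)\,dv_*\le C(1+|v|)^2$ gives the pointwise bound $|\nu_g(v)|\le C(1+|v|)\|g\|_{L^2_v}\le C\nu(v)\|g\|_{L^2_v}$ by \eqref{nuv}; hence $\|\nu^k\Gamma_-(f,g)\|_{L^2_v}\le C\|\nu^{k+1}f\|_{L^2_v}\|g\|_{L^2_v}$, which is dominated by the right-hand side of \eqref{a} with $\beta=0$, and likewise, applying the same pointwise bound in $x$ and then Cauchy--Schwarz in $x$, $\|\Gamma_-(f,g)\|_{L^{2,1}}\le C\|\nu f\|_{L^2_{x,v}}\|g\|_{L^2_{x,v}}$, the second term of \eqref{b}.

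Next I would treat the gain term $\Gamma_+$, which is the crux. Applying Cauchy--Schwarz with respect to the measure $|(v-v_*)\cdot\omega|\sqrt{M(v_*)}\,d\omega\,dv_*$ gives
\[
|\Gamma_+(f,g)(v)|^2\le\Big(\intr\ints|(v-v_*)\cdot\omega|\sqrt{M(v_*)}|f(v')|^2\,d\omega\,dv_*\Big)\Big(\intr\ints|(v-v_*)\cdot\omega|\sqrt{M(v_*)}|g(v'_*)|^2\,d\omega\,dv_*\Big),
\]
and in each factor I would pass, for fixed $v$, to post-collisional variables via the Carleman-type change of variables, which expresses it as an integral of $|f(v')|^2$ (resp. $|g(v'_*)|^2$) against a kernel $k_\Gamma$ whose relevant norms are controlled by powers of $\nu(v)$; the resulting estimates, which are precisely those of Grad's lemma for the hard-sphere operator, then produce the two terms on the right of \eqref{a} for $\beta=0$, and an additional Cauchy--Schwarz in $x$ gives the first term of \eqref{b}. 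To incorporate the $v$-derivatives in \eqref{a} I would use the standard expansion obtained by writing the collision operator in the $\sigma$-representation, in which $v',v'_*$ depend smoothly on $v$: one gets $\dvb\Gamma(f,g)=\sum_{\beta_0+\beta_1+\beta_2=\beta}c_{\beta_0\beta_1\beta_2}\,\widetilde\Gamma^{\beta_0}(\dv^{\beta_1}f,\dv^{\beta_2}g)$, where each $\widetilde\Gamma^{\beta_0}$ is again a collision-type bilinear operator whose kernel satisfies the same hard-sphere bounds (differentiating $|(v-v_*)\cdot\omega|$ and $\sqrt{M(v_*)}$ only produces polynomial factors, absorbed by one extra power of $\nu$); applying the $\beta=0$ estimates to each summand and summing over $\beta_1+\beta_2\le\beta$ yields \eqref{a}.

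The hard part will be the gain term, i.e. making the Carleman change of variables precise and verifying the kernel bounds for $k_\Gamma$ and for its differentiated analogues needed when $\beta\ne0$; this is exactly Grad's lemma for hard spheres and its refinements, so in the paper I would simply invoke \cite{Guo2,Duan2}, the sketch above recording the route one would follow to reprove it.
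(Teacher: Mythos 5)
The paper offers no proof of this lemma at all: it is quoted directly from \cite{Duan2,Guo2}, so your plan of reducing everything to the classical Grad-type bilinear estimates and then invoking those references is exactly what the paper does. Your gain/loss splitting, the pointwise bound on the loss term, and the extra Cauchy--Schwarz in $x$ for \eqref{b} are the standard route by which the cited estimates are proved and are fine as written. Two steps of your sketch, however, are misstated and would fail if taken literally.

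First, the derivative step: before applying $\dvb$ one must translate the integration variable, $v_*=v+u$, so that $v'=v+(u\cdot\omega)\omega$, $v'_*=v+u-(u\cdot\omega)\omega$ and $\sqrt{M(v_*)}=\sqrt{M(v+u)}$; then the $v$-dependence is purely translational, the factor $|u\cdot\omega|$ is independent of $v$ and is never differentiated, and $\dvb$ falls only on $f$, $g$ and on $\sqrt{M(v+u)}$, whose derivatives are polynomial multiples of a Gaussian. Your parenthetical claim that differentiating $|(v-v_*)\cdot\omega|$ ``only produces polynomial factors'' is not correct: $|s|$ is merely Lipschitz, so second and higher $v$-derivatives of this factor produce singular (delta-type) terms, and without the translation the expansion $\dvb\Gamma=\sum\widetilde\Gamma^{\beta_0}(\dv^{\beta_1}f,\dv^{\beta_2}g)$ is not available for $|\beta|\ge2$. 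Second, the gain term cannot be closed ``for fixed $v$'': after your Cauchy--Schwarz, the factor $\intr\ints|(v-v_*)\cdot\omega|\sqrt{M(v_*)}\,|g(v'_*)|^2\,d\omega\,dv_*$ is, for each fixed $\omega$, an integral of $|g|^2$ over a two-dimensional plane through $v$, so it is not dominated pointwise by $\nu(v)^m\|g\|^2_{L^2_v}$; what the Carleman/Grad computation actually gives is a representation $\int k(v,w)|g(w)|^2dw$ with $\int k(v,w)\,dw\le C\nu(v)$ and Gaussian decay of $k$ in $|v-w|$, and \eqref{a} then follows only after multiplying by $\nu(v)^{2k}$, integrating in $v$, applying Fubini, and transferring the weight via $\nu(v)\le C\nu(w)(1+|v-w|)$ (equivalently, splitting according to which of $|v'|,|v'_*|$ is larger) so that the extra power $\nu^{k+1}$ lands on either $f$ or $g$ --- this is precisely where the two terms on the right of \eqref{a} come from. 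With these two repairs your outline coincides with the standard proof in the cited works, and deferring the kernel estimates to \cite{Duan2,Guo2} is consistent with what the paper itself does.
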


\begin{lem}[Macroscopic dissipation] \label{macro-en} Let $(n_1,m_1,q_1)$ and $n_2$ be the strong solutions to \eqref{G_9}--\eqref{G_8} and \eqref{G_9a} respectively. Then, there is a constant $p_0>0$ so that
\bma
&\Dt \sum_{k\le |\alpha|\le N-1}p_0\bigg(\|\dxa(n_1, m_1,q_1)\|^2_{L^2_x}+2\intr \dxa R_1\dxa m_1dx+2\intr \dxa R_3\dxa q_1dx\bigg)\nnm\\
&+\Dt \sum_{k\le |\alpha|\le N-1}4\intr \dxa m_1 \dxa\Tdx n_1dx+\sum_{k\le |\alpha|\le N-1} \|\dxa\Tdx (n_1, m_1,q_1)\|^2_{L^2_x}
\nnm\\
\le & C\sqrt{E(f_1,f_2)}D(f_1,f_2)+C\sum_{k\le |\alpha|\le N-1}\|\dxa\Tdx \P_1f_1\|^2_{L^2_{x,v}},\label{E_1}
\\
&\Dt \sum_{k\le |\alpha|\le N-1}(\|\dxa n_2\|^2_{L^2_x} +\|\dxa \Tdx\Phi\|^2_{L^2_x}+2 \intr\dxa\divx R_5\dxa n_2 dx+2 \intr\dxa R_5\dxa \Tdx\Phi dx)\nnm\\
&+\kappa_3\sum_{k\le |\alpha|\le N-1}(\|\dxa n_2\|^2_{L^2_x}+\|\dxa \Tdx n_2\|^2_{L^2_x} +\|\dxa \Tdx\Phi\|^2_{L^2_x})
\nnm\\
\le & CE(f_1,f_2)D(f_1,f_2)+C\sum_{k\le |\alpha|\le N-1}(\|\dxa  P_rf_2\|^2_{L^2_{x,v}}+\|\dxa\Tdx P_rf_2\|^2_{L^2_{x,v}}),\label{E_1a}
\ema
with $0\le k\le N-1$.
\end{lem}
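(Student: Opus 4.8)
\noindent\textbf{Proposed proof of Lemma~\ref{macro-en}.} The plan is to run the standard macroscopic (hypocoercive) energy argument on the Navier--Stokes--Poisson type systems \eqref{G_9}--\eqref{G_8} and \eqref{G_9a}, building a Lyapunov functional out of the natural $L^2_x$ energies of $(n_1,m_1,q_1)$, of $n_2$ and of $\Tdx\Phi$, plus the ``interactive'' cross term $\int\dxa m_1\cdot\dxa\Tdx n_1\,dx$ that upgrades the hyperbolic coupling $\Tdx n_1$ appearing in \eqref{G_7} into dissipation of $\Tdx n_1$, plus the correctors $\int\dxa R_1\dxa m_1$, $\int\dxa R_3\dxa q_1$, $\int\dxa\divx R_5\dxa n_2$ and $\int\dxa R_5\dxa\Tdx\Phi$ whose time derivatives cancel those of the remainders $R_j$. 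We apply $\dxa$ for $k\le|\alpha|\le N-1$ throughout; keeping $|\alpha|\le N-1$ rather than $|\alpha|\le N$ is forced because each $R_j$ carries one extra $x$-derivative of a microscopic quantity (see \eqref{p_1} and \eqref{p_c}), so its time derivative, computed via the microscopic equations \eqref{GG1} and \eqref{GG2}, costs a further derivative and must still lie in $H^N$.

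For \eqref{E_1}: first I would test the $\dxa$-derivative of \eqref{G_7} against $\dxa m_1$ and the $\dxa$-derivative of \eqref{G_8} against $\dxa q_1$, and add the $\dxa$-derivative of \eqref{G_9} tested against $\dxa n_1$. The viscous terms give $\kappa_1\|\dxa\Tdx m_1\|_{L^2_x}^2+\frac{\kappa_1}{3}\|\dxa\divx m_1\|_{L^2_x}^2+\kappa_2\|\dxa\Tdx q_1\|_{L^2_x}^2$; the hyperbolic coupling terms $\Tdx n_1\cdot m_1$, $\Tdx q_1\cdot m_1$ and $\divx m_1\,q_1$ cancel after integration by parts in the symmetric combination, leaving $\Dt\tfrac12\|\dxa(n_1,m_1,q_1)\|_{L^2_x}^2$; and the $\dt R_1$, $\dt R_3$ terms are absorbed into $\Dt\big(2\int\dxa R_1\dxa m_1+2\int\dxa R_3\dxa q_1\big)$ modulo errors which, after inserting \eqref{p_1} and \eqref{GG1}, are bounded by $\varepsilon\|\dxa\Tdx m_1\|^2+C_\varepsilon\|\dxa\Tdx\P_1 f_1\|^2+C\sqrt{E}D$. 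Next I would differentiate the cross functional: using \eqref{G_9} for $\dt n_1$ and \eqref{G_7} for $\dt m_1$ yields $\Dt\int\dxa m_1\cdot\dxa\Tdx n_1+\|\dxa\Tdx n_1\|_{L^2_x}^2\le C\|\dxa\divx m_1\|^2+\varepsilon(\|\dxa\Tdx m_1\|^2+\|\dxa\Tdx q_1\|^2)+C_\varepsilon\|\dxa\Tdx\P_1 f_1\|^2+C\sqrt{E}D$. Summing $p_0$ times the first estimate plus $4$ times the cross-term estimate over $k\le|\alpha|\le N-1$, and choosing $p_0$ large and then $\varepsilon$ small, makes the full $\|\dxa\Tdx(n_1,m_1,q_1)\|_{L^2_x}^2$ dissipation coercive. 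The remaining inhomogeneities are handled as follows: $n_2\Tdx\Phi$ and $\sqrt{2/3}\,\Tdx\Phi\cdot m_2$ are quadratic, and since $\Tdx\Phi=\Tdx\Delta_x^{-1}n_2$ by \eqref{VPB9} with $n_2,m_2,\Tdx\Phi$ all entering $D(f_1,f_2)$, H\"older plus Sobolev embedding bound their $\dxa$-derivatives by $C\sqrt{E(f_1,f_2)}D(f_1,f_2)$; the remainders $R_2,R_4$ involve $\P_1 G_1$ with $G_1$ from \eqref{G1}, so Lemma~\ref{e1} gives the same bound. Crucially, $\|\dxa\Tdx\P_1 f_1\|_{L^2_{x,v}}^2$ is \emph{not} absorbed: it is kept on the right-hand side of \eqref{E_1} to be controlled later by the microscopic dissipation.

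The estimate \eqref{E_1a} is obtained by the same scheme but is easier, because \eqref{G_9a} already contains the damping $\kappa_3 n_2$ and the parabolic term $-\kappa_3\Delta_x n_2$: testing the $\dxa$-derivative of \eqref{G_9a} against $\dxa n_2$ produces $\kappa_3(\|\dxa n_2\|^2+\|\dxa\Tdx n_2\|^2)$ directly, while testing it against $\dxa\Phi$ and using $\Delta_x\Phi=n_2$ recovers $\kappa_3\|\dxa\Tdx\Phi\|^2$; the $\dt R_5$ terms go into $\Dt\big(2\int\dxa\divx R_5\dxa n_2+2\int\dxa R_5\dxa\Tdx\Phi\big)$ via \eqref{p_c} and \eqref{GG2}, which is precisely where $\|\dxa P_r f_2\|^2$ and $\|\dxa\Tdx P_r f_2\|^2$ enter the right-hand side, and the nonlinear divergence term together with $P_r G_2$ (see \eqref{G2}) contributes $C\,E(f_1,f_2)D(f_1,f_2)$ by Lemma~\ref{e1}. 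The main obstacle, and the only place genuine care is needed, is the bookkeeping of the corrector terms: one must check that after substituting \eqref{p_1}--\eqref{p_c} and \eqref{GG1}--\eqref{GG2}, every error produced by $\dt R_j$ is either a total time derivative of a quantity dominated by $E$, or absorbable into $\tfrac12\|\dxa\Tdx(n_1,m_1,q_1)\|^2$ — resp. into $\tfrac{\kappa_3}{2}(\|\dxa n_2\|^2+\|\dxa\Tdx n_2\|^2)$ — with a small constant, or of the declared form $\sqrt{E}D$ (resp. $ED$), or bounded by the retained microscopic quantities $\|\dxa\Tdx\P_1 f_1\|^2$, $\|\dxa P_r f_2\|^2$, $\|\dxa\Tdx P_r f_2\|^2$; and that the constant choices ($p_0$ large, then $\varepsilon$ small) remain mutually compatible across the $n_1$--$m_1$--$q_1$ coupling.
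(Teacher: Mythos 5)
Your Lyapunov functional is exactly the paper's: the natural $L^2_x$ energies of $(n_1,m_1,q_1)$, $n_2$, $\Tdx\Phi$, the correctors $\int\dxa R_1\dxa m_1$, $\int\dxa R_3\dxa q_1$, $\int\dxa\divx R_5\,\dxa n_2$, $\int\dxa R_5\,\dxa\Tdx\Phi$, and the cross term with weight $4$, with the same $p_0$-then-$\epsilon$ bookkeeping; your treatment of \eqref{E_1a} (testing $\dxa$\eqref{G_9a} against $\dxa n_2$ and against $-\dxa\Phi$, using the Poisson equation) coincides with the paper's. The one point where your execution fails as written is the choice of equation used to replace the time derivatives. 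In the cross-term estimate you take $\dt m_1$ from the Navier--Stokes--Poisson form \eqref{G_7}: at top order $|\alpha|=N-1$ the viscous contribution $\kappa_1\dxa(\Delta_x m_1+\frac13\Tdx\divx m_1)$ paired with $\dxa\Tdx n_1$ cannot be estimated, since any integration by parts leaves $N+1$ $x$-derivatives on either $m_1$ or $n_1$, which neither $E(f_1,f_2)$ nor $D(f_1,f_2)$ controls. Likewise, computing the corrector error by inserting \eqref{p_1}/\eqref{GG1} into $\dt R_1$ produces, from the piece $\P_1(v\cdot\Tdx\P_0f_1)$ of $\dt(\P_1f_1)$, a term carrying two derivatives of $\P_0f_1$; after one integration by parts it is a product of two macroscopic gradients, of size $\|\dxa\Tdx\P_0f_1\|_{L^2_{x,v}}\|\dxa\Tdx m_1\|_{L^2_x}$, with no small prefactor, and inside the $p_0$-weighted block such a term cannot be absorbed (the $\Tdx n_1$-dissipation produced by the cross term has a coefficient independent of $p_0$, and weighting the Cauchy--Schwarz the other way demands $C_\epsilon<\kappa_1$). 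So the error bound you claim, $\varepsilon\|\dxa\Tdx m_1\|^2+C_\varepsilon\|\dxa\Tdx\P_1f_1\|^2+C\sqrt{E}D$, does not come out of that route.

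The fix, which is what the paper does: whenever a time derivative of a macroscopic quantity is needed --- in $\int\dxa R_1\,\dxa\dt m_1$, $\int\dxa R_3\,\dxa\dt q_1$, and in the cross-term estimate --- substitute it from the Euler--Poisson form \eqref{G_3}--\eqref{G_6} (so $\dt n_1=-\divx m_1$, and $\dt m_1$, $\dt q_1$ contain the microscopic flux un-split as $(v\cdot\Tdx\P_1f_1,\cdot)$, involving only one derivative). Then every error term is either purely microscopic, costing $C\epsilon^{-1}\|\dxa\Tdx\P_1f_1\|^2_{L^2_{x,v}}$, or carries a free small factor $\epsilon$ against $\|\dxa\Tdx n_1\|^2_{L^2_x}+\|\dxa\Tdx q_1\|^2_{L^2_x}$ (these, not $\|\dxa\Tdx m_1\|^2_{L^2_x}$), or is $C\sqrt{E(f_1,f_2)}D(f_1,f_2)$; taking $p_0$ large and then $\epsilon$ small (so that $p_0\epsilon$ is small relative to the fixed cross-term dissipation) the combination closes exactly as you outline, and the rest of your argument, including keeping $\|\dxa\Tdx\P_1f_1\|^2$, $\|\dxa P_rf_2\|^2$, $\|\dxa\Tdx P_rf_2\|^2$ on the right-hand side, matches the paper.
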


\begin{proof}
First of all, we prove \eqref{E_1}. Taking the inner product between $\dxa m_1$ and $\dxa\eqref{G_7}$ with $|\alpha|\le N-1$, we have
\bma
&\frac12\Dt \|\dxa m_1\|^2_{L^2_x}
 + \intr \dxa\dt R_1 \dxa m_1dx
 +\intr \dxa\Tdx n_1 \dxa m_1dx+\sqrt{\frac23}\intr \dxa\Tdx q_1\dxa m_1dx\nnm\\
&\quad+\kappa_1 (\|\dxa\Tdx  m_1\|^2_{L^2_x}+\frac13\|\dxa\divx  m_1\|^2_{L^2_x})\nnm\\
&=\intr \dxa(n_2\Tdx \Phi) \dxa m_1dx+\intr\dxa R_1\dxa m_1dx.  \label{en_1}
\ema
For the second and third terms in the left hand side of \eqref{en_1}, we have
\bma
 \intr \dxa\dt R_1 \dxa m_1dx&=\Dt\intr\dxa R_1\dxa m_1dx\nnm\\
&\quad-\intr \dxa R_1\dxa[-\Tdx n_1-\sqrt{\frac23}\Tdx q_1+n_2\Tdx\Phi-(v\cdot\Tdx \P_1f_1,v\sqrt M)]dx\nnm\\
&\ge \Dt\intr\dxa R_1\dxa m_1dx-\epsilon(\|\dxa\Tdx n_1\|^2_{L^2_x}+\|\dxa\Tdx q_1\|^2_{L^2_x})\nnm\\
&\quad-C\sqrt{E(f_1,f_2)}D(f_1,f_2)-\frac{C}{\epsilon}\|\dxa\Tdx \P_1f_1\|^2_{L^2_x},  \label{en_2}
\ema
and
\bma
\intr \dxa\Tdx n_1 \dxa m_1dx&=-\intr\dxa n\dxa\divx mdx=\intr\dxa n_1\dxa\dt n_1dx=\frac12\Dt \|\dxa n_1 \|^2_{L^2_x}.\label{en_2a}
\ema
The first  term in the right hand side  of \eqref{en_1} are bounded by $C\sqrt{E(f_1,f_2)}D(f_1,f_2)$. The second term can be estimated by
\bma
\intr\dxa R_2\dxa m_1dx
&\le C \|\dxa\Tdx   \P_1f_1\|_{L^2_{x,v}}
\|\dxa\Tdx m_1\|_{L^2_x}\nnm\\
&\quad+C(\|\dxa(\Tdx\Phi f_2)\|_{L^2_{x,v}}+\|w^{-\frac12}\dxa \Gamma(f_1,f_1)\|_{L^2_{x,v}})\|\dxa\Tdx m_1\|_{L^2_x}\nnm\\
&\le \frac{\kappa_1}2\|\dxa\Tdx m_1\|_{L^2_x}^2+C\|\dxa\Tdx  \P_1f_1\|^2_{L^2_{x,v}}+C\sqrt{E(f_1,f_2)}D(f_1,f_2),\label{en_4}
\ema
where we make use of Lemma \ref{e1} to obtain
\bma
&\|w^{-\frac12}\dxa \Gamma(f_1,f_1)\|^2_{L^2_{x,v}}+\|\dxa( \Tdx\Phi f_2)\|^2_{L^2_{x,v}}\nnm\\
&\le C\|  f_1\|^2_{L^2_v(H^{N}_x)}\|w^{\frac12}\Tdx f_1\|^2_{L^2_v(H^{N-1}_x)}+ C\|\Tdx\Phi\|^2_{H^N_x}\|\Tdx f_2\|^2_{L^2_v(H^{N-1}_x)}\nnm\\
&\le C E(f_1,f_2) D(f_1,f_2).\label{gamma}
\ema
Therefore, it follows from \eqref{en_1}--\eqref{en_4} that
\bma
&\frac12\Dt (\|\dxa m_1\|^2_{L^2_x}+\|\dxa n_1\|^2_{L^2_x})+\Dt\intr\dxa R_1\dxa m_1dx+\sqrt{\frac23}\intr \dxa\Tdx q_1\dxa m_1dx\nnm\\
&\quad+\frac{\kappa_1}2 (\|\dxa\Tdx  m_1\|^2_{L^2_x}+\frac13\|\dxa\divx  m_1\|^2_{L^2_x})\nnm\\
&\le C\sqrt{E(f_1,f_2)}D(f_1,f_2)+\frac{C}{\epsilon} \|\dxa\Tdx   \P_1f_1\|^2_{L^2_{x,v}}+\epsilon(\|\dxa\Tdx n_1\|^2_{L^2_x}+\|\dxa\Tdx q_1\|^2_{L^2_x}),\label{m_1}
\ema

Similarly, taking the inner product between $\dxa q_1$ and $\dxa\eqref{G_8}$ with $|\alpha|\le N-1$, we have
\bma
&\frac12\Dt \|\dxa q_1\|^2_{L^2_x}+\Dt\intr \dxa R_3\dxa q_1dx+\sqrt{\frac23}\intr \dxa\divx m_1\dxa q_1dx+\frac{\kappa_2}2 \|\dxa\Tdx q_1\|^2_{L^2_x}
\nnm\\
&\le C\sqrt{E(f_1,f_2)}D(f_1,f_2)+\frac{C}{\epsilon}\|\dxa\Tdx   \P_1f_1\|^2_{L^2_{x,v}}+\epsilon\|\dxa\Tdx m_1\|^2_{L^2_x}.\label{q_1}
\ema

Again, taking the inner product between $\dxa\Tdx n_1$ and $\dxa\eqref{G_5}$ with $|\alpha|\le N-1$ to get
\bma &\Dt\intr \dxa m_1 \dxa\Tdx n_1dx+\frac12\|\dxa\Tdx n_1\|^2_{L^2_x}\nnm\\
&\le C \sqrt{E(f_1,f_2)}D(f_1,f_2)+\|\dxa\divx  m_1\|^2_{L^2_x}+\|\dxa \Tdx q_1\|^2_{L^2_x}+C\|\dxa\Tdx  \P_1f_1\|^2_{L^2_{x,v}}.\label{abc}
\ema

Taking the summation of $p_0\sum\limits_{k\le |\alpha|\le N-1}[\eqref{m_1}+\eqref{q_1}]+4\sum\limits_{k\le |\alpha|\le N-1}\eqref{abc}$ with $p_0>0$ large enough, $\epsilon>0$ small enough and $0\le k\le N-1$, we obtain \eqref{E_1}.

Next, we turn to show \eqref{E_1a}. Taking the inner product between $\dxa n_2$ and $\dxa\eqref{G_9a}$ with $|\alpha|\le N-1$, we have
\bma
&\Dt \|\dxa n_2\|^2_{L^2_x}+2\Dt \intr\dxa\divx R_5\dxa n_2 dx+\kappa_3\|\dxa n_2\|^2_{L^2_x}+\kappa_3\|\dxa \Tdx n_2\|^2_{L^2_x}\nnm\\
&\le C\|\dxa\Tdx P_rf_2\|^2_{L^2_{x,v}}+C E(f_1,f_2)D(f_1,f_2).\label{en_5}
\ema
Similarly, taking the inner product between $-\dxa \Phi$ and $\dxa\eqref{G_9a}$ with $|\alpha|\le N-1$ we obtain
\bma
&\Dt \|\dxa \Tdx\Phi\|^2_{L^2_x}+2\Dt \intr\dxa R_5\dxa \Tdx\Phi dx+\kappa_3\|\dxa \Tdx\Phi\|^2_{L^2_x}+\kappa_3\|\dxa  n_2\|^2_{L^2_x}\nnm\\
&\le C(\|\dxa P_rf_2\|^2_{L^2_{x,v}}+\|\dxa\Tdx P_rf_2\|^2_{L^2_{x,v}})+CE(f_1,f_2)D(f_1,f_2).\label{en_6}
\ema
Taking the summation of $\sum_{k\le |\alpha|\le N-1}[\eqref{en_5}+\eqref{en_6}]$ with $0\le k\le N-1$, we obtain \eqref{E_1a}.
\end{proof}

In the following, we shall estimate the  microscopic terms to enclose the energy inequality of the solution $f$ to bVPB system \eqref{VPB7}--\eqref{VPB9}.

\begin{lem}[Microscopic dissipation]
\label{micro-en}
Let $(f_1,f_2,\Phi)$ be a strong solution to bVPB system  \eqref{VPB7}--\eqref{VPB9}.
Then, there are constants $p_k>0$, $1\le k\le N$ so that
 \bma
&\frac12\Dt  (\|(f_1,f_2)\|^2_{L^2_{x,v}}+\|\Tdx\Phi\|^2_{L^2_x})+\mu \|w^{\frac12} (\P_1f_1,P_rf_2)\|^2_{L^2_{x,v}}
\le C\sqrt{E(f_1,f_2)}D(f_1,f_2),\label{E_3}\\
 &\frac12\Dt \sum_{1\le|\alpha|\le N}(\|\dxa (f_1,f_2)\|^2_{L^2_{x,v}}+\|\dxa\Tdx\Phi\|^2_{L^2_x})+\mu\sum_{1\le|\alpha|\le N} \|w^{\frac12}\dxa (\P_1f_1,P_rf_2)\|^2_{L^2_{x,v}}\nnm\\
\le &C\sqrt{E(f_1,f_2)}D(f_1,f_2),\label{E_2}\\
&\Dt \sum_{1\le k\le N}p_k\sum_{|\beta|=k \atop |\alpha|+|\beta|\le N}\|\dxa\dvb (\P_1f_1,P_rf_2)\|^2_{L^2_{x,v}}
+\mu\sum_{1\le k\le N}p_k\sum_{|\beta|=k \atop |\alpha|+|\beta|\le N}\|w^{\frac12}\dxa\dvb (\P_1f_1,P_rf_2)\|^2_{L^2_{x,v}}\nnm\\
\le &C\sum_{|\alpha|\le N-1}(\|\dxa\Tdx ( f_1, f_2)\|^2_{L^2_{x,v}}+\|\dxa\Tdx\Phi\|^2_{L^2_x}) +C\sqrt{E(f_1,f_2)}D(f_1,f_2). \label{E_5}
\ema
\end{lem}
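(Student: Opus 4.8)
The three estimates are obtained by the standard macro--micro energy method: one tests the equations against the solution and its spatial/velocity derivatives, the new structural point being that the electric drift term is absorbed through the Poisson equation \eqref{VPB9}.

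\emph{Step 1: the $L^2_{x,v}$ estimate \eqref{E_3}.}
Take the $L^2_{x,v}$ inner product of \eqref{VPB7} with $f_1$ and of \eqref{VPB8} with $f_2$ and add them. The streaming terms $(v\cdot\Tdx f_i,f_i)_{L^2_{x,v}}$ vanish after integration by parts in $x$. For the electric drift term in \eqref{VPB8} one has $-(v\sqrt M\cdot\Tdx\Phi,f_2)_{L^2_{x,v}}=-(\Tdx\Phi,m_2)_{L^2_x}$ with $m_2=(f_2,v\sqrt M)$; differentiating $\|\Tdx\Phi\|_{L^2_x}^2$ in $t$ and using $\Delta_x\Phi=n_2$ together with the continuity equation \eqref{G_3a} gives $\frac12\Dt\|\Tdx\Phi\|_{L^2_x}^2=-(\Tdx\Phi,m_2)_{L^2_x}$, so this term is exactly $\tfrac12\Dt\|\Tdx\Phi\|_{L^2_x}^2$ transferred to the left. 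The collision contributions $-(Lf_1,f_1)$ and $-(L_1f_2,f_2)$ are bounded below, by \eqref{L_3}--\eqref{L_4} strengthened through \eqref{nuv} and the decomposition $L=K-\nu$ (so that the coercivity holds in the $\nu$-weighted, hence $w$-weighted, norm), by $\mu\|w^{1/2}(\P_1f_1,P_rf_2)\|_{L^2_{x,v}}^2$. The nonlinear terms $(G_1,f_1)_{L^2_{x,v}}+(G_2,f_2)_{L^2_{x,v}}$, with $G_1,G_2$ from \eqref{G1}--\eqref{G2}, are treated using Lemma \ref{e1}, the fact that $\Gamma(\cdot,\cdot)\in N_0^\bot$ (so the collision part of $G_i$ pairs only against $\P_1f_1$, resp.\ $P_rf_2$), the Sobolev embedding $H^2(\R^3_x)\hookrightarrow L^\infty(\R^3_x)$ (available since $N\ge4$), and the fact that $\Tdx\Phi$ differentiated up to $N-1$ times lies in $D$; placing the low-derivative factor in $L^\infty_x$ and the top-derivative factor in $L^2_x$ gives the bound $C\sqrt{E(f_1,f_2)}\,D(f_1,f_2)$, which yields \eqref{E_3}.

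\emph{Step 2: pure spatial derivatives \eqref{E_2}.}
For each $\alpha$ with $1\le|\alpha|\le N$ apply $\dxa$ to \eqref{VPB7}--\eqref{VPB8} and repeat Step 1 with $(\dxa f_1,\dxa f_2)$: since $\dxa$ commutes with $v\cdot\Tdx$, with $L$, $L_1$ and with $\Delta_x$, the streaming terms still vanish and the electric drift term $-(\Tdx\dxa\Phi,\dxa m_2)_{L^2_x}$ equals $\frac12\Dt\|\dxa\Tdx\Phi\|_{L^2_x}^2$ by the same Poisson identity applied to $\dxa n_2,\dxa m_2$. The only new feature is the Leibniz expansion of $\dxa$ on the quadratic terms; each resulting product is estimated as in Step 1, distributing the derivatives so that at most $N-2$ of them fall on the factor placed in $L^\infty_x$, giving the bound $C\sqrt E\,D$. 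Summing over $1\le|\alpha|\le N$ gives \eqref{E_2}.

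\emph{Step 3: mixed velocity derivatives \eqref{E_5}.}
Here one works with the microscopic equations \eqref{GG1} for $\P_1f_1$ and \eqref{GG2} for $P_rf_2$, applies $\dxa\dvb$ with $|\beta|=k\ge1$ and $|\alpha|+|\beta|\le N$, and takes $L^2_{x,v}$ inner products with $\dxa\dvb\P_1f_1$ and $\dxa\dvb P_rf_2$. The essential mechanism is that $\dvb$ does not commute with the streaming operator (nor with $\P_1$, $P_r$, $\nu(v)$, $K$, $K_1$): the commutator $[\dvb,v\cdot\Tdx]$ produces terms with one fewer velocity derivative but one more spatial derivative, of the form $\dx^{\alpha+e}\dv^{\beta-e}(\P_1f_1,P_rf_2)$ with $|e|=1$, while $[\dvb,\P_1]$, $[\dvb,\nu]$ and $\dvb K$ produce terms lower order in the $w$-weight. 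Ordering the estimates by $k=|\beta|$ from $1$ to $N$ and choosing the weights $p_k$ decreasing fast enough, all such commutator contributions at level $k$ are absorbed by a small fraction of the dissipation already secured at level $k-1$ (level $0$ being \eqref{E_2}--\eqref{E_3}). The linear inhomogeneities $-\P_1(v\cdot\Tdx\P_0f_1)$, $-P_r(v\cdot\Tdx P_{\rm d}f_2)$ and $-v\sqrt M\cdot\Tdx\Phi$, after $\dxa\dvb$, consist of Schwartz functions of $v$ multiplied by $\dxa\Tdx$ of a macroscopic moment or of $\Phi$; by Cauchy--Schwarz they are bounded by $\epsilon\|\dxa\dvb(\P_1f_1,P_rf_2)\|_{L^2_{x,v}}^2+C_\epsilon\big(\|\dxa\Tdx(\P_0f_1,P_{\rm d}f_2)\|_{L^2_{x,v}}^2+\|\dxa\Tdx\Phi\|_{L^2_x}^2\big)$ with $|\alpha|\le N-1$, which supplies the first group on the right of \eqref{E_5} (note $\|\dxa\Tdx\P_0f_1\|_{L^2_{x,v}}^2+\|\dxa\Tdx P_{\rm d}f_2\|_{L^2_{x,v}}^2\le\|\dxa\Tdx(f_1,f_2)\|_{L^2_{x,v}}^2$). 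The collision terms still yield $\mu\|w^{1/2}\dxa\dvb(\P_1f_1,P_rf_2)\|_{L^2_{x,v}}^2$ up to the lower-order errors just mentioned, and the nonlinear terms $\dxa\dvb\P_1G_1$, $\dxa\dvb P_rG_2$ are controlled by the $|\beta|>0$ case of Lemma \ref{e1} together with the Sobolev embedding, giving $C\sqrt E\,D$. Multiplying by $p_k$ and summing over $1\le k\le N$ yields \eqref{E_5}.

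\emph{Main obstacle.}
The delicate part is Step 3: bookkeeping the commutators $[\dvb,v\cdot\Tdx]$ and designing the hierarchy of weights $p_k$ so that the velocity-derivative estimates close without generating terms absent from the right-hand side of \eqref{E_5}, and checking that the $\nu$-weighted coercivity of $L$ and $L_1$ survives differentiation in $v$. By contrast, Steps 1--2 are routine once the Poisson identity converting the electric drift term into $\tfrac12\Dt\|\dxa\Tdx\Phi\|_{L^2_x}^2$ is in hand.
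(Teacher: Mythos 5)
Your overall strategy (energy estimates on \eqref{VPB7}--\eqref{VPB8}, the Poisson identity converting $-v\sqrt M\cdot\Tdx\Phi$ into $\tfrac12\Dt\|\dxa\Tdx\Phi\|^2_{L^2_x}$, and a weighted sum in $|\beta|$ for the velocity derivatives) is the same as the paper's, but there is a genuine gap in how you treat the cross drift terms $-\Tdx\Phi\cdot\Tdv f_2$ (in the $f_1$-equation) and $-\Tdx\Phi\cdot\Tdv f_1$ (in the $f_2$-equation) at the top derivative order. In Step 2 you claim every Leibniz product can be bounded by $C\sqrt{E}\,D$ by putting the low-derivative factor in $L^\infty_x$; however, for $|\alpha|=N$ the contribution where all spatial derivatives fall on the drifted factor, namely $\intrr \Tdx\Phi\cdot\dxa\Tdv f_2\,\dxa f_1\,dxdv$, involves $N+1$ derivatives of $f_2$, which is outside the reach of both $E(f_1,f_2)$ and $D(f_1,f_2)$ (they only control up to $N$ total derivatives), and no integration by parts in $x$ or $v$ alone fixes this, since it merely shifts the $(N+1)$-st derivative onto $f_1$. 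The paper's proof keeps exactly this term in the estimate of $I_2$ (see \eqref{G_01}) and the mirror term in \eqref{G_00}, and only after adding the two equations does it disappear through the cancellation $\intrr \Tdx\Phi\cdot\bigl(\dxa\Tdv f_2\,\dxa f_1+\dxa\Tdv f_1\,\dxa f_2\bigr)dxdv=\intrr \Tdx\Phi\cdot\Tdv\bigl(\dxa f_1\,\dxa f_2\bigr)dxdv=0$. This symmetric cross-coupling cancellation is the structural heart of \eqref{E_3}--\eqref{E_2} for the bipolar system (and the reason the lemma is stated for the pair $(f_1,f_2)$ jointly); your proposal never invokes it, and without it the top-order estimate does not close.

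The same issue recurs in your Step 3: the microscopic equations \eqref{G_2} and \eqref{b_1} carry the cross terms $\Tdx\Phi\cdot\Tdv P_rf_2$ and $\Tdx\Phi\cdot\Tdv \P_1f_1$, so at $|\alpha|+|\beta|=N$ the term with all derivatives on the drifted factor again has $N+1$ derivatives and must be cancelled between the two equations before the remaining commutator bookkeeping (which you describe correctly, and which matches the paper's hierarchy $\mu p_k\ge 2\sum_{1\le j\le N-k}p_{k+j}C_{k+j}$) can be carried out. Your treatment of the collision coercivity in the $w$-weighted norm, of the $\tfrac12(v\cdot\Tdx\Phi)f$ terms, and of the linear inhomogeneities feeding the first group on the right of \eqref{E_5} is consistent with the paper; the missing ingredient is solely, but crucially, the top-order cancellation of the electric-drift cross terms.
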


\begin{proof}
Taking inner product between $\dxa  f_1$ and $\dxa\eqref{VPB7}$ with $|\alpha|\le N$ $(N\ge 4)$, we have
\bma
&\frac12\Dt \|\dxa f_1\|^2_{L^2_{x,v}}-\intr (L\dxa f_1)\dxa f_1dxdv
\nnm\\
&=\frac12\intrr \dxa (v\cdot\Tdx \Phi f_2)\dxa f_1dxdv-\intrr \dxa (\Tdx \Phi\cdot \Tdv f_2)\dxa f_1dxdv+\intrr\dxa \Gamma(f_1,f_1)\dxa f_1dxdv\nnm\\
&=:I_1+I_2+I_3.\label{G_0}
\ema
 For $I_1$, it holds
\bma
I_1
&\le C\sum_{1\le |\alpha'|\le |\alpha|-1}\intr |v|\|\dx^{\alpha'}\Tdx\Phi\|_{L^3_x} \|\dx^{\alpha-\alpha'}f_2\|_{L^{6}_x}\|\dxa f_1\|_{L^2_{x}}dv\nnm\\
&\quad+C\intr |v|(\|\Tdx\Phi\|_{L^\infty_x}\|\dxa f_2\|_{L^2_{x}}+\|\dxa\Tdx\Phi\|_{L^2_x}\|f_2\|_{L^{\infty}_x})\|\dxa f_1\|_{L^2_{x}}dv\nnm\\
&\le C\|\Tdx\Phi\|_{H^{N}_x}\|w^{\frac12}\Tdx f_2\|_{L^2_v(H^{N-1}_x)}\|w^{\frac12}\dxa f_1\|_{L^2_{x,v}}\le C\sqrt{E(f_1,f_2)}D(f_1,f_2),\label{I_1}
\ema
for $|\alpha|\ge1$, and
\bma
I_1\le \intr |v|\|\Tdx\Phi\|_{L^3_x} \|f_2\|_{L^{2}_x}\| f_1\|_{L^6_{x}}dv\le C\sqrt{E(f_1,f_2)}D(f_1,f_2),
\ema
for $|\alpha|=0$. For $I_2$, it holds
\bma
I_2
&\le C\sum_{ |\alpha'|\le N/2}\intr\|\dx^{\alpha'}\Tdx\Phi\|_{L^\infty_x}\|\dx^{\alpha-\alpha'}\Tdv f_2\|_{L^{2}_x}\|\dxa f_1\|_{L^2_{x}}dv
\nnm\\
&\quad+C\sum_{|\alpha'|\ge N/2}\intr \|\dx^{\alpha'}\Tdx\Phi\|_{L^2_x}\|\dx^{\alpha-\alpha'}\Tdv f_2\|_{L^\infty_{x}}\|\dxa f_1\|_{L^2_{x}}dv-\intrr \Tdx\Phi\dxa\Tdv f_2\dxa f_1dxdv\nnm\\
&\le C\sqrt{E(f_1,f_2)}D(f_1,f_2)-\intrr \Tdx\Phi\dxa\Tdv f_2\dxa f_1dxdv.
\ema
For $I_3$, by \eqref{gamma} we obtain
\bma
I_3\le \|w^{-\frac12}\dxa \Gamma(f_1,f_1)\|_{L^2_{x,v}}\|w^{\frac12}\dxa   \P_1 f_1\|_{L^2_{x,v}}
\le C\sqrt{E(f_1,f_2)}D(f_1,f_2).\label{I_2}
\ema
Therefore, it follows from \eqref{G_0}--\eqref{I_2} that
\bma
\frac12\Dt \|\dxa f_1\|^2_{L^2_{x,v}}+\mu \|w^{\frac12}\dxa \P_1 f_1\|^2_{L^2_{x,v}} \le C\sqrt{E(f_1,f_2)}D(f_1,f_2)- \intrr \Tdx\Phi\dxa\Tdv f_2\dxa f_1dxdv.\label{G_01}
\ema

Similarly, taking inner product between $\dxa f_2$ and $\dxa\eqref{VPB8}$ with $|\alpha|\le N$ $(N\ge 4)$ we have
\bma
&\frac12\Dt (\|\dxa f_2\|^2_{L^2_{x,v}}+\|\dxa\Tdx\Phi\|^2_{L^2_x})-\intr (L_1\dxa f_2)\dxa f_2dxdv
\nnm\\
&\le C\sqrt{E(f_1,f_2)}D(f_1,f_2)-\intrr \Tdx\Phi\dxa\Tdv f_1\dxa f_2dxdv.\label{G_00}
\ema
Taking the summation \eqref{G_01}+\eqref{G_00} for $|\alpha|=0$ and $\sum_{1\le|\alpha|\le N}[\eqref{G_01}+\eqref{G_00}]$, we obtain \eqref{E_3} and \eqref{E_2} respectively.

 In order to enclose the energy inequality, we need to estimate the terms $\dxa\Tdv f$ with $|\alpha|\le N-1$. To this end, we rewrite \eqref{p_1} and \eqref{p_c} as
 \bma
&\partial_t(  \P_1f_1)+v\cdot\Tdx   \P_1f_1+\Tdx \Phi\cdot\Tdv
P_rf_2-L(  \P_1f_1)\nnm\\
&=\Gamma(f_1,f_1)+\frac12v\cdot\Tdx\Phi P_rf_2+ \P_0(v\cdot\Tdx   \P_1f_1-\frac12v\cdot\Tdx\Phi P_rf_2+\Tdx\Phi\cdot\Tdv P_r f_2)-  \P_1(v\cdot\Tdx  \P_0f_1),\label{G_2}
\ema
and
\bma
&\dt (P_r f_2)+v\cdot\Tdx P_rf_2-v\sqrt M\cdot\Tdx\Phi+\Tdx\Phi\cdot\Tdv  \P_1f_1+L_1(P_rf_2)\nnm\\
&=\Gamma(f_2,f_1)+\frac12v\cdot\Tdx\Phi  \P_1f_1+P_{\rm d}(v\cdot\Tdx P_rf_2)-(v\cdot\Tdx P_{\rm d}f_2-\frac12v\cdot\Tdx\Phi \P_0f_1
+\Tdx\Phi\cdot\Tdv \P_0f_1).\label{b_1}
\ema

Let $1\le k\le N$, and choose $\alpha,\beta $ with $|\beta|=k$ and $|\alpha|+|\beta|\le N$. Taking inner product between $\dxa\dvb \P_1f_1$ and \eqref{G_2} and between $\dxa\dvb P_rf_2$ and \eqref{b_1} respectively, and then taking summation of the resulted equations, we have
\bma
&\Dt\sum_{|\beta|=k \atop |\alpha|+|\beta|\le N}\|\dxa\dvb (\P_1f_1,P_rf_2)\|^2_{L^2_{x,v}}+\mu\sum_{|\beta|=k \atop |\alpha|+|\beta|\le N}\|w^{\frac12}\dxa\dvb (\P_1f_1,P_rf_2)\|^2_{L^2_{x,v}}\nnm\\
&\le C\sum_{|\alpha|\le N-k}(\|\dxa\Tdx (\P_0f_1,P_{\rm d}f_2)\|^2_{L^2_{x,v}}+\|\dxa\Tdx (\P_1f_1,P_rf_2)\|^2_{L^2_{x,v}}+\|\dxa\Tdx\Phi\|^2_{L^2_x})\nnm\\
&\quad+C_k\sum_{ |\beta|\le k-1\atop |\alpha|+|\beta|\le N}\|\dxa\dvb (\P_1f_1,P_rf_2)\|^2_{L^2_{x,v}}+C\sqrt{E(f_1,f_2)}D(f_1,f_2),\label{aa}
\ema
Then taking summation $\sum_{1\le k\le N}p_k\eqref{aa}$ with  constants $p_k$ chosen by
$$\mu p_k\ge 2\sum_{1\le j\le N-k}p_{k+j}C_{k+j},\quad 1\le k\le N-1,\quad p_N=1,$$
we obtain \eqref{E_5}.
The proof of the lemma is completed.
\end{proof}

With the help of Lemma \ref{macro-en}--\ref{micro-en}, we have the following result.

\begin{lem}\label{energy1}
Let $N\ge 4$. Then, there are two equivalent energy functionals
$E^f_0(\cdot)\sim E(\cdot)$ and $H^f_0(\cdot)\sim H(\cdot)$
such that the following holds. If
$E(f_{1,0},f_{2,0})$ is sufficiently small, then the Cauchy problem
\eqref{VPB7}--\eqref{VPB10} of the bVPB system admits a unique global
solution $(f_1,f_2)(x, v, t)$ satisfying
\bma
\Dt E^f_0(f_1,f_2)(t) + \mu D(f_1,f_2)(t) &\le 0, \label{G_1}
\\
\Dt H^f_0(f_1,f_2)(t)+\mu D(f_1,f_2)(t)&\le C\|\Tdx (n_1,m_1,q_1)\|^2_{L^2_{x}}.\label{G_4}
\ema
\end{lem}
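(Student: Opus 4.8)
The plan is to combine a local existence theorem with the global a priori estimates of Lemmas \ref{macro-en} and \ref{micro-en}, closed by a continuity argument. First I would prove local-in-time existence and uniqueness of the strong solution $(f_1,f_2,\Phi)$ to \eqref{VPB7}--\eqref{VPB10} in the class $\sup_{[0,T_0]}E(f_1,f_2)(t)\le 2E(f_{1,0},f_{2,0})$ by the usual iteration scheme, the energy inequalities \eqref{E_3}, \eqref{E_2}, \eqref{E_5} applied on a short interval providing the uniform bounds on the iterates; this part is routine and I would only sketch it.

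The core of the argument is to assemble the Lyapunov functional. I would set $E^f_0:=M_1\mathcal{E}_{\mathrm{mic}}+M_2\mathcal{E}_{v}+\mathcal{E}_{\mathrm{mac}}$, where $\mathcal{E}_{\mathrm{mic}}$ is the sum of the functionals whose time derivatives appear on the left of \eqref{E_3} and \eqref{E_2}, $\mathcal{E}_{v}$ is the one in \eqref{E_5}, and $\mathcal{E}_{\mathrm{mac}}$ collects the macroscopic functionals appearing in \eqref{E_1} and \eqref{E_1a} taken at $k=0$. I choose $M_1$ large enough that the microscopic dissipation $M_1\mu\sum_{|\alpha|\le N}\|w^{1/2}\dxa(\P_1f_1,P_rf_2)\|^2_{L^2_{x,v}}$ absorbs the terms $C\sum\|\dxa\Tdx\P_1f_1\|^2_{L^2_{x,v}}$ and $C\sum(\|\dxa P_rf_2\|^2_{L^2_{x,v}}+\|\dxa\Tdx P_rf_2\|^2_{L^2_{x,v}})$ on the right of \eqref{E_1}, \eqref{E_1a}, and then $M_2$ small (relative to $M_1$ and to the $O(1)$ macroscopic dissipation rates) so that $M_2$ times the right side of \eqref{E_5}, namely $C\sum_{|\alpha|\le N-1}(\|\dxa\Tdx(f_1,f_2)\|^2_{L^2_{x,v}}+\|\dxa\Tdx\Phi\|^2_{L^2_x})$, is dominated by the macroscopic dissipation of $(\P_0f_1,P_{\rm d}f_2,\Tdx\Phi)$ together with the microscopic dissipation already gained. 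Summing, the dissipation on the left becomes, up to equivalence of norms, $c_0 D(f_1,f_2)$ and the right-hand side is $C\sqrt{E(f_1,f_2)}\,D(f_1,f_2)$. The equivalence $E^f_0\sim E(f_1,f_2)$ holds because the leading quadratic terms reproduce $\|(f_1,f_2)\|^2_{L^2_v(H^N_x)}+\|\Tdx\Phi\|^2_{H^N_x}$ and, through $\mathcal{E}_{v}$, the velocity derivatives of the microscopic part, the remaining velocity derivatives of $\P_0f_1$ and $P_{\rm d}f_2$ being controlled by spatial derivatives of $(n_1,m_1,q_1,n_2)$ since these projections are fixed finite combinations of velocity profiles, while the cross terms $\int\dxa R_i\,\dxa(\cdot)\,dx$ are bounded (via Lemma \ref{e1} and $\|\dxa R_i\|_{L^2_x}\le C(\|\dxa\Tdx\P_1f_1\|_{L^2_{x,v}}+\|\dxa P_rf_2\|_{L^2_{x,v}})$) by a small multiple of the leading terms once $M_1$ is large. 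Thus $\Dt E^f_0+c_0 D(f_1,f_2)\le C\sqrt{E^f_0}\,D(f_1,f_2)$; once $E^f_0$ is small the nonlinear term is absorbed, giving \eqref{G_1} after relabelling the constant, and the continuity argument extends the local solution to a global one since $E^f_0(0)$ is small and \eqref{G_1} propagates the smallness.

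For the high-order functional $H^f_0$ I would repeat the construction but take $k=1$ in \eqref{E_1} and \eqref{E_1a}, and replace the zeroth-order part of $\mathcal{E}_{\mathrm{mic}}$ by the functional $\|\P_1f_1\|^2_{L^2_{x,v}}+\|P_rf_2\|^2_{L^2_{x,v}}$ differentiated directly from the microscopic equations \eqref{GG1}--\eqref{GG2} (a computation parallel to the one behind \eqref{E_3}, whose right side carries $C(\|\Tdx(\P_0f_1,P_{\rm d}f_2)\|^2_{L^2_{x,v}}+\|\Tdx\Phi\|^2_{L^2_x})+C\sqrt{E}\,D$). In this way the slow macroscopic mode $\|(n_1,m_1,q_1)\|^2_{L^2_x}$ never enters $H^f_0$, consistently with the definition of $H$. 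The $k=1$ version of \eqref{E_1} supplies the dissipation of $\|\dxa\Tdx(n_1,m_1,q_1)\|^2_{L^2_x}$ for $1\le|\alpha|\le N-1$ but not the single term $\|\Tdx(n_1,m_1,q_1)\|^2_{L^2_x}$, which therefore cannot be absorbed and is precisely the right-hand side of \eqref{G_4}. Checking $H^f_0\sim H$ and reassembling the total dissipation into $D(f_1,f_2)$ proceeds as before.

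I expect the main obstacle to be the bookkeeping of the constant hierarchy: one must simultaneously absorb the microscopic gradient terms $\|\dxa\Tdx\P_1f_1\|^2$, $\|\dxa P_rf_2\|^2$, $\|\dxa\Tdx P_rf_2\|^2$ coming from the macroscopic estimates into the large-coefficient microscopic dissipation, absorb the macroscopic gradient terms $\|\dxa\Tdx\P_0f_1\|^2$, $\|\dxa\Tdx P_{\rm d}f_2\|^2$, $\|\dxa\Tdx\Phi\|^2$ coming from the velocity-derivative estimate \eqref{E_5} into the $O(1)$ macroscopic dissipation, and keep the resulting functional equivalent to $E$ (resp. $H$), which constrains how the cross terms involving $R_1,\dots,R_5$ and the velocity derivatives of $\P_0f_1,P_{\rm d}f_2$ may be treated; for $H^f_0$ one must additionally verify that excising the non-dissipative mode $\|(n_1,m_1,q_1)\|^2_{L^2_x}$ is compatible with every estimate invoked.
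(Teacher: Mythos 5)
Your treatment of \eqref{G_1} is essentially the paper's own proof: the paper sums $A_1[\eqref{E_1}+\eqref{E_1a}]+A_2[\eqref{E_3}+\eqref{E_2}]+\eqref{E_5}$ with $k=0$ and $A_2\gg A_1$, which is exactly your weight hierarchy after normalization, and the absorption and equivalence checks you describe are the right ones.

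The construction of $H^f_0$, however, contains a genuine gap. You treat $f_1$ and $f_2$ symmetrically (taking $k=1$ in both \eqref{E_1} and \eqref{E_1a}, and replacing both zeroth-order energies by their microscopic projections), but $H$ and $D$ are not symmetric: they excise only the zeroth-order moments $(n_1,m_1,q_1)$ of $f_1$, while retaining $\|P_{\rm d}f_2\|^2_{L^2_{x,v}}=\|n_2\|^2_{L^2_x}$ and $\|\Tdx\Phi\|^2_{L^2_x}$ at $|\alpha|=0$. Taking $k=1$ in \eqref{E_1a} discards precisely the zeroth-order dissipation $\kappa_3(\|n_2\|^2_{L^2_x}+\|\Tdx\Phi\|^2_{L^2_x})$, and, unlike the $f_1$ mode, these terms cannot be moved to the right-hand side of \eqref{G_4}, which is $C\|\Tdx(n_1,m_1,q_1)\|^2_{L^2_x}$ and involves only $f_1$. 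Worse, the zeroth-order estimate for $P_rf_2$ that you substitute, derived from \eqref{GG2}, produces the source $\intrr (v\sqrt M\cdot\Tdx\Phi)\,P_rf_2\,dxdv$, i.e. a term $C\|\Tdx\Phi\|^2_{L^2_x}$ on the right (you note this yourself), and in your scheme nothing dissipates it; in addition your $H^f_0$ no longer dominates $\|n_2\|^2_{L^2_x}+\|\Tdx\Phi\|^2_{L^2_x}$, so the claimed equivalence $H^f_0\sim H$ fails, and the full $\mu D(f_1,f_2)$ cannot appear on the left of \eqref{G_4}.

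The repair is the asymmetric choice the paper makes: keep the full, unprojected zeroth-order energy identity for $f_2$ (inner product of \eqref{VPB8} with $f_2$ at $|\alpha|=0$, as in \eqref{G_00}), where the field term combines with the continuity equation \eqref{G_3a} and the Poisson equation \eqref{VPB9} into $\tfrac12\Dt\|\Tdx\Phi\|^2_{L^2_x}$ rather than a bad right-hand term — this is \eqref{E_4} — and take $k=0$ in \eqref{E_1a} so the zeroth-order dissipation of $n_2$ and $\Tdx\Phi$ is available; only \eqref{E_1} is taken at $k=1$, and only the zeroth-order $f_1$ energy is replaced by the projected estimate \eqref{low1}, exactly as you propose for $f_1$. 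With that modification the rest of your assembly goes through.
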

\begin{proof}
Assume that $$E(f_1,f_2)(t)\le \delta$$ for $\delta>0$ small.

Taking the summation of $A_1[\eqref{E_1}+\eqref{E_1a}]+A_2[\eqref{E_3}+\eqref{E_2}]+\eqref{E_5}$ with $A_2>C_0A_1>0$ large enough and taking $k=0$ in \eqref{E_1} and \eqref{E_1a}, we prove \eqref{G_1}.

Taking $|\alpha|=0$ in \eqref{G_00} and noting that the terms in the right hand side are bounded by $\sqrt{E(f_1,f_2)}D(f_1,f_2)$, we have
\bq \frac12\Dt (\|f_2\|^2_{L^2_{x,v}}+\|\Tdx\Phi\|^2_{L^2_x})+\mu \|w^{\frac12}P_rf_2\|^2_{L^2_{x,v}}\le \sqrt{E(f_1,f_2)}D(f_1,f_2).\label{E_4}\eq
Taking the inner product between \eqref{G_2} and $  \P_1f_1$, we have
\bma
\Dt \|   \P_1f_1\|^2_{L^2_{x,v}}+\|w^{\frac12} \P_1f_1\|^2_{L^2_{x,v}}\le C\|\Tdx \P_0f_1\|^2_{L^2_{x,v}}+E(f_1,f_2)D(f_1,f_2). \label{low1}
\ema

Taking the summation of $A_1[\eqref{E_1}+\eqref{E_1a}]+A_2[\eqref{E_4}+\eqref{low1}+\eqref{E_2}]+\eqref{E_5}$ with $A_2>C_0A_1>0$ large enough and taking $k=1$ in \eqref{E_1} and $k=0$ in \eqref{E_1a}, we prove \eqref{G_4}.
\end{proof}

Repeating the proof of Lemmas \ref{macro-en}--\ref{micro-en}, we can show
\begin{lem}\label{energy2}
Let $N\ge 4$.  There are the equivalent energy functionals $E^f_1(\cdot)\sim E_1(\cdot)$, $H^f_1(\cdot)\sim H_1(\cdot)$
such that if $E_1(f_{1,0},f_{2,0})$ is sufficiently small, then the solution $(f_1,f_2)(x,v,t)$ to the bVPB system \eqref{VPB7}--\eqref{VPB10} satisfies
\bma &\Dt E^f_1(f_1,f_2)(t)+\mu D_1(f_1,f_2)(t)\le 0,\label{G_4b}\\
&\Dt H^f_1(f_1,f_2)(t)+\mu D_1(f_1,f_2)(t)\le C\|\Tdx (n_1,m_1,q_1)\|^2_{L^2_{x}}.\label{G_4a}
\ema
\end{lem}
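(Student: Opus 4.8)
The plan is to reproduce, essentially word for word, the three-tier scheme behind Lemma~\ref{energy1}: a macroscopic (fluid) dissipation estimate, a microscopic dissipation estimate, and a weighted linear combination producing a Lyapunov functional. The only change is that every microscopic inner product is now taken against $w^{2}\dxa\dvb(\cdot)$ rather than $\dxa\dvb(\cdot)$, so that the dissipation gains the extra half power of $w$ that distinguishes $D_1$ from $D$. On the fluid side nothing new is needed: the quantities $(n_1,m_1,q_1)$, $n_2$ and $\Tdx\Phi$ carry no velocity weight, so Lemma~\ref{macro-en} applies verbatim and, after choosing the lower index ($k=0$ or $k=1$) in \eqref{E_1}--\eqref{E_1a}, controls $\sum_{|\alpha|\le N-1}\|\dxa\Tdx(n_1,m_1,q_1)\|^2_{L^2_x}$ together with $\|\dxa n_2\|_{L^2_x}$, $\|\dxa\Tdx n_2\|_{L^2_x}$ and $\|\dxa\Tdx\Phi\|_{L^2_x}$, modulo the microscopic remainders $\sum_{|\alpha|\le N-1}\|\dxa\Tdx(\P_1 f_1,P_r f_2)\|^2_{L^2_{x,v}}$ and $C\sqrt{E_1}\,D_1$; the nonlinear contributions are bounded exactly as in \eqref{en_2}--\eqref{gamma}, and only become cheaper here since $E(f_1,f_2)\le E_1(f_1,f_2)$ and $D(f_1,f_2)\le D_1(f_1,f_2)$.

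Next I would pair $\dxa\eqref{VPB7}$ with $w^{2}\dxa f_1$ and $\dxa\eqref{VPB8}$ with $w^{2}\dxa f_2$ for $|\alpha|\le N$. The transport term $v\cdot\Tdx$ still integrates to zero because $w$ commutes with $\Tdx$; the force terms $(v\cdot\Tdx\Phi)$ and $\Tdx\Phi\cdot\Tdv$ are handled as in Lemma~\ref{micro-en} since $\Tdx\Phi$ is $v$-independent, the one new feature being that the two leftover coupling integrals $-\intrr\Tdx\Phi\cdot w^{2}\Tdv\dxa f_2\,\dxa f_1$ and $-\intrr\Tdx\Phi\cdot w^{2}\Tdv\dxa f_1\,\dxa f_2$ no longer cancel: integration by parts in $v$ leaves $2\intrr(\Tdx\Phi\cdot v)\dxa f_1\,\dxa f_2\,dxdv$, which is dominated by $C\sqrt{E_1}\,D_1$ after splitting each $f_i$ into its hydrodynamic and microscopic parts (the top-order hydrodynamic piece being furnished by the $\dxa\Tdx\P_0$-terms inside $D_1$). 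The decisive term is the linear one, for which I would invoke the weighted coercivity of $L$ and $L_1$, namely $-(w^{2}Lg,g)\ge\mu\|w^{3/2}\P_1 g\|^{2}-C\|g\|^{2}$ and $-(w^{2}L_1 g,g)\ge\mu\|w^{3/2}P_r g\|^{2}-C\|g\|^{2}$ for the hard-sphere kernel (the unweighted versions being \eqref{L_3}--\eqref{L_4}), proved by writing $L=K-\nu$, using $\nu\sim w$ from \eqref{nuv}, and splitting $K$ into a small near-diagonal part absorbed by $\epsilon\|w^{3/2}\cdot\|^{2}$ plus an $L^{2}$-bounded remainder (cf.\ \cite{Guo2,Duan2}); the low-order error $\|\dxa g\|^{2}$ is then re-absorbed by the macroscopic dissipation. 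The cubic terms $(w^{2}\dxa\Gamma(f_1,f_1),\dxa\P_1 f_1)$ and their $f_2$-analogues are controlled by $C\sqrt{E_1}\,D_1$ via \eqref{a} with $k=1$ together with the Sobolev embeddings ($N\ge4$), exactly in the spirit of \eqref{gamma}--\eqref{I_2}. This yields the weighted counterparts of \eqref{E_3} and \eqref{E_2}.

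For the pure velocity derivatives I would start from the reformulations \eqref{G_2}--\eqref{b_1} and pair them with $w^{2}\dxa\dvb(\cdot)$, $|\beta|=k$, $1\le k\le N$; relative to Lemma~\ref{micro-en} the only extra terms are the commutators $[\dvb,\nu]$ (uniformly bounded, one order lower in $\beta$), $[\dvb,K]$ (bounded into $L^{2}$, with a weight gain) and $\dvb(w^{2})$ (same weight, lower order in $\beta$), all absorbed into the lower-$|\beta|$ pieces and the right-hand side; this gives the weighted version of \eqref{E_5}. Finally I would take $A_1$ times the sum of \eqref{E_1} and \eqref{E_1a}, plus $A_2$ times the sum of the weighted forms of \eqref{E_3} and \eqref{E_2}, plus the weighted form of \eqref{E_5}, with $A_2\gg A_1\gg1$: choosing $k=0$ in \eqref{E_1}--\eqref{E_1a} produces a functional $E^f_1\sim E_1$ with $\Dt E^f_1+\mu D_1\le0$, that is \eqref{G_4b}; choosing $k=1$ in \eqref{E_1} and adjoining the weighted analogues of \eqref{E_4} and \eqref{low1} for the lowest-order non-conservative modes produces $H^f_1\sim H_1$ with $\Dt H^f_1+\mu D_1\le C\|\Tdx(n_1,m_1,q_1)\|^2_{L^2_x}$, that is \eqref{G_4a}. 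The cross terms $\intr\dxa R_i\,\dxa(\cdot)\,dx$ inside \eqref{E_1}--\eqref{E_1a} are $O(\sqrt{E_1})$ small relative to $E_1$, so the assembled functionals are genuinely equivalent to $E_1$ and $H_1$, and smallness of $E_1(f_{1,0},f_{2,0})$ propagates the a priori bound $E_1(f_1,f_2)(t)\le\delta$ by the standard continuity argument.

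The step I expect to be most delicate is the weighted coercivity of $L$ and $L_1$, together with the bookkeeping of the coupling force term: one must verify that the weighted $K$-error is genuinely subordinate, both in the power of $w$ and in the number of $v$-derivatives, so that it is swallowed by $\mu\|w^{3/2}\cdot\|^{2}$ and the macroscopic dissipation, and that the sign-indefinite term $\Tdx\Phi\cdot\Tdv P_r f_2$ in \eqref{VPB7}, whose weighted pairing produces the spurious factor $2v$ above, is really absorbed into $\sqrt{E_1}\,D_1$ rather than leaving behind an uncontrolled weighted remainder; this is exactly the place where the interplay between $E_1$ and $D_1$ (and, implicitly, the exponential smallness of $\Tdx\Phi$) comes in.
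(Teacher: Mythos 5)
Your proposal is correct and follows essentially the same route as the paper, which proves this lemma simply by repeating the arguments of Lemmas \ref{macro-en}--\ref{micro-en} with the velocity weight $w$ inserted (macroscopic estimates unchanged, microscopic pairings against $w^{2}\dxa\dvb(\cdot)$ with weighted coercivity of $L$, $L_1$, then the same $A_1,A_2$-combination). Your explicit treatment of the non-cancelling weighted coupling term $2\intrr(\Tdx\Phi\cdot v)\dxa f_1\,\dxa f_2\,dxdv$ and of the weighted collision estimates is exactly the bookkeeping the paper leaves implicit.
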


\subsection{Convergence rates}
\begin{thm}\label{time5}
Assume that $(f_{1,0},f_{2,0})\in H^N_w\cap L^{2,1}$ for $N\ge 4$ and  $\|(f_{1,0},f_{2,0})\|_{H^N_{w}\cap L^{2,1}}\le \delta_0$ for a constant  $\delta_0>0$ small enough. Then, the global solution $(f_1,f_2)(x,v,t)$ to the bVPB system~\eqref{VPB7}--\eqref{VPB10} satisfies
 \bma
\|\dx^k f_2(t)\|_{L^2_{x,v}}+\|\dx^k\Tdx\Phi(t)\|_{L^2_x}&\le C\delta_0e^{-dt},\quad t>0, \label{t_3a}
\ema
for $k=0,1$ and $d>0$ a constant.
\end{thm}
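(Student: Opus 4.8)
The plan is to obtain the exponential decay of $f_2$ and $\Tdx\Phi$ by combining the linear decay estimate \eqref{D_2} of Theorem~\ref{rate2} (applied to the component $f_2$, whose linearized evolution is governed by $e^{tB}$) with a Duhamel representation of the nonlinear equation \eqref{VPB8}--\eqref{VPB9} and the already-established global-in-time energy bounds from Lemmas~\ref{energy1}--\ref{energy2} and Theorem~\ref{rate3}. Specifically, I would write $f_2(t) = e^{tB}f_{2,0} + \int_0^t e^{(t-s)B} G_2(s)\,ds$, where $G_2$ is the nonlinear source in \eqref{G2}, namely $G_2 = \frac12(v\cdot\Tdx\Phi)f_1 - \Tdx\Phi\cdot\Tdv f_1 + \Gamma(f_2,f_1)$. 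Applying \eqref{D_2} with $|\alpha|=k$ gives
\[
\|\dx^k f_2(t)\|_{L^2_{x,v}} + \|\dx^k\Tdx\Phi(t)\|_{L^2_x}
\le C e^{-\frac12 a_1 t}\big(\|\dx^k f_{2,0}\|_{L^2_{x,v}} + \|f_{2,0}\|_{L^{2,1}}\big)
+ C\int_0^t e^{-\frac12 a_1(t-s)}\big(\|\dx^k G_2(s)\|_{L^2_{x,v}} + \|G_2(s)\|_{L^{2,1}}\big)\,ds.
\]

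The key point is that every term in $G_2$ is quadratic and contains at least one factor of $\Tdx\Phi$ or of $f_2$ (note $\Tdx\Phi = \Tdx\Delta_x^{-1}(f_2,\sqrt M)$ is itself controlled by $f_2$), so $G_2$ is bilinear of the schematic form $(\text{factor decaying like } f_2)\times(\text{factor bounded by } f_1)$. Using Lemma~\ref{e1} for the $\Gamma$ term together with Sobolev embeddings in $H^N$ ($N\ge 4$), and the energy bounds $\|f_1(t)\|_{H^N_w}, \|f_2(t)\|_{H^N_w}\le C\delta_0$ from Theorem~\ref{rate3}, I would estimate
\[
\|\dx^k G_2(s)\|_{L^2_{x,v}} + \|G_2(s)\|_{L^{2,1}}
\le C\big(\|f_2(s)\|_{H^N_w} + \|\Tdx\Phi(s)\|_{H^N_x}\big)\big(\|f_1(s)\|_{H^N_w} + \|f_2(s)\|_{H^N_w}\big)
\le C\delta_0\, X(s),
\]
where $X(s) := \|f_2(s)\|_{H^{k}_{x,v}} + \|\Tdx\Phi(s)\|_{H^{k}_x}$ (one can afford to put the full $H^N$ norm on the ``large'' factor and only the low-order norm on the ``decaying'' factor $f_2$, since $\delta_0$ is small). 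Plugging this into the Duhamel inequality yields a closed integral inequality
\[
Y(t) \le C\delta_0 e^{-\frac12 a_1 t} + C\delta_0\int_0^t e^{-\frac12 a_1(t-s)} Y(s)\,ds,
\qquad Y(t):=\sum_{k=0,1}\big(\|\dx^k f_2(t)\|_{L^2_{x,v}} + \|\dx^k\Tdx\Phi(t)\|_{L^2_x}\big).
\]
Choosing $\delta_0$ small enough (so that $C\delta_0 < a_1/4$, say) and applying a Gronwall-type argument — e.g. multiply through by $e^{\frac12 a_1 t}$, set $Z(t)=e^{\frac12 a_1 t}Y(t)$, absorb the convolution term, and conclude $Z(t)\le C\delta_0 + C\delta_0\int_0^t Z(s)\,ds$ is replaced by the sharper bound $Y(t)\le C\delta_0 e^{-dt}$ with $d = \frac12 a_1 - C\delta_0 > 0$ — gives \eqref{t_3a}.

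The main obstacle is ensuring the nonlinear source $G_2$ is genuinely quadratically small with a \emph{decaying} factor rather than merely bounded: the term $\Gamma(f_2,f_1)$ and $(v\cdot\Tdx\Phi)f_1$ both carry an $f_2$ or $\Tdx\Phi$ factor, but one must be careful that the velocity weight $w$ produced by Lemma~\ref{e1} is absorbed by the $H^N_w$ energy norm (hence the hypothesis $f_{2,0}\in H^N_w$ and $N\ge 4$ for the $L^\infty_x$ Sobolev embeddings), and that the $L^{2,1}$ norm of $G_2$ is likewise controlled bilinearly via \eqref{b}. A secondary technical point is that $\Tdx\Phi$ decays at the same rate as $f_2$ only because of the Poisson relation $\Tdx\Phi=\Tdx\Delta_x^{-1}(f_2,\sqrt M)$ combined with the spectral gap \eqref{sg1z}; this is exactly what \eqref{D_2} encodes, so once the bilinear estimate on $G_2$ is in place the argument closes cleanly.
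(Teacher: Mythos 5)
Your route (Duhamel for \eqref{VPB8} with the semigroup bound \eqref{D_2}, then Gronwall) is genuinely different from the paper's, but as written it has a real gap in the nonlinear estimate, precisely at the point you flag as the ``main obstacle''. For the hard-sphere collision term the weight cannot always be thrown onto the bounded factor: the loss part of $\Gamma(f_2,f_1)$ is $f_2(v)\int|(v-v_*)\cdot\omega|\sqrt{M_*}f_1(v_*)\,dv_*d\omega\sim \nu(v)f_2(v)\|f_1\|_{L^2_v}$, so Lemma~\ref{e1} unavoidably produces the term $\|\nu f_2\|_{L^2_{x,v}}\|f_1\|_{L^2_{x,v}}$ (and $\|\nu\dx f_2\|\,\|f_1\|$ at one derivative). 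Hence your claimed bound $\|\dx^kG_2(s)\|_{L^2_{x,v}}+\|G_2(s)\|_{L^{2,1}}\le C\delta_0 X(s)$ with $X(s)$ the \emph{unweighted} low-order norm of $(f_2,\Tdx\Phi)$ is not justified: part of the source is controlled only by weighted norms such as $\|w\dx^k P_rf_2(s)\|_{L^2_{x,v}}$, which at this stage are merely bounded by $C\delta_0$ (from Lemmas~\ref{energy1}--\ref{energy2}), not known to decay. Inserting the correct bound into your Duhamel inequality gives $Y(t)\le C\delta_0e^{-\frac12a_1t}+C\delta_0\int_0^te^{-\frac12a_1(t-s)}Y(s)\,ds+C\delta_0^2$, from which one only concludes $Y(t)=O(\delta_0^2)$, not exponential decay; you cannot promote $Y$ to a weighted functional because \eqref{D_2} is an unweighted estimate, and you cannot interpolate the weight away since the hypothesis only provides the weight-one space $H^N_w$. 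A secondary point: quoting Theorem~\ref{rate3} for the a priori bounds is circular, since \eqref{t_3} of Theorem~\ref{rate3} is deduced from the present theorem (and Theorem~\ref{time6} uses it); the uniform smallness you need must be taken from Lemmas~\ref{energy1}--\ref{energy2} only.

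The paper avoids Duhamel altogether and closes a dissipative energy estimate in $(f_2,\Phi)$ alone: the weighted coercivity of $L_1$ controls $\|w^{\frac12}\dxa P_rf_2\|^2$, while the macroscopic equation \eqref{G_9a} supplies zeroth-order damping of $n_2$ and $\Tdx\Phi$ through the terms $\kappa_3 n_2$, $\kappa_3\Tdx\Phi$ coming from the Poisson coupling; this yields $\Dt E_2(f_2)+\kappa_3D_2(f_2)\le0$ together with $E_2(f_2)\le CD_2(f_2)$, the quadratic terms being absorbed as $C\sqrt{E_1}D_2$ or $CE_1D_2$ using only the uniform smallness of $E_1$, and Gronwall then gives \eqref{t_3a}. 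This is exactly the mechanism your scheme lacks: the dissipation itself forces the decay of the weighted norm of $P_rf_2$ (and of $n_2$, $\Tdx\Phi$), whereas the unweighted semigroup bound \eqref{D_2} cannot see it. If you want to keep a Duhamel-type argument you would first need a weighted analogue of \eqref{D_2} or a separate weighted decay estimate, at which point you are essentially reproducing the paper's energy argument.
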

\begin{proof}
Let $(f_1,f_2)$ be a solution to the Cauchy problem \eqref{VPB7}--\eqref{VPB10} for $t>0$.  Firstly, we prove that there is a constant $K_2>0$ and energy functionals $E_2(f_2),D_2(f_2)$ of $f_2$ defined by
\bmas
E_2(f_2)&=K_2\sum_{|\alpha|\le 1}(\|\dxa f_2\|^2_{L^2_{x,v}}+\|\dxa\Tdx\Phi\|^2_{L^2_x})+\|n_2\|^2_{L^2_x}+\|\Tdx\Phi\|^2_{L^2_x}\\
&\quad+2 \intr\divx R_5 n_2 dx+2 \intr R_5 \Tdx\Phi dx, \\
D_2(f_2)&=\sum_{|\alpha|\le1}\|w^{\frac12}\dxa P_rf_2\|^2_{L^2_{x,v}}+\| n_2\|^2_{L^2_x}+\|\Tdx n_2\|^2_{L^2_x}+\|\Tdx\Phi\|^2_{L^2_x},
\emas
such that
\bma
\Dt E_2(f_2(t))+\kappa_3 D_2(f_2(t))\le 0\quad {\rm and}\quad E_2(f_2)\le CD_2(f_2).\label{energy4}
\ema
Indeed, taking the inner product between $ n_2$ and \eqref{G_9} and using Cauchy-Schwarz inequality and Sobolev embedding, we obtain 
\bma
&\Dt (\| n_2\|^2_{L^2_x}+2 \intr \divx R_5 n_2 dx)+\kappa_3(\| n_2\|^2_{L^2_x}+\|\Tdx n_2\|^2_{L^2_x})
\nnm\\
&\le C\|\Tdx P_rf_2\|^2_{L^2_{x,v}}+CE_1(f_1,f_2)D_2(f_2).\label{G_0d}
\ema
Similarly, taking the inner product between $ -\Phi$ and \eqref{G_9} we obtain
\bma
&\Dt (\| \Tdx\Phi\|^2_{L^2_x}+2 \intr R_5 \Tdx\Phi dx)+\kappa_3(\| \Tdx\Phi\|^2_{L^2_x}+\| n_2\|^2_{L^2_x})
\nnm\\
&\le C\| P_rf_2\|^2_{L^2_{x,v}}+CE_1(f_1,f_2)D_2(f_2).\label{G_0e}
\ema
Taking the inner product between $ f_2$ and \eqref{VPB8}, we have
\bma
\frac12\Dt (\| f_2\|^2_{L^2_{x,v}}+\|\Tdx\Phi\|^2_{L^2_x})-\intr (L_1 f_2) f_2dxdv
\le C\sqrt{E_1(f_1,f_2)}D_2(f_2).\label{G_0a}
\ema
Again, taking the inner product between $\dxa f_2$ and $\dxa\eqref{VPB8}$ with $|\alpha|= 1$ to get
\bma
\frac12\Dt (\|\dxa f_2\|^2_{L^2_{x,v}}+\|\dxa\Tdx\Phi\|^2_{L^2_x})-\intr (L_1\dxa f_2)\dxa f_2dxdv
\le C\sqrt{E_1(f_1,f_2)}D_2(f_2).\label{G_0b}
\ema

Making the summation of $K_2 [\eqref{G_0a}+\eqref{G_0b}]+\eqref{G_0d}+\eqref{G_0e}$ for a sufficiently large constant $K_2>0$, we obtain
\eqref{energy4} for $E_1(f_1,f_2)>0$ small enough due to Lemma \ref{energy2}. Then \eqref{t_3a} follows from \eqref{energy4} by Gronwall's inequality.
\end{proof}

\begin{thm}\label{time6}
Assume that $(f_{1,0},f_{2,0})\in H^N_w\cap L^{2,1}$ for $N\ge 4$ and  $\|(f_{1,0},f_{2,0})\|_{H^N_{w}\cap L^{2,1}}\le \delta_0$ for a constant  $\delta_0>0$ small enough. Then, the global solution $(f_1,f_2)(x,v,t)$ to the bVPB system~\eqref{VPB7}--\eqref{VPB10} satisfies
 \bgr
\|\dx^k(f_1(t),\chi_j)\|_{L^2_{x}}\le C\delta_0(1+t)^{-\frac34-\frac k2},\quad j=0,1,2,3,4,\label{t_1a}\\
\|\dx^k  \P_1f_1(t)\|_{L^2_{x,v}}\le C\delta_0(1+t)^{-\frac54-\frac k2},\label{t_2a}\\
\|(\P_1f_1,P_rf_2)(t)\|_{H^N_w} +\|\Tdx (\P_0f_1,P_{\rm d}f_2)(t)\|_{L^2_v(H^{N-1}_x)}\le
C\delta_0(1+t)^{-\frac54},\label{t_4a}
\egr
for $k=0,1$.
\end{thm}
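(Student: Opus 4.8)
The plan is to combine the linearized decay estimates from Theorem~\ref{rate2} with the nonlinear energy estimates from Lemmas~\ref{energy1}--\ref{energy2} via a Duhamel (mild solution) argument together with a time-weighted bootstrap. First I would recall that the global solution exists and that $E^f_1(f_1,f_2)(t)$ is bounded by $C\delta_0^2$ for all $t$, by Lemma~\ref{energy2}; in particular $E_1(f_1,f_2)(t)\le C\delta_0^2$ and hence all the nonlinear source terms are quadratically small. The key structural observation is that the $f_2$-component and $\Tdx\Phi$ decay \emph{exponentially} by Theorem~\ref{time5}, so in every nonlinear term involving $\Tdx\Phi$ or $f_2$ we gain an exponential factor $e^{-dt}$; only the genuinely ``slow'' part is the Boltzmann-type evolution of $f_1$ under $e^{tE}$. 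Thus I would write
\[
f_1(t)=e^{tE}f_{1,0}+\int_0^t e^{(t-s)E}\Big[\tfrac12(v\cdot\Tdx\Phi)f_2-\Tdx\Phi\cdot\Tdv f_2+\Gamma(f_1,f_1)\Big](s)\,ds,
\]
and estimate the inhomogeneous term using \eqref{V_1}--\eqref{V_2} of Theorem~\ref{rate2} with $q=1$ (so the linear rates are $(1+t)^{-3/4-k/2}$ for the fluid part and $(1+t)^{-5/4-k/2}$ for $\P_1 f_1$).

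Concretely, I would introduce the time-weighted quantity
\[
M(t)=\sup_{0\le s\le t}\Big[\,(1+s)^{3/4}\|(f_1(s),\chi_j)\|_{L^2_x}+(1+s)^{5/4}\|\P_1 f_1(s)\|_{L^2_{x,v}}+(1+s)^{3/4+1/2}\|\Tdx(f_1(s),\chi_j)\|_{L^2_x}+(1+s)^{5/4+1/2}\|\Tdx\P_1 f_1(s)\|_{L^2_{x,v}}\Big],
\]
and show $M(t)\le C\delta_0+CM(t)^2+C\delta_0 M(t)$, which closes for $\delta_0$ small. The needed ingredients: (i) for the Boltzmann-type nonlinearity $\Gamma(f_1,f_1)$, use Lemma~\ref{e1} (estimates \eqref{a}--\eqref{b}) to bound $\|\Gamma(f_1,f_1)(s)\|_{L^{2,1}}$ and $\|\Gamma(f_1,f_1)(s)\|_{L^2_{x,v}}$ by $\|f_1(s)\|_{H^N_w}\|\P_1 f_1(s)\|$-type products; here one splits $f_1=\P_0 f_1+\P_1 f_1$ and notes $\Gamma$ maps into $N_0^\perp$, so the output is entirely microscopic, gaining the extra half power, and the quadratic product of two slow factors $(1+s)^{-3/4}$ integrated against the kernel $(1+t-s)^{-5/4}$ is bounded by $C(1+t)^{-5/4}$ (the standard decay–convolution lemma $\int_0^t(1+t-s)^{-a}(1+s)^{-b}ds\le C(1+t)^{-\min(a,b)}$ when $b>1$, and for $b=3/2$ this holds); (ii) for the electric terms $\tfrac12(v\cdot\Tdx\Phi)f_2$ and $\Tdx\Phi\cdot\Tdv f_2$, use Theorem~\ref{time5} to bound the $L^{2,1}$ and $L^2$ norms by $C\delta_0 e^{-ds}\times(\text{$H^N$-norms of }f_2,\Tdx\Phi)\le C\delta_0^2 e^{-2ds}$, so their Duhamel contribution is $\le C\delta_0^2\int_0^t(1+t-s)^{-5/4-k/2}e^{-2ds}ds\le C\delta_0^2(1+t)^{-5/4-k/2}$ — actually these are even better than required. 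Combining, \eqref{t_1a} and \eqref{t_2a} follow.

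For \eqref{t_4a}, which controls the full high-order weighted norm $\|(\P_1 f_1,P_r f_2)\|_{H^N_w}+\|\Tdx(\P_0 f_1,P_{\rm d}f_2)\|_{L^2_v(H^{N-1}_x)}$, I would not use Duhamel but the energy inequality \eqref{G_4a} from Lemma~\ref{energy2}: since $H_1(f_1,f_2)\gtrsim \|(\P_1 f_1,P_r f_2)\|_{H^N_w}^2+\|\Tdx(\P_0 f_1,P_{\rm d}f_2)\|^2_{L^2_v(H^{N-1}_x)}$ and $D_1\gtrsim H_1$ modulo the lowest-order fluid modes, \eqref{G_4a} gives $\Dt H^f_1+\mu H^f_1\le C\|\Tdx(n_1,m_1,q_1)\|_{L^2_x}^2\le C\delta_0^2(1+t)^{-5/2}$ using the already-established \eqref{t_1a} with $k=1$; Gronwall then yields $H^f_1(t)\le C\delta_0^2(1+t)^{-5/2}$, i.e. the $(1+t)^{-5/4}$ bound. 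One must be slightly careful that $D_1$ does not fully dominate $H_1$ — the purely macroscopic low-frequency part $\|(n_1,m_1,q_1)\|_{L^2_x}$ is missing from $D_1$ — but that part is exactly the quantity already controlled at rate $(1+t)^{-3/4}$, which is absorbed into the forcing; this interpolation/absorption step is the one place needing care.

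The main obstacle I anticipate is the decay–convolution bookkeeping for the borderline exponent: the slow nonlinearity $\Gamma(f_1,f_1)$ produces a source decaying like $(1+s)^{-3/2}$, and one needs $\int_0^t (1+t-s)^{-5/4}(1+s)^{-3/2}\,ds\le C(1+t)^{-5/4}$, which is true but right at the edge, so the constants and the precise form of the weighted norm $M(t)$ must be chosen so the bootstrap genuinely closes; relatedly, one must make sure the $H^N_w$-norm appearing in Lemma~\ref{e1} on the right side is controlled by the (bounded but not decaying) energy $E_1$ while still extracting enough decay from the other factor. Everything else — the high-order estimate via \eqref{G_4a}, and the exponential gain in the electric-field terms — is routine given Theorems~\ref{rate2} and~\ref{time5} and Lemmas~\ref{energy1}--\ref{energy2}.
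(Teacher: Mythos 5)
Your overall strategy is the same as the paper's: Duhamel for $f_1$ with the semigroup $e^{tE}$, the linear rates \eqref{V_1}--\eqref{V_2} with $q=1$, Lemma \ref{e1} for $\Gamma(f_1,f_1)$, the exponential decay of $(f_2,\Tdx\Phi)$ from Theorem \ref{time5} for the electric terms, and the weighted energy inequality \eqref{G_4a} plus Gronwall for the high-order part. The genuine gap is in how you organize the bootstrap: you exclude the weighted high-order norm from $M(t)$ and plan to prove \eqref{t_4a} only afterwards, bounding the weighted factor in Lemma \ref{e1} by the bounded-but-non-decaying energy $E_1\le C\delta_0^2$. This does not close. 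Indeed, \eqref{b} forces one factor to carry the weight $\nu\sim w$, and splitting $f_1=\P_0f_1+\P_1f_1$ one meets the term $\|\P_0f_1\|_{L^2_{x,v}}\,\|\nu\P_1f_1\|_{L^2_{x,v}}$: with only $M(t)$ and $E_1$ at hand this is $\lesssim \delta_0(1+s)^{-3/4}M(t)$, so the source decays merely like $(1+s)^{-3/4}$, and
\[
\int_0^t(1+t-s)^{-\frac34}(1+s)^{-\frac34}\,ds\sim(1+t)^{-\frac12},
\]
which is too slow to recover \eqref{t_1a} with $k=0$; the bootstrap inequality acquires a factor $(1+t)^{1/4}$ and cannot be closed uniformly in $t$. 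Your intended escape routes do not work: the observation that $\Gamma$ maps into $N_0^\perp$ does not let you replace the factors by their microscopic parts, and Theorem \ref{rate2} contains no improved linear rate for microscopic sources, so there is no ``extra half power'' to be gained there; interpolation cannot produce decay of $\|w\P_1f_1\|$ from decay of $\|\P_1f_1\|$ and boundedness in $H^N_w$, since the data carry only one weight $w$. Moreover your two-step order is circular: \eqref{t_4a} is derived from \eqref{G_4a} using $\|\Tdx(n_1,m_1,q_1)\|^2\lesssim(1+t)^{-5/2}$, i.e. \eqref{t_1a} with $k=1$, while, as just seen, \eqref{t_1a} needs the decay of the weighted microscopic norm that only \eqref{t_4a} provides.

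The fix is exactly what the paper does: put the quantity $(1+s)^{5/4}\bigl(\|(\P_1f_1,P_rf_2)(s)\|_{H^N_w}+\|\Tdx\P_0f_1(s)\|_{L^2_v(H^{N-1}_x)}+\|P_{\rm d}f_2(s)\|_{L^2_v(H^N_x)}\bigr)$ into the bootstrap functional itself (the paper's $Q(t)$), so that $\|\nu f_1(s)\|_{L^2_{x,v}}\lesssim (1+s)^{-3/4}Q(t)$ and the quadratic source decays like $(1+s)^{-3/2}$ (and $(1+s)^{-2}$ in $L^2$), while the Gronwall estimate based on \eqref{G_4a} is run \emph{inside} the same bootstrap with forcing $(1+s)^{-5/2}(\delta_0+\delta_0Q+Q^2)^2$. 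Then all terms close simultaneously and one gets $Q(t)\le C\delta_0+C\delta_0Q(t)+CQ(t)^2$. Your worry about $D_1$ missing the zero-order macroscopic part is unfounded, since $H_1$ contains only $\Tdx\P_0$ and weighted microscopic terms, so $D_1\gtrsim H_1$ holds directly; that part of your plan coincides with the paper's treatment.
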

\begin{proof}
Let $(f_1,f_2)$ be a solution to the Cauchy problem \eqref{VPB7}--\eqref{VPB10} for $t>0$. We can represent the solution in terms of the semigroup $e^{tE},e^{tB}$  as
 \bma
 f_1(t)=e^{tE}f_{1,0}+\intt e^{(t-s)E}G_1(s)ds, \label{Duh1}\\
 f_2(t)=e^{tB}f_{2,0}+\intt e^{(t-s)B}G_2(s)ds, \label{Duh2}
 \ema
where the nonlinear terms $G_1$ and $G_2$ are given by \eqref{G1} and \eqref{G2} respectively.
 For this global solution $f_1,f_2$, 
we define a functional $Q(t)$  for any $t>0$ as
\bmas Q(t)=\sup_{0\le s\le t}\sum_{k=0}^1&\Big\{\sum_{j=0}^4\|\dx^k(f_1(s),\chi_j)\|_{L^2_{x}}(1+s)^{\frac34+\frac k2}+\|\dx^k  \P_1f_1(s)\|(1+s)^{\frac54+\frac k2}\\
&+(\|(\P_1f_1,P_rf_2)(s)\|_{H^N_w} +\|\Tdx \P_0f_1(s)\|_{L^2_v(H^{N-1}_x)}+\|P_{\rm d}f_2(s)\|_{L^2_v(H^{N}_x)})(1+s)^{\frac54}\Big\}. \emas
We claim that it holds under the assumptions of Theorem~\ref{time6} that
  \bq
 Q(t)\le C\delta_0.  \label{assume}
 \eq

First, let us deal with the time-decay rate of the macroscopic density, momentum and energy of $f_1$, which in terms of \eqref{Duh1} satisfy the following equations
 \be
(f_1(t),\chi_j)=(e^{tE}f_{1,0},\chi_j)+\intt (e^{(t-s)E}G_1(s),\chi_j)ds,\quad j=0,1,2,3,4. \label{maceq}
 \ee
By Lemma \ref{e1} and \eqref{t_3},
we can estimate the nonlinear term $G_1(s)$ for $0\le s\le t$ in terms of $Q(t)$ as
 \bma
 \| G_1(s)\|_{L^2_{x,v}}
 &\le
 C\{\|wf_1\|_{L^{2,3}}\|f_1\|_{L^{2,6}}
    +\|\Tdx\Phi\|_{L^3_x}(\|wf_2\|_{L^{2,6}}
    +\|\Tdv f_2\|_{L^{2,6}})\}
    \nnm\\
&\le C(1+s)^{-2}Q(t)^2+C\delta_0e^{-ds}(1+s)^{-\frac54}Q(t),  \label{GG_1}
\\
 \| G_1(s)\|_{L^{2,1}}
 & \le
 C\{ \|f_1\|_{L^2_{x,v}}\|w f_1\|_{L^2_{x,v}}
    +\|\Tdx\Phi\|_{L^3_x}(\|w f_2\|_{L^2_{x,v}}
    +\|\Tdv  f_1\|_{L^2_{x,v}})\}
    \nnm\\
 &\le C(1+s)^{-\frac32}Q(t)^2+C\delta_0e^{-ds}(1+s)^{-\frac34}Q(t),\label{GG_2}
 \ema
Then, it follows from \eqref{V_1}, \eqref{GG_1} and \eqref{GG_2} that
\bma
\|(f_1(t),\chi_j)\|_{L^2_x}&\le
C(1+t)^{-\frac34}(\|f_{1,0}\|_{L^2_{x,v}}+\|f_{1,0}\|_{L^{2,1}})\nnm\\
&\quad+C\intt (1+t-s)^{-\frac34}(\| G_1(s)\|_{L^2_{x,v}}+\|G_1(s)\|_{L^{2,1}})ds\nnm\\
&\le C\delta_0(1+t)^{-\frac34}+C\intt (1+t-s)^{-\frac34}[(1+s)^{-\frac32}Q(t)^2+C\delta_0e^{-ds}(1+s)^{-\frac34}Q(t)]ds\nnm\\
&\le
C\delta_0(1+t)^{-\frac34}+C\delta_0(1+t)^{-\frac34}Q(t)+C(1+t)^{-\frac34}Q(t)^2. \label{macro_1}
 \ema
 Similarly, we have
 \bma
 \|(\Tdx f_1(t),\chi_j)\|_{L^2_x}
 &\le
 C(1+t)^{-\frac54}(\|\Tdx f_{1,0}\|_{L^2_{x,v}}+\|f_{1,0}\|_{L^{2,1}})
 \nnm\\
&\quad+C\intt (1+t-s)^{-\frac54}(\|\Tdx G_1(s)\|_{L^2_{x,v}}
  +\| G_1(s)\|_{L^{2,1}})ds
 \nnm\\
&\le
    C\delta_0(1+t)^{-\frac54}
   +C\intt (1+t-s)^{-\frac54}[(1+s)^{-\frac32}Q(t)^2+C\delta_0e^{-ds}(1+s)^{-\frac34}Q(t)]ds
 \nnm\\
&\le
 C\delta_0(1+t)^{-\frac54}+C\delta_0(1+t)^{-\frac54}Q(t)+C (1+t)^{-\frac54}Q(t)^2,\label{macro_2}
 \ema
 where we use
 $$\|\Tdx G_1(s)\|_{L^2_{x,v}}+\| G_1(s)\|_{L^{2,1}}\le C(1+s)^{-2}Q(t)^2+C\delta_0e^{-ds}(1+s)^{-\frac54}Q(t).$$

Second, we estimate the microscopic part $  \P_1f_1(t)$ as below. Since
$$  \P_1f_1(t)=  \P_1(e^{tE}f_{1,0})+\intt   \P_1(e^{(t-s)E}G_1(s))ds,$$
it follows from 
\eqref{V_2}, \eqref{GG_1} and \eqref{GG_2} that
\bma
\|  \P_1f_1(t)\|_{L^2_{x,v}}&\le
C(1+t)^{-\frac54}(\|f_{1,0}\|_{L^2_{x,v}}+\|f_{1,0}\|_{L^{2,1}})\nnm\\
&\quad+C\intt (1+t-s)^{-\frac54}(\| G_1(s)\|_{L^2_{x,v}}+\|G_1(s)\|_{L^{2,1}})ds\nnm\\
&\le
C\delta_0(1+t)^{-\frac54}+C\delta_0(1+t)^{-\frac54}Q(t)+C(1+t)^{-\frac54}Q(t)^2,\label{micro_1}
\ema
and
\bma
\|\Tdx   \P_1f_1(t)\|_{L^2_{x,v}}&\le
C(1+t)^{-\frac74}(\|\Tdx f_{1,0}\|_{L^2_{x,v}}+\|f_{1,0}\|_{L^{2,1}})\nnm\\
&\quad+C\int^{t/2}_0 (1+t-s)^{-\frac74}(\|\Tdx G_1(s)\|_{L^2_{x,v}}+\|G_1(s)\|_{L^{2,1}})ds\nnm\\
&\quad+C\int^t_{t/2} (1+t-s)^{-\frac54}(\|\Tdx G_1(s)\|_{L^2_{x,v}}+\|\Tdx G_1(s)\|_{L^{2,1}})ds\nnm\\
&\le
C\delta_0(1+t)^{-\frac74}+C\delta_0(1+t)^{-\frac74}Q(t)+C(1+t)^{-\frac74}Q(t)^2,\label{micro_2}\ema
where we have used
$$
\|\Tdx G_1(s)\|_{L^2_{x,v}}+\|\Tdx G_1(s)\|_{L^{2,1}}\le
C(1+s)^{-2}Q(t)^2+C\delta_0e^{-ds}(1+s)^{-\frac54}Q(t).
$$
Next, we estimate the higher order terms as below. By \eqref{G_4a} and $cH^f_1(f_1,f_2)\le D_1(f_1,f_2)$ with $c>0$, we have
\bma
H^f_1(f_1,f_2)(t)&\le e^{-c\mu t}H^f_1(f_{1,0},f_{2,0})+\intt e^{-c\mu(t-s)}\|\Tdx (n_1,m_1,q_1)(s)\|^2_{L^2_{x}}ds\nnm\\
&\le C\delta_0^2e^{-c\mu t} +\intt
e^{-c\mu(t-s)}(1+s)^{-\frac52}(\delta_0+\delta_0Q(t)+Q(t)^2)^2ds\nnm\\
&\le C(1+t)^{-\frac52}(\delta_0+\delta_0Q(t)+Q(t)^2)^2.\label{other}
\ema
By summing \eqref{macro_1}, \eqref{macro_2}, \eqref{micro_1}, \eqref{micro_2} and \eqref{other}, we have
$$Q(t)\le C\delta_0+C\delta_0Q(t)+CQ(t)^2,$$
which proves \eqref{assume} for $\delta_0>0$ small enough. This completes the proof of the theorem.
\end{proof}

\medskip

\begin{proof}[\it\underline{Proof of Theorem \ref{rate3}}]
Firstly, \eqref{t_1}--\eqref{t_4} and \eqref{t_7} follow from Theorem \ref{time5} and Theorem \ref{time6}.
 Since
\bq f_+=\frac12(f_1+f_2),\quad f_-=\frac12(f_1-f_2),\label{fab}\eq
this and Theorem \ref{time6} imply \eqref{t_5z}--\eqref{t_8}.
\end{proof}

\medskip

\begin{proof}[\it \underline{Proof of Theorem \ref{rate4}}]
By Eq.~\eqref{Duh1}, Theorem~\ref{time5} and Theorem~\ref{time6}, we can establish the lower bounds of the time decay rates of macroscopic density, momentum and energy of the global solution $(f_1,f_2)$ to the bVPB system \eqref{VPB7}--\eqref{VPB10} and its microscopic part for $t>0$ large enough. Indeed, it holds for $k=0,1$  that
 \bmas
\|\Tdx^k(f_1(t),\chi_j)\|_{L^2_{x}}
&\ge
 \|\Tdx^k (e^{tE}f_{1,0},\chi_j)\|_{L^2_{x}}
 -\intt\|\Tdx^k(e^{(t-s)E}G_1(s),\chi_j)\|_{L^2_{x}}ds
 \\
&\ge
 C_1\delta_0(1+t)^{-\frac34-\frac k2}-C_2\delta_0^2(1+t)^{-\frac34-\frac k2},
 \\
\| \Tdx^k \P_1f_1(t)\|_{L^2_{x,v}}
&\ge
 \|\Tdx^k\P_1(e^{tE}f_{1,0})\|_{L^2_{x,v}}
 -\intt\| \Tdx^k \P_1(e^{(t-s)E}G_1(s))\|_{L^2_{x,v}}ds
 \\
&\ge C_1\delta_0(1+t)^{-\frac54-\frac k2}-C_2\delta_0^2(1+t)^{-\frac54-\frac k2},
\emas
which give rise to
\bmas
\|f_1(t)\|_{H^N_w}&\ge\| \P_0f_1(t)\|_{L^2_{x,v}}-\|w  \P_1f_1(t)\|_{L^2_{x,v}}-\sum_{1\le |\alpha|\le N}\|w\dxa f_1(t)\|_{L^2_{x,v}}\\
&\ge C_1\delta_0(1+t)^{-3/4}-C_2\delta_0^2(1+t)^{-3/4}-C_3\delta_0(1+t)^{-5/4}.\emas
Therefore, for $\delta_0>0$
small and $t>0$ large, we obtain \eqref{B_1}--\eqref{B_4}.
By Theorem \ref{time5}, Theorem \ref{time6} and \eqref{fab}, we can prove \eqref{B_4a}--\eqref{B_3a}. 
\end{proof}

\section{The nonlinear problem for mVPB system}\setcounter{equation}{0}
\label{mvpb}

\subsection{Energy estimates}
\label{Lmvpb}

Let $N$ be a positive integer, and let \bmas E_k(f)&=\sum_{|\alpha|+|\beta|\le N}\|w^k\dxa\dvb f\|^2_{L^2_{x,v}} +\sum_{|\alpha|\le N}\|\dxa\Phi\|^2_{H^1_x},\\
H_k(f)&=\sum_{|\alpha|+|\beta|\le N}\|w^k\dxa\dvb
\P_1f\|^2_{L^2_{x,v}} +\sum_{|\alpha|\le N-1}(\|\dxa\Tdx \P_0f\|^2_{L^2_{x,v}}+\|\dxa \Tdx\Phi\|^2_{H^1_x}),\\
D_k(f)&=\sum_{|\alpha|+|\beta|\le N}\|w^{\frac12+k}\dxa\dvb \P_1f\|^2_{L^2_{x,v}}+\sum_{|\alpha|\le N-1}(\|\dxa\Tdx \P_0f\|^2_{L^2_{x,v}}+\|\dxa\Tdx\Phi\|^2_{H^1_x}),
\emas
for $k\ge 0$. For brevity, we write $E(f)=E_0(f)$, $H(f)=H_0(f)$ and $D(f)=D_0(f )$ for $k=0$.

Applying the similar arguments as to derive the equation~\eqref{G_9}--\eqref{G_8} and making use of the system \eqref{m_VPB4}--\eqref{m_VPB5}, we can obtain the  compressible Navier-Stokes-Poisson (NSP) equations with inhomogeneous terms for the macroscopic density, momentum and energy $(n,  m, q)=:((f,\chi_0),(f,v\chi_0),(f,\chi_4))$ as follows
\bma \dt n+\divx  m&=0,\label{m_G_9}\\
\dt  m+\dt  R_6+\Tdx n+\sqrt{\frac23}\Tdx q-\Tdx\Phi&=\kappa_5 (\Delta_x m+\frac13\Tdx{\rm div}_x m)+n\Tdx \Phi+R_7,\label{m_G_7}\\
\dt q+\dt  R_8+\sqrt{\frac23}\divx m&=\kappa_6 \Delta_x q+\sqrt{\frac23}\Tdx \Phi\cdot m+R_9,\label{m_G_8}
\ema
where $\kappa_5$, $\kappa_6>0$ are the viscosity coefficients and the remainder terms $R_6, R_7, R_8, R_9$ are defined by
\bmas \kappa_5&=-(L^{-1}\P_1(v_1\chi_2),v_1\chi_2),\quad \kappa_6=-(L^{-1}\P_1(v_1\chi_4),v_1\chi_4),\\
R_6&=(v\cdot\Tdx L^{-1}\P_1f,v\sqrt M),\quad R_7=-( v\cdot\Tdx L^{-1}(\P_1(v\cdot\Tdx \P_1f)-\P_1 G),v\sqrt M),\\
R_8&=(v\cdot\Tdx L^{-1}\P_1f,\chi_4),\quad R_9=-( v\cdot\Tdx L^{-1}(\P_1(v\cdot\Tdx \P_1f)-\P_1 G),\chi_4).\emas

We have the following estimates.

\begin{lem}[Macroscopic dissipation] \label{m_MacDis}
Let $(n,m,q)$ be a strong solution to \eqref{m_G_9}--\eqref{m_G_8} and assume that the energy $E(f)>0$ is small enough. Then, there is a constant $p_0>0$ so that it holds for $t>0$ that
\bma
&\Dt \sum_{k\le |\alpha|\le N-1}p_0(\|\dxa(n, m,q)\|^2_{L^2_x}+\|\dxa\Phi\|^2_{H^1_x}+2\intr \dxa R_6\dxa mdx+2\intr \dxa R_8\dxa qdx)\nnm\\
&+\Dt \sum_{k\le |\alpha|\le N-1}4\intr \dxa m \dxa\Tdx ndx
+\sum_{k\le |\alpha|\le N-1}( \|\dxa\Tdx (n, m,q)\|^2_{L^2_x}+\|\dxa \Tdx\Phi\|^2_{H^1_x})
\nnm\\
\le& C\sqrt{E(f)}D(f)+C\sum_{k\le |\alpha|\le N-1}\|\dxa\Tdx \P_1f\|^2_{L^2_{x,v}}\label{m_E_1}
\ema
with $0\le k\le N-1$.

\end{lem}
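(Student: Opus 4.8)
The plan is to mimic the proof of Lemma~\ref{macro-en}, running the same hierarchy of $\dxa$--weighted $L^2_x$ energy estimates on the macroscopic Navier--Stokes--Poisson system \eqref{m_G_9}--\eqref{m_G_8}, the only new feature being the self-consistent potential, which we treat through the elliptic identity obtained by rewriting \eqref{m_VPB5},
$$(I-\Delta_x)\Phi=-n+N(\Phi),\qquad N(\Phi):=e^{-\Phi}+\Phi-1=O(\Phi^2).$$
For each fixed $0\le k\le N-1$ and each $k\le|\alpha|\le N-1$ I would form three inner products: $\dxa m$ against $\dxa\eqref{m_G_7}$, $\dxa q$ against $\dxa\eqref{m_G_8}$, and $\dxa\Tdx n$ against $\dxa$ of the Euler--type momentum balance (the analogue for mVPB of \eqref{G_5}, obtained by taking the $v\sqrt M$ moment of \eqref{m_VPB4} before substituting the Chapman--Enskog form of $\P_1f$), in exact parallel with \eqref{en_1}, \eqref{q_1} and \eqref{abc}. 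The time-derivative remainders $\dt R_6,\dt R_8$ are handled as $\dt R_1,\dt R_3$ were in \eqref{en_2}: move $\dt$ outside, absorb $\Dt\intr\dxa R_6\,\dxa m\,dx$ and $\Dt\intr\dxa R_8\,\dxa q\,dx$ into the energy functional, and bound the leftover via the equations; the flux-type remainders $R_7,R_9$ are estimated using the smoothing of $L^{-1}$ and Lemma~\ref{e1}, yielding contributions $\le\epsilon(\|\dxa\Tdx n\|^2_{L^2_x}+\|\dxa\Tdx q\|^2_{L^2_x})+\frac{C}{\epsilon}\|\dxa\Tdx\P_1f\|^2_{L^2_{x,v}}+C\sqrt{E(f)}D(f)$, as in \eqref{en_4}.

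The two genuinely new points both concern the electric force $\Tdx\Phi$ in the momentum equations. First, pairing $-\dxa\Tdx\Phi$ with $\dxa m$, integration by parts followed by the continuity law \eqref{m_G_9} gives $-\intr\dxa\Tdx\Phi\cdot\dxa m\,dx=-\intr\dxa\Phi\,\dt\dxa n\,dx$, and inserting $\dxa n=-(I-\Delta_x)\dxa\Phi+\dxa N(\Phi)$ turns this into $\frac12\Dt\big(\|\dxa\Phi\|^2_{L^2_x}+\|\dxa\Tdx\Phi\|^2_{L^2_x}\big)=\frac12\Dt\|\dxa\Phi\|^2_{H^1_x}$ up to the remainder $-\intr\dxa\Phi\,\dt\dxa N(\Phi)\,dx$; the latter is of higher order and, after either moving $\dt$ outside (absorbing a small $\Dt\intr\dxa\Phi\,\dxa N(\Phi)\,dx$ into the functional) or using $\dt\Phi=(I-\Delta_x-N'(\Phi))^{-1}\divx m$ to see $\|\dt\Phi\|\lesssim\|m\|$, is bounded by $C\sqrt{E(f)}D(f)$. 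This is what produces the $\Dt\|\dxa\Phi\|^2_{H^1_x}$ term on the left of \eqref{m_E_1}. Second, pairing $-\dxa\Tdx\Phi$ with $\dxa\Tdx n$ in the Euler--type estimate, the same substitution $\dxa\Tdx n=-\dxa\Tdx\Phi+\dxa\Tdx\Delta_x\Phi+\dxa\Tdx N(\Phi)$ together with two integrations by parts yields $-\intr\dxa\Tdx\Phi\cdot\dxa\Tdx n\,dx=\|\dxa\Tdx\Phi\|^2_{L^2_x}+\|\Delta_x\dxa\Phi\|^2_{L^2_x}-\intr\dxa\Tdx\Phi\cdot\dxa\Tdx N(\Phi)\,dx$; since the positive part dominates $c\|\dxa\Tdx\Phi\|^2_{H^1_x}$, this cross term supplies exactly the potential dissipation on the left of \eqref{m_E_1}, side by side with the $\|\dxa\Tdx n\|^2_{L^2_x}$ coming from the $\Tdx n$ term and with the Kawashima commutator $\Dt\intr\dxa m\cdot\dxa\Tdx n\,dx$.

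Finally I would take the combination $p_0\sum_{k\le|\alpha|\le N-1}\big[\text{(momentum)}+\text{(energy)}\big]+4\sum_{k\le|\alpha|\le N-1}\text{(Euler cross term)}$ with $p_0>0$ large and $\epsilon>0$ small, so that the indefinite terms $\sqrt{2/3}\intr\dxa\Tdx q\cdot\dxa m\,dx$, $\sqrt{2/3}\intr\dxa\divx m\cdot\dxa q\,dx$ and $\|\dxa\divx m\|^2_{L^2_x}$ are absorbed into the good quantities, producing \eqref{m_E_1}; the genuinely nonlinear contributions $n\Tdx\Phi$, $\sqrt{2/3}\Tdx\Phi\cdot m$, $\Gamma(f,f)$ and all $N(\Phi)$-pieces are controlled by $C\sqrt{E(f)}D(f)$ exactly as in \eqref{I_1}--\eqref{gamma}, using $H^2(\R^3_x)\hookrightarrow L^\infty$, $N\ge4$ and the smallness of $E(f)$. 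I expect the main obstacle to be the bookkeeping around the potential: verifying that the $\Tdx\Phi$ contributions reassemble precisely into $\frac12\Dt\|\dxa\Phi\|^2_{H^1_x}$ on the energy side and $\|\dxa\Tdx\Phi\|^2_{H^1_x}$ on the dissipation side, and checking that the terms involving $N(\Phi)$ — in particular those carrying a factor $\dt\Phi$ — are genuinely higher order and absorbable into $C\sqrt{E(f)}D(f)$ and into the modified energy functional.
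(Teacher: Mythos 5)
Your proposal is correct and follows essentially the same route as the paper: the same three inner products ($\dxa m$ with $\dxa\eqref{m_G_7}$, $\dxa q$ with $\dxa\eqref{m_G_8}$, $\dxa\Tdx n$ with the Euler-type momentum balance), the same treatment of $\dt R_6,\dt R_8$ and of the potential via the rewritten Poisson equation $(I-\Delta_x)\Phi=-n+(e^{-\Phi}+\Phi-1)$ — including the elliptic bound $\|\dt\Phi\|_{H^1}\lesssim\|\divx m\|$ used in the paper's \eqref{m_Phi}--\eqref{m_en_6} to absorb the $(e^{-\Phi}-1)\dt\Phi$ remainder — and the same final combination $p_0\sum[\text{momentum}+\text{energy}]+4\sum\text{cross term}$ with $p_0$ large and $\epsilon$ small. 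No gaps worth noting.
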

\begin{proof}
Taking the inner product between $\dxa m$ and $\dxa\eqref{m_G_7}$ with $|\alpha|\le N-1$, we have after a direct computation
 \bma
 &\frac12\Dt ( \|\dxa m\|^2_{L^2_x} + \|\dxa n \|^2_{L^2_x} + \|\dxa\Tdx\Phi\|^2_{L^2_x}+\|\dxa\Phi\|^2_{L^2_x} )
 +\Dt \intr \dxa R_6\dxa mdx
\nnm\\
&+\sqrt{\frac23}\intr \dxa\Tdx q\dxa mdx+\kappa_5 (\|\dxa\Tdx  m\|^2_{L^2_x}+\frac13\|\dxa\divx  m\|^2_{L^2_x})
\nnm\\
=&\intr \dxa(n\Tdx \Phi) \dxa mdx+\intr\dxa R_7\dxa mdx+\intr \dxa R_6\dxa\dt m dx
  - \intr \dxa\Phi\dxa[(e^{-\Phi}-1)\dt\Phi] dx.\label{m_en_1}
   \ema
The first  term in the right hand side  of \eqref{m_en_1} are bounded by $C\sqrt{E(f)}D(f)$. The second and third terms can be estimated by
\bma
\intr \dxa R_6\dxa\dt m dx&\le \frac{C}{\epsilon}\|\dxa\Tdx \P_1f\|^2_{L^2_{x,v}}+C\sqrt{E(f)}D(f)\nnm\\
&\quad+\epsilon(\|\dxa\Tdx n\|^2_{L^2_x}+\|\dxa \Tdx\Phi\|^2_{L^2_x}+\|\dxa\Tdx q\|^2_{L^2_x}),\label{m_en_3}
\\
\intr\dxa R_7\dxa mdx
&\le C \|\dxa\Tdx \P_1f\|_{L^2_{x,v}}\|\dxa\Tdx m\|_{L^2_x}\nnm\\
&\quad+C(\|\dxa(\Tdx\Phi f)\|_{L^2_{x,v}}+\|w^{-\frac12}\dxa \Gamma(f,f)\|_{L^2_{x,v}})\|\dxa\Tdx m\|_{L^2_x}\nnm\\
&\le \frac{\kappa_5}2\|\dxa\Tdx m\|^2_{L^2_x}+C\|\dxa\Tdx \P_1f\|^2_{L^2_{x,v}}+C\sqrt{E(f)}D(f),\label{m_en_4}
\ema
where we have used Lemma \ref{e1} to obtain
\bmas
\|w^{-\frac12}\dxa \Gamma(f,f)\|^2_{L^2_{x,v}}+\|\dxa(\Tdx\Phi f)\|^2_{L^2_{x,v}}
\le
 C E(f)D(f).
\emas
For the last term, we make use of \eqref{m_VPB5} to obtain
\bq
\dt\Phi-\Delta_x\dt\Phi=-\dt n-(e^{-\Phi}-1)\dt\Phi=\divx  m-(e^{-\Phi}-1)\dt\Phi, \label{m_Phi}
\eq
which after taking the inner product $\dxa\dt\Phi$ and $\dxa\eqref{m_Phi}$ leads  for $E(f)$ small enough to
\be
 \sum_{|\alpha|\le N-1}(\|\dxa\Tdx\dt\Phi\|^2_{L^2_x}+\|\dxa\dt\Phi\|^2_{L^2_x})
\le
 C\sum_{|\alpha|\le N-1}\|\dxa\divx m\|^2_{L^2_x},\label{m_phi_t}
 \ee
and
\bq
  \intr \dxa\Phi\dxa[(e^{-\Phi}-1)\dt\Phi] dx
\le
 C\|\Tdx\Phi\|_{H^{N}_x}\|\Phi\|_{H^{N}_x}\|\Tdx m\|_{H^{N-1}_x}\le C\sqrt{E(f)}D(f). \label{m_en_6}
\eq


Therefore, it follows from \eqref{m_en_1}, \eqref{m_en_3}, \eqref{m_en_4} and \eqref{m_en_6} that
\bma &\frac12\Dt (\|\dxa m\|^2_{L^2_x}+\|\dxa n\|^2_{L^2_x}+\|\dxa\Tdx\Phi\|^2_{L^2_x}+\|\dxa\Phi\|^2_{L^2_x})+\Dt \intr \dxa R_6\dxa mdx\nnm\\
&\quad+\sqrt{\frac23}\intr \dxa\Tdx q\dxa mdx+\frac{\kappa_5}2 (\|\dxa\Tdx  m\|^2_{L^2_x}+\frac13\|\dxa\divx  m\|^2_{L^2_x})\nnm\\
&\le C\sqrt{E(f)}D(f)
+C\|\dxa\Tdx \P_1f\|^2_{L^2_{x,v}}+\epsilon(\|\dxa\Tdx n\|^2_{L^2_x}+\|\dxa \Tdx\Phi\|^2_{L^2_x}+\|\dxa\Tdx q\|^2_{L^2_x}).\label{m_m_1}
\ema

Similarly, taking the inner product between $\dxa q$ and $\dxa\eqref{m_G_8}$ with $|\alpha|\le N-1$, we have
\bma
&\frac12\Dt \|\dxa q\|^2_{L^2_x}+\Dt \intr \dxa R_8\dxa qdx
 +\sqrt{\frac23}\intr \dxa\divx m\dxa qdx+ \frac12\kappa_6 \|\dxa\Tdx q\|^2_{L^2_x}\nnm\\
&\le C\sqrt{E(f)}D(f)+C\|w^{\frac12}\dxa\Tdx \P_1f\|^2_{L^2_{x,v}}+\epsilon\|\dxa\Tdx m\|^2_{L^2_x}.\label{m_q_1}
\ema

Agian, taking the inner product between $\dxa\Tdx n$ and $\dxa\eqref{G_5}$ with $|\alpha|\le N-1$ to get
 \bma
&\Dt\intr \dxa m \dxa\Tdx ndx
 +  \frac12\|\dxa\Tdx n\|^2_{L^2_x}
 +\|\dxa \Delta_x\Phi\|^2_{L^2_x}+\|\dxa \Tdx\Phi\|^2_{L^2_x}
  \nnm\\
&\le
 C\sqrt{E(f)}D(f)+ \|\dxa\divx  m\|^2_{L^2_x}+\|\dxa \Tdx q\|^2_{L^2_x} + C\|\dxa\Tdx (\P_1f)\|^2_{L^2_{x,v}}.    \label{m_abc}
  \ema
Making the summation $p_0\sum\limits_{k\le |\alpha|\le N-1}[\eqref{m_m_1}+\eqref{m_q_1}]+4\sum\limits_{k\le |\alpha|\le N-1}\eqref{m_abc}$ with the constant $p_0>0$ large enough and $\epsilon>0$ small enough, we can obtain \eqref{m_E_1}.
The proof of the lemma is completed.
\end{proof}

In the followings, we shall estimate  the microscopic part $\P_1f$ appearing in \eqref{m_E_1} in order to enclose the energy estimates of the solution $f$  to mVPB~\eqref{m_VPB4}--\eqref{m_VPB5}.

\begin{lem}[Microscopic dissipation] \label{m_MicDis}
Let $(f,\Phi)$ be a strong solution to mVPB~\eqref{m_VPB4}--\eqref{m_VPB5}.
Then, there are constants $p_k>0$, $1\le k\le N$ so that it holds for $t>0$ that
 \bma
 & \frac12\Dt \sum_{1\le |\alpha|\le N}(\|\dxa f\|^2_{L^2_{x,v}}
  +\|\dxa\Phi\|^2_{H^1_x})+\mu\sum_{1\le |\alpha|\le N} \|w^{\frac12}\dxa \P_1f\|^2_{L^2_{x,v}}\le C\sqrt{E(f)}D(f),\label{m_E_2}
  \\
&\Dt \| \P_1f\|^2_{L^2_{x,v}}+\mu\|w^{\frac12}\P_1f\|^2_{L^2_{x,v}}\le C\|\Tdx \P_0f\|^2_{L^2_{x,v}}+ CE(f)D(f),\label{m_E_4}
\\
&\Dt\sum_{1\le k\le N}p_k\sum_{|\beta|=k \atop |\alpha|+|\beta|\le N} \|\dxa\dvb \P_1f\|^2_{L^2_{x,v}}
+\mu\sum_{1\le k\le N}p_k\sum_{|\beta|=k \atop |\alpha|+|\beta|\le N}\|w^{\frac12}\dxa\dvb \P_1f\|^2_{L^2_{x,v}}\nnm\\
&\le C\sum_{|\alpha|\le N-1}(\|\dxa\Tdx \P_0f\|^2_{L^2_{x,v}}+\|\dxa\Tdx \P_1f\|^2_{L^2_{x,v}})+C\sqrt{E(f)}D(f). \label{m_E_5}
\ema
\end{lem}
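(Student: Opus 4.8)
The plan is to follow the three-layer scheme of Lemma~\ref{micro-en}, adapting it to the full linearized operator $L$, to the modified Poisson equation \eqref{m_VPB5}, and to the nonlinearity of \eqref{m_VPB4}.

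\emph{Estimate \eqref{m_E_2}.} For $1\le|\alpha|\le N$ I would apply $\dxa$ to \eqref{m_VPB4} and pair with $\dxa f$ in $L^2_{x,v}$. The transport term $\int(v\cdot\Tdx\dxa f)\dxa f\,dxdv$ vanishes, and the coercivity \eqref{L_3} (together with the growth \eqref{nuv}) gives the dissipation $-\int(L\dxa f)\dxa f\,dxdv\ge\mu\|w^{1/2}\dxa\P_1f\|^2_{L^2_{x,v}}$. The essential new point is the streaming term: $-\int\dxa(v\sqrt M\cdot\Tdx\Phi)\dxa f\,dxdv=-\int\dxa\Tdx\Phi\cdot\dxa m\,dx$ with $m=(f,v\sqrt M)$; using the continuity law $\dt n+\divx m=0$ and the time derivative of \eqref{m_VPB5} (equivalently \eqref{m_Phi}) to write $\dt n=-(I-\Delta_x)\dt\Phi+(1-e^{-\Phi})\dt\Phi$, this term equals $\frac12\Dt\|\dxa\Phi\|^2_{H^1_x}$ modulo a cubic remainder controlled by $\sqrt{E(f)}D(f)$ via \eqref{m_phi_t}. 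Carrying it to the left-hand side produces $\frac12\Dt(\|\dxa f\|^2_{L^2_{x,v}}+\|\dxa\Phi\|^2_{H^1_x})$, and the nonlinear contributions $\frac12(v\cdot\Tdx\Phi)f$, $-\Tdx\Phi\cdot\Tdv f$, $\Gamma(f,f)$ are bounded by $C\sqrt{E(f)}D(f)$ exactly as in \eqref{I_1}--\eqref{I_2} and \eqref{gamma}, the top-order part of the $\Tdx\Phi\cdot\Tdv f$ contribution vanishing after an integration by parts in $v$. Summing over $\alpha$ gives \eqref{m_E_2}.

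\emph{Estimates \eqref{m_E_4} and \eqref{m_E_5}.} Applying $\P_1$ to \eqref{m_VPB4} and using $\P_1(v\sqrt M\cdot\Tdx\Phi)=0$ and $\P_1(v\cdot\Tdx f)=v\cdot\Tdx\P_1f-\P_0(v\cdot\Tdx\P_1f)+\P_1(v\cdot\Tdx\P_0f)$, I would rearrange into the macro--micro form analogous to \eqref{G_2}:
\[
\dt\P_1f+v\cdot\Tdx\P_1f+\Tdx\Phi\cdot\Tdv\P_1f-L\P_1f=\Gamma(f,f)+\frac12(v\cdot\Tdx\Phi)\P_1f+\P_0(\cdots)-\P_1(v\cdot\Tdx\P_0f),
\]
where $\P_0(\cdots)$ collects the $\P_0$-projections of the transport and nonlinear terms. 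Pairing with $\P_1f$ at level $|\alpha|=0$: the transport term drops, $\P_0(\cdots)$ is orthogonal to $\P_1f$, the collision term produces $\mu\|w^{1/2}\P_1f\|^2_{L^2_{x,v}}$, the convective term $\Tdx\Phi\cdot\Tdv\P_1f$ is $O(\sqrt{E(f)}D(f))$ after an integration by parts in $v$, and $-\P_1(v\cdot\Tdx\P_0f)$ — whose $v$-profile decays like a Gaussian — is absorbed into $\epsilon\|w^{1/2}\P_1f\|^2_{L^2_{x,v}}+C_\epsilon\|\Tdx\P_0f\|^2_{L^2_{x,v}}$; this yields \eqref{m_E_4}. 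For \eqref{m_E_5} I would apply $\dxa\dvb$ with $|\beta|=k\ge1$ and $|\alpha|+|\beta|\le N$ to the same equation and pair with $\dxa\dvb\P_1f$. Commuting $\dvb$ past $v\cdot\Tdx$ and past $L=-\nu+K$ creates errors with strictly fewer $v$-derivatives together with $K$-terms; using $\nu(v)\sim w(v)$ one keeps $-\int(\dvb(\nu\P_1f))\dvb\P_1f$ as the dominant dissipation and estimates the commutators by $\eta\|w^{1/2}\dxa\dvb\P_1f\|^2_{L^2_{x,v}}+C_\eta\sum_{|\beta'|<k}\|w^{1/2}\dxa\dv^{\beta'}\P_1f\|^2_{L^2_{x,v}}$, plus macroscopic $\dxa\Tdx\P_0f$, $\dxa\Tdx\Phi$ terms and $C\sqrt{E(f)}D(f)$. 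This is an inequality of the type \eqref{aa}; taking $\sum_{1\le k\le N}p_k(\cdot)$ with $p_k$ chosen to dominate $\sum_{j\ge1}p_{k+j}C_{k+j}$ absorbs the strictly-lower-$|\beta|$ terms and yields \eqref{m_E_5}.

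The main obstacle is \eqref{m_E_5}: one must carry the weight $w^{1/2}$ through the $v$-differentiation of $L$ without destroying the spectral gap, and at the same time keep track of the smallness of $\Tdx\Phi$ so that the extra convective term $\Tdx\Phi\cdot\Tdv\P_1f$ remains a genuine quadratic remainder rather than a transport loss. A secondary but delicate point is the interplay, in the first step, of the streaming term $v\sqrt M\cdot\Tdx\Phi$ with the \emph{modified} Poisson operator $I-\Delta_x$: this is precisely what upgrades the electric-field energy to the full $H^1_x$-norm of $\Phi$, in contrast to the $\|\Tdx\Phi\|^2$ obtained for the bVPB system in Lemma~\ref{micro-en}.
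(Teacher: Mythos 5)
Your proposal is correct and follows essentially the same route as the paper's proof: the same pairing of $\dxa\eqref{m_VPB4}$ with $\dxa f$ for \eqref{m_E_2} (with the $H^1_x$-energy of $\Phi$ generated from the streaming term via the continuity law and the time-differentiated modified Poisson equation, exactly as in Lemma~\ref{m_MacDis}), the same macro--micro reformulation \eqref{m_G_2} paired with $\P_1f$ for \eqref{m_E_4} and with $\dxa\dvb\P_1f$ for \eqref{m_E_5}, and the same hierarchical choice of the constants $p_k$ to absorb the lower-order $v$-derivative terms. The only cosmetic difference is that for \eqref{m_E_4} you state a remainder $C\sqrt{E(f)}D(f)$ instead of the claimed $CE(f)D(f)$; applying Cauchy--Schwarz so that the $w^{1/2}\P_1f$ factor is absorbed into the dissipation and bounding the squared nonlinear terms by \eqref{gamma} yields the stated form, and either version suffices for the way the lemma is used later.
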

\begin{proof}
Taking the inner product between $\dxa  f$ and $\dxa\eqref{m_VPB4}$ with $1\le |\alpha|\le N$ $(N\ge 4)$, we have after a tedious computation that
\bma
\frac12\Dt \|\dxa f\|^2_{L^2_{x,v}}
 +\frac12\Dt(\|\dxa \Tdx\Phi\|^2_{L^2_x}+\|\dxa\Phi\|^2_{L^2_x})
 + \mu\sum_{1\le |\alpha|\le N} \|w^{\frac12}\dxa \P_1f\|^2_{L^2_{x,v}}
\le
   C\sqrt{E(f)}D(f).
 \ema

In order to enclose the energy estimates, we need to estimate the terms $\dxa\Tdv f$ with $|\alpha|\le N-1$. To this end, we rewrite \eqref{m_VPB4} as
 \bma
&\partial_t(\P_1f)+v\cdot\Tdx \P_1f+\Tdx \Phi\cdot\Tdv
\P_1f-L(\P_1f)\nnm\\
=&\Gamma(f,f)+\frac12v\cdot\Tdx\Phi\P_1 f+\P_0(v\cdot\Tdx \P_1f+\Tdx\Phi\cdot\Tdv \P_1f-\frac12v\cdot\Tdx\Phi\P_1 f)\nnm\\
&\quad+\P_1(\frac12v\cdot\Tdx\Phi\P_0 f-v\cdot\Tdx \P_0f-\Tdx\Phi\cdot\Tdv \P_0f).\label{m_G_2}
\ema
Taking the inner product between $ \P_1 f$ and \eqref{m_G_2} and using Cauchy-Schwarz inequality, we can obtain \eqref{m_E_4}.

Let $1\le k\le N$, and choose $\alpha,\beta $ with $|\beta|=k$ and $|\alpha|+|\beta|\le N$. Taking the inner product between $\dxa\dvb \P_1 f$ and $\dxa\dvb\eqref{m_G_2}$ and summing the resulted equations,   we obtain  after a tedious computation
\bma
& \sum_{|\beta|=k \atop |\alpha|+|\beta|\le N}\Dt\|\dxa\dvb \P_1f\|^2_{L^2_{x,v}}
+\mu\sum_{|\beta|=k \atop |\alpha|+|\beta|\le N}\|w^{\frac12}\dxa\dvb \P_1f\|^2_{L^2_{x,v}}\nnm\\
&\le C\sum_{|\alpha|\le N-k}(\|\dxa\Tdx \P_0f\|^2_{L^2_{x,v}}+\|\dxa\Tdx \P_1f\|^2_{L^2_{x,v}})+C_k\sum_{ |\beta|\le k-1\atop |\alpha|+|\beta|\le N}\|\dxa\dvb\P_1f\|^2_{L^2_{x,v}}\nnm\\
&\quad+C\sqrt{E(f)}D(f).\label{m_aa}
\ema
Finally, taking the summation $\sum_{1\le k\le N}p_k\eqref{m_aa}$ with
$$\mu p_k\ge 2\sum_{1\le j\le N-k}p_{k+j}C_{k+j},\quad 1\le k\le N-1,\quad p_N=1,$$
we obtain \eqref{m_E_5}.
The proof is completed.
\end{proof}

With the help of Lemmas~\ref{m_MacDis}--\ref{m_MicDis}  we have the following global existence result.
\begin{prop}\label{m_energy1}
Let $N\ge 4$. Then, there are equivalent energy functionals
$E^f_0(\cdot)\sim E(\cdot)$, $H^f_0(\cdot)\sim H(\cdot)$ and $H^f_1(\cdot)\sim H_1(\cdot)$ so that
if the initial energy $E(f_0)$ is sufficiently small, then the Cauchy problem
\eqref{m_VPB4}--\eqref{m_VPB6} of the mVPB system admits a unique global
solution $f(x, v, t)$ satisfying
\bma
  &\Dt E^f_0(f(t)) + \mu D(f(t)) \le 0, \label{m_G_1}\\
%
 & \Dt H^f_0(f(t))
 +\mu D(f(t)) \le C\|\Tdx \P_0f(t)\|^2_{L^2_{x,v}},\label{m_G_4}\\
&  \Dt H^f_1(f(t))
 +\mu D_1(f(t)) \le C\|\Tdx \P_0f(t)\|^2_{L^2_{x,v}}.\label{m_G_4a}
  \ema
\end{prop}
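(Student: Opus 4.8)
The plan is to combine the macroscopic dissipation estimate of Lemma~\ref{m_MacDis} with the microscopic dissipation estimates of Lemma~\ref{m_MicDis} into closed differential inequalities, exactly in the spirit of Lemmas~\ref{energy1}--\ref{energy2} for the bVPB system, and then to upgrade the local solution to a global one by a continuity argument. I would first record the local-in-time existence and uniqueness of a strong solution $f$ to \eqref{m_VPB4}--\eqref{m_VPB6} on a maximal interval $[0,T_{\max})$, built by the usual linearization--iteration scheme (cf.\ \cite{Guo2,Duan2}): the Poisson part is handled through the bounded inverse of $(I-\Delta_x)$, which is more regularizing than $(-\Delta_x)^{-1}$, and the extra lower-order nonlinearity $e^{-\Phi}+\Phi-1$ causes no difficulty since $\|e^{-\Phi}+\Phi-1\|_{H^m_x}\le C\|\Phi\|^2_{H^m_x}$ whenever $\|\Phi\|_{H^m_x}$ is bounded, so no genuinely new ingredient is needed at this stage. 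The length of $[0,T_{\max})$ is bounded below in terms of $E(f_0)$.

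To close the a priori estimate for \eqref{m_G_1}, I would take the combination of $A_1$ times \eqref{m_E_1} with $k=0$, $A_2$ times the sum of \eqref{m_E_2} and \eqref{m_E_4}, and \eqref{m_E_5}, and let $E^f_0(f)$ be the resulting combination of the time-differentiated functionals. The constants are fixed from the top order downwards: the $p_k$ in \eqref{m_E_5} are chosen starting from $p_N=1$ so that the velocity-derivative terms $C_k\sum_{|\beta|\le k-1}\|\dxa\dvb\P_1f\|^2$ are absorbed by the weighted microscopic dissipation one order below; then $A_1$ is taken large so that the macroscopic dissipation $\sum_{|\alpha|\le N-1}(\|\dxa\Tdx(n,m,q)\|^2+\|\dxa\Tdx\Phi\|^2_{H^1_x})$ from \eqref{m_E_1} absorbs the error terms $C\sum_{|\alpha|\le N-1}\|\dxa\Tdx\P_0f\|^2$ on the right-hand sides of \eqref{m_E_4}--\eqref{m_E_5}; and finally $A_2\gg A_1$ is chosen so that the microscopic dissipation $\sum_{|\alpha|\le N}\|w^{\frac12}\dxa\P_1f\|^2$ from \eqref{m_E_2}--\eqref{m_E_4} absorbs the remaining error $A_1C\sum_{|\alpha|\le N-1}\|\dxa\Tdx\P_1f\|^2$ produced by \eqref{m_E_1}. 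Since $\|\dxa R_6\|_{L^2_x}+\|\dxa R_8\|_{L^2_x}\le C\|\dxa\Tdx\P_1f\|_{L^2_{x,v}}$, the crossing functionals $\intr\dxa R_6\dxa m\,dx$ and $\intr\dxa R_8\dxa q\,dx$ are a small multiple of the energy, so $E^f_0(f)\sim E(f)$ and the total dissipation is comparable to $D(f)$; bounding the nonlinear terms by $C\sqrt{E(f)}D(f)$ via Lemma~\ref{e1}, this yields $\Dt E^f_0(f)+\mu'D(f)\le C\sqrt{E(f)}D(f)$ for some $\mu'>0$.

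A continuity argument then produces the global solution: assuming $\sup_{[0,t]}E(f)\le\delta$ with $\delta$ small enough that $C\sqrt{\delta}\le\mu'/2$, the above gives \eqref{m_G_1} (with $\mu$ replaced by $\mu'/2$), hence $E^f_0(f)(t)\le E^f_0(f_0)\le CE(f_0)$ and therefore $E(f)(t)\le CE(f_0)<\delta$ once $E(f_0)$ is small enough; this strictly improves the running assumption and forces $T_{\max}=\infty$. For \eqref{m_G_4}, I would repeat the construction with \eqref{m_E_1} taken at $k=1$ instead of $k=0$, so that $\|(n,m,q)\|^2_{L^2_x}$ no longer enters ($H^f_0(f)\sim H(f)$); then the macroscopic dissipation only supplies $\sum_{1\le|\alpha|\le N-1}\|\dxa\Tdx\P_0f\|^2$, so the $|\alpha|=0$ error term $\|\Tdx\P_0f\|^2_{L^2_{x,v}}$ on the right of \eqref{m_E_4}--\eqref{m_E_5} survives and is kept on the right-hand side, giving $\Dt H^f_0(f)+\mu D(f)\le C\|\Tdx\P_0f\|^2_{L^2_{x,v}}$. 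For \eqref{m_G_4a} one carries out the weighted analog of Lemmas~\ref{m_MacDis}--\ref{m_MicDis} (multiplying the $\P_1f$-equation \eqref{m_G_2} by the weight $w$ and using \eqref{a} of Lemma~\ref{e1} with parameter $k=1$), combines it in the same way to build $H^f_1(f)\sim H_1(f)$ with dissipation $D_1(f)$, and again retains $\|\Tdx\P_0f\|^2_{L^2_{x,v}}$ on the right.

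I expect the main obstacle to be the bookkeeping in the constant hierarchy $p_N=1,\dots,p_1$ and $1\ll A_1\ll A_2$ that makes every linear error term on the right-hand sides of Lemmas~\ref{m_MacDis}--\ref{m_MicDis} absorbable by a dissipation term of higher priority, together with checking that all the correction functionals (the $R_6,R_8$ terms and the potential contributions $\|\dxa\Phi\|^2_{H^1_x}$) are genuinely lower order so that $E^f_0\sim E$, $H^f_0\sim H$ and $H^f_1\sim H_1$; the genuinely new feature compared with the Boltzmann equation, namely the electric potential and its time derivative $\dt\Phi$, has already been dealt with inside Lemma~\ref{m_MacDis} through the elliptic estimate \eqref{m_phi_t}, so it needs no further attention here.
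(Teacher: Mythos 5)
Your overall strategy is the one the paper intends: as in Lemma~\ref{energy1} for the bVPB system, the proposition follows by combining the macroscopic estimate \eqref{m_E_1} with the microscopic estimates \eqref{m_E_2}, \eqref{m_E_4}, \eqref{m_E_5} through a hierarchy of constants, closing the a priori inequalities under a smallness assumption, and invoking local existence plus continuity; your treatment of \eqref{m_G_4} (take $k=1$ in \eqref{m_E_1} and keep $C\|\Tdx\P_0f\|^2_{L^2_{x,v}}$ on the right) and of \eqref{m_G_4a} (weighted analogues via \eqref{a} with $k=1$) is also consistent with what the paper does.

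There is, however, a concrete flaw in the hierarchy you announce for \eqref{m_G_1}. You place \eqref{m_E_4} in the block multiplied by $A_2$, so its right-hand side contributes $A_2C\|\Tdx\P_0f\|^2_{L^2_{x,v}}$, while the only dissipation able to absorb this term is $A_1\sum_{|\alpha|\le N-1}\|\dxa\Tdx(n,m,q)\|^2_{L^2_x}$ coming from \eqref{m_E_1}, which carries the smaller weight $A_1$. Since $\|\Tdx\P_0f\|^2_{L^2_{x,v}}$ coincides (up to a fixed constant) with the $|\alpha|=0$ part of that macroscopic dissipation and is itself a constituent of $D(f)$, the net coefficient of $\|\Tdx\P_0f\|^2$ in your combined inequality is of size $A_1-A_2C$, which is negative under your prescription $A_2\gg A_1$; hence $\Dt E^f_0+\mu D\le C\sqrt{E}D$ does not follow, and your order of fixing constants ($p_k$, then $A_1$, then $A_2$) is circular, because the error $A_1$ must absorb depends on $A_2$. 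The repair is easy and should be stated: take \eqref{m_E_4} with an $O(1)$ coefficient (any fixed positive multiple suffices, since \eqref{m_E_4} is needed only to supply the zeroth-order term $\|\P_1f\|^2_{L^2_{x,v}}$ in $E^f_0\sim E$), so that the hierarchy $1\ll A_1\ll A_2$ closes with $A_1$ chosen after the coefficient of \eqref{m_E_4} and $A_2$ last; alternatively, derive the $|\alpha|=0$ analogue of \eqref{m_E_2} (the counterpart of \eqref{E_3} in the bVPB case, which carries no linear error because the coupling term $v\sqrt M\cdot\Tdx\Phi$ produces the exact derivative of $\|\Phi\|^2_{H^1_x}$ via \eqref{m_VPB5}) and reserve \eqref{m_E_4} for \eqref{m_G_4} and \eqref{m_G_4a}, where the term $C\|\Tdx\P_0f\|^2_{L^2_{x,v}}$ is allowed to remain on the right.
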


\subsection{Convergence rates}
\label{NLmvpb}

With the help of the energy estimates established in Sect.~\ref{Lmvpb}, we are able to show Theorems~\ref{m_rate3}--\ref{m_rate4} for  the Cauchy problem of the nonlinear mVPB system~\eqref{m_VPB4}-\eqref{m_VPB6} in this subsection. 

\begin{proof}[\underline{Proof of Theorem \ref{m_rate3}}]
Let $f$ be the global solution to the IVP problem \eqref{m_VPB4}--\eqref{m_VPB6} for $t>0$. We can represent it in terms of the semigroup $e^{tB_m}$  as
 \bq
 f(t)=e^{tB_m}f_0+\intt e^{(t-s)B_m}G(s)ds+\intt e^{(t-s)B_m}\divx(I-\Delta_x)^{-1}V(s)ds,     \label{m_Duh}
 \eq
where the nonlinear terms $G$ and $V$  are defined by
\bmas
 G=\frac12 (v\cdot\Tdx\Phi)f-\Tdx\Phi\cdot\Tdv f+\Gamma(f,f),
\quad
V=v\sqrt M (e^{\Phi}+\Phi-1).
\emas

Define a functional $Q_1(t)$ for the global solution $f$ to the IVP problem \eqref{m_VPB4}--\eqref{m_VPB6} for any $t>0$ as
 \bmas
  Q_1(t)=\sup_{0\le s\le t}\sum_{k=0}^1&\Big\{(\sum_{j=0}^4\|\dx^k(f(t),\chi_j)\|_{L^2_{x}}+\|\dx^k\Phi(s)\|_{H^1_x})(1+s)^{\frac34+\frac k2}+\|\dx^k \P_1f(s)\|(1+s)^{\frac54+\frac k2}\\
&+(\|\P_1f(s)\|_{H^N_w} +\|\Tdx \P_0f(s)\|_{L^2_v(H^{N-1}_x)})(1+s)^{\frac54}\Big\}.
\emas
We shall show below  that it holds under the assumptions of Theorem~\ref{m_rate3} that
  \bq
 Q_1(t)\le C\delta_0.  \label{m_assume}
 \eq
To this end, we first deal with the time-decay rates of the macroscopic density, momentum and energy, which in terms of \eqref{m_Duh} satisfy the following equations
$$(f(t),\chi_j)=(e^{tB_m}f_0,\chi_j)+\intt (e^{(t-s)B_m}G(s),\chi_j)ds+\intt (e^{(t-s)B_m}\divx(I-\Delta_x)^{-1}V(s),\chi_j)ds.$$
By \eqref{m_S_4a}--\eqref{m_S_4c}, we have for any $\alpha,\alpha'\in \N^3$ with $\alpha'\le \alpha$ that
\bma
&\|\dxa(e^{tB_m}\dx(I-\Delta_x)^{-1} f_0,\chi_j)\|_{L^2_{x}}\le C(1+t)^{-\frac54-\frac{k}2}(\|\dxa f_0\|_{L^2_{x,v}}+\|\dx^{\alpha'}f_0\|_{L^{2,1}}),\quad j=0,1,2,3,4,\label{m_phi_3}\\
&\|\dxa(I-\Delta_x)^{-1}(e^{tB_m}\dx(I-\Delta_x)^{-1} f_0,\sqrt M)\|_{H^1_{x}}\le C(1+t)^{-\frac54-\frac{k}2}(\|\dxa f_0\|_{L^2_{x,v}}+\|\dx^{\alpha'}f_0\|_{L^{2,1}}),\label{m_phi_3a}\\
&\|\dxa\P_1(e^{tB_m}\dx(I-\Delta_x)^{-1} f_0)\|_{L^2_{x,v}}\le C(1+t)^{-\frac74-\frac{k}2}(\|\dxa f_0\|_{L^2_{x,v}}+\|\dx^{\alpha'}f_0\|_{L^{2,1}}),\label{m_phi_4}
\ema with $k=|\alpha-\alpha'|$.
Then, we can estimate the nonlinear terms $G(s)$ and $V(s)$ for $0\le s\le t$ as below
\bma
\| G(s)\|_{L^2_{x,v}}&\le C\|w
f\|_{L^{2,3}}\|f\|_{L^{2,6}}+\|\Tdx\Phi\|_{L^3_x}(\|w
f\|_{L^{2,6}}+\|\Tdv f\|_{L^{2,6}}) 
\le C(1+s)^{-2}Q_1(t)^2,
\label{m_G1}\\
 \| G(s)\|_{L^{2,1}} &
 \le C\|f\|_{L^2_{x,v}}\|w f\|_{L^2_{x,v}}+\|\Tdx\Phi\|_{L^2_x}(\|w f\|_{L^2_{x,v}}
     +\|\Tdv  f\|_{L^2_{x,v}}) 
 \le C(1+s)^{-\frac32}Q_1(t)^2,\label{m_G2}
\\
 \| V(s)\|_{L^2_{x,v}}&=\sqrt3\|e^{-\Phi}+\Phi-1\|_{L^2_x} \le C e^{\|\Phi\|_{L^\infty_x}}\|\Phi\|_{L^3_x}\|\Phi\|_{L^6_x}
 \le C(1+s)^{-2}e^{CQ_1(t)}Q_1(t)^2,\label{m_G3}
 \\
 \| V(s)\|_{L^{2,1}}&=\sqrt3\|e^{-\Phi}+\Phi-1\|_{L^1_x} \le C e^{\|\Phi\|_{L^\infty_x}}\|\Phi\|_{L^2_x}\|\Phi\|_{L^2_x}
 \le C(1+s)^{-\frac32}e^{CQ_1(t)}Q_1(t)^2.\label{m_G4}
 \ema
It follows from
\eqref{m_V_4}, \eqref{m_phi_3} and \eqref{m_G1}--\eqref{m_G4} that \bma
\|(f(t),\chi_j)\|_{L^2_{x}}&\le
C(1+t)^{-\frac34}(\|f_0\|_{L^2_{x,v}}+\|f_0\|_{L^{2,1}})\nnm\\
&\quad+C\intt (1+t-s)^{-\frac34}(\| G(s)\|_{L^2_{x,v}}+\|G(s)\|_{L^{2,1}})ds\nnm\\
&\quad+C\intt (1+t-s)^{-\frac54}(\| V(s)\|_{L^2_{x,v}}+\|V(s)\|_{L^{2,1}})ds\nnm\\
&\le
C\delta_0(1+t)^{-\frac34}+C(1+t)^{-\frac34}e^{CQ_1(t)}Q_1(t)^2.\label{m_macro_1}
 \ema
Similarly, we have
 \bma
  \|\Tdx (f(t),\chi_j)\|_{L^2_{x}}&\le
C(1+t)^{-\frac54}(\|\Tdx f_0\|_{L^2_{x,v}}+\|f_0\|_{L^{2,1}})\nnm\\
&\quad+C\intt (1+t-s)^{-\frac54}(\|\Tdx G(s)\|_{L^2_{x,v}}+\| G(s)\|_{L^{2,1}})ds\nnm\\
&\quad+C\intt (1+t-s)^{-\frac74}(\|\Tdx V(s)\|_{L^2_{x}}+\|V(s)\|_{L^{2,1}})ds\nnm\\
&\le C\delta_0(1+t)^{-\frac54}+C (1+t)^{-\frac54}e^{CQ_1(t)}Q_1(t)^2,\label{m_macro_2}\ema where
we have used
\bmas \|\Tdx G(s)\|_{L^2_{x,v}}+\| G(s)\|_{L^{2,1}}
&\le C(1+s)^{-\frac32}Q_1(t)^2,\\
\|\Tdx V(s)\|_{L^2_{x,v}}+\| V(s)\|_{L^{2,1}}
&\le C(1+s)^{-\frac32}e^{CQ_1(t)}Q_1(t)^2.
\emas

By $\eqref{m_VPB5}$, we obtain
$$
 \Phi=-(I-\Delta_x)^{-1} (f,\sqrt M)+(I-\Delta_x)^{-1} (e^{-\Phi}+\Phi-1),
 $$
which implies that
\bma
\Phi(t)&=-(I-\Delta_x)^{-1}(e^{tB_m}f_0,\sqrt M)-\intt(I-\Delta_x)^{-1}(e^{(t-s)B_m}G(s),\sqrt M)ds\nnm\\
&\quad+\intt(I-\Delta_x)^{-1}(e^{(t-s)B_m}\divx(I-\Delta_x)^{-1}V(s),\sqrt M)ds+(I-\Delta_x)^{-1} (e^{-\Phi}+\Phi-1). \label{m_phi_5}
\ema
Due to the fact
\bma
\|\Tdx^k(I-\Delta_x)^{-1} (e^{-\Phi}+\Phi-1)\|_{H^1_x}\le \|e^{-\Phi}+\Phi-1\|_{L^2_{x}}\le C(1+t)^{-2}e^{CQ_1(t)}Q^2(t),\quad k=0,1, \label{m_phi_0}
 \ema we obtain by \eqref{m_V_4a} and \eqref{m_phi_3a} that
\bma
\|\Phi(t)\|_{H^1_x}
&\le C\delta_0(1+t)^{-\frac34}+C(1+t)^{-\frac34}e^{CQ_1(t)}Q^2(t)+C(1+t)^{-2}e^{CQ_1(t)}Q^2(t),\label{m_Phi_2}\\
\|\Tdx\Phi(t)\|_{H^1_x}
&\le C\delta_0(1+t)^{-\frac54}+C(1+t)^{-\frac54}e^{CQ_1(t)}Q^2(t)+C(1+t)^{-2}e^{CQ_1(t)}Q^2(t).\label{m_Phi_2a}
\ema

Next, we estimate the microscopic part $\P_1f(t)$. Since  $\P_1f(t)$  satisfies
$$\P_1f(t)=\P_1(e^{tB_m}f_0)+\intt \P_1(e^{(t-s)B_m}G(s))ds+\intt \P_1(e^{(t-s)B_m}\divx(I-\Delta_x)^{-1}V(s))ds,$$
it follows from 
\eqref{m_V_5} and \eqref{m_phi_4} that
 \bma
\|\P_1f(t)\|_{L^2_{x,v}}&\le
C(1+t)^{-\frac54}(\|f_0\|_{L^2_{x,v}}+\|f_0\|_{L^{2,1}})\nnm\\
&\quad+C\intt (1+t-s)^{-\frac54}(\| G(s)\|_{L^2_{x,v}}+\|G(s)\|_{L^{2,1}})ds\nnm\\
&\quad+C\intt (1+t-s)^{-\frac74}(\| V(s)\|_{L^2_{x,v}}+\|V(s)\|_{L^{2,1}})ds\nnm\\
&\le
C\delta_0(1+t)^{-\frac54}+C(1+t)^{-\frac54}e^{CQ_1(t)}Q_1(t)^2,\label{m_micro_1}
 \ema
and
 \bma
\|\Tdx \P_1f(t)\|_{L^2_{x,v}}&\le
C(1+t)^{-\frac74}(\|\Tdx f_0\|_{L^2_{x,v}}+\|f_0\|_{L^{2,1}})\nnm\\
&\quad+C\int^{t/2}_0 (1+t-s)^{-\frac74}(\|\Tdx G(s)\|_{L^2_{x,v}}+\|G(s)\|_{L^{2,1}})ds\nnm\\
&\quad+C\int^t_{t/2} (1+t-s)^{-\frac54}(\|\Tdx G(s)\|_{L^2_{x,v}}+\|\Tdx G(s)\|_{L^{2,1}})ds\nnm\\
&\quad+C\intt (1+t-s)^{-\frac74}(\|\Tdx V(s)\|_{L^2_{x,v}}+\|\Tdx V(s)\|_{L^{2,1}})ds\nnm\\
&\le
C\delta_0(1+t)^{-\frac74}+C(1+t)^{-\frac74}e^{CQ_1(t)}Q_1(t)^2,\label{m_micro_2}
 \ema
due to the facts
\bmas
\|\Tdx G(s)\|_{L^2_{x,v}}+\|\Tdx G(s)\|_{L^{2,1}}&\le
C(1+s)^{-2}Q_1(t)^2,\\
\|\Tdx V(s)\|_{L^2_{x,v}}+\|\Tdx V(s)\|_{L^{2,1}}&\le
C(1+s)^{-2}e^{CQ_1(t)}Q_1(t)^2.
\emas

Finally,  the higher order estimates can be established in terms of  \eqref{m_G_4a} and $d_1H^f_1(f)\le D_1(f)$ for some constant  $d_1>0$  as
\bma
H^f_1(f(t))&\le e^{-d_1\mu t}H^f_1(f_0)+\intt e^{-d_1\mu(t-s)}\|\Tdx \P_0f(s)\|^2_{L^2_{x,v}}ds\nnm\\
&\le C\delta_0^2e^{-d_1\mu t} +\intt e^{-d_1\mu(t-s)}(1+s)^{-\frac52}(\delta_0+Q_1(t)^2)^2ds\nnm\\
&\le C(1+t)^{-\frac52}(\delta_0+e^{CQ_1(t)}Q_1(t)^2)^2.\label{m_other}
\ema
By summing \eqref{m_macro_1}, \eqref{m_macro_2}, \eqref{m_Phi_2}, \eqref{m_Phi_2a}, \eqref{m_micro_1}, \eqref{m_micro_2} and \eqref{m_other} together, we have
$$Q_1(t)\le C\delta_0+Ce^{CQ_1(t)}Q_1(t)^2,$$ which leads to  \eqref{m_assume} for $\delta_0>0$ small enough. This completes the proof of the proposition.
\end{proof}

\begin{proof}[\underline{Proof of Theorem \ref{m_rate4}}]
By Eq.~\eqref{m_Duh} and Theorem~\ref{m_rate3}, we can establish the lower bounds of the time decay rates of macroscopic density, momentum and energy of the global solution $f$ and its microscopic part. Indeed, it holds for $t>0$ large enough and $k=0,1$ that
\bmas
\|\Tdx^k(f(t),\chi_j)\|_{L^2_{x}}&\ge\|\Tdx^k(e^{tB_m}f_0,\chi_j)\|_{L^2_{x}}-\intt\|\Tdx^k(
e^{(t-s)B_m}G(s),\chi_j)\|_{L^2_{x}}ds\\
&\quad-\intt\|\Tdx^k(
e^{(t-s)B_m}\divx(I-\Delta_x)^{-1}V(s),\chi_j)\|_{L^2_{x}}ds\\
&\ge C_1\delta_0(1+t)^{-\frac34-\frac k2}-C_2\delta_0^2(1+t)^{-\frac34-\frac k2},\\
\|\Tdx^k\P_1f(t)\|_{L^2_{x,v}}&\ge\|\Tdx^k\P_1(e^{tB_m}f_0)\|_{L^2_{x,v}}-\intt\|\Tdx^k\P_1(
e^{(t-s)B_m}G(s))\|_{L^2_{x,v}}ds\\
&\quad-\intt\|\Tdx^k\P_1(
e^{(t-s)B_m}\divx(I-\Delta_x)^{-1}V(s))\|_{L^2_{x,v}}ds\\
&\ge C_1\delta_0(1+t)^{-\frac54-\frac k2}-C_2\delta_0^2(1+t)^{-\frac54-\frac k2},
\emas
and by \eqref{m_phi_5} and \eqref{m_phi_0} that
\bma
\|\Tdx^k\Phi(t)\|_{H^1_{x}}&\ge \|\Tdx^k(I-\Delta_x)^{-1}(e^{tB_m}f_0,\sqrt M)\|_{H^1_{x}}-\intt\|\Tdx^k(I-\Delta_x)^{-1}(
e^{(t-s)B_m}G(s),\sqrt M)\|_{H^1_{x}}ds\nnm\\
&\quad-\intt\|\Tdx^k(I-\Delta_x)^{-1}(e^{(t-s)B_m}\divx(I-\Delta_x)^{-1}V(s),\sqrt M)\|_{H^1_{x}}ds\nnm\\
&\quad-\|\Tdx^k(I-\Delta_x)^{-1} (e^{-\Phi}+\Phi-1)\|_{H^1_{x}}\nnm\\
&\ge C_1\delta_0(1+t)^{-\frac34-\frac k2}-C_2\delta_0^2(1+t)^{-\frac34-\frac k2}-C_2\delta_0^2(1+t)^{-2}.
\ema
These and Theorem~\ref{m_rate3} give rise to
\bmas
\|f(t)\|_{H^N_w}&\ge\|\P_0f(t)\|_{L^2_{x,v}}-\|w\P_1f(t)\|_{L^2_{x,v}}-\sum_{1\le |\alpha|\le N}\|w\dxa f(t)\|_{L^2_{x,v}}\\
&\ge C_1\delta_0(1+t)^{-3/4}-C_2\delta_0^2(1+t)^{-3/4}-C_3\delta_0(1+t)^{-5/4}.
 \emas
Therefore, we obtain \eqref{m_B_1}--\eqref{m_B_4} for $\delta_0>0$ sufficiently and $t>0$ large enough.
\end{proof}

\medskip
\noindent {\bf Acknowledgements:}
The research of the first author
was partially supported by the National Natural Science Foundation of China  grants No. 11171228, 11231006
and 11225102, and by the Key Project of Beijing Municipal Education
Commission. The research of the second author was supported by the
General Research Fund of Hong Kong, CityU No.103412. And research of
the third author was supported by the National Natural Science Foundation of China  grants No. 11301094 and Project supported by Beijing Postdoctoral Research Foundation No. 2014ZZ-96.


\end{document}